\newcommand\1{{\mathds{1}}}
\renewcommand\ae{{a.\@e.\@}}
\newcommand\B{\mathrm{B}}
\newcommand\Bo{\mathcal{B}}
\newcommand\BV{\mathrm{BV}}
\newcommand\BVs{\mathcal{BV}}
\newcommand\C{\mathrm{C}}
\renewcommand\c{\mathrm{c}}
\newcommand\cd{\boldsymbol{\cdot}}
\newcommand*{\coleq}{\mathrel{\vcenter{\baselineskip0.5ex\lineskiplimit0pt\hbox{\normalsize.}\hbox{\normalsize.}}}=}
\newcommand*{\colequiv}{\mathrel{\vcenter{\baselineskip0.5ex\lineskiplimit0pt\hbox{\normalsize.}\hbox{\normalsize.}}}\equiv}
\newcommand*{\coliff}{\mathrel{\vcenter{\baselineskip0.5ex\lineskiplimit0pt\hbox{\normalsize.}\hbox{\normalsize.}}}\iff}
\newcommand\Cp{\mathrm{Cap}}
\newcommand\cpt{\mathrm{cpt}}
\newcommand\D{\mathrm{D}}
\DeclareMathOperator\inn{int}
\renewcommand\d{\mathrm{d}}
\DeclareMathOperator\dcup{\dot{\cup}\,}
\newcommand\dist{\mathrm{dist}}
\DeclareMathOperator\Div{div}
\newcommand\dx{{\,\d x}}
\newcommand\ecke{\mathop{\hbox{\vrule height 7pt width .3pt depth 0pt \vrule height .3pt width 5pt depth 0pt}}\nolimits}
\newcommand\eps{\varepsilon}
\newcommand\ka{\textup{(}}
\newcommand\kz{\textup{)}}
\renewcommand\L{\mathrm{L}}
\newcommand\loc{\mathrm{loc}}
\newcommand\M{\mathcal{M}}
\newcommand\mutrace{\mathrm{Tr}_E(\sigma)}
\renewcommand\H{{\mathcal{H}}}
\newcommand\Ln{{\mathcal{L}^n}}
\newcommand\N{{\mathds{N}}}
\newcommand\nortrace{\sigma{\cd}\nu_E}
\newcommand\p{\varphi}
\renewcommand\P{\mathrm{P}}
\renewcommand\p{\varphi}
\newcommand\Pmu{\mathscr{P}_{\mu_+,\mu_-}}
\newcommand\qq{\qquad}
\newcommand\R{{\mathds{R}}}
\renewcommand\r{\varrho}
\DeclareMathOperator\spt{spt}
\newcommand\st{s.\@t.\@}
\renewcommand\t{\vartheta}
\newcommand\ts{\textstyle}
\newcommand\W{\mathrm{W}}
\renewcommand\paragraph[1]{\noindent\textbf{#1.}}
\newtheorem{thm}{Theorem}[section]
\newtheorem{cor}[thm]{Corollary}
\newtheorem{defi}[thm]{Definition}
\newtheorem{exmp}[thm]{Example}
\newtheorem{lem}[thm]{Lemma}
\newtheorem{prop}[thm]{Proposition}
\newtheorem{rem}[thm]{Remark}
\numberwithin{equation}{section}
\begin{document}

\title{Isoperimetric conditions, lower semicontinuity, and existence results for
  perimeter functionals with measure data}

\author{
  Thomas Schmidt\footnote{Fachbereich Mathematik,
    Universit\"at Hamburg, Bundesstr. 55, 20146 Hamburg, Germany.\newline
    Email address: \href{mailto:thomas.schmidt@math.uni-hamburg.de}
      {\tt thomas.schmidt@math.uni-hamburg.de}.
    URL: \href{http://www.math.uni-hamburg.de/home/schmidt/}
      {\tt http:/\!/www.math.uni-hamburg.de/home/schmidt/}.
    }
  }

\date{October 25, 2024}
  
\maketitle

\begin{abstract}
  We establish lower semicontinuity results for perimeter functionals with
  measure data on $\R^n$ and deduce the existence of minimizers to these
  functionals with Dirichlet boundary conditions, obstacles, or
  volume-constraints. In other words, we lay foundations of a perimeter-based
  variational approach to mean curvature measures on $\R^n$ capable of proving
  existence in various prescribed-mean-curvature problems with measure data. As
  crucial and essentially optimal assumption on the measure data we identify a
  new condition, called small-volume isoperimetric condition, which sharply
  captures cancellation effects and comes with surprisingly many properties and
  reformulations in itself. In particular, we show that the small-volume
  isoperimetric condition is satisfied for a wide class of $(n{-}1)$-dimensional
  measures, which are thus admissible in our theory. Our analysis includes
  infinite measures and semicontinuity results on very general domains.
\end{abstract}

\tableofcontents

\section{Introduction}

\paragraph{Prescribed mean curvature hypersurfaces and Massari's functional}
This paper contributes to the theory of (generalized) hypersurfaces of
prescribed mean curvature in $\R^n$, approached from a parametric
calculus-of-variations side. Given a function $H\in\L^1(\Omega)$ on
an open set $\Omega\subset\R^n$, this amounts to the study of functionals of the
type
\begin{equation}\label{eq:P-H}
  \mathscr{P}_H[A;\Omega]
  \coleq\P(A,\Omega)-\int_{A\cap\Omega}H\dx
  \qq\qq\text{on measurable sets }A\subset\R^n\,,
\end{equation}
where the perimeter $\P(A,\Omega)$ of $A$ in $\Omega$ gives, in sufficiently
regular cases, the $(n{-}1)$-dimensional Hausdorff measure of
$\Omega\cap\partial A$. In order to obtain prescribed mean curvature
hypersurfaces one seeks to minimize the functional
$\mathscr{P}_H[\,\cdot\,;\Omega]$ among sets $A$ of finite perimeter in
$\Omega$, which are usually required to satisfy boundary conditions at
$\partial\Omega$ and possibly further constraints. If a minimizer $A$ with
sufficiently smooth boundary $\Omega\cap\partial A$ exists, at least in cases
with constraints \emph{only} at $\partial\Omega$, it should solve the parametric
prescribed mean curvature equation
\begin{equation}\label{eq:pmc}
  \Div\nu_A=H
  \qq\text{on }\Omega\cap\partial A\,,
\end{equation}
where $\nu_A$ denotes the outward unit normal to $A$ at points of
$\Omega\cap\partial A$ and the divergence can be taken either as the tangential
divergence of $\nu_A$ along $\partial A$ or equivalently as the standard
divergence of any smooth continuation of $\nu_A$ to $\Omega$ as a (sub"~)""unit
vector field. The equation \eqref{eq:pmc}, if valid in a suitably strong sense,
does express that the mean curvature of $\partial A$ is indeed the prescribed
function $\frac{-1}{n-1}H$ --- or more precisely that, for every
$x\in\Omega\cap\partial A$, the number $\frac{-1}{n{-}1}H(x)$ is the mean
curvature at $x$ of the hypersurface $\Omega\cap\partial A$ oriented by $\nu_A$.

A major step in the program described has been achieved by Massari
\cite{Massari74,Massari75} who introduced the approach via the functional
$\mathscr{P}_H[\,\cdot\,;\Omega]$ and extended De Giorgi's pioneering work
\cite{DeGiorgi6061} from the minimal surface case $H\equiv0$ to general
prescribed functions $H$. In fact, the papers \cite{Massari74,Massari75}
establish an existence result for minimizers of
$\mathscr{P}_H[\,\cdot\,;\Omega]$ in case $H\in\L^1(\Omega)$ and also a
minimal-surface-type\footnote{By minimal-surface-type partial regularity we mean
regularity up to an exceptional set of Hausdorff dimension at most $n{-}8$.}
partial $\C^{1,\alpha}$ regularity result under the optimal assumption
that $H\in\L^p_{(\loc)}(\Omega)$ holds for some $p>n$. If $H$ is additionally
continuous, it follows in a standard way (e.\@g.\@ by locally computing the
non-parametric first variation) that minimizers $A$ of
$\mathscr{P}_H[\,\cdot\,;\Omega]$ satisfy \eqref{eq:pmc} on the regular portions
of $\Omega\cap\partial A$ and that $\frac{-1}{n{-}1}H$ is the mean curvature of
$\Omega\cap\partial A$. For discontinuous $H$, in contrast, the geometric
significance of $H$ is far less clear, and in general it seems to be a widely
open problem if and in which precise sense one can still restrict $H$ to
$\Omega\cap\partial A$ and make any sense of equation \eqref{eq:pmc}.

\bigskip

\paragraph{Lower semicontinuity for a Massari-type functional with measures}
In the present paper, though we take the geometric situation as a background
motivation and in fact have some hope for a connection with the open problem
just mentioned, we deal with the minimization of prescribed-mean-curvature
functionals mostly in its own right. In fact, we replace the prescribed function
$H\in\L^1(\Omega)$ with prescribed non-negative Radon measures $\mu_+$ and
$\mu_-$ concentrated on $\Omega$ and possibly of dimension lower than $n$, and
correspondingly we replace Massari's functional \eqref{eq:P-H} with its natural
generalization
\begin{equation}\label{eq:Pmu-intro}
  \Pmu[A;\Omega]
  \coleq\P(A,\Omega)+\mu_+(A^1)-\mu_-(A^+)
  \qq\qq\text{on measurable sets }A\subset\R^n\,,
\end{equation}
where $A^+$ denotes the measure-theoretic closure and $A^1$ the
measure-theoretic interior of $A$ (see Section \ref{sec:prelim} for the
definitions). Our central results on the functional $\Pmu[\,\cdot\,;\Omega]$ are
semicontinuity results, which apply under sharp hypotheses on the measures
$\mu_\pm$ and are suitable to prove the existence of minimizers of
$\Pmu[\,\cdot\,;\Omega]$ in several cases with standard boundary conditions or
constraints. In fact, our semicontinuity statements take slightly different
forms in the full-space case $\Omega=\R^n$ (see Section \ref{sec:Rn}), in
versions adapted to Dirichlet problems on domains $\Omega\subset\R^n$ (see
Section \ref{sec:Dir}), and generally on domains $\Omega\subset\R^n$ (see
Section \ref{sec:dom}). For the purposes of this introduction, we restrict
the detailed discussion to the full-space case and the functional
\[
  \Pmu\coleq\Pmu[\,\cdot\,;\R^n]\,,
\]
for which we introduce the crucial hypothesis on $\mu_\pm$ and state a
prototypical case of our results as follows:

\begin{defi}[small-volume isoperimetric condition]
  We say that a non-negative Radon measure $\mu$ on $\R^n$ satisfies the
  small-volume isoperimetric condition \textup{(}briefly\textup{:} the
  small-volume IC\textup{)} in $\R^n$ with constant\/ $1$ if, for every
  $\eps>0$, there exists some $\delta>0$ such that
  \begin{equation}\label{eq:mu<P+eps-intro}
    \mu(A^+)\le\P(A,\R^n)+\eps
    \qq\text{for all measurable }A\subset\R^n
    \text{ with }|A|<\delta\,.
  \end{equation}
\end{defi}

\begin{thm}[lower semicontinuity on full space; prototypical case]\label{thm:lsc-global-intro}
  Consider non-negative Radon measures $\mu_+$ and $\mu_-$ on $\R^n$ which both
  satisfy the small-volume IC in $\R^n$ with constant\/ $1$. Then the full-space
  functional $\Pmu$ introduced above is finite and lower semicontinuous with
  respect to convergence in measure on
  $\BVs(\R^n)\coleq\{A\subset\R^n\,:\,A\text{ measurable}\,,\,|A|{+}\P(A,\R^n)<\infty\}$.
\end{thm}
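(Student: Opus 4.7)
Assume $A_k\to A$ in measure in $\BVs(\R^n)$ and, after passing to a subsequence, $\P(A_k,\R^n)\to\liminf_k\P(A_k,\R^n)<\infty$ (otherwise the inequality is trivial). The plan is to combine the classical BV lower-semicontinuity of $\P$ with a careful application of the small-volume IC on the vanishing-volume pieces $B_k\coleq A\setminus A_k$ and $C_k\coleq A_k\setminus A$, both of which satisfy $|B_k|,|C_k|\to0$.

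For the finiteness claim, I would first show $\mu_\pm(A^+)<\infty$ for every $A\in\BVs(\R^n)$ via a coarea/slicing argument applied to a $1$-Lipschitz exhaustion function (e.g.\ $x\mapsto|x|$): this partitions $A$ into finitely many pieces each of Lebesgue measure below the IC threshold $\delta(1)$, with total perimeter controlled by $\P(A,\R^n)+|A|$ via the coarea formula; summing the IC estimate over the pieces gives the bound.

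For the lower-semicontinuity, a direct density computation based on the disjoint decompositions $A=(A\cap A_k)\sqcup B_k$ and $A_k=(A\cap A_k)\sqcup C_k$ gives the set-theoretic inclusions $A^1\setminus A_k^1\subset B_k^+$ and $A_k^+\setminus A^+\subset C_k^+$. Fixing $\eps>0$ and taking $k$ large enough that $|B_k|,|C_k|<\delta(\eps)$, the small-volume IC applied to $\mu_+$ on $B_k$ and to $\mu_-$ on $C_k$, together with the BV lsc of $\P$, yields the preliminary estimate
\begin{equation*}
  \Pmu[A]\le\Pmu[A_k]+[\P(A,\R^n)-\P(A_k,\R^n)]+\P(B_k,\R^n)+\P(C_k,\R^n)+2\eps.
\end{equation*}

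\textbf{Main obstacle.} The crux is that the leftover perimeter terms $\P(B_k,\R^n)+\P(C_k,\R^n)$ need \emph{not} vanish as $k\to\infty$, even under strict convergence $\P(A_k)\to\P(A)$: the standard BV accounting identity $\P(B_k,\R^n)+\P(C_k,\R^n)=\P(A,\R^n)+\P(A_k,\R^n)-2\H^{n-1}(\Gamma_k^+)$, with $\Gamma_k^+\coleq\partial^*A\cap\partial^*A_k\cap\{\nu_A=\nu_{A_k}\}$, reduces the preliminary bound to $\Pmu[A]\le\Pmu[A_k]+2[\P(A,\R^n)-\H^{n-1}(\Gamma_k^+)]+2\eps$, which would require the oriented convergence $\H^{n-1}(\Gamma_k^+)\to\P(A,\R^n)$---a statement not implied by convergence in measure alone (adding a thin outer strip, whose outer face carries permanent excess in $\partial^*A_k\setminus\partial^*A$, is already a counterexample). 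The correct strategy is therefore to replace the crude estimate ``$\mu_\pm(E^+)\le\P(E,\R^n)+\eps$ for small $|E|$'' by one of the sharper equivalent reformulations of the small-volume IC advertised in the abstract: ideally an inequality bounding $\mu_-(A_k^+\setminus A^+)$ and $\mu_+(A^1\setminus A_k^1)$ directly in terms of the \emph{net} perimeter change between $A$ and $A_k$, rather than via the raw perimeters of $B_k$ and $C_k$, and thereby capturing the cancellation between the shared and the new portions of the reduced boundaries. With such a refined bound in hand, passing to the liminf and letting $\eps\to0$ closes the argument.
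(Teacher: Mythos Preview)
Your diagnosis of the obstacle is accurate: applying the small-volume IC directly to $B_k=A\setminus A_k$ and $C_k=A_k\setminus A$ leaves the residual $\P(B_k)+\P(C_k)$, which does not vanish, and your identity $\P(B_k)+\P(C_k)=\P(A)+\P(A_k)-2\H^{n-1}(\Gamma_k^+)$ correctly isolates the failure. However, the proposed remedy---seeking a ``sharper equivalent reformulation'' of the IC that would bound $\mu_-(A_k^+\setminus A^+)$ by the net perimeter change---is not what the paper does, and no such reformulation among those developed in the paper (replacing $A^+$ by $A^1$, passing to test functions, or the divergence criterion) delivers that kind of bound. The paper keeps the IC exactly as stated and instead changes the set to which it is applied.

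The missing idea is a \emph{good exterior approximation} of $A$: one constructs an auxiliary set $S\in\BVs(\R^n)$ with $A^+\subset\inn(S)$, $\mu_-(\overline S)<\mu_-(A^+)+3\eps$, and, crucially, $\liminf_k\P(S,A_k^+)<\eps$. The IC is then applied not to $C_k=A_k\setminus A$ but to $A_k\setminus S$, for which Lemma~\ref{lem:P(AcapB),P(A-S)} gives $\P(A_k\setminus S)\le\P(A_k,S^0)+\P(S,A_k^+)$; the first term is absorbed by $\P(A_k,\inn(S)^\c)$ and the second is small by construction. The set $S$ is built (Lemma~\ref{lem:good-ext-approx}) from a superlevel set $\{v_\ell>t_\ell\}$ of a one-sided $\W^{1,1}$ approximation $v_\ell\ge\1_A$ of $\1_A$, union a small capacity corrector. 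The key mechanism is the coarea identity $\int_0^1\P(\{v_\ell>t\},A_k^+)\,\d t=\int_{A_k}|\nabla v_\ell|\dx\to\int_A|\nabla v_\ell|\dx=0$ (since $v_\ell\equiv1$ on $A$), which lets one pick $t_\ell$ with $\liminf_k\P(\{v_\ell>t_\ell\},A_k^+)=0$. This is precisely the device that captures the cancellation you were looking for, but it lives at the level of the approximating set $S$, not at the level of the IC itself. The $\mu_+$-term is handled by a dual argument (complements), and the two halves are combined via the union/intersection trick $\P(A_k\cup A)+\P(A_k\cap A)\le\P(A_k)+\P(A)$ rather than treated simultaneously.
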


We emphasize that, for this and similar semicontinuity results, we necessarily
need to use some closed representative of $A$ in the $\mu_-$-volume term of
\eqref{eq:Pmu-intro}, since measurable sets $A$ are considered in an
$\Ln$-\ae{} sense, and other choices of representative would not ensure lower
semicontinuity of $\Pmu$ along basic strictly decreasing sequences $A_k\searrow
A_\infty$ with $\P(A_k,\R^n)\to\P(A_\infty,\R^n)$, as soon as $\mu_-$ assigns
mass to the boundary of $A_\infty$. Indeed, the usage of $A^+$ as a precise
$\H^{n-1}$-\ae{} defined representative of $A$ is perfectly suited for our
purposes and is inspired by related developments in the theory of one-sided
obstacle problems; cf.\@
\cite{CarDalLeaPas88,SchSch15,BoeDuzSch16,SchSch16,SchSch18,Treinov21}. In
a very similar way, the choice of $A^1$ in the $\mu_+$-volume term allows to
cope with basic increasing sequences $A_k\nearrow A_\infty$.

\bigskip

\paragraph{Lower semicontinuity also on general domains}
Our semicontinuity results for functionals of type \eqref{eq:Pmu-intro} on
general domains $\Omega\subset\R^n$ rely on closely related (small-volume) ICs,
which partially can be understood as relative ICs adapted to the domain at hand.
However, at this introductory stage we will only briefly touch upon some aspects
of the results, while postponing the discussion of the adapted ICs entirely to
the later sections. We mention that basically all results on general domains
will be deduced from the ones on full space by extension/restriction to/from all
of $\R^n$. For cases with a generalized Dirichlet boundary condition on a
bounded domain $\Omega$, this deduction is essentially standard. However, as a technical addition, when
working out the details, we also include a careful treatment of (strongly)
unbounded domains $\Omega$ and infinite measures $\mu_\pm$; see Section
\ref{sec:Dir} for the details. Furthermore, in the final Section \ref{sec:dom},
we also obtain two semicontinuity results on general domains independent of any
boundary condition. The first result is somewhat different from the usual
semicontinuity on open sets and yields lower semicontinuity of a functional
$\Pmu[\,\cdot\,;\Omega^1]$ \emph{on the measure-theoretic interior} $\Omega^1$
of a set $\Omega$ of locally finite perimeter in $\R^n$. This type of
semicontinuity on $\Omega^1$ does not seem to be standard even in case of the
relative perimeter $\mathscr{P}_{0,0}[\,\cdot\,;\Omega^1]=\P(\,\cdot\,,\Omega^1)$
alone, but in the perimeter case is in fact not entirely new and can also be
deduced from a recent result of Lahti \cite{Lahti20}. Anyway, our theory allows
for a new and very natural proof by incorporating the perimeter measure
$\P(\Omega,\,\cdot\,)$ (and potentially even some other measures on the reduced
boundary $\partial^\ast\Omega$) into the measures $\mu_\pm$ of the full-space
functional $\Pmu$. As a complement, the second result gives lower semicontinuity
of $\Pmu[\,\cdot\,;\Omega]$ also on an arbitrary open set $\Omega\subset\R^n$
and thus can dispense with any regularity of $\Omega$ at the price of requiring
openness even in the standard topological sense. Finally, we will also further
underpin the results with several examples of admissible domains and measures
and with a detailed discussion of the relevant (relative) ICs and their
optimality.

\bigskip

\paragraph{The small-volume IC as decisive assumption for semicontinuity}
For now, we return to the full space-setting of Theorem \ref{thm:lsc-global-intro}
and discuss its crucial assumption, the small-volume IC, in some more detail. We
first highlight that this condition is not only sufficient for the lower
semicontinuity conclusion, but in itself expresses lower semicontinuity of the
functional $\mathscr{P}_{0,\mu}$ \emph{at the empty set} and thus in most cases
is also necessary for lower semicontinuity. Indeed, if $\mu=\mu_-$ violates the
small-volume IC in $\R^n$ with constant $1$, for some $\eps>0$ there exists a
sequence of counterexamples in form of measurable sets $A_k\subset\R^n$ with
$\lim_{k\to\infty}|A_k|=0$ and $\mu(A_k^+)>\P(A_k,\R^n)+\eps$. This, however,
means that $A_k$ converge in measure to the empty set $\emptyset$ with
$\limsup_{k\to\infty}\mathscr{P}_{0,\mu}[A_k]\le{-}\eps<0=\mathscr{P}_{0,\mu}[\emptyset]$,
and lower semicontinuity fails as well. Therefore, the small-volume IC with
constant $1$ is in fact the optimal assumption on $\mu_-$ in Theorem
\ref{thm:lsc-global-intro}. Moreover, if $\mu=\mu_+$ is supported in a ball $B$
and $A_k$ are as before, then $B\setminus A_k$ converge in measure to $B$, and
one finds
$\limsup_{k\to\infty}\mathscr{P}_{\mu,0}[B\setminus A_k]\le\mathscr{P}_{\mu,0}[B]-\eps$.
Therefore, at least in case of bounded support, the small-volume IC with
constant $1$ is the optimal assumption on $\mu_+$ as well.

In the proof of Theorem \ref{thm:lsc-global-intro}, the small-volume IC is
decisive in coping with cases in which (the singular part of) $\mu=\mu_-$
has mass on an $(n{-}1)$-dimensional surface $S$ and, for a decreasing sequence
$A_k\searrow A_\infty$, the sets $A_k$ include thinner and thinner neighborhoods of
$S$, while $A_\infty^+$ does not intersect $S$ anymore; see Figure \ref{fig:tentacle}
below for an illustration in case $n=2$. In such situations, with
${-}\mu(A_\infty^+)>\liminf_{k\to\infty}[{-}\mu(A_k^+)]$ the $\mu$-volume term
in $\mathscr{P}_{0,\mu}$ is \emph{not} lower semicontinuous, but it holds the
\emph{strict} inequality $\P(A_\infty,\R^n)<\liminf_{k\to\infty}\P(A_k,\R^n)$.
Under the small-volume IC from \eqref{eq:mu<P+eps-intro} we will show that it is
possible to quantitatively relate these opposite effects, to compensate for the increase of
the $\mu$-volume with the decrease of the perimeter and thus to admit a certain
cancellation effect while still preserving lower semicontinuity of the
functional $\mathscr{P}_{0,\mu}$. The functional $\mathscr{P}_{\mu,0}$ with
$\mu=\mu_+$ can be handled in a dual manner (where the decisive sequences are
the increasing ones), and the results can be combined in order to reach
functionals of the general type $\Pmu$.

\begin{figure}[h]\centering
  \includegraphics{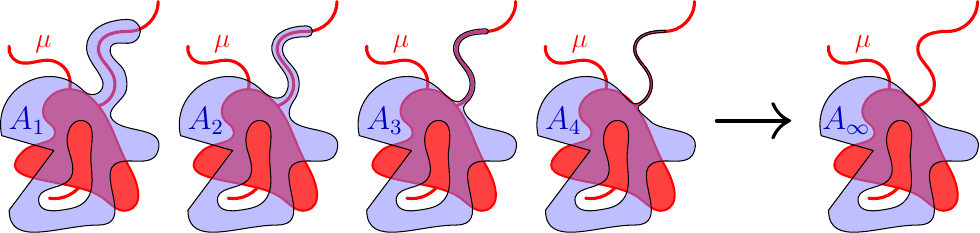}
  \vspace{-2.7ex}
  \caption{An illustration of the decisive cancellation effect in $\R^2$: A sequence
    $(A_k)_{k\in\N}$ forms thinner and thinner tentacles around a $1$d portion of $\spt\mu$,
    but in the limit $A_\infty^+$ does not cover this portion anymore.\label{fig:tentacle}}
\end{figure}

Beside the decisive effect just described, the small-volume IC also has a role
in preventing a breakdown of lower semicontinuity at infinity, which in general
can occur already in the function case $\mu_\pm=H_\pm\Ln$. Indeed, for each
$H\in\L^1(\R^n)$, continuity of the $H$-volume term and thus lower semicontinuity
of $\mathscr{P}_H$ are immediate. However, this does not extend to
$H\in\L^1_\loc(\R^n)$, where for similar reasons as above one needs to prevent
that $A_k$ move away to infinity with $\lim_{k\to\infty}|A_k|=0$,
$\limsup_{k\to\infty}\P(A_k,\R^n)<\infty$, but
$\limsup_{k\to\infty}\int_{A_k}H\dx=\infty$. As our result is formulated for
\emph{locally} finite measures $\mu_\pm$, it also singles out functions
$H\in\L^1_\loc(\R^n)\setminus\L^1(\R^n)$ such that $\mathscr{P}_H$ is lower
semicontinuous. We are aware of previous results in this direction only on
specific unbounded domains in the different setting of \cite{Duzaar93,DuzSte96}
(compare also below), but still consider this aspect mostly as a side benefit of
our treatment of possibly singular measure data.

\bigskip

\paragraph{Existence results}
As standard consequences of semicontinuity we derive existence results for
minimizers of $\Pmu[\,\cdot\,;\Omega]$ with obstacles, prescribed volume, or a
Dirichlet boundary condition as side conditions. Since the obstacle and
prescribed-volume constraints fit into the full-space setting described so far,
we exemplarily state our corresponding existence results at least for the case
of finite $\mu_-$, while the somewhat more technical treatment of Dirichlet
problems is postponed to the later Section \ref{sec:Dir}. In all cases, we
impose the small-volume IC as the decisive assumption on $\mu_\pm$.

\begin{thm}[existence in obstacle and prescribed-volume problems]
    \label{thm:exist-intro}
  Consider non-negative Radon measures $\mu_+$ and\/ $\mu_-$ on $\R^n$ such that
  both $\mu_+$ and\/ $\mu_-$ satisfy the small-volume IC with constant\/ $1$ on
  $\R^n$ and such that\/ $\mu_-$ is finite. Then, with $\BVs(\R^n)$ as in
  Theorem \ref{thm:lsc-global-intro}, we have\textup{:}\vspace{-.5ex}
  {\leftmargini=3.5ex
  \begin{itemize}
  \item[]\textsc{Obstacle problem:}
    Whenever, for given measurable sets $I,O\subset\R^n$, the admissible class
    $\{A\in\BVs(\R^n):I\subset A\subset O\text{ up to negligible sets}\}$
    is non-empty, then there exists a minimizer of\/ $\Pmu$ in this
    class.\vspace{-.5ex}
  \item[]\textsc{Prescribed-volume problem with $\mu_+\equiv0$:}
    For every $v\in{(0,\infty)}$, there exists a minimizer of\/
    $\mathscr{P}_{0,\mu_-}$ in $\{A\in\BVs(\R^n):|A|=v\}$.
  \end{itemize}
  }
\end{thm}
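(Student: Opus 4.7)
The plan is to apply the direct method of the calculus of variations, combining the lower semicontinuity of Theorem~\ref{thm:lsc-global-intro} with standard $\BV$ compactness. Let $(A_k)_{k\in\N}$ be a minimizing sequence in the respective admissible class. Since $\mu_-(\R^n)<\infty$, the infimum is bounded below by $-\mu_-(\R^n)$, while non-emptiness of the admissible class (respectively, any ball of volume $v$) furnishes a finite upper bound. Consequently $\sup_k\Pmu[A_k]<\infty$, and together with $\mu_+\ge0$ and $\mu_-(A_k^+)\le\mu_-(\R^n)$ this gives the uniform perimeter bound $\P(A_k,\R^n)\le C$. The isoperimetric inequality on $\R^n$ in turn yields a uniform volume bound $|A_k|\le C'$, trivially so in the volume-constrained case.

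By $\BV$ compactness on balls and a diagonal extraction, a subsequence of $(A_k)$ converges in $\L^1_\loc(\R^n)$ to some measurable $A\subset\R^n$. For the obstacle problem, the inclusions $I\subset A_k\subset O$ pass to the $\Ln$-almost-everywhere limit, so $A$ is admissible. To invoke Theorem~\ref{thm:lsc-global-intro} one needs convergence in measure on $\R^n$, which is automatic if $|O|<\infty$ via $\1_O$-domination; in general I would upgrade $\L^1_\loc$ to global $\L^1$-convergence by a tightness argument: truncate each $A_k$ against a large ball $B_R$ with $R$ chosen via the coarea formula so that $\H^{n-1}(\partial B_R\cap A_k)$ is small, and observe that since $\mu_-$ is finite the $\mu_-(\R^n\setminus B_R)$-contribution to $\Pmu$ can be made arbitrarily small, while the added perimeter from the cut is likewise small. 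Hence the truncated sequence $A_k\cap B_R$ is still asymptotically minimizing, and is now tight. Theorem~\ref{thm:lsc-global-intro} then gives $\Pmu[A]\le\liminf_{k\to\infty}\Pmu[A_k]$, so $A$ is a minimizer.

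For the prescribed-volume problem with $\mu_+\equiv0$, the main obstacle is that the $\L^1_\loc$-limit $A$ may satisfy $|A|<v$ because volume has escaped to infinity. I would treat this via a concentration-compactness dichotomy: decompose each $A_k$ into a piece captured in a ball $B_R$ and a distant piece. Since $\mu_-(\R^n\setminus B_R)\to0$ as $R\to\infty$, the distant piece contributes negligibly to $-\mu_-(A_k^+)$ while carrying its full perimeter cost, so reabsorbing its volume into the retained piece --- for instance by a small dilation of the retained piece, or by attaching a ball of equal volume at a point of positive $\mu_-$-density --- does not increase $\Pmu$ in the limit and restores $|A|=v$. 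This reduces the problem to the tight case, in which the argument of the previous paragraph supplies the minimizer. The hard part in both assertions is thus the tightness/concentration step: the crucial input is the finiteness of $\mu_-$, which ensures that any mass escaping to infinity carries no compensating gain in the $\mu_-$-term and can therefore be traded for a bounded, tight competitor.
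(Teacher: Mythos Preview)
Your overall strategy---direct method via uniform perimeter bounds, compactness, and control of volume escape to infinity---matches the paper's.

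For the \textsc{obstacle problem}, however, you work harder than necessary. The paper does not establish tightness; instead it first upgrades Theorem~\ref{thm:lsc-global-intro} to a lower-semicontinuity statement valid for merely \emph{local} convergence in measure when $\mu_-$ is finite (this is Theorem~\ref{thm:lsc-Rn}\eqref{item:lsc-Rn-a}, obtained by a cut-off-on-balls argument essentially identical to your truncation). With that in hand the existence proof is immediate: the locally convergent limit is admissible and minimizing. Your truncation-to-tightness route can be completed, but as written it is not: the truncated competitors $A_k\cap B_R$ need not satisfy $I\subset A_k\cap B_R$, and if you take $R=R_k\to\infty$ you have not argued that $A_k\cap B_{R_k}\to A$ \emph{globally} (mass of $A_k\setminus A$ may linger in $B_{R_k}\setminus B_{R_0}$).

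For the \textsc{prescribed-volume problem}, your concentration-compactness outline is correct and close to the paper's, but one of your two volume-restoration mechanisms is genuinely wrong and the other omits its key step. Dilation fails: $\mu_-$ has no dilation equivariance, and a small dilation can move the retained piece off $\spt\mu_-$ and drop $\mu_-$ by a fixed amount (e.g.\ in $\R^2$ with $\mu_-=2\H^1\ecke(\{1\}\times[0,1])$ and $A'=[1,2]\times[0,1]$, dilation from the origin by any $\lambda>1$ yields $\mu_-((\lambda A')^+)=0$). The ``attach a ball'' idea is correct and is exactly what the paper implements---it sets $E_i=(A_{k_i}\cap B_{R_i})\cup B_{r_i}$ with $|B_{r_i}\setminus A_{k_i}|=|A_{k_i}\setminus B_{R_i}|$---but the step you do not name is the crucial one: the \emph{sharp isoperimetric inequality} is invoked in the form $\P(B_{r_i})\le\P(B_{r_i}\cap A_{k_i})+\P(A_{k_i}\setminus B_{R_i})$ to bound the added perimeter by the removed perimeter. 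The ball's placement has nothing to do with $\mu_-$-density; the $\mu_-$-term is controlled solely via $\mu_-(\R^n\setminus B_{R_i})\to0$.
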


Theorem \ref{thm:exist-intro} will be established in Section \ref{sec:exist},
where existence in the obstacle problem will also be extended to some
infinite measures $\mu_-$, while in the prescribed-volume problem we will not go
beyond the statement given above. The proof uses the direct method in the
calculus of variations and at least in the obstacle case is standard once
suitable semicontinuity is at hand. However, since in the full-space situation
out of a minimizing sequence we can only extract a subsequence which converges
\emph{locally} in measure on all of $\R^n$, we in fact need a semicontinuity
statement adapted to \emph{local} convergence in measure. As we will see in
Section \ref{sec:Rn}, such a variant can be deduced from the above statement of
Theorem \ref{thm:lsc-global-intro} by cut-off arguments. In case of the
prescribed-volume problem, the local-convergence issue additionally brings up
the more severe difficulty that a limit in the sense of local convergence may exhibit a ``volume
drop'' at infinity and thus may fall out of the admissible class. The strategy
for preventing this is technically more involved and consists in constructing an
improved minimizing sequence by ``shifting volume'' into a bounded region;
see Section \ref{sec:exist} for detailed discussion and implementation.

\bigskip

\paragraph{More on the small-volume IC: criteria and exemplary cases}
We further support the semicontinuity and existence results by identifying
wide classes of measures for which the small-volume IC holds. First let us
remark that related ICs without the additive $\eps$-term have been considered
in classical literature (compare also below for related discussion) with the
typical background idea that such conditions can be deduced for
$\mu_\pm=H_\pm\Ln$, $H\in\L^p(\R^n)$, $p>n$, by the classical estimate
via the Hölder and isoperimetric inequalities
$\int_AH_\pm\dx\le C_n\|H\|_{\L^p(\R^n)}|A|^{\frac1n-\frac1p}\,\P(A,\R^n)$,
where $C_n$ is a dimensional constant. As a first indication that our
small-volume IC is substantially different, we record that it is
in fact trivially satisfied, beyond the previous $\L^p$ cases and due to the
$\eps$-term alone, for all finite absolutely continuous measures
$\mu_\pm=H_\pm\Ln$ with $H\in\L^1(\R^n)$. Hence, our semicontinuity results
include Massari's standard case of the functional $\mathscr{P}_H$. In addition,
however, our results do admit singular measures, as will become clear from the
following abstract criterion:

\begin{thm}[divergence criterion for the small-volume IC]
    \label{thm:admis-crit-intro}
  If a non-negative Radon measure $\mu$ on $\R^n$ can be expressed as
  $\mu=H\mathcal{L}^n{+}\Div\sigma$ with $H\in\L^1(\R^n)$ and a
  divergence-measure field $\sigma\in\L^\infty(\R^n,\R^n)$ such that
  $\|\sigma\|_{\L^\infty(\R^n,\R^n)}\le1$, then $\mu$ satisfies the small-volume
  IC in $\R^n$ with constant $1$.
\end{thm}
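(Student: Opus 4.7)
The strategy is to exploit the decomposition $\mu=H\Ln+\Div\sigma$ and estimate the two summands evaluated on $A^+$ separately. The absolutely continuous piece $H\Ln$ will be absorbed into the $\eps$-tolerance via absolute continuity of the Lebesgue integral, while the divergence piece $\Div\sigma$ will be controlled by $\P(A,\R^n)$ via the Gauss-Green normal-trace bound for bounded divergence-measure fields.

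For the absolutely continuous part, since $A\triangle A^+$ is $\Ln$-negligible, we have $(H\Ln)(A^+)=\int_A H\dx\le\int_A|H|\dx$. As $H\in\L^1(\R^n)$, absolute continuity of the Lebesgue integral supplies, for any prescribed $\eps>0$, some $\delta>0$ such that $\int_A|H|\dx<\eps$ whenever $|A|<\delta$. Thus $(H\Ln)(A^+)\le\eps$ uniformly on this class.

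For the divergence part, the key ingredient is the Gauss-Green formula for a bounded divergence-measure field paired with a set of finite perimeter (Chen-Frid, \v Silhav\'y, Comi-Payne, etc.). This furnishes a normal-trace function $\nortrace$ defined $\H^{n-1}$-a.e.\ on $\partial^{\ast}A$, pointwise bounded $\H^{n-1}$-a.e.\ by $\|\sigma\|_{\L^\infty(\R^n,\R^n)}\le 1$, and satisfying an identity of the form
\[
  (\Div\sigma)(A^+)=\int_{\partial^{\ast}A}\nortrace\,\d\H^{n-1},
\]
where the trace is chosen as the \emph{exterior} one (so as to match the closed representative $A^+$, rather than the interior trace which would pair with $A^1$). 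Since $\H^{n-1}(\partial^{\ast}A)=\P(A,\R^n)$, integrating the pointwise bound yields $(\Div\sigma)(A^+)\le\P(A,\R^n)$. Adding the two estimates gives $\mu(A^+)\le\P(A,\R^n)+\eps$ for $|A|<\delta$, which is exactly the small-volume IC with constant $1$.

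The main subtlety is twofold: first, pinning down the correct (exterior) trace so that the Gauss-Green identity pairs $\Div\sigma$ with $A^+$ rather than $A^1$; and second, ensuring validity for possibly \emph{unbounded} $A$ of finite perimeter, since classical Gauss-Green for divergence-measure fields is typically stated on bounded finite-perimeter sets. The trace issue is handled by the refined Gauss-Green formulation distinguishing interior and exterior normal traces on $\partial^{\ast}A$. The unboundedness is overcome either by a global pairing for $\chi_A\in\BV(\R^n)$ and $\sigma\in\L^\infty(\R^n,\R^n)$, or by exhausting $A$ with $A\cap B_{R_k}$ along a sequence of radii $R_k\to\infty$ chosen so that $\H^{n-1}(A^1\cap\partial B_{R_k})\to 0$, whose existence is guaranteed by the coarea-type identity $\int_0^\infty\H^{n-1}(A^1\cap\partial B_r)\,\d r=|A|<\infty$, and then passing to the limit using monotonicity of $(A\cap B_{R_k})^+\nearrow A^+$ up to $\H^{n-1}$-negligible sets.
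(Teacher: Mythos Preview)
Your proof is correct and takes a route the paper itself explicitly mentions as a valid alternative (see the paragraph following Theorem~\ref{thm:admis-crit-intro} in the introduction). The paper, however, chooses a different path: it establishes the implication \eqref{item:sub-C-div-plus-L1}$\Rightarrow$\eqref{item:omega-est-smooth-grad} in Theorem~\ref{thm:char-small-vol-ic} by testing $\mu$ against smooth compactly supported $\eta$ with $0\le\eta\le1$, using only the \emph{distributional} definition of divergence to write $\int\eta\,\d\mu=-\int\sigma\cdot\nabla\eta\,\dx+\int\eta H\,\dx$, and then reaches the small-volume IC via the chain \eqref{item:omega-est-smooth-grad}$\Rightarrow$\eqref{item:eps-delta-est-W11-grad}$\Rightarrow$\eqref{item:mu(A^+)<P+eps}, which relies on the approximation machinery of Section~\ref{sec:prelim} (one-sided strict approximation of $\1_A$ and control of supports, Lemmas~\ref{lem:strict-approx-above} and~\ref{lem:control-spt}). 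Your approach is more direct in that it reaches $\mu(A^+)\le\P(A)+\eps$ in one step, but it imports a substantial external ingredient: the refined Gauss-Green formula for $\mathcal{DM}^\infty$ fields on sets of finite perimeter distinguishing the exterior normal trace paired with $A^+$ from the interior one paired with $A^1$. The paper's route avoids this dependency at the cost of the approximation chain; your route bypasses that chain at the cost of the trace theory. One minor point in your exhaustion argument: it is cleanest to pass to the limit in $\mu((A\cap\B_{R_k})^+)$ via monotone convergence for the non-negative $\mu$, combined with dominated convergence for $\int_{A\cap\B_{R_k}}H\,\dx$, rather than in $(\Div\sigma)((A\cap\B_{R_k})^+)$ directly, since $|\Div\sigma|(A^+)<\infty$ is not known a priori.
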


Theorem \ref{thm:admis-crit-intro} and its proof are not very surprising.
For instance, one may read off the result from a divergence theorem for
$\L^\infty$ divergence-measure fields on sets of finite perimeter (similar to
the later formula \eqref{eq:int-Gauss-Rn}). Alternatively, one can also argue by
approximation, and this is the route we take when picking up the result in the 
somewhat wider context of the later Section \ref{sec:admis-meas}.

For the moment, we mainly record that the condition of Theorem
\ref{thm:admis-crit-intro} holds for infinite measures
$\mu=\theta\H^{n-1}\ecke S$ with $\theta\in{[0,2]}$ and with a hyperplane
$S\subset\R^n$ or a union $S$ of finitely many parallel hyperplanes in $\R^n$.
Thus, we obtain basic examples of singular measures with small-volume IC.
However, the condition remains valid for a much broader class of
$(n{-}1)$-dimensional measures, as in fact we have:

\begin{thm}[small-volume IC for rectifiable $\H^{n-1}$-measures]
    \label{thm:rect-admis-intro}
  Whenever, for a non-negative Radon measure $\mu$ on $\R^n$, we have
  $\mu\le2\H^{n-1}\ecke S$ with some $\H^{n-1}$-finite and countably
  $\H^{n-1}$-rectifiable Borel set $S\subset\R^n$, then $\mu$ satisfies the
  small-volume IC in $\R^n$ with constant $1$.
\end{thm}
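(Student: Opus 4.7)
The plan is to deduce the claim from the divergence criterion of Theorem~\ref{thm:admis-crit-intro} via an approximation argument. Monotonicity of the small-volume IC in $\mu$ (immediate from the definition, as $\mu(A^+)\le\nu(A^+)$ whenever $\mu\le\nu$) reduces the claim to verifying the IC for $2\,\H^{n-1}\ecke S$. Fix $\eps>0$. Using countable $\H^{n-1}$-rectifiability together with $\H^{n-1}(S)<\infty$, I would first decompose $S$, up to an $\H^{n-1}$-null set, as a disjoint countable union of Borel subsets of $C^1$ hypersurfaces, then truncate at a finite level, pass to compact sub-portions via inner regularity, and shrink slightly to separate them, arriving at \emph{pairwise disjoint, compact} pieces $\Gamma_1,\ldots,\Gamma_N$ at positive mutual distance, each contained in a $C^1$ hypersurface $M_i$, with $\H^{n-1}\bigl(S\setminus\bigcup_{i=1}^N\Gamma_i\bigr)<\eps/4$.

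On each $M_i$, pick a continuous unit normal $\nu_i$ and let $d_i$ be the associated signed distance function, which is $C^1$ on a thin tubular neighborhood $T_i$ of $\Gamma_i$; the positive mutual distance allows the $T_i$ to be chosen pairwise disjoint. Modelled on the hyperplane prototype $\sigma(x)=\mathrm{sgn}(x_n)\,e_n$ alluded to after Theorem~\ref{thm:admis-crit-intro}, I set
\[
  \sigma_i(x)\coleq\mathrm{sgn}(d_i(x))\bigl(1-|d_i(x)|/r_i\bigr)_+\chi_i(x)\,\nabla d_i(x)\quad\text{for }x\in T_i,
\]
and $\sigma_i\equiv0$ elsewhere, with a smooth spatial cut-off $\chi_i$ equal to $1$ near $\Gamma_i$ and vanishing beyond a slightly larger neighborhood in order to tame relative-boundary effects along $\partial\Gamma_i\subset M_i$. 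A direct localization of the elementary hyperplane calculation yields $|\sigma_i|\le1$ together with an identity
\[
  \Div\sigma_i=2\,\theta_i\,\H^{n-1}\ecke M_i+H_i\,\Ln,
\]
where $\theta_i\colon M_i\to[0,1]$ is Borel with $\theta_i\equiv1$ on $\Gamma_i$, and $H_i\in\L^1(\R^n)$ is supported in $T_i$.

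Disjointness of the $T_i$ forces $\|\sigma\|_{\L^\infty(\R^n,\R^n)}\le1$ for $\sigma\coleq\sum_{i=1}^N\sigma_i$, and $\Div\sigma=2\sum_i\theta_i\,\H^{n-1}\ecke M_i+H\Ln$ with $H\coleq\sum_iH_i\in\L^1(\R^n)$. Hence the singular measure $\mu'\coleq 2\sum_i\theta_i\,\H^{n-1}\ecke M_i=(-H)\Ln+\Div\sigma$ meets the hypothesis of Theorem~\ref{thm:admis-crit-intro} and therefore satisfies the small-volume IC with constant~$1$, producing some $\delta>0$ with $\mu'(A^+)\le\P(A,\R^n)+\eps/2$ whenever $|A|<\delta$. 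Since $\theta_i\equiv1$ on $\Gamma_i$ gives $2\,\H^{n-1}\ecke\bigcup_i\Gamma_i\le\mu'$, the elementary estimate
\[
  2\H^{n-1}(A^+\cap S)\le 2\H^{n-1}\bigl(S\setminus\bigcup_i\Gamma_i\bigr)+\mu'(A^+)<\tfrac{\eps}{2}+\P(A,\R^n)+\tfrac{\eps}{2}
\]
then closes the argument and, via monotonicity, establishes the IC for $\mu$.

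The main obstacle lies in the geometric first step: extracting \emph{pairwise disjoint} compact $C^1$-pieces at positive mutual distance out of an abstract countably rectifiable set is the substantive ingredient. This disjointness is exactly what prevents the $\L^\infty$-norm of the assembled field $\sigma$ from inflating beyond $1$; a naive application of the divergence criterion to each piece individually would only produce the IC with a constant that grows with the number~$N$ of pieces.
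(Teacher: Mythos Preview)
Your approach is correct and provides a genuine alternative to the paper's argument. The paper first establishes the stronger result (Theorem~\ref{thm:2P-admis}) that $2\H^{n-1}\ecke\partial^\ast E$ satisfies the small-volume IC with constant~$1$ for \emph{any} set $E$ of finite perimeter, constructing the sub-unit field via the Barozzi--Gonzalez--Tamanini optimal variational mean curvature $H_E$ and a sign-flip of the resulting $\sigma_E$ across $\partial^\ast E$. It then covers $S$ up to $\eps$-defect by finitely many compact pieces of Lipschitz graphs, embeds each piece into some $\partial^\ast E_j$, and combines them via Proposition~\ref{prop:ic-sing-intro}, which transfers the small-volume IC with the \emph{same} constant~$1$ to sums of mutually \emph{singular} finite measures --- no positive spatial separation is needed. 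You bypass both devices: the explicit signed-distance construction replaces the variational mean curvature machinery, and arranging the pieces at positive mutual distance (so that the assembled fields have disjoint supports) replaces the singularity lemma. Your route is more elementary in that it avoids the Barozzi--Gonzalez--Tamanini theory entirely, at the cost of not obtaining the general Theorem~\ref{thm:2P-admis} along the way; the paper's Proposition~\ref{prop:ic-sing-intro} is a cleaner, purely measure-theoretic mechanism for the combination step.

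Two minor technical remarks on your write-up. First, the signed distance to a merely $C^1$ hypersurface is in general not $C^1$ in a tubular neighborhood (the standard tubular neighborhood theorem needs $C^{1,1}$), so take your pieces inside $C^{1,1}$ or $C^2$ hypersurfaces --- the rectifiability decomposition allows this at no extra cost. Second, your ``shrink slightly to separate them'' step is not needed: once you take compact subsets of pairwise disjoint Borel pieces via inner regularity, the resulting compact sets are automatically pairwise disjoint, hence at positive mutual distance.
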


Theorem \ref{thm:rect-admis-intro} will be established in Section
\ref{sec:IC-for-P}, where one could in fact take the case
$\mu=2\H^{n-1}\ecke\partial E$ with $\partial E$ Lipschitz as a starting point
and then reach the generality of Theorem \ref{thm:rect-admis-intro} by
covering. However, we prefer directly resolving the case
$\mu=2\H^{n-1}\ecke\partial^\ast\!E$ with the reduced boundary
$\partial^\ast\!E$ of a set $E$ of finite perimeter by a reasoning we
consider interesting in its own right: The argument is based on the construction
of a sub-unit extension $\sigma_E\in\L^\infty(\R^n,\R^n)$ of a unit normal
vector field to $\partial^\ast\!E$ with $\Div\sigma_E\in\L^1(\R^n)$ and then
reads off the condition of Theorem \ref{thm:admis-crit-intro} for
$\mu=2\H^{n-1}\ecke\partial^\ast\!E$ from Gauss-Green formulas which involve
weak normal traces of $\sigma_E$. In fact, for Lipschitz boundaries
$\partial E$ the existence of the field $\sigma_E$ is also guaranteed by trace
theory, while for general reduced boundaries we rely on the theory and
construction of an optimal variational mean curvature $H_E\in\L^1(\R^n)$ of $E$
due to Barozzi \& Gonzalez \& Tamanini \cite{BarGonTam87} and Barozzi
\cite{Barozzi94}, read off a certain auxiliary IC for  $H_E$, and only then
deduce the existence of $\sigma_E$ with $\Div\sigma_E=H_E$.

We postpone most of the more detailed discussion on reformulations and further
properties of ICs to the later sections. However, already at this stage we wish
to mention one specific property of the small-volume IC, which came quite
unexpected, which has a role in proving the general Theorem
\ref{thm:rect-admis-intro}, and which genericly allows to obtain further
examples of measures admissible in our theory from those already discussed:

\begin{prop}[small-volume IC for the sum of singular measures]
    \label{prop:ic-sing-intro}
  Consider non-negative Radon measures $\mu_1$ and $\mu_2$ on $\R^n$ such that
  $\mu_1$ and $\mu_2$ are singular to each other and least one of $\mu_1$ and
  $\mu_2$ is finite. If $\mu_1$ and $\mu_2$ both satisfy the small-volume IC in
  $\R^n$ with constant\/ $1$, then $\mu_1{+}\mu_2$ satisfies the small-volume IC
  in $\R^n$ still with the same constant\/ $1$ \ka and not merely in the evident
  way with an additional multiplicative factor $2$ in front of the perimeter\kz.
\end{prop}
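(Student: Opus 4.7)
The naive approach of summing the two small-volume ICs produces a factor of $2$ in front of the perimeter, so my plan is to exploit the mutual singularity genuinely. The idea is to split $A$ along a thin open buffer $V$ around (the concentration set of) $\mu_1$, apply the IC for $\mu_1$ on the piece inside $V$ and the IC for $\mu_2$ on the piece outside $V$, and recombine via the standard measure-theoretic perimeter identity
\[
\P(A\cap V, \R^n)+\P(A\setminus V, \R^n) = \P(A, \R^n) + 2\,\P(V, A^1).
\]

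To set this up, I would fix $\eps > 0$ and assume without loss of generality that $\mu_2$ is finite. Let $M_1, M_2$ be disjoint Borel sets on which $\mu_1, \mu_2$ are concentrated, and pick $\delta_i > 0$ realizing the IC for $\mu_i$ with slack $\eps/8$. Since $\mu_2(M_1) = 0$ and $\mu_2$ is finite, continuity from above yields an $\eta > 0$ with $\mu_2(\{\dist(\cdot, M_1) < \eta\}) < \eps/8$. For each $A$ with $|A| < \delta := \min(\delta_1, \delta_2, \eps\eta/16)$, the $1$-Lipschitz coarea formula $\int_0^\eta \H^{n-1}(\{\dist(\cdot,M_1) = s\}\cap A^1)\,ds \le |A|$ combined with the countability of $\{s : \mu_2(\{\dist(\cdot,M_1) = s\}) > 0\}$ lets me choose $r = r(A) \in (0,\eta)$ with both $\mu_2(\{\dist(\cdot,M_1) = r\}) = 0$ and $\H^{n-1}(\{\dist(\cdot,M_1) = r\} \cap A^1) \le 2|A|/\eta$. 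Setting $V := \{\dist(\cdot,M_1) < r\}$ and $A_1 := A\cap V$, $A_2 := A\setminus V$, one verifies $\mu_1(A^+) \le \mu_1(A_1^+)$ from openness of $V$ together with $M_1 \subset V$, and dually $\mu_2(A^+) \le \mu_2(A_2^+) + \eps/8$ from $\mu_2(\overline V) = \mu_2(V) < \eps/8$. Feeding these into the IC for each $\mu_i$ applied to $A_i$, invoking the perimeter identity, and using $\P(V, A^1) \le \H^{n-1}(\{\dist(\cdot,M_1)=r\} \cap A^1) \le \eps/8$, the estimates combine to
\[
(\mu_1+\mu_2)(A^+) \le \P(A,\R^n) + 2\,\P(V,A^1) + \tfrac{3\eps}{8} < \P(A,\R^n) + \eps.
\]

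I expect the main obstacle to be the construction of the buffer $V$: it must simultaneously contain $M_1$, carry small $\mu_2$-mass, \emph{and} have its restricted perimeter $\P(V, A^1)$ small uniformly over all admissible $A$. The trick is that $V$ can be chosen \emph{per} $A$, because the IC is a set-by-set condition; coarea on the distance function $\dist(\cdot, M_1)$ then supplies a good radius $r(A)$. The finiteness of one of the two measures enters essentially at this point, through continuity from above of the finite Radon measure $\mu_2$, which converts $\mu_2(M_1) = 0$ into smallness of $\mu_2$ on a fixed tubular neighborhood of $M_1$; without at least one measure being finite this continuity may genuinely fail, as e.g.\ for $\mu_i = \H^{n-1}\ecke H_i$ with transversely intersecting hyperplanes $H_1, H_2\subset\R^n$.
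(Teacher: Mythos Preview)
Your overall strategy—splitting $A$ along a thin buffer $V$ around the concentration set of one measure, applying each IC to its respective piece, and absorbing the cut via the identity $\P(A\cap V)+\P(A\setminus V)=\P(A)+2\P(V,A^1)$ together with coarea on a distance function—is sound and does yield the constant-$1$ bound. However, one step has a genuine gap as written. Where you invoke ``continuity from above'' to get $\mu_2(\{\dist(\cdot,M_1)<\eta\})<\eps/8$, note that the sets $\{\dist(\cdot,M_1)<\eta\}$ decrease to the \emph{closure} $\overline{M_1}$, not to $M_1$. Since $M_1$ is merely a Borel set from the singularity decomposition, $\overline{M_1}$ can carry positive—even full—$\mu_2$-mass (for instance, take $\mu_1$ absolutely continuous and $M_1$ the complement of a $\mu_2$-carrying hyperplane; then $\overline{M_1}=\R^n$). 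So $\mu_2(M_1)=0$ does not suffice here.

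The repair is to swap the labeling: let $\mu_1$ (the measure you build the buffer around) be the \emph{finite} one. Inner regularity of the finite $\mu_1$ yields a compact $K_1\subset M_1$ with $\mu_1(K_1^\c)<\eps/16$; with $d=\dist(\cdot,K_1)$ the neighborhoods $\{d<\eta\}$ are bounded, so the possibly infinite Radon measure $\mu_2$ is finite on them and continuity from above now legitimately gives $\mu_2(\{d<\eta\})\to\mu_2(K_1)=0$. You should also add $\P(A,\{d=r\})=0$ to the list of conditions on $r$ (true for all but countably many $r$), which is what secures the equality form of your perimeter identity. With these adjustments and one extra $\eps/16$-error from $\mu_1(K_1^\c)$, everything goes through.

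For comparison, the paper takes a different route: it does not split $A$ at all. Instead it first establishes a relative-perimeter reformulation of the small-volume IC (Lemma~\ref{lem:bdry-local-ic}), then separates a compact $K_1\subset S_1$ from a closed $C_2\subset S_2$ by \emph{disjoint} open sets $O_1,O_2$ and applies the relative IC on each to obtain $\mu_1(A^+\cap K_1)\le\P(A,O_1)+\eps$ and $\mu_2(A^+\cap C_2)\le\P(A,O_2)+\eps$. Summing and using $\P(A,O_1)+\P(A,O_2)\le\P(A)$ from disjointness gives the result with no cut term to control. Your coarea cut is essentially the ingredient inside the proof of that lemma, so your argument is the more self-contained one while the paper's is more modular.
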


The proof of Proposition \ref{prop:ic-sing-intro} will be given in Section
\ref{sec:admis-meas} and is based on a certain relative-perimeter
characterization of the small-volume IC and an elementary separation argument.

\bigskip

\paragraph{On the usage of ICs and related results in the literature}
To the state of our knowledge, the precise form of our small-volume IC
and its flexibility, as underlined by Theorem \ref{thm:rect-admis-intro}, are
new. Nevertheless, related linear ICs have been around in the theory of
prescribed mean curvature surfaces for a long time, and thus we now comment on
the previous literature in some more detail.

In fact, ICs have been prominently used in the theory of \emph{non-parametric}
prescribed-mean-curvature functionals, which correspond to
$\mathscr{P}_H[A;\Omega]$ from \eqref{eq:P-H} for subgraphs $A$ and
$\Omega=D\times\R$ with a bounded Lipschitz domain $D\subset\R^{n-1}$. However,
the considerations on such functionals in
\cite{BomGiu73,Miranda74,Giaquinta74a,Giaquinta74b,Gerhardt74,Giusti78a} differ
from ours, since e.\@g.\@ the assumptions in \cite{BomGiu73,Gerhardt74}
are essentially (in the terminology of our setting)
$\partial_nH\le0$, $H(\,\cdot\,,0)\in\L^{n-1}(D)$ and the settings of the other
papers tend in similar, but rather more restrictive directions. In any case,
these works exclude cancellation in the previously described sense, and thus the
perimeter and the $H$-volume are even separately lower semicontinuous for basic
reasons and without need for imposing an IC. In fact, in these non-parametric
cases it is not semicontinuity but rather coercivity of the problem which is
obtained from stronger ICs of type
\begin{equation}\label{eq:G-IC}
  \bigg|\int_A H(\overline x,0)\,\d\overline x\bigg|\le C\P(A,\R^{n-1})
  \qq\text{for all measurable }A\subset D\,,
  \qq\text{with fixed }C\in{[0,1)}\,.
\end{equation}
When comparing with our results, the need for assuming \eqref{eq:G-IC} may be
viewed as a result of considering on the unbounded cylinder $D\times\R$ an
infinite measure $H\mathcal{L}^n$, and analogous conditions occur also in our
theory when later addressing the existence issue with infinite measures in
Theorems \ref{thm:ex-obst} and \ref{thm:ex-Dir}. Moreover, in case
$H(\overline x,x_n)=H_0(\overline x)$, having \eqref{eq:G-IC} with $C=1$ is
also necessary for classical solvability of the prescribed mean curvature
equation ${-}\Div\big(\nabla u/\!\sqrt{1{+}|\nabla u|^2}\big)=H_0$ (compare
with \cite{Giusti78a} for finer related discussion). It is not clear to us if
there is an effective necessary condition of a similar type also for general $H$
with $x_n$-dependence.

Still in the non-parametric framework, directions partially analogous to ours
have been pursued in
\cite{CarLeaPas86,CarLeaPas87,Ziemer95,DaiTruWan12,DaiWanZho15}: Indeed,
Carriero \& Leaci \& Pascali \cite{CarLeaPas86,CarLeaPas87} study
semicontinuity and relaxation of non-parametric functionals with certain general
measure terms, where the assumptions of their main semicontinuity result
\cite[Theorem 5.2]{CarLeaPas87}, for instance, have aspects in common with our
small-volume IC. However, the framework is rather different, builds on some more
background notions for measures and capacities, and in detail is difficult to
compare. In any case, we stress that the results in
\cite{CarLeaPas86,CarLeaPas87} concern the non-parametric setting and remain
limited to measure terms of fixed sign and to Sobolev spaces. In particular,
these papers do not discuss a natural $\BV$ framework or any existence result.
Eventually, Ziemer \cite{Ziemer95} gives an existence result for non-parametric
functionals which involve a finite non-negative measure datum $\mu_0$ with
compact support in a bounded Lipschitz domain $D\subset\R^{n-1}$. However, his
central assumption
\begin{equation}\label{eq:Z-IC}
  \mu_0(\B_r(x))\le Cr^\kappa
  \qq\text{for all balls }\B_r(x)\subset D\,,
  \qq\text{with fixed }C\in{[0,\infty)}\text{ and }\kappa\in{(n{-}2,n{-}1)}
\end{equation}
is considerably stronger than a linear IC and in particular excludes the
interesting borderline case of $(n{-}2)$-dimensional measures $\mu_0$. Moreover,
Dai \& Trudinger \& Wang \cite{DaiTruWan12} and Dai \& Wang \& Zhou
\cite{DaiWanZho15} introduce an approximation-based notion of a mean curvature
measure and establish a corresponding existence result for generalized solutions
to the prescribed mean curvature equation on a smooth bounded domain
$D\subset\R^{n-1}$ with a finite signed measure $\mu_0$ on $D$ as right-hand
side. They require that the singular part of $\mu_0$ has compact support in $D$
and in analogy with \eqref{eq:G-IC} impose on $\mu_0$ an IC of type
\begin{equation}\label{eq:DTW-IC}
  |\mu_0(A^1)|\le C\P(A,\R^{n-1})
  \qq\text{for all measurable }A\subset D\,,
  \qq\text{with fixed }C\in{[0,1)}\,.
\end{equation}
Since the settings differ, a comparison of these results with ours is
necessarily incomplete, but one may say that the results in
\cite{Ziemer95,DaiTruWan12,DaiWanZho15} work for product measures
$\mu=\mu_0\otimes\mathcal{L}^1$ on $D\times\R$, while we admit general measures
$\mu$ on $\Omega\subset\R^n$. Alternatively, from a more PDE-based viewpoint,
one may put it the way that \cite{Ziemer95,DaiTruWan12,DaiWanZho15} treat
right-hand sides of type $H_0(x)$ with $H_0\in\L^1(D)$ replaced by a measure
$\mu_0$ on $D$, while for the non-parametric equations corresponding to
our functionals one expects right-hand sides of type $H(x,u(x))$ (with
dependence on the unknown $u$) with $H\in\L^1(D\times\R)$ replaced by a measure
$\mu$ on $D\times\R$. Beyond this partial comparison we stress that the
approaches taken are technically very different from ours and that the works
\cite{Ziemer95,DaiTruWan12,DaiWanZho15} do \emph{not} involve any semicontinuity
by cancellation. In fact, the more restrictive assumption \eqref{eq:Z-IC} of
\cite{Ziemer95} still ensures separate semicontinuity of the $\mu_0$-volume,
and the approach of \cite{DaiTruWan12,DaiWanZho15} works much more on the PDE
side rather than the variational side of the field and does not involve
semicontinuity of a functional with measure datum at all.

Finally, when a first version of this article was already finalized, an
independent preprint of Leonardi \& Comi \cite{LeoCom23v1} on non-parametric
functionals closely analogous to the parametric ones in \eqref{eq:Pmu-intro}
became available. In this interesting work the authors obtain (among other
results) lower semicontinuity and existence results over a bounded Lipschitz
domain $D\subset\R^{n-1}$ in case of specific measures
$\mu_0=h\mathcal{L}^{n-1}{+}\gamma\H^{n-2}\ecke\Gamma$ with
$h\in\L^q(D)$, $q>n{-}1$, an $(n{-}2)$-dimensional set $\Gamma\subset D$ with
bounded $(n{-}2)$-dimensional density ratio, and
$\gamma\in\L^\infty(\Gamma;\H^{n-2})$ such that moreover the IC
\eqref{eq:DTW-IC} holds. Though also these results concern the non-parametric
setting and differ considerably from ours in the framework and the technical
approach, we put on record that at its heart the work \cite{LeoCom23v1} brings up a
semicontinuity-by-cancellation effect analogous to ours.

Returning to the parametric case, we point out that ICs have been introduced
into the classical $2$-dimensional Douglas-Rad\'o theory of prescribed mean
curvature surfaces by Steffen \cite{Steffen76a,Steffen76b}. Among the
ICs considered in his work, a central type for functions $H\colon S\to\R$ on $S\subset\R^n$
reads in our terminology
\begin{equation}\label{eq:Steffen-IC}
  \bigg|\int_AH(x)\dx\bigg|\le C\P(A,\R^n)
  \qq\text{for all measurable }A\subset S\text{ with }H\in\L^1(A)\,,\,\P(A,\R^n)\le R\,,
\end{equation}
where $C\in{[0,1]}$ and $R\in{[0,\infty]}$ are fixed. In the classical case with
$n=3$ such ICs are then exploited in \cite{Steffen76a,Steffen76b} in
establishing lower semicontinuity of prescribed mean curvature functionals and
in case $C<1$ also existence results, where in a spirit similar to ours the ICs
compensate for a lack of separate lower semicontinuity of a certain $H$-volume
term. However, while in our theory the main issue originates from passing from
functions $H$ to measures $\mu_\pm$ and from a possible loss of a hypersurface
portion in the limit, in \cite{Steffen76a,Steffen76b} an analogous issue occurs
already for functions $H$ and is connected with a typical phenomenon of the
parametric theory, namely the possible bubbling-off of regions of positive
volume in the limit. In addition, Duzaar \cite{Duzaar93} and Duzaar \& Steffen
\cite{DuzSte96} have established existence results based on ICs of type
\eqref{eq:Steffen-IC} with $C<1$ also in Euclidean space $\R^n$ and in
Riemannian manifolds of arbitrary dimension $n$ by working in a general GMT
framework with codimension-$1$ currents. However, also the results in
\cite{Duzaar93,DuzSte96} are limited to functions $H$ and not measures $\mu_\pm$
in the volume term. Yet again, since bubbling off is not an issue in the
framework of currents, the role of the ICs is once more a bit different and
consists mostly in preventing a breakdown of semicontinuity at $\infty$, as it
has already been discussed and needs to be excluded in our theory as well.

\bigskip

\paragraph{Acknowledgments}\!
The author is grateful to T.\@ Ilmanen for a discussion on the extension of
Theorem \ref{thm:2P-admis} from perimeter measures to rectifiable measures, as
subsequently achieved in Corollary \ref{cor:rect-admis} and stated also in
Theorem \ref{thm:rect-admis-intro}. Moreover, the author wants to thank E.\@
Ficola and M.\@ Torres for pointing out references
\cite{Miranda74,CarLeaPas86,CarLeaPas87} and \cite{Ziemer95}, respectively, and
J.\@ Schütt for a careful reading of a preliminary version of the manuscript.
The figures in this article have been created in the vector graphics language
`Asymptote'.

\section{Preliminaries}\label{sec:prelim}

We work in Euclidean space $\R^n$ of arbitrary dimension
$n\in\N=\{1,2,3,\ldots\}$ (unless indicated otherwise).

\subsection*{Basic notation for sets and balls}

Our basic notation for sets is widely standard. However, we mention that we use
$A^\mathrm{c}$ for the complement of a set $A$ (in $\R^n$ or in some other base
set clear from the context),
$A\Delta B\coleq(A{\setminus}B)\cup(B{\setminus}A)$ for the symmetric difference
of sets $A$ and $B$, and $\1_A$ for the characteristic function of a set $A$
with $\1_A\equiv1$ on $A$ and $\1_A\equiv0$ on $A^\c$. By $\overline A$ and
$\inn(A)$ we denote the closure and the interior, respectively, of a set $A$
(taken once more in $\R^n$ or another base space). We write $A\Subset B$ if
$\overline A$ is compact and satisfies $\overline A\subset B$. Moreover, we use
$\B_r(x)\coleq\{y\in\R^n\,:\,|y{-}x|<r\}$ for balls in $\R^n$, we abbreviate
$\B_r\coleq\B_r(0)$, and we denote by $\alpha_n=|\B_1|$ the volume of the unit
ball $\B_1$ in $\R^n$. Finally, for $a\in\R^n$, $A,B\subset\R^n$ we use
$\dist(a,B)\coleq\inf_{b\in B}|a{-}b|$ and
$\dist(A,B)\coleq\inf_{a\in A}\dist(a,B)$ for Euclidean distances.

\subsection*{Measures and convergence in measure}

We write $\Bo(\R^n)$ for the Borel $\sigma$-algebra on the full space $\R^n$ and
$\Bo(\Omega)=\{A\in\Bo(\R^n)\,:\,A\subset\Omega\}$ for the Borel
$\sigma$-algebra on a Borel subset $\Omega\in\Bo(\R^n)$. By a non-negative Borel
measure $\mu$ on a set $\Omega\in\Bo(\R^n)$ we mean a $\sigma$-additive set
function on $\Bo(\Omega)$ with values in ${[0,\infty]}$. The support $\spt\mu$
of such a measure $\mu$ is the smallest closed set $S\subset\Omega$ with
$\mu(S^\c)=0$, and $\mu$ is called finite if $\mu(\Omega)<\infty$ holds. A
non-negative Radon measure on an open set $\Omega\subset\R^n$ is a non-negative
Borel measure on $\Omega$ with finite value on all compacts subsets of $\Omega$.

Specifically, we work with the $n$-dimensional Lebesgue measure $\Ln$, which is
a non-negative Radon measure on $\R^n$, and with the $(n{-}1)$-dimensional
Hausdorff measure $\H^{n-1}$, which is at least a non-negative Borel measure on
$\R^n$. In case of $\Ln$ we also consider its extension from $\Bo(\R^n)$ to the
completed $\sigma$-algebra $\M(\R^n)$ of Lebesgue measurable subsets of
$\R^n$. We write $|A|\coleq\Ln(A)$ for the volume of $A\in\M(\R^n)$ and
generally adopt the convention that \emph{measure-theoretic notions are taken
with respect to the Lebesgue measure unless indicated otherwise}.
Specifically, this applies for \ae{} properties and the following convergences.
For $\Omega,A_k,A\in\M(\R^n)$ we define
\begin{align}
  \hspace{-1.1ex}A_k\text{ converge \emph{\ka globally\kz{} in measure} on }\Omega\text{ to }A_\infty
  &\coliff\lim_{k\to\infty}|(A_k\Delta A_\infty){\cap}\Omega|=0\,,
  \label{eq:conv-in-meas}\\
  A_k\text{ converge \emph{locally in measure} on }\Omega\text{ to }A_\infty
  &\coliff\lim_{k\to\infty}|(A_k\Delta A_\infty){\cap}K|=0\text{ for all compact }K\subset\Omega\,.\hspace{-1ex}
  \label{eq:local-conv-in-meas}
\end{align}
We remark that in most of the following we will apply \eqref{eq:conv-in-meas} and
\eqref{eq:local-conv-in-meas} in the standard case of open $\Omega$ only, but in
fact we have intentionally given the definitions for arbitrary measurable
$\Omega$, since this more general viewpoint will become relevant for Theorem
\ref{thm:lsc-BV-dom} and Corollary \ref{cor:lsc-per-BV-dom} in the final section
of this paper. Indeed, the reasonableness of this framework is supported by the
fact that just as the convergence in \eqref{eq:conv-in-meas} also the
convergence in \eqref{eq:local-conv-in-meas} depends on $\Omega$ only up to
negligible sets, as one can verify in case of \eqref{eq:local-conv-in-meas} by a
short reasoning with the inner regularity of the Lebesgue measure. Moreover, the
same reasoning shows that equivalent with \eqref{eq:local-conv-in-meas} is
having $\lim_{k\to\infty}|(A_k\Delta A_\infty)\cap S|=0$ even for all $S\in\M(\R^n)$
with $|S\setminus\Omega|=0$ and $|S|<\infty$. Finally, we briefly remark that
local convergence in measure is closely tied to almost everywhere convergence in
the sense of $\lim_{k\to\infty}\1_{A_k}=\1_{A_\infty}$ \ae{} on $\Omega$: In fact, almost
everywhere convergence implies local convergence in measure, and local
convergence in measure implies almost everywhere convergence of a subsequence.

In connection with signed measures and vector measures we adopt mostly the
conventions of \cite[Sections 1.1, 1.3]{AmbFusPal00}. Specifically, as a
signed Radon measure $\nu$ on open $\Omega\subset\R^n$ we consider any set
function which is defined and $\sigma$-additive with finite real values (at
least) on the relatively compact Borel subsets of $\Omega$, and an $\R^m$-valued
Radon measure is defined analogously with values in $\R^m$. A signed or
$\R^m$-valued Radon measure $\nu$ on $\Omega$ is called finite if it extends to
a finite-valued $\sigma$-additive set function on the full Borel
$\sigma$-algebra $\Bo(\Omega)$. With these conventions the (total) variation
measure $|\nu|$ of a signed or $\R^m$-valued Radon measure $\nu$ on $\Omega$ can
always be regarded as a non-negative Radon measure on $\Omega$ (where $|\nu|$ is
finite if and only if $\nu$ is finite). Moreover, every signed Radon measure
$\nu$ on $\Omega$ admits a unique decomposition $\nu=\nu_+{-}\nu_-$ into
mutually singular non-negative Radon measures $\nu_+$ and $\nu_-$ on $\Omega$,
which also satisfy $|\nu|=\nu_+{+}\nu_-$.

Finally, for any measure $\nu$ on a measurable space $(\Omega,\mathcal{A})$, the
weighted measure $f\nu$ on $(\Omega,\mathcal{A})$ with
$f\in\L^1(\Omega\;\!;\nu)$ is defined by setting $(f\nu)(A)\coleq\int_Af\,\d\nu$
for all $A\in\mathcal{A}$. Specifically, the restriction measure $\nu\ecke S$ on
$(\Omega,\mathcal{A})$ with $S\in\mathcal{A}$ is obtained through
$(\nu\ecke S)(A)\coleq(\1_S\nu)(A)=\nu(S\cap A)$ for all $A\in\mathcal{A}$.

\subsection*{Coarea formula for Lipschitz functions}
For a (locally) Lipschitz function $\Omega\to\R$ on open $\Omega\subset\R^n$,
Rademacher's theorem guarantees the existence of the derivative
$\nabla u(x)\in\R^n$ at \ae{} $x\in\Omega$; compare e.\@g.\@ with
\cite[Section 2.3]{AmbFusPal00}, \cite[Section 3.1]{EvaGar15},
\cite[Section 7.3]{Maggi12}, or \cite[Theorem 7.3]{Mattila95}. With the
derivative at hand the coarea formula for Lipschitz functions can then be stated
as follows.

\begin{thm}[coarea formula for Lipschitz functions]\label{thm:Lip-coarea}
  Consider a Lipschitz function $u\colon\Omega\to\R$ on open
  $\Omega\subset\R^n$. Then we have 
  \[
    \int_A|\nabla u|\dx
    =\int_{-\infty}^\infty\H^{n-1}(A\cap\{u=t\})\,\d t
    \qq\text{for all }A\in\Bo(\Omega)\,.
  \]
\end{thm}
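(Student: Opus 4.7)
The plan is to establish the coarea formula by approximation, first handling the smooth case and then passing to general Lipschitz $u$ via a Lusin-type approximation. I would extend $u$ to a Lipschitz function on all of $\R^n$ by Kirszbraun's theorem (preserving the Lipschitz constant $L$) and work with Borel sets $A\in\Bo(\R^n)$. The initial step is to verify that both sides define Borel measures in $A$: the left side manifestly does. For the right side one must check that $t\mapsto\H^{n-1}(A\cap\{u=t\})$ is Lebesgue measurable for every $A\in\Bo(\R^n)$; I would handle this by a monotone-class argument starting from open $A$, where $\H^{n-1}$ is lower semicontinuous under the relevant set convergence.

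I would then prove the identity for $\C^2$ functions $u$. On the open set $U\coleq\{\nabla u\ne0\}$, the implicit function theorem furnishes, in a neighborhood of each $x_0\in U$, a diffeomorphism straightening the level sets of $u$ into coordinate hyperplanes, on which the coarea identity reduces to Fubini's theorem; a partition of unity glues the local identities into a global one on $U$. On the complementary set $Z\coleq\{\nabla u=0\}$ the left-hand integrand vanishes, while the right-hand contribution vanishes because $u(Z)$ has Lebesgue measure zero by Sard's theorem for $\C^2$ maps. Summing gives the formula for $\C^2$ functions and arbitrary $A\in\Bo(\R^n)$.

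Next I would extend from $\C^2$ to general Lipschitz $u$ by a Whitney/Lusin approximation: for every $\eps>0$ there exists a $\C^2$ function $v_\eps\colon\R^n\to\R$ with $\mathrm{Lip}(v_\eps)\le CL$ and $|E_\eps|<\eps$, where $E_\eps\coleq\{v_\eps\ne u\}$. Applying the $\C^2$ identity to $v_\eps$, the left-hand integral over $A$ converges to $\int_A|\nabla u|\dx$ by the bound $\bigl|\int_A|\nabla v_\eps|\dx-\int_A|\nabla u|\dx\bigr|\le CL|E_\eps|$ together with $\nabla v_\eps=\nabla u$ almost everywhere on $\{v_\eps=u\}$. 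On the right-hand side, the discrepancy is controlled by
\[
  \int_{-\infty}^\infty\H^{n-1}\bigl(E_\eps\cap(\{u=t\}\cup\{v_\eps=t\})\bigr)\,\d t\le C_nL|E_\eps|\to0\,,
\]
where the bound is the ``easy'' direction of coarea applied to $u$ and to $v_\eps$.

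The main obstacle is precisely this ``easy'' direction, which must be proved without relying on the full coarea formula. I would establish it by a direct covering argument: cover a given set $E$ by countably many balls $\B_{r_j}(x_j)$ with $r_j<\delta$ and $\sum_j\alpha_nr_j^n\le(1{+}\eps)|E|$; since $u(\B_{r_j}(x_j))$ is an interval of length at most $2Lr_j$ while $\B_{r_j}(x_j)\cap\{u=t\}$ lies inside a set of diameter $\le2r_j$ contributing at most $C_nr_j^{n-1}$ to the approximating premeasure $\H^{n-1}_\delta$, integrating in $t$ and summing yields $\int_{-\infty}^\infty\H^{n-1}_\delta(E\cap\{u=t\})\,\d t\le C_nL|E|$; letting $\delta\to0$ via Fatou gives the one-sided estimate $\int_{-\infty}^\infty\H^{n-1}(E\cap\{u=t\})\,\d t\le C_nL|E|$. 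With this secured, the approximation closes up and yields the full coarea identity.
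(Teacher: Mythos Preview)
The paper does not prove this theorem at all: it is stated as a preliminary result, and immediately after the statement the paper writes ``For the proof (of actually more general statements) we refer to [AmbFusPal00, Section~2.12], [EvaGar15, Section~3.4], or [Maggi12, Section~18.1], for instance.'' So there is nothing in the paper to compare your argument against.

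That said, your outline is essentially the standard textbook proof (in particular the one in Evans--Gariepy, which is one of the cited references): establish the smooth case via local straightening and Sard, prove the one-sided ``Eilenberg inequality'' $\int\H^{n-1}(E\cap\{u=t\})\,\d t\le C_nL|E|$ by a direct covering argument, and then pass from $\C^2$ (or $\C^1$) to Lipschitz by Lusin--Whitney approximation, controlling the error on the bad set $E_\eps$ via that one-sided bound. Two small remarks: Sard's theorem for maps $\R^n\to\R$ already holds under $\C^1$ regularity, so you need not go to $\C^2$; and the measurability of $t\mapsto\H^{n-1}(A\cap\{u=t\})$ is genuinely delicate and is usually handled (as in Evans--Gariepy) by first proving it for the approximating premeasures $\H^{n-1}_\delta$ and passing to the supremum, rather than by a bare monotone-class argument.
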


For the proof (of actually more general statements) we refer to
\cite[Section 2.12]{AmbFusPal00}, \cite[Section 3.4]{EvaGar15}, or
\cite[Section 18.1]{Maggi12}, for instance.

\subsection*{\boldmath Sets of finite perimeter (and $\BV$ functions)}
In working with spaces of integrable and weakly differentiable functions such as
$\mathrm{L}^p_{(\loc)}(\Omega)$, $\W^{1,p}_{\!(\loc)}(\Omega)$, $\BV_{\!(\loc)}(\Omega)$ we
follow once more the terminology of \cite{AmbFusPal00}. In particular, for a
real-valued $\BV$ function $u\in\BV_\loc(\Omega)$ on open $\Omega\subset\R^n$,
we write $\D u$ for the $\R^n$-valued Radon measure which represents the
distributional gradient of $u$ on $\Omega$. Moreover, we generally use
$u_\pm\coleq\max\{{\pm}u,0\}$ for the positive and negative part of functions,
but we directly warn the reader that in addition to this convention with
\emph{lower} indices ${}_\pm$ we will soon introduce \emph{upper} indices
${}^\pm$ for certain approximate limits as well.

We introduce the perimeter $\P(A,\Omega)$ of a measurable set $A\in\M(\R^n)$ in
an arbitrary Borel set $\Omega\in\Bo(\R^n)$ by setting
$\P(A,\Omega)\coleq|\D\1_A|(\Omega)$ whenever there exists an open neighborhood
$U$ of $\Omega$ in $\R^n$ such that $\1_A\in\BV_\loc(U)$ and by trivially setting
$\P(A,\Omega)\coleq\infty$ otherwise. For open $\Omega$ this coincides with more
standard distributional definitions, while in general we have
$\P(A,\Omega)=\inf\{\P(A,U)\,:\,U\text{ open neighborhood of }\Omega\text{ in }\R^n\}$.
As usual we abbreviate $\P(A)\coleq\P(A,\R^n)$.

We next record two standard results, where the former can be inferred
from \cite[Theorem 3.39]{AmbFusPal00} or \cite[Corollary 12.27]{Maggi12}, and
the later from \cite[Proposition 3.38(b)]{AmbFusPal00},
\cite[Theorem 5.2]{EvaGar15}, or \cite[Proposition 12.15]{Maggi12}.

\begin{lem}[compactness from perimeter bounds]\label{lem:cpct-per}
  Consider an open set\/ $\Omega\subset\R^n$. If\/ $(A_k)_{k\in\N}$ is a sequence
  in $\M(\R^n)$ with $\sup_{k\in\N}\P(A_k,\Omega)<\infty$, then a subsequence
  of\/ $(A_k)_{k\in\N}$ converges locally in measure on $\Omega$ to some limit
  $A_\infty\in\M(\R^n)$.
\end{lem}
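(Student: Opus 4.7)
My plan is to reduce the statement to the standard $\BV$ compactness theorem on bounded Lipschitz domains (the result cited from \cite{AmbFusPal00,EvaGar15,Maggi12}) via a diagonal extraction.

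First I would exhaust $\Omega$ by an increasing sequence of open sets $\Omega_1\Subset\Omega_2\Subset\cdots\Subset\Omega$ with $\bigcup_{j\in\N}\Omega_j=\Omega$ and each $\Omega_j$ a bounded Lipschitz domain (for instance, finite unions of small open balls compactly contained in $\Omega$). Since $\Omega$ is open, the hypothesis $\P(A_k,\Omega)<\infty$ gives $\1_{A_k}\in\BV_\loc(\Omega)$ by the very definition of the perimeter recalled just before the lemma. Consequently, for every fixed $j$,
\[
  \|\1_{A_k}\|_{\BV(\Omega_j)}
  =\|\1_{A_k}\|_{\L^1(\Omega_j)}+|\D\1_{A_k}|(\Omega_j)
  \le|\Omega_j|+\P(A_k,\Omega)
  \le|\Omega_j|+C\,,
\]
where $C\coleq\sup_{k\in\N}\P(A_k,\Omega)<\infty$. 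Thus the sequence $(\1_{A_k})_{k\in\N}$ is bounded in $\BV(\Omega_j)$.

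Next I would apply the standard $\BV$ compactness theorem on each $\Omega_j$ together with a diagonal selection: on $\Omega_1$, extract a subsequence $(\1_{A_k^{(1)}})$ converging in $\L^1(\Omega_1)$; on $\Omega_2$, extract a further subsequence $(\1_{A_k^{(2)}})$ converging in $\L^1(\Omega_2)$; continuing inductively and taking the diagonal $B_k\coleq A_k^{(k)}$, we obtain a single subsequence such that $\1_{B_k}\to f_\infty$ in $\L^1(\Omega_j)$ for every $j$. Passing to a further subsequence (not relabeled) we may assume $\1_{B_k}\to f_\infty$ pointwise \ae{} on $\Omega$, hence $f_\infty\in\{0,1\}$ \ae{} on $\Omega$. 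Setting $A_\infty\coleq\{x\in\Omega:f_\infty(x)=1\}\in\M(\R^n)$, this yields $\1_{B_k}\to\1_{A_\infty}$ in $\L^1(\Omega_j)$ for every $j$.

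Finally, I would verify local convergence in measure in the sense of \eqref{eq:local-conv-in-meas}. Given any compact $K\subset\Omega$, the exhaustion property gives $K\subset\Omega_j$ for some $j$, so
\[
  |(B_k\Delta A_\infty)\cap K|
  =\int_K|\1_{B_k}-\1_{A_\infty}|\dx
  \le\int_{\Omega_j}|\1_{B_k}-\1_{A_\infty}|\dx
  \xrightarrow[k\to\infty]{}0\,,
\]
which is exactly the required convergence. I expect no genuine obstacle here: the one point needing care is the construction of the exhaustion $(\Omega_j)$ with sufficiently regular boundaries so that the cited $\BV$ compactness is directly applicable, but this is standard (any exhaustion by smoothly bordered open sets, or even by finite unions of balls $\B_r(x)\Subset\Omega$, works).
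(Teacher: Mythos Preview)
Your argument is correct and is precisely the standard diagonal-extraction proof underlying the references the paper cites; note that the paper itself does not give a proof at all but simply refers to \cite[Theorem 3.39]{AmbFusPal00} and \cite[Corollary 12.27]{Maggi12}. One cosmetic point: finite unions of balls can fail to be Lipschitz if two balls are tangent, so when you implement the exhaustion it is cleanest to take, as you yourself suggest at the end, smoothly bordered $\Omega_j\Subset\Omega$ (or to invoke directly the $\L^1_\loc$-compactness formulation of \cite[Theorem 3.39]{AmbFusPal00}, which needs no boundary regularity of the exhausting sets).
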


\begin{lem}[lower semicontinuity of the perimeter]\label{lem:lsc-per}
  Consider an open set\/ $\Omega\subset\R^n$. If a sequence $(A_k)_{k\in\N}$ in
  $\M(\R^n)$ converges locally in measure on $\Omega$ to $A_\infty\in\M(\R^n)$, then
  we have
  \[
    \liminf_{k\to\infty}\P(A_k,\Omega)\ge\P(A_\infty,\Omega)\,.
  \]
\end{lem}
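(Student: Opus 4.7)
Since $\Omega$ is open, I will use the standard distributional characterization of the perimeter:
\[
  \P(A,\Omega)
  =\sup\bigg\{\int_A\div\p\dx\,:\,\p\in\C^1_\c(\Omega,\R^n)\,,\,\|\p\|_{\L^\infty(\Omega,\R^n)}\le1\bigg\}
  \qq\text{for }A\in\M(\R^n)\,,
\]
which the paper notes coincides with the excerpt's definition on open domains (with both sides being $\infty$ precisely when $\1_A\notin\BV_\loc(\Omega)$).

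First, I would dispose of the trivial case $\liminf_{k\to\infty}\P(A_k,\Omega)=\infty$, and then pass to a subsequence along which $\P(A_k,\Omega)\to L\coleq\liminf_{k\to\infty}\P(A_k,\Omega)<\infty$. Then I would fix an arbitrary test field $\p\in\C^1_\c(\Omega,\R^n)$ with $\|\p\|_{\L^\infty(\Omega,\R^n)}\le1$, and set $K\coleq\spt\p$, which is a compact subset of $\Omega$. For such $\p$ the elementary estimate
\[
  \bigg|\int_{A_k}\div\p\dx-\int_{A_\infty}\div\p\dx\bigg|
  \le\|\div\p\|_{\L^\infty(\Omega)}\,|(A_k\Delta A_\infty)\cap K|
\]
together with the assumed local convergence in measure on $\Omega$ yields
\[
  \int_{A_\infty}\div\p\dx
  =\lim_{k\to\infty}\int_{A_k}\div\p\dx
  \le\liminf_{k\to\infty}\P(A_k,\Omega)\,,
\]
where in the last step I use the distributional characterization applied to each $A_k$ with $\P(A_k,\Omega)<\infty$ (which holds eventually along the chosen subsequence).

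Finally, taking the supremum over all admissible $\p$ gives $\P(A_\infty,\Omega)\le\liminf_{k\to\infty}\P(A_k,\Omega)$; en passant this also shows $\1_{A_\infty}\in\BV_\loc(\Omega)$, reconciling the two equivalent definitions of $\P(\,\cdot\,,\Omega)$ on open $\Omega$. I do not expect any genuine obstacle: the only subtle point is to make sure that the excerpt's somewhat unusual definition of $\P(A,\Omega)$ (via open neighborhoods of $\Omega$) aligns with the distributional sup-characterization, which is why I restrict attention to the open case from the start and invoke the paper's own remark that the definitions agree there.
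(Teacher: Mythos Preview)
Your proof is correct and is exactly the standard argument via the distributional sup-characterization of the perimeter. The paper does not prove this lemma at all but merely cites standard references (\cite[Proposition 3.38(b)]{AmbFusPal00}, \cite[Theorem 5.2]{EvaGar15}, \cite[Proposition 12.15]{Maggi12}), and your argument is precisely the one found there; note, incidentally, that the subsequence extraction and the restriction to eventually finite $\P(A_k,\Omega)$ are unnecessary, since the inequality $\int_{A_k}\div\p\dx\le\P(A_k,\Omega)$ holds for every $k$ regardless of finiteness.
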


Whenever we have $\P(A,\Omega)<\infty$ for $A\in\M(\R^n)$ and
$\Omega\in\Bo(\R^n)$, we call $A$ a set of finite perimeter in $\Omega$, and we
write the class of sets of finite measure and finite perimeter in $\Omega$ as
\[
  \BVs(\Omega)
  \coleq\{A\in\M(\R^n)\,:\,|A\cap\Omega|{+}\P(A,\Omega)<\infty\}
  =\bigcup_{U\text{\! open,}\,\Omega\subset U}\{A\in\M(\R^n)\,:\,\1_A\in\BV(U)\}\,.
\]
Moreover, we call $A\in\M(\R^n)$ a set of locally finite perimeter in open
$\Omega\subset\R^n$ if $\P(A,K)<\infty$ holds for all compact $K\subset\Omega$.
The corresponding class is written, still for open $\Omega$, as
\[
  \BVs_\loc(\Omega)
  \coleq\{A\in\M(\R^n)\,:\,\P(A,K)<\infty\text{ for all compact }K\subset\Omega\}
  =\{A\in\M(\R^n)\,:\,\1_A\in\BV_\loc(\Omega)\}\,,
\]

The reduced boundary of $A\in\BVs(\Omega)$ in $\Omega\in\Bo(\R^n)$ in the sense
of \cite[Definition 3.54]{AmbFusPal00}, \cite[Definition 5.4]{EvaGar15},
\cite[Section 15]{Maggi12} is denoted by $\partial^\ast\!A$ or by
$\Omega\cap\partial^\ast\!A$. Its significance is partially highlighted by the
following result, which can be read off
from \cite[Theorem 3.59]{AmbFusPal00}, \cite[Theorem 5.15]{EvaGar15}, or
\cite[Theorem 15.9]{Maggi12}.

\begin{thm}[De Giorgi's structure theorem; partial statement]
    \label{thm:DeGiorgi}
  For $A\in\M(\R^n)$ and $\Omega\in\Bo(\R^n)$ with $\P(A,\Omega)<\infty$, it holds
  \[
    \P(A,\,\cdot\,)=|\D\1_A|=\H^{n-1}\ecke\partial^\ast\!A
    \qq\text{as measures on }\Omega\,.
  \]
\end{thm}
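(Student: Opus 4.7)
The identity $\P(A,\,\cdot\,)=|\D\1_A|$ is mostly definitional: on open $U\subset\Omega$ the author has set $\P(A,U)=|\D\1_A|(U)$, so the two expressions agree as Borel measures on any open $U\subset\Omega$, and by Carath\'eodory extension/outer-regularity of $|\D\1_A|$ they agree on all Borel subsets of $\Omega$. (One small check is that $\P(A,\Omega)=\inf\{\P(A,U):U\supset\Omega\text{ open}\}=|\D\1_A|(\Omega)$, which is immediate from outer regularity of the Radon measure $|\D\1_A|$.) Hence the only substantive content is the identification $|\D\1_A|=\H^{n-1}\ecke\partial^\ast\!A$, and the remainder of the plan concerns this.

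The strategy is the classical blow-up analysis of De~Giorgi. First one defines, at $|\D\1_A|$-a.e.\ point $x$, the generalized inner normal $\nu_A(x)$ as the Radon-Nikod\'ym derivative of $\D\1_A$ with respect to $|\D\1_A|$; by Lebesgue-Besicovitch differentiation $|\nu_A(x)|=1$ for $|\D\1_A|$-a.e.\ $x$, and $\partial^\ast\!A$ is precisely the Borel set on which $\nu_A$ is well-defined with $|\nu_A|=1$. Consequently $|\D\1_A|(\Omega\setminus\partial^\ast\!A)=0$, so $|\D\1_A|$ is concentrated on $\partial^\ast\!A$.

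The heart of the argument is the blow-up: for $x\in\partial^\ast\!A$ and the rescalings $A_{x,r}\coleq(A-x)/r$, one shows $\1_{A_{x,r}}\to\1_{H_x}$ in $\L^1_\loc(\R^n)$ as $r\downarrow0$, where $H_x\coleq\{y\in\R^n:y\cd\nu_A(x)\le0\}$ is the halfspace orthogonal to $\nu_A(x)$. This uses the perimeter compactness and lower semicontinuity results (Lemmas \ref{lem:cpct-per} and \ref{lem:lsc-per}) to extract a subsequential limit $E$, plus the relative isoperimetric inequality and the definition of $\nu_A(x)$ to force $\D\1_E$ to be a nonnegative multiple of a constant vector; the constancy of the rescaled total variation on concentric balls then pins down $E=H_x$ uniquely. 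From the blow-up one reads off the density estimate
\[
  \lim_{r\downarrow0}\frac{|\D\1_A|(\B_r(x))}{\alpha_{n-1}r^{n-1}}=1
  \qq\text{for every }x\in\partial^\ast\!A\,,
\]
which is the genuinely difficult step and where most of the technique lies.

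Finally, a Besicovitch/Vitali-type covering argument converts the $(n{-}1)$-dimensional density $1$ into the measure identity $|\D\1_A|=\H^{n-1}\ecke\partial^\ast\!A$: an upper-density-$1$ estimate combined with the $\sigma$-finiteness of $|\D\1_A|$ on $\partial^\ast\!A$ yields $|\D\1_A|\le\H^{n-1}\ecke\partial^\ast\!A$, and the lower-density-$1$ estimate yields the converse via Federer's differentiation theorem applied to $|\D\1_A|$ and $\H^{n-1}\ecke\partial^\ast\!A$. The main obstacle throughout is the blow-up step, because it is there that one must quantitatively exploit the first-order structure of $\D\1_A$ encoded in the single pointwise normal $\nu_A(x)$; the surrounding covering and differentiation arguments are comparatively formal.
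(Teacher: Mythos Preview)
The paper does not prove this theorem: it is stated in the preliminaries and simply referred to the literature (\cite[Theorem 3.59]{AmbFusPal00}, \cite[Theorem 5.15]{EvaGar15}, \cite[Theorem 15.9]{Maggi12}). Your sketch is a faithful outline of the classical De~Giorgi blow-up proof that appears in precisely those references, so in substance you are reproducing the proof the paper defers to. One small slip: you call $\nu_A=\D\1_A/|\D\1_A|$ the \emph{inner} normal (which matches the convention of \cite{AmbFusPal00}), but then write the blow-up halfspace as $H_x=\{y:y\cd\nu_A(x)\le0\}$; with the inner-normal convention the correct halfspace is $\{y:y\cd\nu_A(x)\ge0\}$. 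This is only a sign convention issue and does not affect the argument.
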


With this result in mind, from here on we mostly use $\P(A,\,\cdot\,)$ as the
preferred notation for the perimeter measure of a set $A$ of (locally) finite
perimeter.

In view of the conventions for $\BV$ functions and $\BVs$ sets we can also state
a variant of the coarea formula, which is contained in e.\@g.\@
\cite[Theorem 3.40]{AmbFusPal00} or \cite[Theorem 5.9]{EvaGar15}.

\begin{thm}[Fleming-Rishel coarea formula]\label{thm:BV-coarea}
  Consider an open set $\Omega\subset\R^n$ and $u\in\BV(\Omega)$. Then, for
  $\mathcal{L}^1$-\ae{} $t\in\R$, we have $\{u>t\}\in\BVs(\Omega)$, and it holds
  \[
    |\D u|(A)=\int_{-\infty}^\infty\P(\{u>t\},A)\,\d t
    \qq\text{for all }A\in\Bo(\Omega)\,.
  \]
\end{thm}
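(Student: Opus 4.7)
My approach is first to establish the global identity on $\Omega$ and then to pass from $\Omega$ to an arbitrary $A \in \Bo(\Omega)$ by a measure-theoretic extension. For the global identity
\[
  |\D u|(\Omega) = \int_{-\infty}^\infty \P(\{u>t\}, \Omega)\,\d t
\]
I would match the inequalities ``$\le$'' and ``$\ge$'' separately; once the identity is at hand, finite-$\ae$ness of the integrand in $t$ immediately gives $\{u>t\} \in \BVs(\Omega)$ for $\ae$ $t \in \R$.

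For the upper bound ``$\le$'', I would test $\D u$ against $\p \in \C^1_\c(\Omega, \R^n)$ with $\|\p\|_\infty \le 1$. Inserting the layer-cake representation
\[
  u(x) = \int_0^\infty \1_{\{u>t\}}(x)\,\d t - \int_{-\infty}^0 \1_{\{u<t\}}(x)\,\d t
\]
into $\int_\Omega u\,\div\p\,\d x$, using $\int_\Omega \div\p\,\d x = 0$ and the identity $\1_{\{u<t\}} = 1 - \1_{\{u>t\}}$ $\ae$ (valid for all but countably many $t$, namely those with $|\{u=t\}|=0$) to make the negative-range integrand match the positive one with a sign flip, and applying Fubini yield
\[
  \int_\Omega u\,\div\p\,\d x = \int_{-\infty}^\infty \int_\Omega \1_{\{u>t\}}\,\div\p\,\d x\,\d t.
\]
Since the inner integral is bounded in modulus by $\P(\{u>t\}, \Omega)$ for every $t$ (trivially so when $\{u>t\} \notin \BVs_\loc(\Omega)$), taking the supremum over admissible $\p$ yields $|\D u|(\Omega) \le \int \P(\{u>t\}, \Omega)\,\d t$.

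For the reverse inequality ``$\ge$'', I would apply the standard total-variation-preserving smooth approximation for $\BV$ (of Anzellotti-Giaquinta type), which supplies $u_k \in \C^\infty(\Omega) \cap \W^{1,1}(\Omega)$ with $u_k \to u$ in $\L^1(\Omega)$ and $\int_\Omega |\nabla u_k|\,\d x \to |\D u|(\Omega)$. By Sard's theorem, for $\ae$ $t$ the level set $\{u_k = t\}$ is a smooth hypersurface that coincides $\H^{n-1}$-$\ae$ with $\partial^\ast\{u_k > t\}$, so Theorem \ref{thm:DeGiorgi} gives $\P(\{u_k > t\}, \Omega) = \H^{n-1}(\Omega \cap \{u_k = t\})$; Theorem \ref{thm:Lip-coarea} then produces
\[
  \int_\Omega |\nabla u_k|\,\d x = \int_{-\infty}^\infty \P(\{u_k > t\}, \Omega)\,\d t.
\]
Along a subsequence with $u_k \to u$ pointwise $\ae$ on $\Omega$, for every $t$ with $|\{u=t\}|=0$ one has $\1_{\{u_k>t\}} \to \1_{\{u>t\}}$ $\ae$, hence also locally in measure on $\Omega$ by the paper's preliminaries; Lemma \ref{lem:lsc-per} then yields $\P(\{u>t\}, \Omega) \le \liminf_k \P(\{u_k>t\}, \Omega)$ for $\ae$ $t$, and Fatou's lemma delivers $\int_{-\infty}^\infty \P(\{u>t\}, \Omega)\,\d t \le \lim_k \int_\Omega |\nabla u_k|\,\d x = |\D u|(\Omega)$.

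Finally, to extend from $\Omega$ to an arbitrary $A \in \Bo(\Omega)$, I would apply the global identity with $U$ in place of $\Omega$ for every open $U \subset \Omega$ (noting that $u|_U \in \BV(U)$). The map $U \mapsto |\D u|(U)$ is the restriction of the Radon measure $|\D u|$, and by Tonelli, combined with Theorem \ref{thm:DeGiorgi} making each $\P(\{u>t\}, \cdot) = \H^{n-1}\ecke\partial^\ast\{u>t\}$ a Borel measure for $\ae$ $t$, the map $A \mapsto \int_{-\infty}^\infty \P(\{u>t\}, A)\,\d t$ is likewise a Borel measure on $\Bo(\Omega)$; two Radon measures agreeing on open subsets of $\Omega$ must agree on all of $\Bo(\Omega)$, which completes the argument. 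The step I expect to be the most delicate is the ``$\ge$'' inequality, which hinges on having an approximation that actually preserves total variation (not merely $\W^{1,1}$ density of smooth functions), combined with the Sard-based identification of the perimeter of $\{u_k > t\}$ with $\H^{n-1}(\{u_k = t\})$ and a two-level lsc/Fatou passage in the $t$-variable; the ``$\le$'' part, by contrast, is essentially a direct consequence of the layer-cake decomposition.
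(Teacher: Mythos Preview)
The paper does not actually prove this statement: Theorem~\ref{thm:BV-coarea} is stated in the preliminaries with a citation to \cite[Theorem 3.40]{AmbFusPal00} and \cite[Theorem 5.9]{EvaGar15}, and no proof is given. Your proposal is essentially the standard textbook argument found in those references (layer-cake plus Fubini for ``$\le$'', strict smooth approximation plus Fatou and lower semicontinuity of the perimeter for ``$\ge$'', then measure-theoretic extension to Borel sets), so there is nothing to compare against within the paper itself.

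One small technical point: you invoke Theorem~\ref{thm:Lip-coarea} for the approximants $u_k \in \C^\infty(\Omega) \cap \W^{1,1}(\Omega)$, but that theorem is stated for globally Lipschitz functions, while your $u_k$ are only locally Lipschitz on $\Omega$. This is easily fixed by applying Theorem~\ref{thm:Lip-coarea} on relatively compact open subsets $\Omega' \Subset \Omega$ (where $u_k$ is genuinely Lipschitz) and exhausting, or by citing the coarea formula in a form valid for $\C^1$ functions directly; but as written the citation does not quite match.
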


Finally, we use the following result, which in this form is provided by
\cite[Theorem 3.46]{AmbFusPal00}, for instance.

\begin{thm}[isoperimetric estimate]\label{thm:isoperi}
  For $n\ge2$ and $A\in\M(\R^n)$, we have
  \[
    \min\{|A|,|A^\c|\}\le\Gamma_n\P(A)^\frac n{n-1}
  \]
  with a constant $\Gamma_n>0$ which depends only on $n$. Evidently, in case
  $|A|<\infty$ this reduces to $|A|\le\Gamma_n\P(A)^\frac n{n-1}$.
\end{thm}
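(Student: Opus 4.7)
The plan is to deduce the isoperimetric estimate from the Sobolev-Gagliardo-Nirenberg embedding applied to the characteristic function $\1_A$. First I would establish the classical Sobolev bound $\|u\|_{\L^{n/(n-1)}(\R^n)}\le C_n\|\nabla u\|_{\L^1(\R^n)}$ for $u\in\C^\infty_\cpt(\R^n)$ via Gagliardo's argument, estimating $|u(x)|$ along each coordinate axis by $\int_\R|\partial_iu|\,\d x_i$ and combining the resulting $n$ one-dimensional inequalities through the generalized H\"older inequality. By density this bound extends to all $u\in\W^{1,1}(\R^n)$.

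For the main case $A\in\BVs(\R^n)$ (i.\@e.\@ $|A|<\infty$), I would fix a standard mollifier $\eta_\eps$ and set $u_\eps\coleq\eta_\eps*\1_A\in\C^\infty(\R^n)\cap\W^{1,1}(\R^n)$. Writing $\nabla u_\eps=\eta_\eps*\D\1_A$ as convolution of a smooth kernel with the vector Radon measure $\D\1_A$ gives $\|\nabla u_\eps\|_{\L^1(\R^n)}\le|\D\1_A|(\R^n)=\P(A)$, while $u_\eps\to\1_A$ in $\L^1(\R^n)$ with pointwise convergence almost everywhere along a subsequence. Applying the Sobolev bound to $u_\eps$ and passing to the liminf via Fatou's lemma yields
\[
  |A|^{(n-1)/n}=\|\1_A\|_{\L^{n/(n-1)}(\R^n)}\le C_n\P(A)\,,
\]
which with $\Gamma_n\coleq C_n^{n/(n-1)}$ is the claim under $|A|<\infty$. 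The case $|A^\c|<\infty$ follows by running the same argument with $A^\c$ in place of $A$, since $\P(A^\c)=\P(A)$.

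The remaining case $|A|=|A^\c|=\infty$ requires $\P(A)=\infty$ so that both sides of the inequality become $+\infty$, and this is the main obstacle. A direct averaging of the finite-measure bound over spherical shells via the coarea identity $\P(A\cap\B_r)=\P(A,\B_r)+\H^{n-1}(A\cap\partial\B_r)$ (valid for $\Ln$-almost every $r>0$) turns out to be dimensionally balanced against $\H^{n-1}(\partial\B_r)\sim r^{n-1}$ and does not close. I would instead invoke the refined Sobolev-Gagliardo-Nirenberg inequality for $\BV$, namely
\[
  \inf_{c\in\R}\|u{-}c\|_{\L^{n/(n-1)}(\R^n)}\le C_n|\D u|(\R^n)
  \qq\text{for every }u\in\BV(\R^n)\,,
\]
which can be proved by applying the finite-measure isoperimetric bound of the previous step to each super-level set of $u$ and combining via Minkowski's inequality with the Fleming-Rishel formula of Theorem \ref{thm:BV-coarea}. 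Applied to $u=\1_A$ with $|A|=|A^\c|=\infty$, elementary computation of $\|\1_A{-}c\|_{\L^{n/(n-1)}(\R^n)}^{n/(n-1)}=|A||1{-}c|^{n/(n-1)}+|A^\c||c|^{n/(n-1)}$ shows that this quantity is $+\infty$ for every $c\in\R$, so the refined Sobolev inequality forces $\P(A)=|\D\1_A|(\R^n)=+\infty$, as required.
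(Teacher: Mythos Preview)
The paper does not give its own proof of this statement; it simply cites \cite[Theorem 3.46]{AmbFusPal00}. Your treatment of the main case $|A|<\infty$ (and by complementation $|A^\c|<\infty$) via Gagliardo's argument, mollification, and Fatou is correct and standard.

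The residual case $|A|=|A^\c|=\infty$, however, has a genuine gap. You state and prove the refined inequality $\inf_c\|u-c\|_{\L^{n/(n-1)}}\le C_n|\D u|(\R^n)$ for $u\in\BV(\R^n)$ and then apply it to $u=\1_A$; but when $|A|=\infty$ one has $\1_A\notin\L^1(\R^n)$ and hence $\1_A\notin\BV(\R^n)$, so the inequality as stated does not cover this $u$. Your proof sketch for the refined inequality --- coarea plus the already-established finite-measure isoperimetric bound on each super-level set --- does not extend either: for $u=\1_A$ the only nontrivial super-level set is $A$ itself, whose measure is infinite, so you would be invoking precisely the case you have not yet handled. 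The implication ``$\P(A)<\infty\Rightarrow\min\{|A|,|A^\c|\}<\infty$'' for $n\ge2$ is true but requires an independent ingredient. The standard route is the \emph{relative} isoperimetric inequality on balls,
\[
  \min\{|A\cap\B_R|,|\B_R\setminus A|\}^{(n-1)/n}\le c_n\,\P(A,\B_R)\le c_n\,\P(A)\,,
\]
a form of the Poincar\'e--Sobolev inequality for $\BV$ on $\B_R$; since both $R\mapsto|A\cap\B_R|$ and $R\mapsto|\B_R\setminus A|$ are nondecreasing, the uniform bound on their minimum forces one of the limits $|A|$, $|A^\c|$ to be finite. Note that a naive attempt to close the gap by applying your finite-volume bound separately to $A\cap\B_R$ and $A^\c\cap\B_R$ fails: the cut-off contributions $\H^{n-1}(\partial\B_R)\sim R^{n-1}$ exactly balance the left-hand side, and Gagliardo's non-sharp constant $C_n$ is too large to extract a contradiction.
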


With the determination of the optimal constant
$\Gamma_n=\P(\B_1)^{-\frac n{n-1}}|\B_1|=\P(\B_r)^{-\frac n{n-1}}|\B_r|$, the
preceding statement turns into the isoperimetric inequality
\begin{equation}\label{eq:sharp-isoperi}
  \P(\B_r)\le\P(A)\qq\text{for }r\in{(0,\infty)}\text{ and all
  }A\in\M(\R^n)\text{ with }|A|=|\B_r|\,;
\end{equation}
for a proof see \cite[Chapter 14]{Maggi12}, for instance. For the purposes of this
paper we need \eqref{eq:sharp-isoperi} only at a single point in the proof of
Theorem \ref{thm:ex-prscr-vol}, while otherwise the estimate of Theorem
\ref{thm:isoperi} with any constant $\Gamma_n$ suffices.

Finally, we record the following basic estimate (which has also variants for
sets with \emph{finite} $\H^{n-1}$-measure):

\begin{lem}\label{lem:negligible}
  For every $\H^{n-1}$-negligible $N\in\Bo(\R^n)$ and every $\eps>0$, there
  exists an open set $A$ such that
  \[
    N\subset A\subset\mathcal{N}_\eps(N)\,,\qq\qq
    |A|<\eps\,,
    \qq\qq\text{and}\qq\qq
    \P(A)<\eps
  \]
  \ka with the $\eps$-neighborhood\/
  $\mathcal{N}_\eps(N)\coleq\{x\in\R^n:\dist(x,N)<\eps\}$ of\/ $N$\kz.
\end{lem}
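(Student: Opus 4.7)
The plan is to produce $A$ as a union of small open balls centered at points of $N$. Since $\H^{n-1}(N)=0$, the spherical $(n{-}1)$-dimensional Hausdorff premeasure of $N$ vanishes as well, so for any prescribed $\delta,\eta>0$ there is a countable family of open balls of radii smaller than $\delta$ covering $N$ whose $(n{-}1)$-th powers of radii sum to less than $\eta$. After discarding balls disjoint from $N$ and, if needed, translating each center to a point of $N$ (at the cost of at most doubling the radii and multiplying the sum by $2^{n-1}$), one ends up with open balls $\B_{r_i}(z_i)$ such that $z_i\in N$, $r_i<2\delta$, $\sum_i r_i^{n-1}<2^{n-1}\eta$, and $N\subset\bigcup_i\B_{r_i}(z_i)$.

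Setting $A\coleq\bigcup_i\B_{r_i}(z_i)$ then yields an open set containing $N$, and since every $z_i\in N$ with $r_i<2\delta$ the inclusion $A\subset\mathcal{N}_{2\delta}(N)$ is immediate; choosing $\delta<\eps/2$ enforces the middle inclusion $A\subset\mathcal{N}_\eps(N)$. For the volume bound, countable subadditivity of $\Ln$ gives
\[
  |A|\le\alpha_n\sum_i r_i^n\le2\alpha_n\delta\sum_i r_i^{n-1}\le 2^n\alpha_n\delta\,\eta\,,
\]
which will be smaller than $\eps$ as soon as $\delta\eta$ is taken small enough.

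The perimeter bound is the only step where some care is needed. The plan is to combine the well-known submodularity $\P(E\cup F)\le\P(E){+}\P(F)$ inductively with the identity $\P(\B_{r_i}(z_i))=n\alpha_n r_i^{n-1}$ to obtain $\P\big(\bigcup_{i=1}^N\B_{r_i}(z_i)\big)\le n\alpha_n\sum_{i=1}^N r_i^{n-1}$ for every finite $N$, and then to pass to the countable union by means of the lower semicontinuity in Lemma \ref{lem:lsc-per}, applied to the truncated unions which converge locally in measure to $A$. This yields
\[
  \P(A)\le n\alpha_n\sum_i r_i^{n-1}\le 2^{n-1}n\alpha_n\,\eta\,,
\]
again $<\eps$ once $\eta$ is chosen small enough.

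The argument is essentially routine; the only genuine decision is how to turn the natural Hausdorff-measure covers (by sets of small diameter) into ball covers whose centers lie in $N$, and the replacement outlined above only perturbs the estimates by harmless dimensional constants. Finally, in the degenerate case $n=1$ the assertion is trivial since any $\H^0$-negligible set is empty, so one may tacitly assume $n\ge2$ throughout.
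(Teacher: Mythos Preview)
Your proof is correct and follows essentially the same approach as the paper: cover $N$ by countably many small open balls whose $(n{-}1)$-th powers of radii have small sum, take $A$ as the union, and bound $|A|$ and $\P(A)$ by the corresponding sums over the individual balls. The paper writes $\P(A)\le\sum_i\P(B_i)$ directly, while you spell this out via submodularity for finite unions plus lower semicontinuity (Lemma~\ref{lem:lsc-per}); and the paper ensures $A\subset\mathcal{N}_\eps(N)$ by simply choosing the cover inside $\mathcal{N}_\eps(N)$, while you obtain it by re-centering at points of $N$---both are cosmetic variations of the same argument.
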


\begin{proof}
  By definition of $\H^{n-1}$, there exist open balls
  $B_i\subset\mathcal{N}_\eps(N)$ with corresponding radii $r_i\in{(0,n]}$ such that
  $N\subset\bigcup_{i=1}^\infty B_i$ and
  $n\alpha_n\sum_{i=1}^\infty r_i^{n-1}<\eps$ hold. For the open set
  $A\coleq\bigcup_{i=1}^\infty B_i$ with $N\subset A\subset\mathcal{N}_\eps(N)$,
  we get
  \[
    |A|\le\sum_{i=1}^\infty|B_i|=\alpha_n\sum_{i=1}^\infty r_i^n
    \le n\alpha_n\sum_{i=1}^\infty r_i^{n-1}<\eps
    \qq\text{and}\qq
    \P(A)\le\sum_{i=1}^\infty\P(B_i)
    =n\alpha_n\sum_{i=1}^\infty r_i^{n-1}<\eps\,.
  \]
  This completes the proof.
\end{proof}

\subsection*{\boldmath$\H^{n-1}$-\ae{} representatives and set operations
  for sets of finite perimeter}

For $A\in\M(\R^n)$, $\t\in{[0,1]}$ we introduce the Borel sets
\begin{gather*}
  A^\t
  \coleq\bigg\{x\in\R^n\,:\,\lim_{\r\searrow0}\frac{|\B_\r(x)\cap A|}{|\B_\r|}=\t\bigg\}
  \qq\text{and}\qq
  A^+\coleq\big(A^0\big)^\c=\bigg\{x\in\R^n\,:\,\limsup_{\r\searrow0}\frac{|\B_\r(x)\cap A|}{|\B_\r|}>0\bigg\}
\end{gather*}
of density-$\t$ points and positive-upper-density points of $A$, and we record that
$A^1=A^+=A$ holds up to negligible sets (see e.\@g.\@ by \cite[Theorem 1.35]{EvaGar15},
\cite[eq.\@ (5.19)]{Maggi12}, or \cite[Corollary 2.14(1)]{Mattila95}). More can be said
in case $A$ has finite perimeter: Then the $A^\t$ are significant only for
$\t\in\{0,\frac12,1\}$, and the essential boundary
\[
  \partial^\mathrm{e}\!A\coleq A^+\setminus A^1
\]
is not only negligible, but in fact coincides with the reduced boundary $\partial^\ast\!A$ up
to an $\H^{n-1}$-negligible sets. In fact, this is made precise in the next result, for
which we refer to \cite[Theorem 3.61]{AmbFusPal00} or \cite[Theorem 16.2]{Maggi12}.

\begin{thm}[Federer's structure theorem]\label{thm:Federer}
  For $A\in\M(\R^n)$, $\Omega\in\Bo(\R^n)$ with $\P(A,\Omega)<\infty$, there hold
  $\Omega\cap\partial^\ast\!A\subset A^\frac12$ and
  \[
    \H^{n-1}((\partial^\mathrm{e}\!A\setminus\partial^\ast\!A)\cap\Omega)=0
  \]
\end{thm}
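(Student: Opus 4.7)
The plan is to prove the two assertions in turn using De Giorgi's structure theorem. The first inclusion will follow from De Giorgi's blow-up analysis at reduced-boundary points; the $\H^{n-1}$-negligibility of $\partial^\mathrm{e}\!A \setminus \partial^\ast\!A$ will be obtained by exhibiting a strictly positive $(n{-}1)$-dimensional upper density for $\P(A,\,\cdot\,)$ at every essential-boundary point and then comparing this against the identity $\P(A,\,\cdot\,) = \H^{n-1}\ecke\partial^\ast\!A$ furnished by Theorem \ref{thm:DeGiorgi}.

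For the first claim, I would invoke De Giorgi's blow-up theorem at $x \in \partial^\ast\!A$ with measure-theoretic unit normal $\nu_A(x)$: the rescaled sets $r^{-1}(A{-}x)$ converge in $\L^1_\loc(\R^n)$ as $r \searrow 0$ to the half-space $H_x \coleq \{y\in\R^n : y \cd \nu_A(x) \le 0\}$. Since $|\B_1 \cap H_x| = |\B_1|/2$, a change of variables converts this $\L^1_\loc$-convergence on $\B_1$ into $\lim_{r \searrow 0} |A \cap \B_r(x)|/|\B_r| = 1/2$, yielding $x \in A^\frac12$.

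For the second claim, set $N \coleq (\partial^\mathrm{e}\!A \setminus \partial^\ast\!A) \cap \Omega$. The main obstacle is to establish the density bound
\begin{equation*}
  \Theta^\ast(x) \coleq \limsup_{r \searrow 0} \frac{\P(A, \B_r(x))}{r^{n-1}} > 0
  \qq\text{for every } x \in \partial^\mathrm{e}\!A\,.
\end{equation*}
To this end, I would consider the continuous function $f(r) \coleq |A \cap \B_r(x)|/|\B_r|$ on $(0,\infty)$. The conditions $x \in A^+$ and $x \notin A^1$ translate into $\limsup_{r\searrow 0} f(r) > 0$ and $\liminf_{r\searrow 0} f(r) < 1$, and an intermediate-value argument based on continuity of $f$ produces some $\gamma \in (0,1)$ and arbitrarily small radii $r$ with $f(r) = \gamma$. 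For such $r$, both $|A \cap \B_r(x)|$ and $|\B_r(x) \setminus A|$ are at least $\min\{\gamma,1{-}\gamma\}\,|\B_r|$, and the relative isoperimetric inequality on $\B_r(x)$ yields $\P(A, \B_r(x)) \ge c(n,\gamma)\, r^{n-1}$, whence $\Theta^\ast(x) > 0$.

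With the density estimate at hand, decompose $N = \bigcup_{k\in\N} N_k$ with the Borel sets $N_k \coleq \{x \in N : \Theta^\ast(x) \ge 1/k\}$. A standard density-comparison lemma between $\H^{n-1}$ and Radon measures with strictly positive upper $(n{-}1)$-density \ka proved by a Vitali-type covering argument\kz{} yields $\H^{n-1}(N_k) \le C_n\, k\, \P(A, N_k)$. Since $N \cap \partial^\ast\!A = \varnothing$, Theorem \ref{thm:DeGiorgi} forces $\P(A, N_k) \le \P(A, N) = 0$, so every $N_k$ is $\H^{n-1}$-negligible, and summing over $k$ gives $\H^{n-1}(N) = 0$, as required.
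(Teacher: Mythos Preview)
The paper does not supply its own proof of this theorem; it simply cites \cite[Theorem 3.61]{AmbFusPal00} and \cite[Theorem 16.2]{Maggi12} as references. Your proposal is a correct outline of the standard argument found in those sources: De Giorgi's blow-up at reduced-boundary points gives the inclusion $\partial^\ast\!A \subset A^{1/2}$, and the combination of a positive lower perimeter-density bound at essential-boundary points with the measure identity $\P(A,\,\cdot\,)=\H^{n-1}\ecke\partial^\ast\!A$ from Theorem~\ref{thm:DeGiorgi} yields the $\H^{n-1}$-negligibility of $\partial^\mathrm{e}\!A\setminus\partial^\ast\!A$.

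One small point worth tightening: your intermediate-value step is phrased as if one always finds a fixed $\gamma\in(0,1)$ hit exactly by $f(r)$ along a sequence $r\searrow0$. This is fine when $\liminf f<\limsup f$, but when the limit exists (and then necessarily lies in $(0,1)$), no such exact value need be attained. The fix is trivial --- you only need that $f(r)$ stays in a compact subinterval of $(0,1)$ along some sequence $r\searrow0$, which holds in both cases and is all the relative isoperimetric inequality requires. Also note that $\Omega$ here is merely Borel, so you should observe that $\P(A,\Omega)<\infty$ by definition gives an open $U\supset\Omega$ on which $\P(A,\,\cdot\,)$ is a Radon measure, and your density and covering arguments take place in $U$.
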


In particular, in the situation of the theorem we infer
$\H^{n-1}(A^\t\cap\Omega)=0$ for all $\t\in{[0,1]}\setminus\{0,\frac12,1\}$,
and the equalities $\partial^\ast\!A\cap\Omega=A^\frac12\cap\Omega
=\partial^\mathrm{e}\!A\cap\Omega$
and $A^+\cap\Omega=(A^1\cup\partial^\ast\!A)\cap\Omega$ hold up to
$\H^{n-1}$-negligible sets. Altogether this supports viewing $A^+$ as
measure-theoretic closure and $A^1$ as measure-theoretic interior of $A$.

\smallskip

Next we discuss basic set operations and corresponding estimates for sets of
finite perimeter.

\begin{lem}\label{lem:P(AcapB),P(A-S)}
  For $A,B\in\M(\R^n)$, $\Omega\in\Bo(\R^n)$ with
  $\P(A,\Omega)+\P(B,\Omega)<\infty$, there holds
  \begin{equation}\label{eq:P(AcapB)}
    \P(A\cap B,G)\le\P(A,B^1\cap G)+\P(B,A^+\cap G)
    \qq\qq\text{for all }G\in\Bo(\Omega)
  \end{equation}
  and in particular $\P(A\cap B,\Omega)<\infty$. If either
  $|(A\setminus B)\cap G|=0$ or $|(B\setminus A)\cap G|=0$ or
  $\H^{n-1}(\partial^\ast\!A\cap\partial^\ast\!B\cap G)=0$ holds, then we have
  equality in \eqref{eq:P(AcapB)}.

  Similarly, for $A,S\in\M(\R^n)$, $\Omega\in\Bo(\R^n)$ with
  $\P(A,\Omega)+\P(S,\Omega)<\infty$, there holds
  \begin{equation}\label{eq:P(A-S)}
    \P(A\setminus S,G)\le\P(A,S^0\cap G)+\P(S,A^+\cap G)
    \qq\qq\text{for all }G\in\Bo(\Omega)
  \end{equation}
  and in particular $\P(A\setminus S,\Omega)<\infty$. If either
  $|A\cap S\cap G|=0$ or $|G\setminus(A\cup S)|=0$ or
  $\H^{n-1}(\partial^\ast\!A\cap\partial^\ast\!S\cap G)=0$ holds, then we have
  equality in \eqref{eq:P(A-S)}.
\end{lem}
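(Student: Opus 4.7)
The plan is to base everything on Federer's structure theorem (Theorem \ref{thm:Federer}), which identifies $\P(E,G) = \H^{n-1}(\partial^\ast\!E \cap G)$ for every Borel $G$, combined with the classical essentially-disjoint decomposition of the reduced boundary of an intersection. The second statement will then be reduced to the first by taking complements.

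For \eqref{eq:P(AcapB)}, I would start from the standard decomposition (see e.g.\ \cite[Theorem 16.3]{Maggi12}) that, up to $\H^{n-1}$-null sets,
\[
  \partial^\ast(A \cap B) = (\partial^\ast\!A \cap B^1) \cup (\partial^\ast\!B \cap A^1) \cup (\partial^\ast\!A \cap \partial^\ast\!B \cap \{\nu_A = \nu_B\}),
\]
where the three pieces are $\H^{n-1}$-essentially pairwise disjoint (the first two thanks to $\partial^\ast\!E \subset E^{1/2}$ from Theorem \ref{thm:Federer}). Intersecting with $G \in \Bo(\Omega)$ and taking $\H^{n-1}$-measure yields the exact identity
\[
  \P(A \cap B, G) = \P(A, B^1 \cap G) + \P(B, A^1 \cap G) + \H^{n-1}(\partial^\ast\!A \cap \partial^\ast\!B \cap \{\nu_A = \nu_B\} \cap G).
\]
Bounding the last term by $\H^{n-1}(\partial^\ast\!A \cap \partial^\ast\!B \cap G) = \P(B,\partial^\ast\!A \cap G)$ and combining with $\P(B, A^1 \cap G)$ through the essentially disjoint decomposition $A^+ = A^1 \cup \partial^\ast\!A$ (disjointness from $\partial^\ast\!A \subset A^{1/2}$) then gives \eqref{eq:P(AcapB)}. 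Finiteness of $\P(A\cap B, \Omega)$ follows at once on taking $G = \Omega$.

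The same computation shows that the gap in \eqref{eq:P(AcapB)} equals exactly $\H^{n-1}(\partial^\ast\!A \cap \partial^\ast\!B \cap \{\nu_A \ne \nu_B\} \cap G)$. Under (iii) this is trivially zero. Under (i), $|(A\setminus B)\cap G| = 0$, the goal is to show that this gap vanishes: at every $x \in \partial^\ast\!A \cap \partial^\ast\!B \cap G$ which is additionally a Lebesgue density point of $G$, the standard blow-up argument produces half-spaces $H_A$, $H_B$ with inward normals $-\nu_A(x)$, $-\nu_B(x)$, and the hypothesis then forces $|(A\setminus B) \cap \B_\rho(x)| \le |G^\c \cap \B_\rho(x)| = o(\rho^n)$, hence $H_A \subset H_B$ up to negligible sets, which for two half-spaces through the origin only allows $\nu_A(x) = \nu_B(x)$. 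The main obstacle is that $G$ is merely Borel, so Lebesgue density of $G$ at $\H^{n-1}$-a.e.\ point of $\partial^\ast\!A \cap \partial^\ast\!B \cap G$ is not automatic; this is dealt with by exhausting $G$ from inside by compact (or open) subsets, using that both sides of \eqref{eq:P(AcapB)} define Borel measures in $G$, and passing to the limit. Case (ii) is entirely symmetric.

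Finally, \eqref{eq:P(A-S)} follows from \eqref{eq:P(AcapB)} with the choice $B \coleq S^\c$: one has $\partial^\ast\!S^\c = \partial^\ast\!S$ and $\P(S^\c,\,\cdot\,) = \P(S,\,\cdot\,)$ (with $(S^\c)^1 = S^0$ up to $\H^{n-1}$-null sets), and the three equality conditions translate verbatim via $|(A \setminus S^\c) \cap G| = |A \cap S \cap G|$, $|(S^\c \setminus A) \cap G| = |G \setminus (A \cup S)|$, and $\partial^\ast\!A \cap \partial^\ast\!S^\c = \partial^\ast\!A \cap \partial^\ast\!S$.
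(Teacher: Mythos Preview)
Your derivation of the inequality \eqref{eq:P(AcapB)} and the identification of the gap as $\H^{n-1}(\partial^\ast\!A\cap\partial^\ast\!B\cap\{\nu_A\ne\nu_B\}\cap G)$ are correct and match the paper's approach; the paper first records the coarser inclusion $\partial^\ast(A\cap B)\subset(\partial^\ast\!A\cap B^1)\cup(\partial^\ast\!B\cap A^+)$ and then the finer decomposition you start from, and it also offers an alternative derivation via the $\BV$ chain rule. The reduction of \eqref{eq:P(A-S)} to \eqref{eq:P(AcapB)} with $B=S^\c$ is exactly how the paper proceeds, and equality case (iii) is immediate in both arguments.

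For the equality cases (i) and (ii), however, your exhaustion workaround does not close the gap you correctly flagged. Approximating $G$ from inside by compact $K$ still leaves you needing Lebesgue density of $K$ at the relevant $\H^{n-1}$-boundary points, which fails for the same reason as for $G$ itself; and approximating by open subsets of $G$ is unavailable once $G$ has empty interior. In fact the equality claim under (i) or (ii) is \emph{false} for general Borel $G$: take $\Omega=\B_1\subset\R^n$ with $n\ge2$, $A=\{x_n<0\}$, $B=\{x_n>0\}$, and $G=\{x\in\B_1:x_n\ge0\}$. Then $(A\setminus B)\cap G=\emptyset$, yet $\P(A\cap B,G)=0$ while $\P(A,B^1\cap G)+\P(B,A^+\cap G)=0+\H^{n-1}(\{x_n=0\}\cap\B_1)>0$. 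The paper itself only asserts that the implication ``can be checked'' and supplies no argument; its subsequent applications of the equality case use either condition (iii) or condition (i) with $G$ open (in fact $G=\R^n$), and for open $G$ your blow-up argument is valid as written, since $\B_\rho(x)\subset G$ for small $\rho$ gives $|(A\setminus B)\cap\B_\rho(x)|=0$ directly.
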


\begin{proof}
  We observe that $\P(A\cap B,\Omega)<\infty$ is ensured, for instance, by
  applying the basic product rule estimate \cite[eq.\@ (3.10)]{AmbFusPal00} for
  the derivative of $\1_{A\cap B}=\1_A\1_B$. Now we consider
  $x\in(A^1{\cup}A^\frac12{\cup}A^0)\cap(B^1{\cup}B^\frac12{\cup}B^0)$. Then
  $x\in (A\cap B)^\frac12$ necessarily implies that either
  $x\in A^\frac12\cap B^1$ or $x\in B^\frac12\cap A^+$ holds. In view of
  Theorem \ref{thm:Federer} this means $\partial^\ast(A\cap B)
  \subset(\partial^\ast\!A\cap B^1)\cup(\partial^\ast\!B\cap A^+)$ up to
  $\H^{n-1}$-negligible sets, and via Theorem \ref{thm:DeGiorgi} we arrive
  at \eqref{eq:P(AcapB)}. In order to discuss equality, one can use the full
  statement of De Giorgi's theorem as provided in
  \cite[Theorem 3.59]{AmbFusPal00} to verify more precisely
  $\partial^\ast(A\cap B)=(\partial^\ast\!A\cap B^1)\cup(\partial^\ast\!B\cap A^1)
  \cup(\partial^\ast\!A\cap\partial^\ast\!B\cap\{\nu_A=\nu_B\})$ up to
  $\H^{n-1}$-negligible sets, where $\nu_A$ and $\nu_B$ denote the generalized
  outward unit normals of $A$ and $B$. Then one reads off that equality occurs
  in \eqref{eq:P(AcapB)} if and only if $\nu_A=\nu_B$ holds $\H^{n-1}$-\ae{} on
  $\partial^\ast\!A\cap\partial^\ast\!B\cap G$, and the latter can be checked to
  follow from each of the conditions claimed to be sufficient for equality.

  We find worth recording also the following alternative derivation of
  \eqref{eq:P(AcapB)}. From the rule for the derivative of composite functions
  in \cite[Theorem 3.84]{AmbFusPal00} we get
  \[
    \P(A\cap B,G)
    =|\D(\1_A\1_B)|(G)
    =|\D\1_A|(B^1\cap G)
    +\big(|(\1_A)^{\inn}_{\partial^\ast\!B}|\H^{n-1}\big)(\partial^\ast\!B\cap G)
    \qq\text{for }G\in\Bo(\Omega)
  \]
  and specifically $\P(A\cap B,\Omega)<\infty$, where
  $(\1_A)^{\inn}_{\partial^\ast\!B}$ stands for the interior trace of $\1_A$ on
  $\partial^\ast\!B$. Since the trace is $\{0,1\}$-valued with value $1$ on
  $A^1\cap\partial^\ast\!B$ and value $0$ on
  $A^0\cap\partial^\ast\!B=(A^+)^\c\cap\partial^\ast\!B$, with the help of
  Theorem \ref{thm:DeGiorgi} we obtain
  \[
    \P(A\cap B,G)
    \le|\D\1_A|(B^1\cap G)+\H^{n-1}(\partial^\ast\!B\cap A^+\cap G)
    =\P(A,B^1\cap G)+\P(B,A^+\cap G)
    \qq\text{for }G\in\Bo(\Omega)
  \]
  and arrive once more at \eqref{eq:P(AcapB)}. From these arguments one reads
  off that equality occurs in \eqref{eq:P(AcapB)} if and only if
  $(\1_A)^{\inn}_{\partial^\ast\!B}=1$ holds $\H^{n-1}$-\ae{} on
  $(A^+\setminus A^1)\cap\partial^\ast\!B\cap G$. In view of Theorem
  \ref{thm:Federer} it is equivalent that $(\1_A)^{\inn}_{\partial^\ast\!B}=1$
  holds $\H^{n-1}$-\ae{} on $\partial^\ast\!A\cap\partial^\ast\!B\cap G$, and
  once more this can be checked to follow from each of the conditions in the
  statement.

  Finally, the inequality \eqref{eq:P(A-S)} is nothing but the inequality
  \eqref{eq:P(AcapB)} for $B=S^\c$.
\end{proof}

Also the following combined estimate for the perimeters of union and intersection
is well known.

\begin{lem}\label{lem:P-cup-cap}
  For $A,B\in\M(\R^n)$, $\Omega\in\Bo(\R^n)$ with
  $\P(A,\Omega)+\P(B,\Omega)<\infty$, we have 
  \begin{equation}\label{eq:P-cup-cap}
    \P(A\cup B,G)+\P(A\cap B,G)\le\P(A,G)+\P(B,G)
    \qq\text{for all }G\in\Bo(\Omega)
  \end{equation}
  and thus in particular $\P(A\cup B,\Omega)+\P(A\cap B,\Omega)<\infty$.
\end{lem}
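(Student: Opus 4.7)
The plan is to derive \eqref{eq:P-cup-cap} directly from the two inequalities in the preceding Lemma \ref{lem:P(AcapB),P(A-S)}, applied once to $A\cap B$ and once to $A\cup B=(A^\c\cap B^\c)^\c$. Crucially, one has to make the two applications with opposite choices of which argument occupies the ``density-one'' and which occupies the ``positive-upper-density'' slot, so that the resulting six perimeter terms recombine to $\P(A,G)+\P(B,G)$ without leftover.

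Concretely, I would first note $\P(A\cap B,\Omega)<\infty$ and $\P(A\cup B,\Omega)<\infty$, which follow from \eqref{eq:P(AcapB)} and the identity $\P(A\cup B,\cdot)=\P((A\cup B)^\c,\cdot)=\P(A^\c\cap B^\c,\cdot)$ combined with the same lemma. Then I would write down
\[
  \P(A\cap B,G)\le\P(A,B^1\cap G)+\P(B,A^+\cap G)
\]
as the first estimate from \eqref{eq:P(AcapB)}, and apply \eqref{eq:P(AcapB)} a second time to the pair $(A^\c,B^\c)$, but with $A^\c$ now playing the role of ``$B$'' and $B^\c$ the role of ``$A$''. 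Using the elementary identifications $(B^\c)^1=B^0$, $(B^\c)^+=(B^1)^\c$, and $(A^\c)^1=A^0$, the second estimate becomes
\[
  \P(A\cup B,G)=\P(A^\c\cap B^\c,G)
  \le\P(A,(B^1)^\c\cap G)+\P(B,A^0\cap G).
\]

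Adding the two inequalities and regrouping gives
\[
  \P(A\cap B,G)+\P(A\cup B,G)
  \le\bigl[\P(A,B^1\cap G)+\P(A,(B^1)^\c\cap G)\bigr]
   +\bigl[\P(B,A^+\cap G)+\P(B,A^0\cap G)\bigr].
\]
Since $B^1$ and $(B^1)^\c$ partition $\R^n$, and since $A^+=(A^0)^\c$ so that $A^+$ and $A^0$ partition $\R^n$, each bracket collapses by $\sigma$-additivity of the corresponding perimeter measure (see Theorem \ref{thm:DeGiorgi}) to $\P(A,G)$ and $\P(B,G)$, respectively. This yields \eqref{eq:P-cup-cap} and, in combination with the finiteness remarks above, also the ``in particular'' statement.

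There is no substantial obstacle here: the entire argument is bookkeeping built on the disjoint-set identities $B^1\mathbin{\dot{\cup}}(B^1)^\c=\R^n$ and $A^+\mathbin{\dot{\cup}}A^0=\R^n$. The only point requiring a moment of care is choosing opposite ``slots'' for $A^\c,B^\c$ in the second application of \eqref{eq:P(AcapB)}; choosing them in the same pattern as the first application would instead produce the overlap set $\partial^\ast\!A\cap\partial^\ast\!B$ counted twice and fail to close.
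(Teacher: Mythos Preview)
Your proof is correct and matches one of the three arguments the paper gives for this lemma---specifically the third one, which likewise derives \eqref{eq:P-cup-cap} by applying \eqref{eq:P(AcapB)} once to $A,B$ and once to $B^\c,A^\c$ (with the roles swapped) and then summing so that the decompositions $B^1\dcup(B^1)^\c=\R^n$ and $A^+\dcup A^0=\R^n$ recombine to $\P(A,G)+\P(B,G)$. The paper additionally records two alternative proofs (smooth approximation, and the identity $|\D u_+|+|\D u_-|=|\D u|$ applied to $u=\1_A+\1_B-1$), but your route is explicitly among those presented.
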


\begin{proof}[Proofs]
  A basic approach is given in the proofs of
  \cite[Proposition 3.38(d)]{AmbFusPal00} and \cite[Lemma 12.22]{Maggi12}, where
  the claim is shown for open $G$ by approximating $\1_A$ and $\1_B$ with smooth
  functions. Our claim for arbitrary $G\in\Bo(\Omega)$ then follows by
  regularity of the perimeter measures.

  Alternatively, one may obtain the lemma from the equality
  $|\D u_+|{+}|\D u_-|=|\D u|$ for $u\in\BV_\loc(U)$ on open $U\subset\R^n$
  (which in turn results from an approximation argument somewhat similar to the
  previously mentioned one). In fact, using the equality for
  $u\coleq\1_A{+}\1_B{-}1$ with $u_+=\1_{A\cap B}$ and $u_-=1-\1_{A\cup B}$ we
  directly obtain $\P(A\cap B,G)+\P(A\cup B,G)
  =|\D u_+|(G)+|\D u_-|(G)=|\D u|(G)\le\P(A,G)+\P(B,G)$.

  Finally, we find worth recording that the claim can also be derived from the
  preceding Lemma \ref{lem:P(AcapB),P(A-S)}. Indeed, elementary rules for
  complements and \eqref{eq:P(AcapB)} with $B^\c$ in place of $A$ and $A^\c$ in
  place of $B$ yield
  \[
    \P(A\cup B,G)
    =\P(B^\c\cap A^\c,G)
    \le\P(B^\c,(A^\c)^1\cap G)+\P(A^\c,(B^\c)^+\cap G)
    =\P(B,(A^+)^\c\cap G)+\P(A,(B^1)^\c\cap G)\,.
  \]
  Summing up the original version of \eqref{eq:P(AcapB)} and the variant just
  derived, we arrive at \eqref{eq:P-cup-cap} once more.
\end{proof}

\subsection*{Pseudoconvexity}

Pseudoconvexity, a weak version of mean-convexity, has been introduced by
Miranda \cite{Miranda71} and will eventually be relevant for us in connection
with the discussion of a basic example. We restate the definition and a first
lemma in versions adapted to our framework.

\begin{defi}[pseudoconvexity]
  We say that $K\in\BVs(\R^n)$ is pseudoconvex if it satisfies
  \begin{equation}\label{eq:pseudoconvex}
    \P(K)\le\P(B)
    \qq\text{whenever }K\subset B\in\M(\R^n)\text{ with }|B|<\infty\,.
  \end{equation}
\end{defi}

\begin{lem}\label{lem:intersect-with-pseudoconvex}
  For every pseudoconvex set $K\in\BVs(\R^n)$, we have
  \[
    \P(A\cap K)\le\P(A)
    \qq\text{for all }A\in\M(\R^n)\text{ with }|A|<\infty\,.
  \]
\end{lem}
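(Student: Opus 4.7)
The plan is to combine the submodularity-type estimate from Lemma \ref{lem:P-cup-cap} with the pseudoconvexity of $K$ applied to the enlargement $A\cup K$.

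First I would dispose of the trivial case $\P(A)=\infty$, where there is nothing to show. So assume $\P(A)<\infty$, and also note that $|A\cup K|\le|A|+|K|<\infty$ since $K\in\BVs(\R^n)$ and $|A|<\infty$. In particular, both $A\cap K$ and $A\cup K$ are elements of $\M(\R^n)$ of finite volume, and by Lemma \ref{lem:P-cup-cap} (applied with $\Omega=G=\R^n$) we obtain
\[
  \P(A\cup K)+\P(A\cap K)\le\P(A)+\P(K)\,.
\]

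The key step is now to use pseudoconvexity. Since $K\subset A\cup K$ and $|A\cup K|<\infty$, the defining inequality \eqref{eq:pseudoconvex} applied with $B\coleq A\cup K$ yields $\P(K)\le\P(A\cup K)$. Substituting this into the previous estimate gives
\[
  \P(A\cap K)\le\P(A)+\P(K)-\P(A\cup K)\le\P(A)\,,
\]
which is the desired conclusion.

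There is no real obstacle here: the argument is a two-line application of already-established tools. The only minor point to be careful about is the finiteness assumptions needed to invoke Lemma \ref{lem:P-cup-cap} and the pseudoconvexity inequality, but both are guaranteed by $|A|<\infty$ together with $K\in\BVs(\R^n)$.
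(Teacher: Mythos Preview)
Your proof is correct and takes essentially the same approach as the paper: apply Lemma~\ref{lem:P-cup-cap} to $A$ and $K$, then use pseudoconvexity with $B=A\cup K$ to cancel the $\P(K)$ term. Your version is slightly more explicit about the trivial case $\P(A)=\infty$ and the finiteness checks needed to invoke both ingredients, but the argument is identical in substance.
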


\begin{proof}
  From \eqref{eq:P-cup-cap} and the definition of pseudoconvexity, applied with
  $B=A\cup K$, we get
  \[
    \P(A\cap K)\le\P(A)+\P(K)-\P(A\cup K)\le\P(A)\,.\qedhere
  \]
\end{proof}

Clearly, a basic feature of pseudoconvexity is that convex sets are
pseudoconvex. Though this may be considered as geometrically quite obvious, we
prefer to sketch at least one possible precise proof.

\begin{lem}[convexity implies pseudoconvexity]\label{lem:convex->pseudoconvex}
  Every bounded, convex set $K\in\M(\R^n)$ with $\inn(K)\neq\emptyset$ satisfies
  $K\in\BVs(\R^n)$ with $\H^{n-1}(\partial K\setminus\partial^\ast\!K)=0$ and is
  actually pseudoconvex.
\end{lem}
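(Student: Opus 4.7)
The plan is to split the argument into the basic regularity statements for $K$ and the main pseudoconvexity assertion, for the latter exploiting the distance function $d_K(x)\coleq\dist(x,K)$ as an admissible divergence-measure field in the sense of Theorem \ref{thm:admis-crit-intro}.

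For $K\in\BVs(\R^n)$ and the rectifiability of $\partial K$: after translation I may assume $\B_r\subset K\subset\B_R$, and the Minkowski functional then expresses $\partial K=\{s(\theta)\theta:\theta\in S^{n-1}\}$ via a radial function $s\colon S^{n-1}\to[r,R]$ which is Lipschitz by convexity (the convex hull of $\B_r$ and any $s(\theta)\theta$ lies in $K$, giving the needed two-sided Lipschitz bound). This already yields $\H^{n-1}(\partial K)<\infty$ and hence $K\in\BVs(\R^n)$ with $\P(K)\le\H^{n-1}(\partial K)$. By Rademacher's theorem applied to $s$, at $\H^{n-1}$-a.e.\ $y\in\partial K$ the set $K$ admits a tangent hyperplane and has Lebesgue density $\tfrac12$; Theorem \ref{thm:Federer} then places $y$ in $\partial^\ast\!K$, proving $\H^{n-1}(\partial K\setminus\partial^\ast\!K)=0$.

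For pseudoconvexity, fix $K\subset B\in\M(\R^n)$ with $|B|<\infty$, where I may assume $\P(B)<\infty$. Since $K$ is convex, $d_K$ is a $1$-Lipschitz convex function, so $\sigma\coleq\nabla d_K\in\L^\infty(\R^n,\R^n)$ with $|\sigma|\le1$, and $\Div\sigma=\Delta d_K$ is a non-negative Radon measure. Moreover, at $\H^{n-1}$-a.e.\ $y\in\partial^\ast\!K$, the trace of $\sigma$ from $K^\c$ equals the outer unit normal $\nu_K(y)$: by convexity and the supporting-hyperplane property, $d_K(y+t\nu_K(y))=t$ for small $t\ge0$, so $\nabla d_K(y+t\nu_K(y))=\nu_K(y)$. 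Now set $E\coleq B\setminus K\in\BVs(\R^n)$ (Lemma \ref{lem:P(AcapB),P(A-S)}). From $\1_E=\1_B-\1_K$, together with density considerations based on $K\subset B$ and Federer's theorem, one reads off, up to $\H^{n-1}$-null sets, that $\partial^\ast\!B\cap K^1=\emptyset$ (a $B$-density-$\tfrac12$ point cannot have $K$-density $1$), $\partial^\ast\!K\subset\partial^\ast\!B\cup B^1$ (a $K$-density-$\tfrac12$ point has $B$-density $\ge\tfrac12$), $\nu_B=\nu_K$ on $\partial^\ast\!B\cap\partial^\ast\!K$ (the two tangent half-spaces must coincide), and consequently $\partial^\ast\!E=(\partial^\ast\!B\cap K^0)\cup(\partial^\ast\!K\cap B^1)$ with outer unit normal $\nu_E=\nu_B$ on the first piece and $\nu_E=-\nu_K$ on the second.

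I then apply the Gauss-Green identity for the divergence-measure field $\sigma$ on $E$,
\[
0\le(\Div\sigma)(E^1)=\int_{\partial^\ast\!E}\sigma\cd\nu_E\,\d\H^{n-1}\,,
\]
and compute the right-hand side piece by piece: on $\partial^\ast\!B\cap K^0$ the integrand is bounded by $1$ in modulus since $|\sigma|\le1$; on $\partial^\ast\!K\cap B^1$ the trace of $\sigma$ from $E=B{\setminus}K$ equals $\nu_K$, so $\sigma\cd\nu_E=\nu_K\cd(-\nu_K)=-1$. Putting these together gives
\[
\H^{n-1}(\partial^\ast\!K\cap B^1)\le\H^{n-1}(\partial^\ast\!B\cap K^0)\,,
\]
and combining with $\P(K)=\H^{n-1}(\partial^\ast\!K\cap\partial^\ast\!B)+\H^{n-1}(\partial^\ast\!K\cap B^1)$ and $\P(B)=\H^{n-1}(\partial^\ast\!B\cap\partial^\ast\!K)+\H^{n-1}(\partial^\ast\!B\cap K^0)$ yields $\P(K)\le\P(B)$. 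The principal technical hurdle is the rigorous Gauss-Green formula for $\sigma=\nabla d_K$ on the finite-perimeter set $E$ and the identification of the trace from $E$ on $\partial^\ast\!K$ as $\nu_K$; this can be handled either by invoking the general $\mathcal{DM}^\infty$ theory or, more directly, by mollifying $\sigma$ and passing to the limit using that $\Delta d_K$ is locally finite.
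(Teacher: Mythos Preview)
Your argument is correct and takes a genuinely different route from the paper's. The paper proves pseudoconvexity by a geometric contraction argument: for a bounded $\C^1$ domain $B\supset K$, every point of $\partial K$ is hit by the nearest-point projection $p_{\overline K}$ applied to some point of $\partial B$, and since $p_{\overline K}$ is $1$-Lipschitz one gets $\P(K)=\H^{n-1}(\partial K)\le\H^{n-1}(\partial^\ast\!B)=\P(B)$; the general case is then reached by approximating $B$ first with smooth domains and then with $B\cap\B_R$. Your approach instead exploits $\sigma=\nabla d_K$ as a bounded divergence-measure field with $\Div\sigma=\Delta d_K\ge0$ and applies a Gauss--Green identity on $E=B\setminus K$, reading off $\H^{n-1}(\partial^\ast\!K\cap B^1)\le\H^{n-1}(\partial^\ast\!B\cap K^0)$ directly from the sign of the divergence and the trace $\sigma\cd\nu_E=-1$ on $\partial^\ast\!K\cap B^1$.

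The trade-offs: the paper's proof is more elementary (only the $1$-Lipschitz property of the metric projection and standard smooth approximation of $\BVs$ sets), while yours avoids approximating $B$ altogether but relies on the $\mathcal{DM}^\infty$ Gauss--Green machinery, which is not developed in the paper for fields with merely measure-valued divergence (the paper's formulas \eqref{eq:int-Gauss}--\eqref{eq:int-Gauss-Rn} assume $\Div\sigma\in\L^1_\loc$). You correctly flag this as the main technical hurdle; note also that for unbounded $B$ one needs a truncation $E\cap\B_R$ and a coarea choice of $R$ to control the extra boundary term, since $\Delta d_K$ need not be globally finite. Your approach meshes nicely with the paper's broader theme of using divergence-measure fields (cf.\ Theorems \ref{thm:admis-crit-intro} and \ref{thm:2P-admis}).
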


\begin{proof}[Sketch of proof]
  The claims $K\in\BVs(\R^n)$ and
  $\H^{n-1}(\partial K\setminus\partial^\ast\!K)=0$ follow from
  \cite[Proposition 3.62]{AmbFusPal00}. We now establish the inequality
  \eqref{eq:pseudoconvex} for the convex set $K$, at first only with the extra
  assumption that $B$ is a bounded $\C^1$ domain. Indeed, for every
  $x\in\partial K$, we may choose any ray from $x$ in a direction of the
  outward normal cone to $K$ at $x$ and on this ray find some
  $y\in\partial B=\partial^\ast\!B$ with $p_{\overline K}(y)=x$ for the
  nearest-point projection $p_{\overline K}\colon\R^n\to\overline K$ onto
  $\overline K$. This shows $\partial K\subset p_{\overline K}(\partial^\ast\!B)$.
  Then, since $p_{\overline K}$ is a contraction, we get
  $\P(K)=\H^{n-1}(\partial K)\le\H^{n-1}(\partial^\ast\!B)=\P(B)$ as claimed. In
  a next step, we weaken the extra assumption to merely
  $B\in\BVs(\R^n)$ and show that \eqref{eq:pseudoconvex} still applies. To this
  end we approximate $B$ with bounded $\C^1$ domains $B_\ell$ such that
  $\lim_{\ell\to\infty}\P(B_\ell)=\P(B)$ as in \cite[Theorem 3.42]{AmbFusPal00},
  where we can additionally arrange for $K_\ell\subset B_\ell$ with the bounded,
  convex sets $K_\ell\coleq\{x\in\R^n\,:\,\dist(x,K^\c)>\eps_\ell\}$,
  suitable $\eps_\ell>0$, and $\lim_{\ell\to\infty}\eps_\ell=0$. As we infer
  $\liminf_{\ell\to\infty}\P(K_\ell)\ge\P(K)$ by Lemma \ref{lem:lsc-per}, we can
  then carry over \eqref{eq:pseudoconvex} from $K_\ell$ and $B_\ell$ to $K$ and
  $B$ as claimed. Finally, we deduce \eqref{eq:pseudoconvex} in full generality
  by approximating $B$ with $B\cap\B_R$ and exploiting the convergence
  $\liminf_{R\to\infty}\P(B\cap\B_R)=\P(B)$ (which in turn results from Lemma
  \ref{lem:lsc-per}, the estimate
  $\P(B\cap\B_R)\le\P(B)+\H^{n-1}(B^1\cap\partial\B_R)$, and
  $\int_0^\infty\H^{n-1}(B^1\cap\partial\B_R)\,\d R=|B|<\infty$).
\end{proof}

\subsection*{\boldmath$\H^{n-1}$-\ae{} representatives of $\BV$ functions}

For measurable $u\colon\Omega\to\R$ on open $\Omega\subset\R^n$, by taking the
approximate upper and lower limits in the sense of
\[
  u^+(x)\coleq\sup\{t\in\R\,:\,x\in\{u>t\}^+\}
  \qq\text{and}\qq
  u^-(x)\coleq\sup\{t\in\R\,:\,x\in\{u>t\}^1\}
  \qq\text{for }x\in\Omega
\]
(where as usual $\sup\emptyset\coleq{-}\infty$) we obtain two
extended-real-valued Borel functions $u^+\ge u^-$ on $\Omega$. Occasionally we
also work with their arithmetic mean $u^\ast\coleq\frac12(u^+{+}u^-)$. In
particular, for $A\in\M(\R^n)$, we have $(\1_A)^+=\1_{A^+}$ and
$(\1_A)^-=\1_{A^1}$ on $\R^n$. We also record that, whenever $u$ has value
$y_0\in\R$ at a Lebesgue point $x_0\in\Omega$ (in the sense that
$\lim_{r\searrow0}|\B_r|^{-1}\int_{\B_r(x_0)}|u{-}y_0|\dx=0$), then
$u^\ast(x_0)=u^+(x_0)=u^-(x_0)=y_0$ holds. Hence, it follows from
\cite[Corollary 2.23]{AmbFusPal00} that in case of $u\in\L^1_\loc(\Omega)$ the
representatives $u^+$, $u^-$, $u^\ast$ of $u$ coincide \ae{} on $\Omega$.
Moreover, as a consequence of the Federer-Volpert theorem (see e.\@g.\@
\cite[Theorem 3.78]{AmbFusPal00}), for $u\in\W^{1,1}_\loc(\Omega)$ the
coincidence $u^\ast=u^+=u^-$ stays valid even $\H^{n-1}$-\ae{} on $\Omega$, and
for $u\in\BV_\loc(\Omega)$ one has $u^\ast=u^+=u^-$ at least $\H^{n-1}$-\ae{} on
$\Omega\setminus\mathrm{J}_u$, while on the approximate jump set $\mathrm{J}_u$
the values $u^+$ and $u^-$ correspond $\H^{n-1}$-\ae{} to the two jump values in
the sense of \cite[Definition 3.67]{AmbFusPal00}.

\subsection*{\boldmath$1$-capacity}

A decisive role in at least one central proof of this paper is taken by
$1$-capacity, also known as $\BV$-capacity, in the sense of the next definition.

\begin{defi}[$1$-capacity]\label{def:Cp1}
  For an arbitrary set $E\subset\R^n$, we define
  \[
    \Cp_1(E)\coleq\inf\bigg\{\int_{\R^n}|\nabla u|\dx\,:\,
    u\in\W^{1,1}(\R^n)\,,\,u\ge1\text{ \ae{} on an open neighborhood of }E\bigg\}
    \in{[0,\infty]}
  \]
  \ka with the usual understanding that $\Cp_1(E)=\infty$ if no such $u$
  exists, as, for instance, in case $|E|=\infty$\kz.
\end{defi}

The geometric meaning of $1$-capacity is captured by the following result.

\begin{prop}[perimeter characterization of $1$-capacity]\label{prop:inf-P}
  For every set $E\subset\R^n$, we have
  \[
    \Cp_1(E)=\inf\{\P(H)\,:\,H\in\BVs(\R^n)\,,\,E\subset H^+\}\,.
  \]
\end{prop}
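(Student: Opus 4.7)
I will prove the two inequalities separately.

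The direction $\Cp_1(E)\ge\inf\{\P(H):H\in\BVs(\R^n),E\subset H^+\}$ will follow from the Fleming--Rishel coarea formula. Starting from any admissible $u\in\W^{1,1}(\R^n)$ with $u\ge 1$ \ae{} on some open $V\supset E$, I would truncate to $v\coleq\min\{\max\{u,0\},1\}\in\W^{1,1}(\R^n)$, so that $v\equiv 1$ \ae{} on $V$ and $\int|\nabla v|\dx\le\int|\nabla u|\dx$. Applying Theorem~\ref{thm:BV-coarea} to $v$ (noting $0\le v\le 1$ so that $\P(\{v>t\})=0$ for $t\notin(0,1)$) yields $\int|\nabla v|\dx=\int_0^1\P(\{v>t\})\,\d t$, and hence some $t_\ast\in(0,1)$ satisfies $\P(\{v>t_\ast\})\le\int|\nabla u|\dx$. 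Setting $H\coleq\{v>t_\ast\}$, Chebyshev gives $|H|\le t_\ast^{-1}\|v\|_{\L^1(\R^n)}<\infty$, so $H\in\BVs(\R^n)$; and for each $x\in E\subset V$ some $\B_r(x)\subset V$ has $v\equiv 1$ \ae{} on $\B_r(x)$, so $|\B_r(x)\cap H|=|\B_r|$ and $x\in H^1\subset H^+$. Taking infimum over $u$ gives the inequality.

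For the reverse direction $\Cp_1(E)\le\inf\{\P(H):H\in\BVs(\R^n),E\subset H^+\}$, given $H\in\BVs(\R^n)$ with $E\subset H^+$ and $\eta>0$, I would construct an admissible $u\in\W^{1,1}(\R^n)$ with $\int|\nabla u|\dx\le\P(H)+\eta$. The strategy is to enlarge $H$ to an open set $\tilde H$ containing an open neighborhood of $E$ with $\P(\tilde H)\le\P(H)+\eta$, and then mollify $\1_{\tilde H}$: the smooth function $u\coleq\rho_{\delta/2}\ast\1_{\tilde H}$ equals $1$ on the open set $\{x:\B_{\delta/2}(x)\subset\tilde H\}$, which (for sufficiently small $\delta$) will cover the required neighborhood of $E$, and satisfies $\int|\nabla u|\dx\le\P(\tilde H)\le\P(H)+\eta$ by the standard estimate $|\nabla(\rho_{\delta/2}\ast\1_{\tilde H})|\le\rho_{\delta/2}\ast|\D\1_{\tilde H}|$.

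The main technical hurdle is constructing $\tilde H$ with perimeter close to $\P(H)$. The naive $\delta$-thickening $\tilde H=\{x:\dist(x,H)<\delta\}$ need not work: for instance, when $H$ is a countable disjoint union of vanishingly small balls with $\P(H)<\infty$, one can have $\P(\tilde H)=\infty$ for every $\delta>0$. The resolution should exploit the countable $\H^{n-1}$-rectifiability of the reduced boundary $\partial^\ast H$: a small-perimeter open covering of $\partial^\ast H$ in the spirit of Lemma~\ref{lem:negligible}, combined with an inner smooth approximation of $H^1$, should yield an open $\tilde H$ with $\P(\tilde H)\le\P(H)+\eta$ and $H^+\subset\tilde H$, which is exactly the structure needed to complete the argument.
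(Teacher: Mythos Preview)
Your first direction ($\ge$) via truncation and the coarea formula is correct and standard.

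Your second direction ($\le$) has a genuine gap. You correctly isolate the key step: from $H\in\BVs(\R^n)$ with $E\subset H^+$, produce an open $\tilde H$ containing a neighborhood of $E$ with $\P(\tilde H)\le\P(H)+\eta$. But the proposed construction---``a small-perimeter open covering of $\partial^\ast H$ in the spirit of Lemma~\ref{lem:negligible}, combined with an inner smooth approximation of $H^1$''---does not work as stated. Lemma~\ref{lem:negligible} covers $\H^{n-1}$-\emph{negligible} sets by open sets of arbitrarily small perimeter, whereas $\H^{n-1}(\partial^\ast H)=\P(H)>0$ in general; a simple slicing argument shows that any open $U\in\BVs(\R^2)$ containing a unit segment already satisfies $\P(U)\ge2$, so no ``small-perimeter'' open covering of $\partial^\ast H$ exists. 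A thin-tube covering with perimeter of order $2\P(H)$ is conceivable, but the implicit hope that its boundary ``cancels'' against the boundary of an inner approximation of $H^1$ when one takes the union is a delicate statement you have not justified. The phrase ``should yield'' is carrying all the weight. (There is also a secondary issue: even once $\tilde H\supset E$ is open, the mollification $\rho_{\delta/2}\ast\1_{\tilde H}$ equals $1$ only on the $\delta/2$-interior of $\tilde H$, which need not contain $E$ for any $\delta>0$.)

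The paper's proof is entirely different and much shorter: it cites \cite[Theorem~2.1]{CarDalLeaPas88}, which already establishes the formula with $E\subset H^+$ replaced by either the stronger condition $E\subset\inn(H)$ or the weaker condition $\H^{n-1}(E\setminus H^+)=0$; the desired version is then sandwiched between these two. The hard analysis (quasi-semicontinuity of precise representatives of $\BV$ functions) is thus outsourced to \cite{CarDalLeaPas88}. If you want a self-contained argument for the $\le$ direction, you will need machinery of comparable depth---for instance the one-sided $\H^{n-1}$-a.e.\ approximation of Lemma~\ref{lem:mon-approx-u+} together with the quasi-continuity of Lemma~\ref{lem:qc}, in the spirit of the later proof of Lemma~\ref{lem:good-ext-approx}---rather than a covering argument modeled on Lemma~\ref{lem:negligible}.
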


\begin{proof}
  By \cite[Theorem 2.1]{CarDalLeaPas88}, the claim holds with the inclusion
  $E\subset H^+$ replaced either by $E\subset\inn(H)$ (for any pointwise
  representative of $H$) or by $\H^{n-1}(E\setminus H^+)=0$. Since we trivially
  have $E\subset\inn(H)\implies E\subset H^+\implies\H^{n-1}(E\setminus H^+)=0$,
  the claimed intermediate version of the formula follows. (In fact, taking into
  account Lemma \ref{lem:negligible}, the claimed version can alternatively be
  deduced from the version with $\H^{n-1}(E\setminus H^+)=0$ only.)
\end{proof}

The following result from \cite[Section 4]{FedZie72} can also be found in
\cite[Proposition 2.2(f)]{CarDalLeaPas88} and \cite[Theorem 5.12]{EvaGar15}, for
instance (where the latter statement is made for $n\ge2$ and compact sets, but
easily extends to the remaining cases).

\begin{prop}\label{prop:Cp1-null}
  For $S\in\Bo(\R^n)$, we have
  \[
    \Cp_1(S)=0\iff\H^{n-1}(S)=0\,.
  \]
\end{prop}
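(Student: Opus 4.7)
The plan is to establish the two implications separately, in both cases reducing everything to Proposition~\ref{prop:inf-P}.

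For the direction $\H^{n-1}(S)=0\Rightarrow\Cp_1(S)=0$, I would apply Lemma~\ref{lem:negligible} directly to the Borel set $S$ itself to obtain, for each $\eps>0$, an open set $A\supset S$ with $|A|<\eps$ and $\P(A)<\eps$. Since $A$ is open, every $x\in A$ has a ball $\B_r(x)$ contained in $A$ for all small $r$, hence density $1$ in $A$, so $S\subset A\subset A^1\subset A^+$ with $A\in\BVs(\R^n)$. Proposition~\ref{prop:inf-P} then gives $\Cp_1(S)\le\P(A)<\eps$, and letting $\eps\searrow0$ concludes.

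For the harder direction $\Cp_1(S)=0\Rightarrow\H^{n-1}(S)=0$, the strategy is to produce covers of $S$ by balls whose $(n{-}1)$-th power radii sum to zero \emph{and} whose individual radii tend to zero, since a mere Hausdorff-content bound would not suffice. Proposition~\ref{prop:inf-P} furnishes, for each $k\in\N$, some $H_k\in\BVs(\R^n)$ with $S\subset H_k^+$ and $\P(H_k)<2^{-k}$, and since $|H_k^\c|=\infty$ in $\R^n$, Theorem~\ref{thm:isoperi} implies $|H_k|\le\Gamma_n\P(H_k)^{n/(n-1)}\le\Gamma_n 2^{-kn/(n-1)}$. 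For each $x\in H_k^+$, I consider the continuous map $r\mapsto|H_k\cap\B_r(x)|/|\B_r|$, which has positive upper limit at $r=0$ (by definition of $H_k^+$) and vanishes as $r\to\infty$ (by $|H_k|<\infty$), so by the intermediate value theorem there is $r_k(x)>0$ with $|H_k\cap\B_{r_k(x)}(x)|=2^{-n-1}|\B_{r_k(x)}|$. This identity forces the uniform bound $r_k(x)\le C_n|H_k|^{1/n}\to0$ as $k\to\infty$, so Vitali's $5$-times covering lemma extracts a disjoint subfamily $\{\B_{r_k(x_i)}(x_i)\}_i$ with $H_k^+\subset\bigcup_i\B_{5r_k(x_i)}(x_i)$.

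The decisive step is the relative isoperimetric inequality applied inside each $\B_{r_k(x_i)}(x_i)$, where by construction the density of $H_k$ is the fixed value $2^{-n-1}$, strictly bounded between $0$ and $1$. It gives a dimensional bound $c_n\,r_k(x_i)^{n-1}\le\P(H_k,\B_{r_k(x_i)}(x_i))$, and summing over the disjoint family yields $\sum_i r_k(x_i)^{n-1}\le c_n^{-1}\P(H_k)<c_n^{-1}2^{-k}$. Therefore $S\subset H_k^+$ is covered by balls of radius at most $5C_n|H_k|^{1/n}\to0$ whose $(n{-}1)$-th power radii sum to at most $5^{n-1}c_n^{-1}2^{-k}\to0$, and the definition of $\H^{n-1}$ as a limit of $\H^{n-1}_\delta$ immediately yields $\H^{n-1}(S)=0$. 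The main obstacle is precisely this \emph{double} smallness: without the isoperimetric shrinking of $|H_k|$ only the Hausdorff content $\H^{n-1}_\infty(S)$ would be controlled, not the full $\H^{n-1}$-measure.
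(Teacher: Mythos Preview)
The paper does not prove this proposition; it simply cites \cite{FedZie72}, \cite{CarDalLeaPas88}, \cite{EvaGar15} and remarks that the extension beyond compact sets and to $n=1$ is routine. Your argument is therefore not reproducing anything in the paper, and the overall plan---Lemma~\ref{lem:negligible} together with Proposition~\ref{prop:inf-P} for the easy direction, a Vitali cover combined with the relative isoperimetric inequality for the hard one---is a standard and essentially correct route.

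There is, however, a genuine gap in your intermediate-value step. You assert that for \emph{each} $x\in H_k^+$ there exists $r_k(x)$ with $|H_k\cap\B_{r_k(x)}(x)|=2^{-n-1}|\B_{r_k(x)}|$, but membership in $H_k^+$ only ensures that the upper density is \emph{positive}, not that it exceeds $2^{-n-1}$. At points where the upper density lies in $(0,2^{-n-1}]$ the continuous density ratio may never reach $2^{-n-1}$, and the IVT does not apply; without a uniform lower bound on the density you also lose the uniform constant in the relative isoperimetric step. The fix is short but must be stated: invoke Theorem~\ref{thm:Federer} to see that $\H^{n-1}$-a.e.\ point of $H_k^+$ has density $\tfrac12$ or $1$, so the exceptional low-density points form an $\H^{n-1}$-null subset of $H_k^+$ that can be covered at zero $\H^{n-1}_\delta$-cost, while your covering argument runs unchanged on $H_k^{1/2}\cup H_k^1$.

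One further small point: your bound $|H_k|\le\Gamma_n\P(H_k)^{n/(n-1)}$ via Theorem~\ref{thm:isoperi} tacitly assumes $n\ge2$. For $n=1$ the equivalence is immediate from Proposition~\ref{prop:inf-P}, since the perimeter on $\R$ is integer-valued and hence $\Cp_1(S)=0$ forces $S\subset H^+=\emptyset$.
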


Finally, we record the following (semi)continuity properties of weakly
differentiable functions.

\begin{lem}[quasi semicontinuous representatives of a $\BV$ function]\label{lem:qsc}
  For open $\Omega\subset\R^n$ and\/ $u\in\BV_\loc(\Omega)$, the
  representatives $u^+$ and\/ $u^-$ of\/ $u$ are $\Cp_1$-quasi upper
  semicontinuous and $\Cp_1$-quasi lower semicontinuous, respectively, that is,
  for every $\eps>0$, there exists an open set\/ $E\subset\Omega$ with
  $\Cp_1(E)<\eps$ such that $u^+$ is upper semicontinuous on $E^\c$ and $u^-$
  is lower semicontinuous on $E^\c$.
\end{lem}

\begin{lem}[quasi continuity of a $\W^{1,1}$ function]\label{lem:qc}
  For open $\Omega\subset\R^n$ and\/ $u\in\W^{1,1}_\loc(\Omega)$, the
  representative $u^\ast$ of\/ $u$ is $\Cp_1$-quasi continuous, that is, for every
  $\eps>0$, there exists an open set\/ $E\subset\Omega$ with $\Cp_1(E)<\eps$ such
  that $u^\ast$ is defined and continuous on $E^\c$.
\end{lem}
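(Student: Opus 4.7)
The plan is to reduce the localized statement to the full-space Federer-Ziemer quasi-continuity result from \cite{FedZie72} cited just above the lemma, by means of a standard cut-off and exhaustion argument together with a straightforward patching step at the end.

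More precisely, I fix $\eps>0$ and choose an exhaustion $\Omega_1\Subset\Omega_2\Subset\ldots$ of $\Omega$ with $\Omega=\bigcup_j\Omega_j$ (for instance, $\Omega_j\coleq\{x\in\Omega\,:\,|x|<j,\,\dist(x,\Omega^\c)>1/j\}$). For each $j\in\N$ I pick a cutoff $\p_j\in\C_\c^\infty(\Omega)$ with $0\le\p_j\le1$ and $\p_j\equiv1$ on an open neighborhood of $\overline{\Omega_j}$. Extending $u$ by zero outside $\Omega$, the product $u_j\coleq\p_j u$ then belongs to $\W^{1,1}(\R^n)$ and has compact support. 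By the full-space quasi-continuity established in \cite[Sections 9 and 10]{FedZie72}, for each $j$ there consequently exists an open set $E_j\subset\R^n$ with $\Cp_1(E_j)<\eps\,2^{-j-1}$ such that $u_j^\ast$ has finite values and is continuous on $E_j^\c$.

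I then set $E\coleq\bigcup_{j=1}^\infty(E_j\cap\Omega_j)$, so that $E$ is open and contained in $\Omega$, and by monotonicity and countable subadditivity of $\Cp_1$ satisfies $\Cp_1(E)\le\sum_{j=1}^\infty\Cp_1(E_j)<\eps$. Since $u_j=u$ holds $\Ln$-\ae{} on the open set $\Omega_j$, the approximate limits coincide pointwise on $\Omega_j$, i.\@e.\@ $u_j^\ast\equiv u^\ast$ there. To verify continuity of $u^\ast$ on $\Omega\setminus E$, I fix $x\in\Omega\setminus E$, pick $j_0\in\N$ with $x\in\Omega_{j_0}$, and choose an open neighborhood $U$ of $x$ with $U\subset\Omega_{j_0}$. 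Then $U\cap(\Omega\setminus E)\subset\Omega_{j_0}\setminus E_{j_0}$, and on this set $u^\ast$ agrees with the continuous function $u_{j_0}^\ast|_{E_{j_0}^\c}$, which yields the desired continuity of $u^\ast|_{\Omega\setminus E}$ at $x$.

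The main obstacle I expect is the patching, in particular the choice to intersect each $E_j$ with $\Omega_j$ before forming the union: this refinement is what ensures that every $x\in\Omega\setminus E$ is eventually in some $\Omega_{j_0}\setminus E_{j_0}$, where a single $u_{j_0}^\ast$ governs continuity locally, without any need to argue consistency between the different $u_j^\ast$. A minor technical addendum concerns the ``defined'' part of the conclusion: since $u\in\W^{1,1}_\loc(\Omega)$ implies via the Federer-Volpert theorem that $u^+=u^-=u^\ast$ with finite values holds $\H^{n-1}$-\ae{} on $\Omega$, the exceptional set where $u^\ast$ is infinite has $\Cp_1$ equal to zero by Proposition~\ref{prop:Cp1-null} and may be absorbed into $E$ via the outer-regularity built into Definition~\ref{def:Cp1}.
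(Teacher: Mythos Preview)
Your proposal is correct and follows exactly the route the paper indicates: the paper does not give a detailed proof but merely states that the localized claim ``easily follows from the original statements established in \cite[Section 9, 10]{FedZie72} for full-space case'', and your cut-off and exhaustion argument is a clean execution of precisely that reduction. The patching via $E=\bigcup_j(E_j\cap\Omega_j)$ is handled correctly, and your final addendum about the ``defined'' part is in fact redundant (since $u^\ast=u_{j_0}^\ast$ on $\Omega_{j_0}$ already inherits finiteness from the full-space conclusion for $u_{j_0}^\ast$ on $E_{j_0}^\c$), but harmless.
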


Here, Lemma \ref{lem:qsc} is a restatement of \cite[Theorem
2.5]{CarDalLeaPas88}, while the claim of Lemma \ref{lem:qc} follows from
original statements established in \cite[Section 9, 10]{FedZie72} for the
full-space case. Clearly, one may also view Lemma \ref{lem:qc} as a consequence
of Lemma \ref{lem:qsc} and the $\H^{n-1}$-\ae{} coincidence $u^\ast=u^+=u^-$ for
$\W^{1,1}$ functions $u$.

\subsection*{\boldmath Strict and $\H^{n-1}$-\ae{} convergence and approximation}

\begin{lem}[strong convergence in $\W^{1,1}$ implies $\H^{n-1}$-\ae{}
    convergence]\label{lem:str-implies-Hae}
  If $v_\ell$ converge to $v$ in $\W^{1,1}(\Omega)$ on an open set\/
  $\Omega\subset\R^n$, then a subsequence of $(v_\ell^\ast)_{\ell\in\N}$
  converges $\H^{n-1}$-\ae{} on $\Omega$ to $v^\ast$.
\end{lem}

The case $\Omega=\R^n$ of Lemma \ref{lem:str-implies-Hae} is contained in
\cite[Section 10]{FedZie72} (where in view of Proposition \ref{prop:Cp1-null} we
may use $\H^{n-1}$ instead of $\Cp_1$). Since the claim can be localized, one
may pass to general domains $\Omega$ by simple cut-off arguments.

\begin{lem}[one-sided $\H^{n-1}$-\ae{} approximation of a $\BV$ function]
    \label{lem:mon-approx-u+}
  For every $u\in\BV(\R^n)$, there exists a sequence of functions
  $v_\ell\in\W^{1,1}(\R^n)$ such that $v_{\ell+1}\le v_\ell$ holds \ae{} on
  $\R^n$ for all $\ell\in\N$ and $v_\ell^\ast$ converge $\H^{n-1}$-\ae{} on
  $\R^n$ to $u^+$. If $u$ is bounded from above, one can additionally achieve
  $\sup_{\R^n}v_1\le\sup_{\R^n}u$.
\end{lem}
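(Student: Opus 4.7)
I would realize $u^+$ as a layer-cake integral over the essential closures $\{u>t\}^+$ of the superlevel sets of $u$ and approximate each such indicator \emph{from above} by a $\W^{1,1}$-function via the $1$-capacity theory developed above.

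As a preliminary reduction I handle bounded $u$ first and pass to the general case by truncation. So assume $|u|\le M$, and start from the elementary identity $u(x)=-M+\int_{-M}^M\1_{\{u>t\}}(x)\,\d t$, valid for $\Ln$-\ae{} $x$. By Theorem~\ref{thm:BV-coarea}, $\{u>t\}\in\BVs(\R^n)$ for $\mathcal{L}^1$-\ae{} $t\in(-M,M)$, and for each such ``good'' $t$ and $\ell\in\N$, combining Definition~\ref{def:Cp1} with Proposition~\ref{prop:inf-P} (which yields $\Cp_1(\{u>t\}^+)\le\P(\{u>t\})$) I select $g_{t,\ell}\in\W^{1,1}(\R^n)$ with $0\le g_{t,\ell}\le 1$, $g_{t,\ell}=1$ \ae{} on an open neighborhood of $\{u>t\}^+$, and
\[
  \int_{\R^n}|\nabla g_{t,\ell}|\dx+\|g_{t,\ell}-\1_{\{u>t\}^+}\|_{\L^1(\R^n)}\le\P(\{u>t\})+\eta_\ell(t)\,,
\]
for a measurable $\eta_\ell(t)>0$ with $\int_{-M}^M\eta_\ell(t)\,\d t\to 0$ as $\ell\to\infty$; for exceptional $t$ I set $g_{t,\ell}\coleq\1_{\{u>t\}^+}$. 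Then
\[
  \tilde v_\ell(x)\coleq-M+\int_{-M}^M g_{t,\ell}(x)\,\d t
\]
lies in $\W^{1,1}(\R^n)$ (by Fubini and the coarea identity $\int_\R\P(\{u>t\})\,\d t=|\D u|(\R^n)$), converges to $u$ in $\L^1(\R^n)$, and satisfies $\tilde v_\ell\le M$ pointwise. Since $g_{t,\ell}\ge\1_{\{u>t\}}$ $\Ln$-\ae{}, one gets $\tilde v_\ell\ge u$ $\Ln$-\ae{}, and this propagates to $\tilde v_\ell^\ast\ge u^+$ everywhere on $\R^n$ (because $(\cdot)^\ast$ and $(\cdot)^+$ are insensitive to $\Ln$-null modifications of their arguments).

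To upgrade $\Ln$-\ae{} to $\H^{n-1}$-\ae{} pointwise convergence of the representatives, I invoke the quasi-continuity of $\W^{1,1}$-representatives (Lemma~\ref{lem:qc}) combined with Proposition~\ref{prop:Cp1-null} (identifying $\Cp_1$-null and $\H^{n-1}$-null Borel sets): for each fixed good $t$ this yields $g_{t,\ell}^\ast\to\1_{\{u>t\}^+}$ $\H^{n-1}$-\ae{} on $\R^n$ along a subsequence, and a Fubini-type analysis on the $\H^{n-1}$-$\sigma$-finite rectifiable jump set $J_u$ (together with routine dominated convergence off $J_u$, where $u^+=u^-$ $\H^{n-1}$-\ae{}) integrates this convergence in $t$ to produce $\tilde v_{\ell_k}^\ast(x)\to u^+(x)$ at $\H^{n-1}$-\ae{} $x\in\R^n$ along a subsequence. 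The one-sided bound $\tilde v_\ell^\ast\ge u^+$ pins the limit to $u^+$ rather than to $u^\ast$ or any other $\Ln$-equivalent representative. Setting finally $v_k\coleq\min(\tilde v_{\ell_1},\ldots,\tilde v_{\ell_k})$ produces a non-increasing sequence in $\W^{1,1}(\R^n)$ (the minimum of finitely many $\W^{1,1}$-functions is $\W^{1,1}$) with $v_k^\ast\to u^+$ $\H^{n-1}$-\ae{}; in the bounded-above case with $M=\sup u$ the pointwise bound $v_1\le M$ gives the refinement $\sup_{\R^n}v_1\le\sup_{\R^n}u$. For general $u\in\BV(\R^n)$ one truncates $u_M\coleq(u\wedge M)\vee(-M)$, uses $(u_M)^+\to u^+$ $\H^{n-1}$-\ae{}, runs the bounded construction for each $u_M$, and stitches everything together via a cumulative minimum over a diagonal subsequence.

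The \emph{main obstacle} is precisely this upgrade from $\Ln$-\ae{} (routinely obtained from $\L^1$-convergence) to $\H^{n-1}$-\ae{} pointwise convergence of the representatives, combined with the identification of the limit as $u^+$ rather than $u^\ast$. The capacitary framework (Definition~\ref{def:Cp1}, Propositions~\ref{prop:inf-P} and~\ref{prop:Cp1-null}, Lemma~\ref{lem:qc}), together with the strictly one-sided construction $g_{t,\ell}\ge\1_{\{u>t\}^+}$, is what makes resolution feasible.
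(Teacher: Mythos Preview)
The paper does not give a self-contained proof; it simply cites \cite[Theorem 2.5]{CarDalLeaPas88} (quasi upper semicontinuity of $u^+$) together with \cite[Lemma 1.5, Section 6]{DalMaso83} (a general monotone approximation of $\Cp_1$-quasi-u.s.c.\ functions from above by quasi-continuous $\W^{1,1}$ functions), and obtains the boundedness addendum by truncation at $\sup_{\R^n}u$. Your layer-cake construction is genuinely different in spirit, but as written it has two real gaps.

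First, the selection $(t,x)\mapsto g_{t,\ell}(x)$ must be jointly measurable for $\tilde v_\ell$ and $\nabla\tilde v_\ell$ to even be defined and for Fubini to apply; you have not addressed this, and a naive choice of near-minimizers for $\Cp_1(\{u>t\}^+)$ will not give it. Second, and more seriously, your ``upgrade'' from $\L^1$ to $\H^{n-1}$-\ae{} convergence of representatives is precisely the content of the lemma and is not actually carried out. The displayed bound $\int|\nabla g_{t,\ell}|+\|g_{t,\ell}-\1_{\{u>t\}^+}\|_{\L^1}\le\P(\{u>t\})+\eta_\ell(t)$ does \emph{not} force $\|g_{t,\ell}-\1_{\{u>t\}^+}\|_{\L^1}\to0$, since $\P(\{u>t\})$ is fixed; and even granting $\L^1$ convergence, Lemma~\ref{lem:qc} gives quasi-continuity of a single $\W^{1,1}$ function, not $\H^{n-1}$-\ae{} convergence of a sequence. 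The tool that would help is Lemma~\ref{lem:str-implies-Hae}, but that requires \emph{strong $\W^{1,1}$} convergence, which you do not have (the $\tilde v_\ell$ converge to $u\in\BV$, not to a $\W^{1,1}$ limit). The sentence about a ``Fubini-type analysis on $J_u$ together with routine dominated convergence off $J_u$'' hides the entire difficulty: the subsequence along which $g_{t,\ell}^\ast\to\1_{\{u>t\}^+}$ would depend on $t$, and no mechanism is offered for synchronizing these. In short, the capacitary ingredients you invoke are the right ones, but the passage to $\H^{n-1}$-\ae{} convergence needs an argument of the calibre of the cited results in \cite{CarDalLeaPas88,DalMaso83}, not a Fubini sketch.
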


Lemma \ref{lem:mon-approx-u+} follows by combining Lemma \ref{lem:qsc} and
\cite[Lemma 1.5, Section 6]{DalMaso83}; compare also
\cite[Section 4, Section 10]{FedZie72}. However, since Lemma
\ref{lem:mon-approx-u+} plays a crucial role in this paper, in the following we
provide a slightly adapted and explicated rereading of the relevant arguments
of \cite{DalMaso83} in our setting.

\begin{proof}[Proof of Lemma \ref{lem:mon-approx-u+}]
  We first assume ${-}M\le u\le0$ \ae{} on $\R^n$ for some $M\in{[0,\infty)}$.
  Lemma \ref{lem:qsc} yields open sets $E_k\subset\R^n$ such that $u^+$ is upper
  semicontinuous on $E_k^\c$ for all $k\in\N$ and $\lim_{k\to\infty}\Cp_1(E_k)=0$
  holds. In particular we infer $\Cp_1\big(\bigcap_{k=1}^\infty E_k\big)=0$ and
  via Proposition \ref{prop:Cp1-null} also
  $\H^{n-1}\big(\bigcap_{k=1}^\infty E_k\big)=0$. By passage to finite
  intersections we can additionally achieve $E_{k+1}\subset E_k$ for all $k\in\N$.
  From Definition \ref{def:Cp1} we get functions $0\le w_k\in\W^{1,1}(\R^n)$ with
  $w_k^\ast\ge1$ on $E_k$ for all $k\in\N$ such that
  $\lim_{k\to\infty}\int_{\R^n}|\nabla w_k|\dx=0$ holds. Via
  the Gagliardo-Nirenberg inequality we conclude that $(w_k)_{k\in\N}$ converge to
  $0$ in $\W^{1,1}_\loc(\R^n)$, and after another passage to a subsequence Lemma
  \ref{lem:str-implies-Hae} gives $\H^{n-1}$-\ae{} convergence $w_k^\ast\to0$ on
  $\R^n$. Further, we recall that $\B_k$ denotes the open ball with center $0$
  and radius $k$, and we define upper semicontinuous functions $\overline u_k$
  on all of $\R^n$ by setting $\overline u_k\coleq u^+$ on $E_k^\c\cap\B_k$ and
  $\overline u_k\coleq{-}M$ on $E_k\cap\B_k$ together with $\overline
  u_k\coleq0$ outside $\B_k$. Then, for arbitrary $\ell\in\N$, the choice
  $\overline u_{k,\ell}(x)\coleq\max_{y\in\R^n}\big[\overline u_k(y){-}\ell|x{-}y|\big]$
  for $x\in\R^n$ produces compactly supported Lipschitz functions
  $\overline u_{k,\ell}$, which in particular satisfy
  $\overline u_{k,\ell}\in\W^{1,1}(\R^n)$ and in the limit $\ell\to\infty$
  converge from above to $\overline u_k$. Now we are ready to introduce
  \[
    v_\ell\coleq\min\{\overline u_{1,\ell}{+}Mw_1,\overline u_{2,\ell}{+}Mw_2,\ldots,\overline u_{\ell,\ell}{+}Mw_\ell\}\in\W^{1,1}(\R^n)\,.
  \]
  Since the construction ensures $\overline u_{k,\ell+1}\le\overline u_{k,\ell}$,
  we evidently have $v_{\ell+1}\le v_\ell$ \ae{} on $\R^n$ for all
  $\ell\in\N$. In addition, for each $k\in\N$, we find
  $\limsup_{l\to\infty}v_\ell^\ast\le\lim_{l\to\infty}\overline u_{k,\ell}+Mw_k^\ast
  =\overline u_k+Mw_k^\ast=u^++Mw_k^\ast$ on $E_k^\c\cap\B_k$. Then, by
  passing $k\to\infty$ and exploiting the choices of $E_k$ and $w_k$ (in
  particular the observation that in view of $E_{k+1}\subset E_k$ each point of
  $(\bigcap_{k=1}^\infty E_k)^\c$ is contained in $E_k^\c\cap\B_k$ for
  arbitrarily large $k$) we conclude that
  \[
    \limsup_{l\to\infty}v_\ell^\ast\le u^+
    \qq\text{holds }\H^{n-1}\text{-\ae{} on }\R^n\,.
  \]
  As a complement, for all $k,\ell\in\N$, the construction ensures
  $\overline u_{k,\ell}+Mw_k^\ast\ge\overline u_k \ge u^+$ on $E_k^\c$ and
  $\overline u_{k,\ell}+Mw_k^\ast\ge{-}M+Mw_k^\ast\ge0\ge u^+$ on $E_k$.
  Therefore, we also get
  \[
    \liminf_{l\to\infty}v_\ell^\ast\ge u^+
    \qq\text{on }\R^n
  \]
  and have checked all claims of the lemma in the case with ${-}M\le u\le0$.

  Next, we assume merely $u\le0$ \ae{} on $\R^n$, but allow $u$ to be unbounded
  from below. Then, for each $M\in\N$, the previous reasoning applies to
  $\max\{u,{-}M\}$ and gives functions $v_{\ell,M}\in\W^{1,1}(\R^n)$ with
  $v_{\ell+1,M}\le v_{\ell,M}$ \ae{} on $\R^n$ such that $v_{\ell,M}^\ast$
  converge $\H^{n-1}$-\ae{} on $\R^n$ to
  $\max\{u,{-}M\}^+=\max\{u^+,{-}M\}$. It is then a standard matter to verify
  the claims of the lemma for
  $v_\ell\coleq\min\{v_{\ell,1},v_{\ell,2},\ldots,v_{\ell,\ell}\}\in\W^{1,1}(\R^n)$.

  Finally, to prove the lemma for arbitrary $u\in\BV(\R^n)$, we exploit the
  existence of some $w\in\W^{1,1}(\R^n)$ such that $w\ge u$ \ae{} on
  $\R^n$. We subtract $w$, apply the preceding to $u{-}w\le0$, and then add $w$
  again to obtain suitable $v_\ell$. Clearly, if $u$ is additionally bounded
  from above, we can preserve the bound $M\coleq\sup_{\R^n}u$ by replacing
  $v_\ell$ with $\min\{v_\ell,M\}$ (or alternatively by taking $w\le M$ and
  revisiting the above construction).
\end{proof}

\begin{defi}[strict convergence in $\BV$]\label{defi:strict-conv}
  We say that a sequence of functions $u_\ell\in\BV(\Omega)$ converges strictly
  in $\BV(\Omega)$ to $u\in\BV(\Omega)$ if $u_\ell$ converge to $u$ in
  $\L^1(\Omega)$ with $\lim_{\ell\to\infty}|\D u_\ell|(\Omega)=|\D u|(\Omega)$.
\end{defi}

The following statement slightly adapts the one-sided approximation result of
\cite[Theorem 3.3]{CarDalLeaPas88} in order to additionally preserve boundedness
of the support and possibly the function itself.

\begin{lem}[one-sided strict approximation of a $\BV$ function]
    \label{lem:strict-approx-above}
  Consider an open set\/ $\Omega\subset\R^n$ and $u\in\BV(\Omega)$ with
  $\spt u\Subset\Omega$. Then there exists a sequence of functions
  $v_k\in\W^{1,1}(\Omega)$ such that $v_k$ converge strictly in $\BV(\Omega)$ to
  $u$ with $\spt v_k\Subset\Omega$ and $v_k\ge u$ \ae{}
  on $\Omega$ for all $k\in\N$. If $u$ is bounded from above, one can
  additionally achieve $\sup_\Omega v_k\le\sup_\Omega u$ for all
  $k\in\N$.
\end{lem}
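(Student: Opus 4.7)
The strategy is to take the CDLP approximation \cite[Theorem 3.3]{CarDalLeaPas88} as a starting point---yielding $w_k\in\W^{1,1}(\Omega)$ with $w_k\ge u$ \ae{} and $w_k\to u$ strictly in $\BV(\Omega)$, but \emph{without} any control on either $\spt w_k$ or $\sup_\Omega w_k$---and then to modify it in two small post-processing steps that independently enforce the additional compact-support property and the upper-bound property.

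If $u$ is bounded from above, I first truncate: with $M_+\coleq\max\{0,\sup_\Omega u\}\ge0$, I replace $w_k$ by $\min\{w_k,M_+\}$. The truncated sequence stays in $\W^{1,1}(\Omega)$, still dominates $u$ pointwise (since both $w_k\ge u$ and $M_+\ge\sup_\Omega u\ge u$), cannot have larger total variation than the original, and converges in $\L^1(\Omega)$ to $\min\{u,M_+\}=u$; combined with the standard lower semicontinuity of the total variation along $\L^1$-convergent sequences, strict convergence persists. Hence I may assume $w_k\le M_+$ without loss of generality.

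Next I enforce compact support via a cutoff. Since $\spt u$ is compact in $\Omega$, pick $\eta\in\C^\infty_\c(\Omega)$ with $0\le\eta\le1$ and $\eta\equiv1$ on an open neighborhood of $\spt u$, and set $v_k\coleq\eta w_k$. Then $v_k\in\W^{1,1}(\Omega)$ with $\spt v_k\subset\spt\eta\Subset\Omega$, and in the bounded case $v_k\le M_+$ (using $\eta\in[0,1]$ together with the trivial case distinction $w_k\ge0$ vs.\ $w_k<0$). The key observation is that the one-sided inequality $v_k\ge u$ is preserved automatically: on $\{\eta=1\}\supset\spt u$ I have $v_k=w_k\ge u$ directly, while on its complement $u\equiv0$ by the definition of $\spt u$, so that $w_k\ge u=0$ \ae{} there forces $v_k=\eta w_k\ge0=u$ as well.

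Strict convergence of $v_k$ to $u$ in $\BV(\Omega)$ is then the final verification. For the $\L^1$-part, $|v_k-u|\le\eta|w_k-u|+(1-\eta)|u|=\eta|w_k-u|$ (since $u=0$ off $\{\eta=1\}$), and the right-hand side tends to $0$ in $\L^1(\Omega)$ together with $w_k\to u$. For the total variation I expand $\nabla(\eta w_k)=\eta\nabla w_k+w_k\nabla\eta$; as $\spt\nabla\eta\subset\Omega\setminus\spt u$ and thus $|u|\cd|\nabla\eta|\equiv0$, the $\L^1$-convergence of $w_k$ yields $\int_\Omega|w_k||\nabla\eta|\dx\to0$, whence $\limsup_k|\D v_k|(\Omega)\le\lim_k|\D w_k|(\Omega)=|\D u|(\Omega)$; the matching $\liminf$-inequality is the classical lower semicontinuity of total variation. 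The only genuinely subtle step is the sign observation that $w_k\ge0$ \ae{} on $\Omega\setminus\spt u$ (immediate from $w_k\ge u=0$ there), which is what makes the naive cutoff multiplication succeed without any further correction; everything else is routine.
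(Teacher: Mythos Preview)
Your proof is correct and follows essentially the same approach as the paper's: start from the CDLP one-sided approximation \cite[Theorem 3.3]{CarDalLeaPas88}, then post-process with a cut-off $\eta$ and a truncation at $M_+=\max\{0,\sup_\Omega u\}$. The only cosmetic difference is the order of the two modifications (you truncate first, then cut off; the paper cuts off first, then truncates), and you spell out in detail the strict-convergence verification that the paper simply calls ``standard''. One small point you might add for completeness: the paper first reduces to bounded $\Omega$ (harmless since $\spt u\Subset\Omega$) before invoking \cite[Theorem 3.3]{CarDalLeaPas88}, in case that result is stated only for bounded domains.
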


\begin{proof}
  Since $\spt u$ is compact in $\Omega$, there is no loss of generality in
  assuming boundedness of $\Omega$. Then, by \cite[Theorem 3.3]{CarDalLeaPas88},
  there exist $w_k\in\W^{1,1}(\Omega)$ such that $w_k$ converge strictly in
  $\BV(\Omega)$ to $u$ with
  $w_k\ge u$ \ae{} on $\Omega$ for all $k\in\N$ (where in fact the
  convergence in area guaranteed by \cite[Theorem 3.3]{CarDalLeaPas88} is even
  stronger than the strict convergence of Definition \ref{defi:strict-conv}). We
  now fix a cut-off function $\eta\in\C^\infty_\cpt(\Omega)$ with
  $\1_{\spt u}\le\eta\le1$ on $\Omega$. Then, for
  $v_k\coleq\eta w_k\in\W^{1,1}(\Omega)$ with $\spt v_k\Subset\Omega$, it is
  standard to verify that $v_k$ still converge strictly in $\BV(\Omega)$ to $u$
  with $v_k\ge u$ \ae{} on $\Omega$ for all $k\in\N$. This establishes the main
  claim.

  If $u$ is additionally bounded, we replace $v_k$ already constructed
  with $\min\{v_k,L\}$ for $L\coleq\sup_\Omega u\ge0$. Taking into
  account the lower semicontinuity of the total variation, this preserves all
  previous properties and additionally ensures boundedness from above by $L$.
\end{proof}

We conclude this subsection with one more lemma which is tailored out for
constructing approximations with suitable smallness conditions on the support in
the proof of the later Theorem \ref{thm:char-small-vol-ic}.

\begin{lem}[control on the support of strict approximations]
    \label{lem:control-spt}
  Consider an open set\/ $\Omega\subset\R^n$. If $v_k\in\W^{1,1}_0(\Omega)$
  converge to $u\in\BV(\Omega)$ strictly in $\BV(\Omega)$ with $u\ge0$ \ae{} on
  $\Omega$ and\/ $|\{u>0\}|<M<\infty$, then there also exists a modified
  sequence of functions $w_\ell\in\W^{1,1}_0(\Omega)$ such that $w_\ell$ still
  converge to $u$ strictly in $\BV(\Omega)$ with $w_\ell\ge0$ \ae{} on $\Omega$
  and\/ $|\{w_\ell>0\}|<M$ for all $\ell\in\N$. Moreover, if all $v_k$ are
  even in $\C^\infty_\cpt(\Omega)$, all $w_\ell$ can be taken in
  $\C^\infty_\cpt(\Omega)$ as well, and in this case $|\{w_\ell>0\}|<M$ can be
  strengthened to $|{\spt w_\ell}|<M$. Finally, if $v_k$ converge even in
  $\W^{1,1}(\Omega)$ \ka and thus to $u\in\W^{1,1}_0(\Omega)$\kz, also $w_\ell$ can be
  taken to converge in $\W^{1,1}(\Omega)$.
\end{lem}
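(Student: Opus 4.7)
The plan is to obtain $w_\ell$ from $v_k$ by a low-level truncation $w_\ell\coleq(v_k{-}t_k)_+$ with a carefully tuned threshold $t_k\searrow0$, so that $\{w_\ell{>}0\}=\{v_k{>}t_k\}$ inherits via Markov's inequality a measure bound just under $M$; the slack $M{-}|\{u{>}0\}|>0$ combined with $v_k\to u$ in $\L^1(\Omega)$ supplies the needed room. As a preliminary I would first reduce to $v_k\ge0$ \ae{} on $\Omega$ by replacing each $v_k$ with $v_k^+\in\W^{1,1}_0(\Omega)$: one has $|v_k^+{-}u|\le|v_k{-}u|$ \ae{} (using $u=u^+$) together with $|\D v_k^+|(\Omega)\le|\D v_k|(\Omega)\to|\D u|(\Omega)$, and combined with lower semicontinuity of the total variation this gives strict convergence $v_k^+\to u$. (The reduction destroys smoothness; the $\C^\infty_\cpt$ case is handled separately below.)

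With the reduced $v_k\ge0$, set $\alpha_k\coleq\int_{\{u=0\}}v_k\dx\le\|v_k{-}u\|_{\L^1(\Omega)}\to0$ and choose $t_k>0$ with $t_k\to0$ and $\alpha_k/t_k\to0$, for instance $t_k\coleq\max(1/k,\sqrt{\alpha_k})$. Then $w_k\coleq(v_k{-}t_k)_+\in\W^{1,1}_0(\Omega)$ is non-negative with $\nabla w_k=\1_{\{v_k>t_k\}}\nabla v_k$ \ae, and Markov's inequality gives
\[
  |\{w_k{>}0\}|=|\{v_k{>}t_k\}|\le|\{u{>}0\}|+\alpha_k/t_k\longrightarrow|\{u{>}0\}|<M\,,
\]
so $|\{w_k{>}0\}|<M$ for all sufficiently large $k$ (discarding a tail yields it for all $k$). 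Strict $\BV$-convergence $w_k\to u$ then follows from the identity $v_k{-}w_k=\min(v_k,t_k)$, which yields $\|w_k{-}v_k\|_{\L^1}\le(M{+}|\{u{>}0\}|)t_k+\alpha_k\to0$, and from the sandwich $|\D u|(\Omega)\le\liminf_{k\to\infty}|\D w_k|(\Omega)$ against $|\D w_k|(\Omega)=\int_{\{v_k>t_k\}}|\nabla v_k|\dx\le|\D v_k|(\Omega)\to|\D u|(\Omega)$.

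For the $\C^\infty_\cpt$ case I would skip the $v_k\mapsto v_k^+$ reduction and apply the truncation directly to the smooth $v_k$, additionally picking $t_k$ so that $|\{v_k=t_k\}|=0$ (valid for \ae{} $t_k>0$ since the level sets $\{v_k=t\}$ for $t>0$ are disjoint subsets of $\spt v_k$, only countably many of which can carry positive measure); the estimates above pass through upon redefining $\alpha_k\coleq\int_{\{u=0\}}v_k^+\dx\le\|v_k{-}u\|_{\L^1}$. The resulting $w_k$ is Lipschitz with compact support $\overline{\{v_k>t_k\}}$ of Lebesgue measure $|\{v_k>t_k\}|<M$, and mollifying at a small scale $\delta_k>0$ produces $\widetilde w_k\coleq w_k*\r_{\delta_k}\in\C^\infty_\cpt(\Omega)$ with $|\spt\widetilde w_k|\le|\overline{\{v_k>t_k\}}+\B_{\delta_k}|<M$, because the right-hand side tends to $|\{v_k>t_k\}|$ as $\delta_k\to0$. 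For the last assertion on $\W^{1,1}$-convergence it suffices to show $\int_{\{v_k\le t_k\}}|\nabla u|\dx\to0$: $\nabla u=0$ \ae{} on $\{u{=}0\}$, while on $\{u{>}0\}$ an \ae{} subsequential limit $v_k\to u{>}0$ forces $\1_{\{v_k\le t_k\}\cap\{u>0\}}\to0$ \ae, and dominated convergence with $|\nabla u|\in\L^1(\Omega)$ finishes the job. The main obstacle is the competing requirements on $t_k$: it must vanish to recover $u$ in the limit, but slowly enough relative to $\alpha_k$ to keep the Markov bound on $|\{v_k>t_k\}\cap\{u=0\}|$ useful; the square-root choice $t_k\sim\sqrt{\alpha_k}$ reconciles both, and the two-scale bookkeeping between $t_k$ and $\delta_k$ in the smooth subcase is routine once the core truncation is in place.
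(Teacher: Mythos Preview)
Your proof is correct and follows essentially the same route as the paper's: both construct $w_\ell$ as low-level truncations $(v_k-t)_+$ and control $|\{v_k>t\}|$ via a Markov-type estimate, with mollification handling the $\C^\infty_\cpt$ case. The only differences are bookkeeping---the paper fixes thresholds $t=2/\ell$ and then diagonalizes in $k$ (so it avoids your preliminary reduction $v_k\mapsto v_k^+$), whereas you choose $t_k\sim\sqrt{\alpha_k}$ adaptively; in the smooth case the paper uses two levels $2/\ell,3/\ell$ to create room for mollification while you use a single null-measure level set, but both variants work.
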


\begin{proof}
  We first establish the original claim. For fixed $\ell\in\N$ we observe
  $|\{v_k>\frac2\ell\}\setminus\{u>\frac1\ell\}|
  \le\ell\|v_k{-}u\|_{\L^1(\Omega)}$ and deduce
   $\limsup_{k\to\infty}|\{v_k>\frac2\ell\}|\le|\{u>\frac1\ell\}|<M$. Hence,
  for each $\ell\in\N$, we can choose $k_\ell\in\N$ such that in addition to
  $\|v_{k_\ell}{-}u\|_{\L^1(\Omega)}<\frac1\ell$ and
  $\|\nabla v_{k_\ell}\|_{\L^1(\Omega,\R^n)}\le|\D u|(\Omega){+}\frac1\ell$ we have
  $|\{v_{k_\ell}>\frac2\ell\}|<M$. For the non-negative functions
  $w_\ell\coleq\big(v_{k_\ell}{-}\frac2\ell\big)_+\in\W^{1,1}_0(\Omega)$, the
  previous properties and the non-negativity of $u$ imply via
  $\|w_\ell{-}u\|_{\L^1(\Omega)}\le\frac3\ell$ and
  $\|\nabla w_\ell\|_{\L^1(\Omega,\R^n)}\le|\D u|(\Omega){+}\frac1\ell$ the
  claimed strict convergence of $w_\ell$, and in view of
  $\{w_\ell>0\}=\{v_{k_\ell}>\frac2\ell\}$ we additionally get
  $|\{w_\ell>0\}|<M$. This completes the main part of the reasoning.

  If all $v_k$ are even in $\C^\infty_\cpt(\Omega)$, in order to preserve
  smoothness and control the support we slightly modify the choice of
  $w_\ell$. In fact, since in this situation $\{v_{k_\ell}\ge\frac3\ell\}$ is
  compact in the open set $\{v_{k_\ell}>\frac2\ell\}$, we even get
  $\spt w_\ell\subset\{v_{k_\ell}>\frac2\ell\}$ \emph{for a suitable
  mollification} $w_\ell\in\C^\infty_\cpt(\Omega)$ of
  $(v_{k_\ell}{-}\frac3\ell)_+$. Then, also exploiting standard estimates for
  mollifications, we conclude the reasoning by a straightforward adaptation of
  the preceding arguments.

  Finally, if the convergence is even in $\W^{1,1}(\Omega)$, we still argue
  in the same way, where the gradients can even be kept $\L^1$-close in the
  sense of $\|\nabla v_{k_\ell}{-}\nabla u\|_{\L^1(\Omega,\R^n)}\le\frac1\ell$.
\end{proof}

We remark that essentially the same proof yields versions of Lemma
\ref{lem:control-spt} for sequences in other spaces, e.\@g.\@ in
$\W^{1,1}(\Omega)$ or $\BV(\Omega)$ instead of $\W^{1,1}_0(\Omega)$. However,
since the above version suffices for our later purposes, we do not discuss this
any further.

\subsection*{\boldmath Normal traces of $\L^\infty$ vector fields with $\L^1$ divergence}

We next discuss, for vector fields $\sigma$ with $\L^1$ distributional
divergence, a notion of normal trace on the reduced boundary of a set of
finite perimeter. The considerations are given for the case of a base domain
$\Omega\subset\R^n$ which need not necessarily be bounded, and in fact we are
mostly interested in the full-space situation $\Omega=\R^n$.

\begin{defi}[distributional normal traces]\label{def:dist-traces}
  Consider an open set\/ $\Omega$ in $\R^n$, a set $E\in\M(\R^n)$ with
  $\P(E,\Omega)<\infty$, and a vector field $\sigma\in\L^1_\loc(\Omega,\R^n)$
  with distributional divergence $\Div\sigma\in\L^1_\loc(\Omega)$. Then we call
  the distribution
  \[
    \mutrace\coleq\1_E(\Div\sigma)-\Div(\1_E\sigma)
  \]
  on $\Omega$ the distributional normal trace \ka with respect to the outward
  normal\kz{} of\/ $\sigma$ on $\Omega\cap\partial^\ast\!E$.
\end{defi}

We remark that, spelling out the definition of $\mutrace$, we have
\begin{equation}\label{eq:int-trace-def}
  \langle\mutrace;\p\rangle
  =\int_E(\Div\sigma)\p\dx+\int_E\sigma\cd\nabla\p\dx
  \qq\text{for all }\p\in\mathrm{C}^\infty_\cpt(\Omega)\,.
\end{equation}
Taking into account the definition of the distributional divergence (or merely
its linearity), we also infer $\mutrace={-}\mathrm{Tr}_{E^\mathrm{c}}(\sigma)
={-}\1_{E^\mathrm{c}}\Div\sigma+\Div(\1_{E^\mathrm{c}}\sigma)$ in the sense of
distributions on $\Omega$, that is,
\begin{equation}\label{eq:ext-trace-def}
  \langle\mutrace;\p\rangle
  ={-}\int_{E^\mathrm{c}}(\Div\sigma)\p\dx-\int_{E^\mathrm{c}}\sigma\cd\nabla\p\dx
  \qq\text{for all }\p\in\mathrm{C}^\infty_\cpt(\Omega)\,.
\end{equation}

For \emph{bounded} $\sigma$, the distributional normal trace actually admits
a more concrete representation:

\begin{lem}[measure representation of the distributional normal trace]
    \label{lem:measure}
  Consider an open set $\Omega$ in $\R^n$, a set $E$ of finite perimeter in
  $\Omega$, and a bounded vector field $\sigma\in\L^\infty(\Omega,\R^n)$ with
  distributional divergence $\Div\sigma\in\L^1_\loc(\Omega)$. Then $\mutrace$ is a
  finite signed Radon measure on $\Omega$ and satisfies
  \[
    |\mutrace|\le\|\sigma\|_{\L^\infty;\Omega}\H^{n-1}\ecke(\Omega\cap\partial^\ast\!E)
    \qq\text{as measures on }\Omega\,.
  \]
\end{lem}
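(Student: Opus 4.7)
The plan is to test $\mutrace$ against $\p\in\C^\infty_\cpt(\Omega)$, approximate $\1_E$ by smooth functions converging strictly in $\BV$, and then read off both the measure representation and the asserted variation bound from the resulting integration-by-parts identity.

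Concretely, I would fix $\p\in\C^\infty_\cpt(\Omega)$ together with a bounded open set $U$ satisfying $\spt\p\subset U\Subset\Omega$. Since $|U|<\infty$ and $\P(E,U)\le\P(E,\Omega)<\infty$, one has $\1_E\in\BV(U)$; standard mollification combined with truncation at $0$ and $1$ then produces $u_k\in\C^\infty(U)\cap\W^{1,1}(U)$ with $0\le u_k\le 1$ that converge to $\1_E$ strictly in $\BV(U)$ and \ae{} on $U$. Since $u_k\p$, extended by zero, lies in $\C^\infty_\cpt(\Omega)$, the defining relation of the distributional divergence gives
\[
  \int_\Omega(\Div\sigma)u_k\p\dx
  ={-}\int_\Omega u_k\sigma\cd\nabla\p\dx-\int_\Omega\p\,\sigma\cd\nabla u_k\dx\,.
\]
Dominated convergence (using $\Div\sigma\in\L^1_\loc(\Omega)$ and $\sigma\in\L^\infty$) sends the left-hand side to $\int_E(\Div\sigma)\p\dx$ and the first right-hand integral to ${-}\int_E\sigma\cd\nabla\p\dx$; a comparison with \eqref{eq:int-trace-def} then forces the remaining integral to converge as well, yielding $\int_\Omega\p\,\sigma\cd\nabla u_k\dx\to{-}\langle\mutrace;\p\rangle$.

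The heart of the argument is to bound this limit. Starting from the estimate $\bigl|\int_\Omega\p\,\sigma\cd\nabla u_k\dx\bigr|\le\|\sigma\|_{\L^\infty(\Omega,\R^n)}\int_U|\p||\nabla u_k|\dx$, I would exploit that strict $\BV(U)$ convergence gives $|\nabla u_k|\Ln\rightharpoonup|\D\1_E|$ weakly-$\ast$ on $U$ \emph{together with convergence of the total masses on $U$}; a standard portmanteau argument upgrades this convergence to testing against bounded continuous functions, so that by Theorem \ref{thm:DeGiorgi} we obtain $\int_U|\p||\nabla u_k|\dx\to\int_{\Omega\cap\partial^\ast\!E}|\p|\,\d\H^{n-1}$. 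Combining with the previous paragraph gives
\[
  |\langle\mutrace;\p\rangle|\le\|\sigma\|_{\L^\infty(\Omega,\R^n)}\int_{\Omega\cap\partial^\ast\!E}|\p|\,\d\H^{n-1}
  \qq\text{for every }\p\in\C^\infty_\cpt(\Omega)\,.
\]
Since $\H^{n-1}\ecke(\Omega\cap\partial^\ast\!E)=\P(E,\,\cdot\,)$ is a finite Borel measure on $\Omega$, the Riesz representation theorem promotes $\mutrace$ to a finite signed Radon measure on $\Omega$, and the inequality above simultaneously identifies its total variation.

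The main obstacle I anticipate is precisely the low regularity of $\sigma$: being only $\L^\infty$, it cannot be paired directly with the weak-$\ast$ limit of the vector measures $\nabla u_k\Ln$, so $\lim_k\int\p\,\sigma\cd\nabla u_k\dx$ cannot be \emph{computed} by naive weak convergence. My strategy sidesteps this by extracting the \emph{existence} of the limit from the two better-behaved terms in the integration-by-parts identity and by then merely estimating it through the total-mass-preserving feature of strict $\BV$-convergence; this is enough both to identify $\mutrace$ as a Radon measure and to obtain the claimed total-variation bound.
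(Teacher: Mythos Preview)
Your argument is correct and runs along a genuinely different track from the paper's. The paper mollifies the \emph{vector field}: with $\sigma_\eps$ smooth on $\spt\p$ one has $\int_E(\Div\sigma_\eps)\p\dx+\int_E\sigma_\eps\cd\nabla\p\dx=\int_\Omega\1_E\Div(\p\sigma_\eps)\dx=-\int_\Omega\p\sigma_\eps\cd\d\D\1_E$, and the bound $\|\sigma_\eps\|_{\L^\infty;\spt\p}\le\|\sigma\|_{\L^\infty;\Omega}$ gives the estimate immediately before letting $\eps\to0$. You instead approximate the \emph{set indicator} $\1_E$ strictly in $\BV$ and recover the pairing as a limit whose existence is forced by the other two (well-behaved) terms in the product rule. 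The paper's route is a touch shorter, since testing $\D\1_E$ against the smooth $\p\sigma_\eps$ is immediate and no strict-convergence machinery is needed; your route, on the other hand, is closer in spirit to the Anzellotti pairing construction and makes explicit the point you flag at the end --- that for merely bounded $\sigma$ the limit $\lim_k\int\p\,\sigma\cd\nabla u_k\dx$ cannot be \emph{computed} from weak-$\ast$ convergence alone but only \emph{bounded}, which is all one needs here. One cosmetic remark: since mollifications of $\1_E$ already take values in $[0,1]$, the truncation step is redundant (and would otherwise cost smoothness), so you may simply drop it.
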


\begin{proof}
  We fix $\p\in\C^\infty_\cpt(\Omega)$ and consider standard mollifications
  $\sigma_\eps$ of $\sigma$, which are defined on all of $\spt\p$ at least for
  $0<\eps\ll1$. Then from \eqref{eq:int-trace-def} and standard properties of
  mollifications we deduce
  \[\begin{aligned}
    \big|\langle\mutrace;\p\rangle\big|
    &=\lim_{\eps\searrow0}\bigg|\int_E(\Div\sigma_\eps)\p\dx+\int_E\sigma_\eps\cd\nabla\p\dx\bigg|
    =\lim_{\eps\searrow0}\bigg|\int_{\Omega}\1_E\Div(\p\sigma_\eps)\dx\bigg|\\
    &=\lim_{\eps\searrow0}\bigg|\int_{\Omega}\p\sigma_\eps\cd\d\D\1_E\bigg|
    \le\|\sigma\|_{\L^\infty;\Omega}\int_{\Omega}|\p|\,\d|\D\1_E|\,,
  \end{aligned}\]
  where specifically in the last step we used the bound
  $\|\sigma_\eps\|_{\L^\infty;\spt\p}\le\|\sigma\|_{\L^\infty;\Omega}$. This
  implies that $\mutrace$ extends to a continuous linear functional on
  $\C^0_0(\Omega)$, which satisfies the resulting estimate $|\langle\mutrace;\p\rangle|
  \le\|\sigma\|_{\L^\infty;\Omega}\int_{\Omega}|\p|\,\d|\D\1_E|$ for arbitrary
  $\p\in\C^0_\cpt(\Omega)$. An application of the Riesz representation theorem
  now identifies $\mutrace$ as finite signed Radon measure with
  $|\mutrace|\le\|\sigma\|_{\L^\infty;\Omega}|\D\1_E|$ as measures on
  $\Omega$. Since we have $|\D\1_E|=\H^{n-1}\ecke(\Omega\cap\partial^\ast\!E)$
  from Theorem \ref{thm:DeGiorgi}, the claimed estimate follows.
\end{proof}

Lemma \ref{lem:measure} and the Radon-Nikod\'ym theorem yield the representation
\begin{equation}\label{eq:trace-densities}
  \mutrace=(\nortrace)\H^{n-1}\ecke(\Omega\cap\partial^\ast\!E)
\end{equation}
with a density $\nortrace\in\L^\infty(\Omega\cap\partial^\ast\!E;\H^{n-1})$ such
that $|\nortrace|\le\|\sigma\|_{\L^\infty;\Omega}$ holds $\H^{n-1}$-\ae{} on
$\Omega\cap\partial^\ast\!E$.

\begin{defi}[generalized normal traces]\label{def:gen-traces}
  Consider an open set $\Omega$ in $\R^n$, a set $E$ of finite perimeter in
  $\Omega$, and a bounded vector field $\sigma\in\L^\infty(\Omega,\R^n)$ with
  distributional divergence $\Div\sigma\in\L^1_\loc(\Omega)$. Then we call the
  density $\nortrace$ from \eqref{eq:trace-densities} the generalized normal trace
  of\/ $\sigma$ on $\Omega\cap\partial^\ast\!E$.
\end{defi}

In the setting of Definition \ref{def:gen-traces}, the formulas
\eqref{eq:int-trace-def}, \eqref{eq:ext-trace-def} can be recast in form of
the Gauss-Green formulas
\begin{align}
  \int_E\p(\Div\sigma)\dx+\int_E\sigma\cd\nabla\p\dx
  &=\int_{\partial^\ast\!E}\p\,\nortrace\,\d\H^{n-1}\,,\label{eq:int-Gauss}\\
  {-}\int_{E^\mathrm{c}}\p(\Div\sigma)\dx-\int_{E^\mathrm{c}}\sigma\cd\nabla\p\dx
  &=\int_{\partial^\ast\!E}\p\,\nortrace\,\d\H^{n-1}\,,\label{eq:ext-Gauss}
\end{align}
valid for all $\p\in\C^\infty_\cpt(\Omega)$. If we additionally assume
$\Div\sigma\in\L^1(\Omega\cap E)$ and $|\Omega\cap E|<\infty$, then
\eqref{eq:int-Gauss} stays valid for bounded functions $\p\in\C^\infty(\Omega)$
with bounded gradient $\nabla\p$ and possibly unbounded support
$\spt\p\subset\Omega$. This is straightforwardly verified by approximating $\p$
with $\eta_k\p$, where $\eta_k\in\C^\infty_\cpt(\R^n)$ are cut-off functions
with $0\le\eta_k\nearrow1$ and $|\nabla\eta_k|\le1/k$ on $\R^n$. Specifically,
we record for later application that in case $\Omega=\R^n$, we can use
$\p\equiv1$ to obtain
\begin{equation}\label{eq:int-Gauss-Rn}
  \int_E\Div\sigma\dx=\int_{\partial^\ast\!E}\,\nortrace\,\d\H^{n-1}\,.
\end{equation}
for all $E\in\BVs(\R^n)$ and all $\sigma\in\L^\infty(\R^n,\R^n)$ with
$\Div\sigma\in\L^1(\R^n)$.

\section{Isoperimetric conditions}\label{sec:isoperimetric}

In order to conveniently specify assumptions on the measure data we introduce
the following terminology (which for our main results will mostly be needed in
the small-volume version with the optimal constant $1$):

\begin{defi}[isoperimetric conditions]\label{def:ICs}
  Consider a non-negative Radon measure $\mu$ on an open set\/ $\Omega\subset\R^n$
  and $C\in{[0,\infty)}$. We say that $\mu$ satisfies the \emph{strong}
  isoperimetric condition \ka strong IC\kz{} in $\Omega$ with constant\/ $C$ if
  we have
  \begin{equation}\label{eq:mu<P}
    \mu(A^+)\le C\P(A)
    \qq\text{for all }A\in\M(\R^n)\text{ with }\overline A\subset\Omega\text{ and\/ }|A|<\infty\,.
  \end{equation}
  We say that $\mu$ satisfies the \emph{small-volume} isoperimetric condition
  \ka small-volume IC\kz{} in $\Omega$ with constant\/ $C$ if, for every
  $\eps>0$, there exists some $\delta>0$ such that we have
  \begin{equation}\label{eq:mu<P+eps}
    \mu(A^+)\le C\P(A)+\eps
    \qq\text{for all }A\in\M(\R^n)\text{ with }\overline A\subset\Omega\text{ and\/ }|A|<\delta\,.
  \end{equation}
\end{defi}

We briefly point out two equivalent reformulations of ICs in $\Omega$, which
will be treated in detail only in Section \ref{sec:admis-meas}. First, it is
equivalent to require the ICs merely for $A\Subset\Omega$ or to admit even for
$A^+\subset\Omega$ instead of $\overline A\subset\Omega$. Second, it is
equivalent to replace $\mu(A^+)$ in the ICs with $\mu(A^1)$ (or to use any other
precise representative between $A^1$ and $A^+$ at this point). The latter
possibility is in sharp contrast, however, with the necessity of sticking to
$A^+$ in the $\mu_-$-term and to $A^1$ in the $\mu_+$-term of the functional
$\Pmu$, as explained in the introduction.

We next record some basic properties which are somewhat reminiscent of the
theory of charges discussed e.\@g.\@ in \cite{Pfeffer96,BucDePPfe99}. However,
as we are not aware of a precise link between our ICs with fixed constant $C$
and that theory, we work out the details in our framework. We first recall that,
if a finite measure $\mu$ is absolutely continuous with respect to the Lebesgue
measure, then the absolute continuity of the integral gives, for every $\eps>0$
some $\delta>0$ such that we have even $\mu(A^+)=\mu(A)<\eps$ whenever
$|A|<\delta$ holds. Therefore, for this type of $n$-dimensional measures, we
trivially have the small-volume IC even with constant $0$. Back to the general
case we now show by a basic covering argument that a measure with IC cannot have
any part of dimension smaller than $n{-}1$:

\begin{lem}\label{lem:abs-con-H}
  If a Radon measure $\mu$ on open $\Omega\subset\R^n$ satisfies, for
  $C\in{[0,\infty)}$, the small-volume IC in $\Omega$ with constant\/ $C$, then,
  for every $\H^{n-1}$-negligible set\/ $N\in\Bo(\Omega)$, we have $\mu(N)=0$.
\end{lem}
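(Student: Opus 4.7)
The plan is to combine Lemma~\ref{lem:negligible}, which encloses an $\H^{n-1}$-negligible set in open sets of arbitrarily small volume and perimeter, with the small-volume IC applied to such enclosures. The only technicality is that the IC is formulated under the constraint $\overline A\subset\Omega$, so one needs to make sure the approximating open set has closure inside $\Omega$; this is arranged by a preliminary reduction to compactly contained $N$.

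\emph{Reduction to compact $N$.} Since $\mu$ is a Radon measure on the open set $\Omega$, it is inner regular on Borel sets, so
\[
  \mu(N)=\sup\{\mu(K)\,:\,K\subset N\text{ compact}\}.
\]
Every such $K$ is a compact subset of $\Omega$ and inherits $\H^{n-1}(K)=0$ from $N$. Hence it suffices to prove $\mu(K)=0$ for every compact $\H^{n-1}$-negligible $K\subset\Omega$.

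\emph{Main estimate.} Fix such a $K$ and choose $\eps_0>0$ small enough that $\mathcal{N}_{\eps_0}(K)\Subset\Omega$, which is possible by compactness of $K$ in $\Omega$. Given an arbitrary $\eta>0$, the small-volume IC provides $\delta>0$ such that \eqref{eq:mu<P+eps} holds with $\eps$ replaced by $\eta$. Applying Lemma~\ref{lem:negligible} with parameter $\min\{\eps_0,\delta,\eta\}$ to the $\H^{n-1}$-negligible Borel set $K$, we obtain an open set $A$ with
\[
  K\subset A\subset\mathcal{N}_{\eps_0}(K),\qquad |A|<\delta,\qquad \P(A)<\eta.
\]
Since $A$ is open, every $x\in A$ lies in a ball contained in $A$ and hence $x\in A^1\subset A^+$; in particular $K\subset A^+$. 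Moreover $\overline A\subset\overline{\mathcal{N}_{\eps_0}(K)}\subset\Omega$, so $A$ is admissible in \eqref{eq:mu<P+eps}. Combining these ingredients gives
\[
  \mu(K)\le\mu(A^+)\le C\P(A)+\eta\le (C+1)\eta.
\]

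\emph{Conclusion.} Since $\eta>0$ was arbitrary, $\mu(K)=0$, and the inner-regularity reduction then yields $\mu(N)=0$. The only place any real work is done is the elementary geometric covering already carried out in Lemma~\ref{lem:negligible}; otherwise the argument is a direct application of the small-volume IC to shrinking open neighborhoods of $N$.
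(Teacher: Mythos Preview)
Your proof is correct and follows essentially the same approach as the paper's: reduce by inner regularity to compactly contained $\H^{n-1}$-negligible sets, apply Lemma~\ref{lem:negligible} to enclose them in open sets of small volume and small perimeter, and then invoke the small-volume IC. The paper's version is a bit terser (it writes directly $N\Subset\Omega$ rather than passing through compact $K$), but the argument is the same.
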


\begin{proof}
  By inner regularity of $\mu$ it suffices to treat an $\H^{n-1}$-negligible
  Borel set $N\Subset\Omega$. Consider an arbitrary
  $\eps>0$ with corresponding $\delta>0$. By Lemma \ref{lem:negligible},
  there exists an open set $A$ (in particular $A\subset A^+$) such that
  $N\subset A\Subset\Omega$, $|A|<\delta$, $\P(A)<\eps$. Bringing in the
  IC, we get $\mu(N)\le\mu(A^+)\le C\P(A){+}\eps<(C{+}1)\eps$. As $\eps>0$ is
  arbitrary, this means $\mu(N)=0$.
\end{proof}

In other words, measures with IC can only have parts of dimension in
${[n{-}1,n]}$, and for the limit case of $(n{-}1)$-dimensional measures we will
actually show in Section \ref{sec:IC-for-P} that $\H^{n-1}$-rectifiable measures
satisfy the small-volume IC with constant $C$ if and only if the
$(n{-}1)$-dimensional density of $\mu$ does not exceed $2C$. Moreover, examples
with fractional dimension $\kappa$ between $n{-}1$ and $n$ can be obtained from
the basic observation that a Radon measure $\mu$ on $\R$ satisfies the strong IC
in $\R$ with constant $C$ if and only if $\mu(\R)\le2C$ holds. In particular,
for every fractal $F\in\Bo(\R)$ with $0<\H^\kappa(F)\le2C$, the measure
$\H^\kappa\ecke F$ satisfies even the strong IC in $\R$ with constant $C$. With
the help of a slicing theory similar to \cite[Theorem 18.11]{Maggi12} it follows
successively for arbitrary $n\in\N$ that the product measure
$(\H^\kappa\ecke F){\otimes}(\mathcal{L}^{n-1}\ecke{[0,1]})$ satisfies the
strong IC in $\R^n$ with constant $C$. However, since we do not work with such
fractional examples or with slicing elsewhere in this paper, we refrain from
going into details on these issues.

Next, as a technical preparation, which in the sequel ensures finiteness of
our functionals even on unbounded sets $A$, we record:

\begin{lem}\label{lem:finite}
  Consider a Radon measure $\mu$ on open $\Omega\subset\R^n$, which satisfies,
  for $C\in{[0,\infty)}$, the small-volume IC in $\Omega$ with constant\/ $C$
  or at least satisfies the defining condition \eqref{eq:mu<P+eps} for one
  fixed choice of $\eps>0$ and $\delta>0$. Then, for every $A\in\BVs(\R^n)$
  with\/ $\overline A\subset\Omega$, we have $\mu(A^+)<\infty$.
\end{lem}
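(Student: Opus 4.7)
The plan is to decompose $A$ into finitely many pieces, each of $\Ln$-measure less than $\delta$ and with $\overline{A_k}\subset\Omega$, and then apply the given IC \eqref{eq:mu<P+eps} to each piece and sum the resulting bounds. The main technical point is that $A$ need not be bounded, so the decomposition must be chosen carefully to keep the individual perimeters finite.

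If $|A|<\delta$, the single application $\mu(A^+)\le C\P(A)+\eps<\infty$ already concludes the argument. Otherwise, I would slice $A$ by means of concentric spheres around the origin: setting $R_0\coleq0$, I would choose radii $R_1<R_2<\dots<R_{N-1}<\infty$ and consider the disjoint pieces $A_k\coleq A\cap(\B_{R_k}\setminus\overline{\B_{R_{k-1}}})$ for $k=1,\dots,N{-}1$ together with $A_N\coleq A\setminus\overline{\B_{R_{N-1}}}$. Since $|A|<\infty$, the function $r\mapsto|A\cap\B_r|$ is continuous, bounded, and tends to $|A|$ as $r\to\infty$. Hence one may first fix $R_{N-1}$ large enough that $|A_N|<\delta$ and then, by uniform continuity of this function on $[0,R_{N-1}]$, insert finitely many intermediate radii so that $|A_k|<\delta$ holds for every $k$, with a strict margin to spare. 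Moreover, the coarea formula of Theorem \ref{thm:Lip-coarea} applied to $u(x)\coleq|x|$ on the Borel set $A^+$ yields $\int_0^\infty\H^{n-1}(A^+\cap\partial\B_r)\,\d r=|A^+|=|A|<\infty$, so the previously chosen $R_k$ can be perturbed within the tolerance gained from the volume estimates in order to additionally ensure $\H^{n-1}(A^+\cap\partial\B_{R_k})<\infty$ for every $k=1,\dots,N{-}1$.

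With such radii fixed, Lemma \ref{lem:P(AcapB),P(A-S)}, applied first with $B=\B_{R_k}$ and then with $S=\overline{\B_{R_{k-1}}}$, yields the control
\[
  \P(A_k)\le\P(A)+\H^{n-1}(A^+\cap\partial\B_{R_{k-1}})+\H^{n-1}(A^+\cap\partial\B_{R_k})<\infty\,,
\]
where the boundary contributions at $R_0=0$ and at ``$R_N=\infty$'' are simply absent. Since $A_k\subset A$ gives $\overline{A_k}\subset\overline{A}\subset\Omega$ and $|A_k|<\delta$, the hypothesis of \eqref{eq:mu<P+eps} is verified and we obtain $\mu(A_k^+)\le C\P(A_k)+\eps<\infty$ for every $k$.

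Finally, since the $A_k$ partition $A$ up to an $\Ln$-null set, for every $x\in A^+$ the finite identity $\sum_k|A_k\cap\B_\rho(x)|=|A\cap\B_\rho(x)|$ forces at least one index $k$ to satisfy $\limsup_{\rho\searrow0}|A_k\cap\B_\rho(x)|/|\B_\rho|>0$, so that $x\in A_k^+$. Consequently $A^+\subset A_1^+\cup\dots\cup A_N^+$, and subadditivity of $\mu$ gives $\mu(A^+)\le\sum_{k=1}^N\mu(A_k^+)<\infty$, which is the claim. The only delicate step in this scheme is the simultaneous choice of the radii $R_k$ fulfilling both the strict volume bound and the finiteness of the spherical slice measures; everything else is a routine combination of the cited coarea formula and the set-operation estimates of Lemma \ref{lem:P(AcapB),P(A-S)}.
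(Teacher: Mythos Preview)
Your proof is correct and follows the same overall strategy as the paper---decompose $A$ into finitely many pieces of volume below $\delta$, check that each piece has finite perimeter and closure in $\Omega$, apply \eqref{eq:mu<P+eps} to each, and use $A^+\subset\bigcup_k A_k^+$. The difference lies in the choice of decomposition. The paper slices $A$ by finitely many parallel \emph{slabs} $S_i={(t_{i-1},t_i)}\times\R^{n-1}$; since each slab is convex, one gets $\P(A\cap S_i)\le\P(A)$ directly (via Lemmas~\ref{lem:intersect-with-pseudoconvex}--\ref{lem:convex->pseudoconvex} and an approximation by bounded convex sets), and no coarea argument or choice of good radii is needed. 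Your spherical annuli are not convex, so you must additionally select radii with $\H^{n-1}(A^+\cap\partial\B_{R_k})<\infty$ via the coarea formula and then control $\P(A_k)$ through Lemma~\ref{lem:P(AcapB),P(A-S)}. Both routes work; the paper's is a little shorter, while yours stays closer to the cut-off arguments used elsewhere in the paper (e.g.\ in the proofs of Proposition~\ref{prop:lsc-global'} and Lemma~\ref{lem:loc-lsc}).
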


\begin{proof}
  We fix $\eps$ and $\delta$ such that \eqref{eq:mu<P+eps} applies. Since we
  have $|A|<\infty$ and since $t\mapsto|A\cap({(t_0,t)}{\times}\R^{n-1})|$ is
  continuous, we can divide $\R^n$ into finitely many parallel strips
  $S_i\coleq{(t_{i-1},t_i)}{\times}\R^{n-1}$ with
  ${-}\infty=t_0<t_1<t_2<\ldots t_{k-1}<t_k=\infty$ such that
  $|A\cap S_i|<\delta$ holds for $i=1,2,\ldots,k$. Since we assumed in fact
  $A\in\BVs(\R^n)$, we have $\P(A\cap S_i)\le\P(A)<\infty$, and via the IC we
  get $\mu((A\cap S_i)^+)<\infty$ for $i=1,2,\ldots,k$. Taking into account
  $A^+\subset\bigcup_{i=1}^k(A\cap S_i)^+$, we conclude $\mu(A^+)<\infty$.
\end{proof}

At the end of this section we wish to underline that the small-volume
requirement $|A|<\delta$ in \eqref{eq:mu<P+eps} is absolutely decisive
for our purposes. As a first indication in this direction, we record
that an analogous small-\emph{diameter} IC, in which the condition
$\mathrm{diam}(A)<\delta$ substitutes for $|A|<\delta$, does not share
the same desirable features. Indeed, a compactness argument shows
that the small-diameter IC with any constant $C\in{[0,\infty)}$ for a
non-negative finite Radon measure $\mu$ on open $\Omega\subset\R^n$,
$n\ge2$, reduces to the simple requirement that $\mu$ is non-atomic
(i.\@e.\@ $\mu(\{x\})=0$ for all $x\in\Omega$). Hence, in
case\footnote{For $n=1$, in contrast, the small-volume and
small-diameter ICs with constant $C\in{(0,\infty)}$ are in fact
equivalent, since small-length sets of finite perimeter can always be
decomposed into short intervals with disjoint closures.} $n\ge2$, the
small-diameter IC admits many measures of dimension strictly smaller
than $n{-}1$ and cannot yield any semicontinuity results for the
functionals $\Pmu[\,\cdot\,;\Omega]$ considered here.

\section{Lower semicontinuity on full space}
  \label{sec:Rn}
  
After the preparations of Section \ref{sec:isoperimetric} we are ready to state,
in extension of Theorem \ref{thm:lsc-global-intro}, our main semicontinuity
result for the full-space case. The result applies under ICs on given
non-negative Radon measures $\mu_+$ and $\mu_-$ on $\R^n$ and yields lower
semicontinuity of a functional $\Pmu$, in which $\mu_+$ and $\mu_-$ are each
evaluated on a suitable representative. In fact, this functional is defined by
\begin{equation}\label{eq:P}
  \Pmu[E]\coleq\P(E)+\mu_+(E^1)-\mu_-(E^+)
\end{equation}
for $E\in\M(\R^n)$ with at least one of $\P(E){+}\mu_+(E^1)$ and $\mu_-(E^+)$
finite. In the sequel we keep $\mathcal\Pmu[E]$ well-defined either by
generally requiring finiteness of $\mu_-$ (in which case $\P(E)$ and
$\mu_+(E^1)$ may be finite or infinite) or by drawing on the ICs and Lemma
\ref{lem:finite} to ensure finiteness of all three terms in \eqref{eq:P} at
least for the restricted class of sets $E\in\BVs(\R^n)$. We find it worth
pointing out that, whenever the measures $\mu_+$ and $\mu_-$ are singular to
each other, they may be viewed as positive and negative part of a signed Radon
measure $\mu_+{-}\mu_-$, and we presently consider this the most relevant case.
However, our actual semicontinuity result does not depend on any relation
between $\mu_+$ and $\mu_-$.

\begin{thm}[lower semicontinuity on full space]\label{thm:lsc-Rn}
  Consider non-negative Radon measures $\mu_+$ and $\mu_-$ on $\R^n$, which both satisfy
  the \emph{small-volume} IC in $\R^n$ with constant\/ $1$. For a set $A_\infty\in\M(\R^n)$,
  and a sequence $(A_k)_{k\in\N}$ in $\M(\R^n)$, assume that \emph{one of the
  following sets of additional assumptions} is valid\textup{:}
  \begin{enumerate}[{\rm(a)}]
  \item The measure $\mu_-$ is \emph{finite}, and $A_k$ converge to $A_\infty$
    \emph{locally} in measure on $\R^n$.\label{item:lsc-Rn-a}
  \item The measure $\mu_-$ additionally satisfies an \emph{almost-strong IC
    with constant $1$ near $\infty$} in the sense that, for every $\eps>0$,
    there exists some $R_\eps\in{(0,\infty)}$ such that
    \begin{equation}\label{eq:strong-ic-at-infty}
      \mu_-(A^+)\le\P(A)+\eps
      \qq\text{for all }A\in\M(\R^n)\text{ with }|A\cap\B_{R_\eps}|=0\text{ and }|A|<\infty\,,
    \end{equation}
    and $A_k\in\BVs(\R^n)$ converge to $A_\infty\in\BVs(\R^n)$ \emph{locally} in
    measure on $\R^n$.\label{item:lsc-Rn-b}
  \item The sets $A_k\in\BVs(\R^n)$ converge to $A_\infty\in\BVs(\R^n)$
    \emph{globally} in measure on $\R^n$.\label{item:lsc-Rn-c}
  \end{enumerate}
  Then we have
  \begin{equation}\label{eq:lsc-Rn}
    \liminf_{k\to\infty}\Pmu[A_k]\ge\Pmu[A_\infty]
  \end{equation}
\end{thm}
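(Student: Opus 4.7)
The core is case (c), from which (a) and (b) will follow by a cut-off to a large ball $\B_R$: one picks $R$ generic via coarea so that the flux $\H^{n-1}(A_k\cap\partial\B_R)$ is uniformly controlled, applies (c) to the truncations, and lets $R\to\infty$. In (a), finiteness of $\mu_-$ ensures the escaping mass vanishes; in (b), the almost-strong IC \eqref{eq:strong-ic-at-infty} controls the $\mu_-$-mass outside $\B_{R_\eps}$ by the perimeter, and in both settings the contribution at $\partial\B_R$ goes away in the limit thanks to the reduction to case (c) applied inside $\B_R$.

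For (c), pass to a subsequence realizing $\liminf_k\Pmu[A_k]$ and, via Lemma~\ref{lem:cpct-per} together with further extraction, arrange that $\1_{A_k}\to\1_{A_\infty}$ pointwise almost everywhere on $\R^n$. Decompose the symmetric difference into the disjoint pieces $B_k\coleq A_k\setminus A_\infty$ and $C_k\coleq A_\infty\setminus A_k$, both of vanishing volume by the global $\L^1$ convergence. Comparing densities yields the inclusions $A_k^+\setminus A_\infty^+\subset B_k^+$ and $A_\infty^1\setminus A_k^1\subset C_k^+$ (up to $\H^{n-1}$-null sets, which are $\mu_\pm$-negligible by Lemma~\ref{lem:abs-con-H}); combined with the small-volume ICs applied to the small-volume sets $B_k$, $C_k$ this gives, for every $\eps>0$ and all large $k$,
\[
  \mu_-(A_k^+)-\mu_-(A_\infty^+)\le\P(B_k)+\eps,
  \qq
  \mu_+(A_\infty^1)-\mu_+(A_k^1)\le\P(C_k)+\eps.
\]

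The main obstacle is that simply combining these with $\P(A_k)-\P(A_\infty)$ loses track of cancellation: the naive submodularity $\P(B_k)+\P(C_k)\le\P(A_k)+\P(A_\infty)$ from Lemma~\ref{lem:P-cup-cap} is too crude, leaving a residual defect comparable to $\P(A_\infty)$. My plan to close the gap is to refine the perimeter estimates through Lemma~\ref{lem:P(AcapB),P(A-S)}, namely $\P(B_k)\le\P(A_k,A_\infty^0)+\P(A_\infty,A_k^+)$ and the symmetric version for $\P(C_k)$, together with the partition identities $\P(A_k)=\P(A_k,A_\infty^0)+\P(A_k,A_\infty^+)$ and $\P(A_\infty)=\P(A_\infty,A_k^0)+\P(A_\infty,A_k^+)$. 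This reduces the LSC inequality to passing to the limit in the boundary-localized perimeters $\P(A_\infty,A_k^+)$ and $\P(A_k,A_\infty^+)$. Since the evaluation regions $A_k^+,A_\infty^+$ are only measure-theoretic, the standard lower semicontinuity of the perimeter measure on open sets (Lemma~\ref{lem:lsc-per}) does not apply verbatim; to handle this I would invoke the $\H^{n-1}$-\ae{} convergence of precise representatives obtained from the one-sided strict $\BV$ approximations of Lemma~\ref{lem:mon-approx-u+} and the convergence property of Lemma~\ref{lem:str-implies-Hae}, extracted along a further subsequence. This delicate passage to the limit encapsulates exactly the ``quantitative cancellation'' between the $\mu_\pm$-volume and the perimeter which the small-volume IC was designed to deliver.
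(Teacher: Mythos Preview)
Your reduction of (a) and (b) to a localized version via coarea-selected radii is broadly in line with the paper. The genuine gap is in your treatment of case (c).

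Running your algebra through: with the inclusions $A_k^+\setminus A_\infty^+\subset B_k^+$, $A_\infty^1\setminus A_k^1\subset C_k^+$ and the IC applied to $B_k,C_k$, you obtain
\[
  \Pmu[A_k]\ge\Pmu[A_\infty]+\big[\P(A_k)-\P(A_\infty)-\P(B_k)-\P(C_k)\big]-2\eps.
\]
Substituting the refined bounds $\P(B_k)\le\P(A_k,A_\infty^0)+\P(A_\infty,A_k^+)$ and $\P(C_k)\le\P(A_\infty,A_k^0)+\P(A_k,A_\infty^+)$ together with the partition $\P(A_k)=\P(A_k,A_\infty^0)+\P(A_k,A_\infty^+)$ gives, after cancellation, the bracket $\ge -\P(A_\infty,A_k^+)-\P(A_\infty,A_k^0)=-\P(A_\infty)$, and hence $\Pmu[A_k]\ge\Pmu[A_\infty]-2\P(A_\infty)-2\eps$. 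This is the deficit you noticed, and it is \emph{not} an artifact of the estimate: your hoped-for limit $\P(A_\infty,A_k^+)\to\P(A_\infty)$ (equivalently $\P(A_\infty,A_k^0)\to0$) is false in general. Take $A_\infty=[0,1]^2\subset\R^2$ and $A_k=[0,1]\times[0,1-\tfrac1k]$; then $A_k\to A_\infty$ globally in $\L^1$ with $\P(A_k)\to\P(A_\infty)$, yet the top edge $[0,1]\times\{1\}\subset\partial^\ast\!A_\infty$ lies in $A_k^0$ for every $k$, so $\P(A_\infty,A_k^0)\ge1$ for all $k$. Thus Lemmas~\ref{lem:mon-approx-u+} and~\ref{lem:str-implies-Hae} cannot deliver the $\H^{n-1}$-a.e.\ convergence of $\1_{A_k^+}$ on $\partial^\ast\!A_\infty$ that your plan requires; such convergence simply does not hold along general $\L^1$-convergent sequences with bounded perimeter.

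The paper avoids this double-counting in two ways. First, it treats $\mu_-$ and $\mu_+$ \emph{separately}: one proves full LSC for $\mathscr{P}_{0,\mu_-}$ and for $\mathscr{P}_{\mu_+,0}$ individually, and only then combines them by applying the former to $A_k\cup A_\infty$ and the latter to $A_k\cap A_\infty$, so that submodularity $\P(A_k\cup A_\infty)+\P(A_k\cap A_\infty)\le\P(A_k)+\P(A_\infty)$ yields exactly one spare $\P(A_\infty)$ on each side. Second, and crucially, the one-sided LSC for $\mathscr{P}_{0,\mu_-}$ is \emph{not} obtained by applying the IC to $B_k=A_k\setminus A_\infty$. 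Instead one constructs, via a $\W^{1,1}$ monotone approximation of $\1_{A_\infty}$ and $\Cp_1$-quasi-continuity, a fixed open set $S\in\BVs(\R^n)$ with $A_\infty^+\subset\inn(S)$, $\mu_-(\overline S)<\mu_-(A_\infty^+)+O(\eps)$, and $\liminf_k\P(S,A_k^+)<\eps$. One then splits $\P(A_k)-\mu_-(A_k^+)$ at $\partial S$: inside the genuinely open set $\inn(S)$ the standard LSC of perimeter gives $\liminf_k\P(A_k,\inn(S))\ge\P(A_\infty)$, while outside $\overline S$ the IC is applied to $A_k\setminus S$ (not to $B_k$), and the cross term is exactly $\P(S,A_k^+)$, which was arranged to be small. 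This ``good exterior approximation'' is the missing idea; your Lemma~\ref{lem:P(AcapB),P(A-S)} manipulations alone cannot substitute for it.
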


We emphasize that the $\mu_+$- and $\mu_-$-terms in Theorem \ref{thm:lsc-Rn}
behave fully dual to each other only for finite measures $\mu_\pm$. In contrast,
in case of infinite measures, the $\mu_-$-term features a more subtle interplay
with the perimeter term due to the opposite signs and the resulting
well-definedness and cancellation issues whenever both these terms are infinite
or approach infinity. This is in fact the reason why the settings
\eqref{item:lsc-Rn-a}, \eqref{item:lsc-Rn-b}, \eqref{item:lsc-Rn-c} in the
theorem differ in the assumptions only on $\mu_-$ and not on $\mu_+$. In brief,
the actual differences are that in \eqref{item:lsc-Rn-a} we assume
\emph{finiteness} of $\mu_-$, that in \eqref{item:lsc-Rn-b} we impose the
\emph{almost-strong} IC near $\infty$ on $\mu_-$, and that finally in
\eqref{item:lsc-Rn-c} we have neither finiteness nor any strong IC for $\mu_-$,
but in exchange require the convergence of $A_k$ to $A_\infty$ in a more
restrictive \emph{global} $\L^1$ sense. We point out that a finite measure
$\mu_-$ generally fulfills $\lim_{R\to\infty}\mu_-((\B_R)^\c)=0$ and thus satisfies
\eqref{eq:strong-ic-at-infty}. Thus, the result under \eqref{item:lsc-Rn-a} is a
special case of the one under \eqref{item:lsc-Rn-b} when disregarding the
marginal point that in \eqref{item:lsc-Rn-a} we can formally allow infinite
perimeters of $A_k$ and $A_\infty$. Nevertheless, we believe that also the much simpler
setting \eqref{item:lsc-Rn-a} deserves its explicit recording in the above
statement (and in similar ones to follow later on).

Interestingly, having at least one of the extra features from the settings
\eqref{item:lsc-Rn-a}, \eqref{item:lsc-Rn-b}, \eqref{item:lsc-Rn-c} is necessary
for having \eqref{eq:lsc-Rn}, as shown by the following examples with sequences
$(A_k)_{k\in\N}$ which \glqq loose mass at infinity\grqq.

\begin{exmp}[for the failure of lower semicontinuity]
  For $n\ge2$, we consider the infinite Radon measure
  \[
    \mu_-\coleq2\H^{n-1}\ecke(\R^{n-1}{\times}\{0,1\})
  \]
  \ka twice the area measure on two parallel hyperplanes\kz. Then $\mu_-$
  satisfies the small-volume IC in $\R^n$ with constant\/ $1$ by Proposition
  \ref{prop:inf-model-meas} in the appendix, while it satisfies the strong IC in
  $\R^n$ and its variant of type \eqref{eq:strong-ic-at-infty} only with
  constant\/ $2$, but not with constant\/ $1$. Furthermore, for fixed
  $B\in\BVs(\R^{n-1})$ with $\P(B)<2|B|<\infty$ \ka a large ball in $\R^{n-1}$,
  for instance\kz{} and a fixed direction $0\neq v\in\R^{n-1}$, we consider the
  shifted cylinders $A_k\coleq(B{+}kv){\times}{[0,1]}\in\BVs(\R^n)$; see Figure
  \ref{fig:basic-failure-lsc} for a basic illustration. Then $A_k$ converge
  only locally, but not globally in measure on $\R^n$ to $\emptyset$, and from
  $\P(A_k)=2|B|{+}\P(B)$ and $\mu_-(A_k^+)=\mu_-(A_k)=4|B|$ we deduce
  \[
    \lim_{k\to\infty}\mathscr{P}_{0,\mu_-}[A_k]
    =\P(B){-}2|B|
    <0
    =\mathscr{P}_{0,\mu_-}[\emptyset]\,.
  \]
  Thus, lower semicontinuity of $\mathscr{P}_{0,\mu_-}$ fails along this sequence.
  \begin{figure}[h]\centering
    \includegraphics{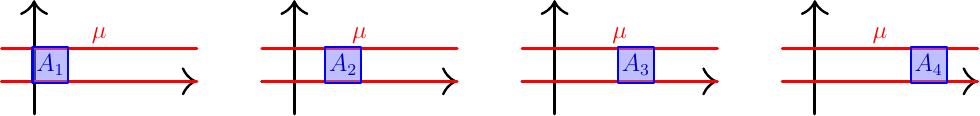}
    \vspace{-2.7ex}
    \caption{The sets $A_k$, which converge \emph{locally} in measure on $\R^2$
      to $\emptyset$, in case $n=2$, $B={[{-}1,0]}$, $v=1$.\label{fig:basic-failure-lsc}}
  \end{figure}
  
  For $n=1$, essentially the same phenomenon occurs for the measure
  $\mu_-\coleq2\H^0\ecke\mathds{Z}$ \ka with the counting measure $\H^0$\kz{}
  and $A_k\coleq I{+}k$ with any bounded interval $I\subset\R$ such that\/
  $\overline I$ contains at least two integers.\label{item:counterex}
\end{exmp}

Before proceeding to the proof of the theorem we add a brief remark on technical
infinite-volume variants of the assumptions in \eqref{item:lsc-Rn-b} and
\eqref{item:lsc-Rn-c}. While the issue is rather marginal and could also be
skipped, we find it worth recording mainly for better comparability with the
later Theorem \ref{thm:lsc-Dir}.

\begin{rem}\label{rem:lsc-Rn-infinite-vol}\text{}
    In the settings \eqref{item:lsc-Rn-b} and \eqref{item:lsc-Rn-c} of Theorem
    \ref{thm:lsc-Rn} we may replace the requirements $A_k,A_\infty\in\BVs(\R^n)$ by
    $A_k^\c,A_\infty^\c\in\BVs(\R^n)$ together with
    $\min\{\mu_+(A_k^1),\mu_-(A_k^+)\}<\infty$ and\/
    $\min\{\mu_+(A_\infty^1),\mu_-(A_\infty^+)\}<\infty$.

    \begin{proof}
      From $\P(A_k^\c)=\P(A_k)$ and $\P(A_\infty^\c)=\P(A_\infty)$ we see that
      $\Pmu[A_k]$ and $\Pmu[A_\infty]$ are still well-defined. With the result for the
      setting \eqref{item:lsc-Rn-a} at hand it suffices to consider the case
      $\mu_-(\R^n)=\infty$. Then, starting from $|A_k^\c|<\infty$ and using
      Lemma \ref{lem:finite} we infer first
      $\mu_-((A_k^+)^\c)\le\mu_-((A_k^\c)^+)<\infty$, then
      $\mu_-(A_k^+)=\infty$, then $\mu_+(A_k^1)<\infty$, and finally
      $\Pmu[A_k]={-}\infty$ for $k\gg1$. As in the same way we get
      $\Pmu[A_\infty]={-}\infty$, the semicontinuity inequality \eqref{eq:lsc-Rn} is
      trivially valid with ${-}\infty$ on both sides.
    \end{proof}

    For $n\ge2$, in view of Theorem \ref{thm:isoperi} we may express that either
    $A_k,A_\infty\in\BVs(\R^n)$ \ka as in the theorem\kz{} or
    $A_k^\c,A_\infty^\c\in\BVs(\R^n)$ \ka as in this remark\kz{} holds by requiring the
    unifying condition $|A_k\Delta A_\infty|{+}\P(A_k){+}\P(A_\infty)<\infty$.
    For $n=1$, the condition $|A_k\Delta A_\infty|{+}\P(A_k){+}\P(A_\infty)<\infty$ includes
    further cases, but still semicontinuity remains valid in all of these \ka
    as it can be read off from the later proofs or the refined results in Theorems
    \ref{thm:lsc-Dir}, \ref{thm:lsc-BV-dom}, \ref{thm:lsc-dom} and, in fact, in
    the one-dimensional situation can also be proved by much simpler means\kz.
\end{rem}

The proof of Theorem \ref{thm:lsc-Rn} is approached step by step and
will be finalized only at the end of this section. We start by establishing an
approximation lemma, which is illustrated in Figure \ref{fig:cut-off-tentacle}
and plays a key role.

\begin{lem}[good exterior approximation]\label{lem:good-ext-approx}
  For a non-negative Radon measure $\mu$ on $\R^n$ with $\mu(N)=0$ for all\/
  $\H^{n-1}$-negligible $N\in\Bo(\R^n)$, assume that condition
  \eqref{eq:mu<P+eps} holds in $\Omega=\R^n$ for some fixed choice of
  $\eps>0$, $\delta>0$, and $C\in{[0,\infty)}$. Then, if a sequence
  $(A_k)_{k\in\N}$ in $\BVs(\R^n)$ converges globally in measure on $\R^n$ to
  $A_\infty\in\BVs(\R^n)$, there exists a Borel set $S\in\BVs(\R^n)$ such that we have
  \[
    A_\infty^+\subset\inn(S)\,,\qq\qq
    \mu\big(\overline S\big)<\mu(A_\infty^+)+3\eps\,,
    \qq\qq\text{and}\qq\qq
    \liminf_{k\to\infty}\P(S,A_k^+)<\eps\,.
  \]
\end{lem}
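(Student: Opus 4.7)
The plan is to construct $S$ as a level set of a carefully chosen Sobolev approximation of $\1_{A_\infty^+}$ from above, augmented by two small open corrections that ensure topological openness. To begin, I would apply Lemma \ref{lem:mon-approx-u+} to $u\coleq\1_{A_\infty}$ to obtain a non-increasing sequence $v_\ell\in\W^{1,1}(\R^n)$ with $0\le v_\ell\le 1$ and $v_\ell^*\to\1_{A_\infty^+}$ $\H^{n-1}$-\ae{} on $\R^n$. Monotonicity and $v_\ell\le 1$ force $v_\ell=1$ a.e.\ on $A_\infty$, hence $\nabla v_\ell=0$ a.e.\ on $A_\infty$.

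Fixing $\ell$, the identity just noted together with absolute continuity of the integral and $|A_k\triangle A_\infty|\to 0$ will yield
\[
  \int_{A_k^+}|\nabla v_\ell|\dx=\int_{A_k\setminus A_\infty}|\nabla v_\ell|\dx\xrightarrow{k\to\infty}0.
\]
Via Theorem \ref{thm:BV-coarea} and Fatou's lemma one then gets $\liminf_{k\to\infty}\P(\{v_\ell>t\},A_k^+)=0$ for $\L^1$-\ae{} $t\in(0,1)$, alongside $\{v_\ell>t\}\in\BVs(\R^n)$ for such $t$. Separately, the sets $\{v_\ell^*\ge t\}$ are decreasing in $\ell$ with intersection equal to $A_\infty^+$ up to an $\H^{n-1}$-null set; since $\mu(\{v_1^*\ge t\})<\infty$ for $\L^1$-\ae{} $t$ by Lemma \ref{lem:finite} and $\mu$ vanishes on $\H^{n-1}$-null sets, monotone convergence gives $\mu(\{v_\ell^*\ge t\})\downarrow\mu(A_\infty^+)$. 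I will then fix such a good $t$ and choose $\ell$ large enough that $\mu(\{v_\ell^*\ge t\})<\mu(A_\infty^+)+\eps$.

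Since the Borel set $T\coleq\{v_\ell^*>t\}$ need not be topologically open and contains $A_\infty^+$ only up to an $\H^{n-1}$-null set $N\coleq A_\infty^+\setminus T$, I will apply Lemma \ref{lem:qc} to $v_\ell^*$ to produce an open set $E$ of small $\Cp_1$ on whose complement $v_\ell^*$ is continuous (the standard maximal-function construction simultaneously arranges $|E|,\P(E)<\eta$ for any prescribed $\eta>0$), and Lemma \ref{lem:negligible} to cover $N$ by an open $W$ with $|W|,\P(W)<\eta$. Setting $S\coleq T\cup E\cup W\in\BVs(\R^n)$, the inclusion $A_\infty^+\subset\inn(S)$ follows since every $x\in A_\infty^+\setminus(E\cup W)$ lies in $T\cap E^c$ where $v_\ell^*$ is continuous, so a small ball around $x$ intersected with $E^c$ lies in $T$ and the ball itself therefore in $T\cup E\subset S$. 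The perimeter condition is immediate from $\P(S,A_k^+)\le\P(T,A_k^+)+\P(E)+\P(W)\le\P(T,A_k^+)+2\eta$.

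The hardest step will be controlling $\mu(\overline S)$: since our convergence results are $\H^{n-1}$-\ae{} while $\overline S$ is a topological closure, a direct measure-theoretic bound is not available. The key device is the quasi-continuity of $v_\ell^*$ off $E$: any $x\in\overline S\setminus(\overline E\cup\overline W)$ is a limit of a sequence in $S$ which eventually avoids the closed sets $\overline E$ and $\overline W$, hence lies in $T\cap E^c$; passing to the limit via continuity of $v_\ell^*$ on $E^c$ forces $v_\ell^*(x)\ge t$. This yields the crucial inclusion $\overline S\subset\{v_\ell^*\ge t\}\cup\overline E\cup\overline W$, so $\mu(\overline S)\le\mu(\{v_\ell^*\ge t\})+\mu(\overline E)+\mu(\overline W)$. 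The first term is $<\mu(A_\infty^+)+\eps$ by the choice of $t$ and $\ell$, and for the other two I would enclose $\overline E,\overline W$ in slight open thickenings of still-small volume (below $\delta$) and still-small perimeter, applying the small-volume IC to obtain each $<\eps$ upon taking $\eta$ sufficiently small; this yields $\mu(\overline S)<\mu(A_\infty^+)+3\eps$ as desired.
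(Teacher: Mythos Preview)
Your overall architecture matches the paper's (monotone $\W^{1,1}$ approximation from above, coarea to select a good level, quasi-continuity to arrange topological openness), but two of the later steps do not go through as written.

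First, Lemma~\ref{lem:qc} only delivers an open exceptional set $E$ with $\Cp_1(E)<\eta$; it gives no control on $\P(E)$, and the maximal-function construction does not either: for $E=\{M(|\nabla v_\ell|)>\lambda\}$ the weak-type estimate controls $|E|$ but there is no general bound on $\H^{n-1}(\partial E)$. The paper sidesteps this via Proposition~\ref{prop:inf-P}, which produces $H_\ell\in\BVs(\R^n)$ with $E_\ell\subset H_\ell^+$ and $\P(H_\ell)$ small (and then $|H_\ell|<\delta$ by the isoperimetric inequality), and sets $S\coleq U_\ell\cup H_\ell^+$ with $U_\ell\coleq\{v_\ell^\ast>t_\ell\}\setminus E_\ell$. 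Openness of $U_\ell\cup E_\ell$ (as $U_\ell$ is relatively open in the closed set $E_\ell^\c$) still yields $A_\infty^+\subset\inn(S)$, and since $S=\{v_\ell>t_\ell\}\cup H_\ell$ up to negligible sets the perimeter bound follows from $\P(H_\ell)<\eps$ alone.

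Second, and more seriously, your bound on $\mu(\overline S)$ breaks at the terms $\mu(\overline E)$ and $\mu(\overline W)$. An open set with arbitrarily small volume and perimeter can have a topological closure of unit volume: a countable union of tiny balls centered at the rationals in $[0,1]^n$ has $|W|,\P(W)$ as small as you like while $\overline W\supset[0,1]^n$. So one cannot in general enclose $\overline E$ or $\overline W$ in a set of volume below $\delta$, and the IC is not applicable there. The paper's route is different: the only topological closure taken is that of $U_\ell$, and since $U_\ell$ lies in the \emph{closed} set $E_\ell^\c$ on which $v_\ell^\ast$ is continuous, one gets $\overline{U_\ell}\subset\{v_\ell^\ast\ge t_\ell\}\cap E_\ell^\c$, whose $\mu$-measure is controlled directly by the monotone convergence $\mu(\{v_\ell^\ast>t_1\})\downarrow\mu(A_\infty^+)$ together with the choice $\mu(\{v_\ell^\ast=t_\ell\})=0$. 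The remaining piece is handled through $\mu(H_\ell^+)$, bounded via $|H_\ell|<\delta$ and the IC.
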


\vspace{-3ex}

\begin{figure}[H]\centering
  \includegraphics{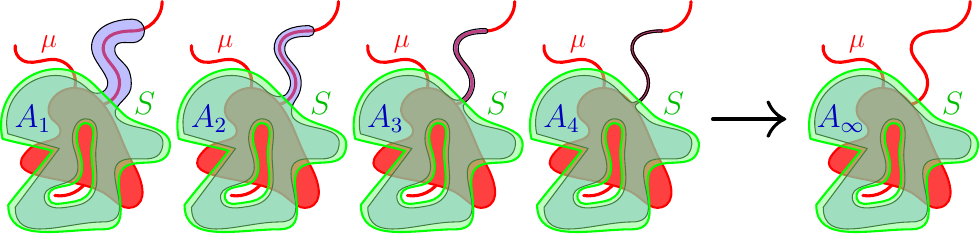}
  \vspace{-2.7ex}
  \caption{A set $S$ which cuts off the tentacle of Figure \ref{fig:tentacle} in
    the sense of Lemma \ref{lem:good-ext-approx} (for mildly small
    $\eps$).\label{fig:cut-off-tentacle}}
\end{figure}

\begin{proof}[Proof of Lemma \ref{lem:good-ext-approx}]
  We first treat the main case $n\ge2$. Applying Lemma \ref{lem:mon-approx-u+}
  to $\1_{A_\infty}\in\BV(\R^n)$, we find $v_\ell\in\W^{1,1}(\R^n)$ such that
  $1\ge v_1\ge v_2\ge v_3\ge\ldots$ holds \ae{} on $\R^n$ and $v_\ell^\ast$
  converge $\H^{n-1}$-\ae{} on $\R^n$ to $\1_{A_\infty^+}$. By assumption on
  $\mu$, this convergence holds also $\mu$-\ae{} on $\R^n$. Next, possibly
  decreasing $\delta>0$ from the statement, we can assume
  $C(\delta/\Gamma_n)^\frac{n-1}n\le\eps$ for the constant $\Gamma_n$ of Theorem
  \ref{thm:isoperi}. Lemma \ref{lem:qc} then gives open sets $E_\ell$ in $\R^n$
  with $\Cp_1(E_\ell)<(\delta/\Gamma_n)^\frac{n-1}n$ (and in particular
  $|E_\ell|<\infty$) such that $v_\ell^\ast$ is defined and continuous on
  $\R^n\setminus E_\ell$. From Proposition \ref{prop:inf-P} we further obtain
  $H_\ell\in\BVs(\R^n)$ with $\P(H_\ell)<(\delta/\Gamma_n)^\frac{n-1}n$ such
  that $E_\ell\subset H_\ell^+$. By the isoperimetric estimate of Theorem
  \ref{thm:isoperi} we infer
  $|H_\ell|\le\Gamma_n\P(H_\ell)^\frac n{n-1}<\delta$, and via
  \eqref{eq:mu<P+eps} we end up with $\mu(H_\ell^+)\le C\P(H_\ell){+}\eps
  <C(\delta/\Gamma_n)^\frac{n-1}n{+}\eps\le2\eps$. For the following we can thus
  record
  \begin{equation}\label{eq:prop-Hl}
    \mu(H_\ell^+)<2\eps
    \qq\qq\text{and}\qq\qq
    \P(H_\ell)<\eps\,.
  \end{equation}
  Next we observe that $\P(\{v_\ell>t\})<\infty$ holds for \ae{} $t\in{(0,1)}$ by
  Theorem \ref{thm:BV-coarea}. Furthermore, with the help of Fatou's lemma,
  again Theorem \ref{thm:BV-coarea}, $|A_k^+\Delta A_k|=0$,
  $\lim_{k\to\infty}|A_k\Delta A_\infty|=0$, and $v_\ell\equiv1$, $\nabla
  v_\ell\equiv0$ \ae{} on $A_\infty$ we obtain
  \[
    \int_0^1\liminf_{k\to\infty}\P(\{v_\ell>t\},A_k^+)\,\d t
    \le\liminf_{k\to\infty}\int_0^1\P(\{v_\ell>t\},A_k^+)\,\d t
    =\liminf_{k\to\infty}\int_{A_k}|\nabla v_\ell|\dx
    =\int_{A_\infty}|\nabla v_\ell|\dx=0\,.
  \]
  As a consequence, we have
  $\liminf_{k\to\infty}\P(\{v_\ell>t\},A_k^+)=0$ for
  \ae{} $t\in{(0,1)}$. Therefore, we can choose a level $t_\mathrm{o}\in{(0,1)}$
  such that we have
  \[
    \P(\{v_\ell>t_\mathrm{o}\})<\infty
    \qq\text{for all }\ell\in\N
  \]
  and
  \begin{equation}\label{eq:tl-liminf}
    \liminf_{k\to\infty}\P(\{v_\ell>t_\mathrm{o}\},A_k^+)=0
    \qq\text{for all }\ell\in\N\,.
  \end{equation}
  Moreover, since the measure $\mu$ has positive mass on at most countably many
  level sets\footnote{Since $v_\ell^\ast$ is defined $\H^{n-1}$-\ae{} and then
  by assumption also $\mu$-ae{}, the level sets $\{v_\ell^\ast=t\}$ are defined
  up to $\mu$-negligible sets, and this will suffice for our purposes. Clearly,
  one may also agree on a concrete convention such as simply excluding the
  non-existence points of $v_\ell^\ast$ from the level sets.} $\{v_\ell^\ast=t\}$
  with $t\in{(0,1)}$, the choice can be made such that additionally
  $\mu(\{v_1^\ast=t_\mathrm{o}\})=0$. Now we introduce the sets\footnote{The
  sets $U_\ell$ are defined up to single points, since the non-existence set of
  $v_\ell^\ast$ is contained in $E_\ell$.}
  \[
    U_\ell\coleq\{v_\ell^\ast>t_\mathrm{o}\}\setminus E_\ell\,.
  \]
  We observe that $U_\ell$ are relatively open in $\R^n\setminus E_\ell$ with
  $\overline{U_\ell}\subset\{v_\ell^\ast\ge t_\mathrm{o}\}\setminus E_\ell$ by the
  openness of $E_\ell$ and the continuity of $v_\ell^\ast$ outside $E_\ell$.
  Furthermore, we can estimate
  \begin{equation}\label{eq:pre-Ul-est}
    \mu\big(\overline{U_\ell}\big)
    \le\mu(\{v_\ell^\ast\ge t_\mathrm{o}\})\,.
  \end{equation}
  Here, from $v_1\in\L^1(\R^n)$, $\P(\{v_1>t_\mathrm{o}\})<\infty$, and Lemma
  \ref{lem:finite} we infer
  $\mu(\{v_1^\ast>t_\mathrm{o}\}\setminus E_1)\le\mu(\{v_1>t_\mathrm{o}\}^+)<\infty$.
  Then in view of $\mu(\{v_1^\ast=t_\mathrm{o}\})=0$ we get also
  $\mu(\{v_1^\ast\ge t_\mathrm{o}\})
  \le\mu(\{v_1^\ast>t_\mathrm{o}\}\setminus E_1)+\mu(H_1^+)<\infty$.
  Combining this with the $\mu$-\ae{} monotone convergence
  $v_\ell^\ast\to\1_{A_\infty^+}$, we conclude that the right-hand side
  $\mu(\{v_\ell^\ast\ge t_\mathrm{o}\})$ in \eqref{eq:pre-Ul-est} converges to
  $\mu(\{\1_{A_\infty^+}\ge t_\mathrm{o}\})=\mu(A_\infty^+)$ for
  $\ell\to\infty$. Therefore, for a suitably large $\ell\in\N$, which we fix at
  this point for the remainder of the proof, we have
  \begin{equation}\label{eq:Ul-est}
    \mu\big(\overline{U_\ell}\big)<\mu(A_\infty^+)+\eps\,.
  \end{equation}
  Now we are ready to introduce
  \[
    S\coleq U_\ell\cup H_\ell^+\,,
  \]
  and using $E_\ell\subset H_\ell^+$ we see
  \[
    A_\infty^+\subset\{v_\ell^\ast=1\}\cup E_\ell\subset U_\ell\cup E_\ell\subset S\,.
  \]
  Since $U_\ell$ is relatively open in $\R^n\setminus E_\ell$ and $E_\ell$ is
  open in $\R^n$, also $U_\ell\cup E_\ell$ is open in $\R^n$, and we can deduce
  even
  \[
    A_\infty^+\subset\inn(S)\,.
  \]
  Furthermore, from \eqref{eq:Ul-est} and \eqref{eq:prop-Hl} we infer
  \[
    \mu\big(\overline S\big)
    \le\mu\big(\overline{U_\ell}\big)+\mu(H_\ell^+)
    \le\mu(A_\infty^+)+3\eps\,.
  \]
  At this stage we observe $S=\{v_\ell>t_\mathrm{o}\}\cup H_\ell$ up to negligible
  sets with $\{v_\ell>t_\mathrm{o}\},H_\ell\in\BVs(\R^n)$. Thus, by Lemma
  \ref{lem:P-cup-cap} we obtain $S\in\BVs(\R^n)$ and
  $\P(S,\,\cdot\,)\le\P(\{v_\ell>t_\mathrm{o}\},\,\cdot\,)+\P(H_\ell,\,\cdot\,)$. Therefore,
  involving also \eqref{eq:tl-liminf} and \eqref{eq:prop-Hl} we can estimate
  \[
    \liminf_{k\to\infty}\P(S,A_k^+)
    \le\liminf_{k\to\infty}\P(\{v_\ell>t_\mathrm{o}\},A_k^+)+\P(H_\ell)
    <\eps\,.
  \]
  At this point, all claims on $S$ are verified.

  Finally, in the simpler case $n=1$ the previous reasoning applies with major
  simplifications, which are mostly due to the full continuity of $\W^{1,1}(\R)$
  functions. In particular there is no need to construct $E_\ell$ and $H_\ell$,
  which can be replaced with $\emptyset$, and one can directly obtain an open
  set $S=U_\ell=\{v_\ell^\ast>t_\mathrm{o}\}$.
\end{proof}

With the lemma at hand, we now proceed to a proof of Theorem
\ref{thm:lsc-Rn}\eqref{item:lsc-Rn-c}, which corresponds to Theorem
\ref{thm:lsc-global-intro} from the introduction. We start with the special case
$\mu_+\equiv0$, which is here restated as follows.

\begin{prop}[$\L^1$ lower semicontinuity in case $\mu_+\equiv0$]
    \label{prop:lsc-global}
  Consider a non-negative Radon measure $\mu$ on $\R^n$ which satisfies the
  small-volume IC in $\R^n$ with constant\/ $1$. Moreover,
  assume that $A_k\in\BVs(\R^n)$ converge \emph{globally} in measure on $\R^n$
  to $A_\infty\in\BVs(\R^n)$. Then we have
  \begin{equation}\label{eq:lsc}
    \liminf_{k\to\infty}\big[\P(A_k)-\mu(A_k^+)\big]\ge\P(A_\infty)-\mu(A_\infty^+)\,.
  \end{equation}
\end{prop}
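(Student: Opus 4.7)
The proof will use Lemma \ref{lem:good-ext-approx} as the main engine, plus the small-volume IC to handle the portion of $A_k$ lying outside the cut-off set $S$. Since $\mu$ has the small-volume IC, by Lemma \ref{lem:abs-con-H} it annihilates $\H^{n-1}$-negligible sets, so the hypotheses of Lemma \ref{lem:good-ext-approx} are met with $C=1$.

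Fix $\eps>0$ with associated $\delta>0$ from the small-volume IC, and apply Lemma \ref{lem:good-ext-approx} to obtain $S\in\BVs(\R^n)$ with $A_\infty^+\subset\inn(S)$, $\mu(\overline S)<\mu(A_\infty^+)+3\eps$, and $\liminf_{k\to\infty}\P(S,A_k^+)<\eps$. Passing to a subsequence I may assume both that $\P(A_k){-}\mu(A_k^+)$ tends to its liminf and that $\P(S,A_k^+)<\eps$ for every $k$. Since $A_\infty\subset A_\infty^+\subset\inn(S)\subset S$ up to $\Ln$-null sets, global convergence in measure yields $|A_k\setminus S|\to0$, so eventually $|A_k\setminus S|<\delta$.

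The next step splits $\mu(A_k^+)$ according to $\overline S$. The key elementary inclusion
\[
  A_k^+\setminus\overline S\subset(A_k\setminus S)^+
\]
holds because any $x\notin\overline S$ has a neighborhood disjoint from $S$, so locally $A_k$ and $A_k\setminus S$ agree. Hence
\[
  \mu(A_k^+)\le\mu\big(\overline S\big)+\mu((A_k\setminus S)^+)<\mu(A_\infty^+)+3\eps+\mu((A_k\setminus S)^+).
\]
The small-volume IC applied to $A_k\setminus S$ gives $\mu((A_k\setminus S)^+)\le\P(A_k\setminus S)+\eps$, and Lemma \ref{lem:P(AcapB),P(A-S)} (with $B=S^\c$) yields $\P(A_k\setminus S)\le\P(A_k,S^0)+\P(S,A_k^+)<\P(A_k,S^0)+\eps$. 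Combining,
\[
  \P(A_k)-\mu(A_k^+)>\P(A_k,(S^0)^\c)-\mu(A_\infty^+)-5\eps.
\]

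To conclude I bound $\P(A_k,(S^0)^\c)$ from below by $\P(A_k,\inn(S))$, using the trivial inclusion $\inn(S)\subset S^1\subset(S^0)^\c$. Since $\inn(S)$ is open and $A_k\to A_\infty$ locally in measure there, Lemma \ref{lem:lsc-per} gives
\[
  \liminf_{k\to\infty}\P(A_k,\inn(S))\ge\P(A_\infty,\inn(S))=\P(A_\infty),
\]
where the last equality uses $\partial^\ast\!A_\infty\subset A_\infty^+\subset\inn(S)$ together with Theorem \ref{thm:DeGiorgi}. Putting everything together,
\[
  \liminf_{k\to\infty}\big[\P(A_k)-\mu(A_k^+)\big]\ge\P(A_\infty)-\mu(A_\infty^+)-5\eps,
\]
and sending $\eps\searrow0$ gives \eqref{eq:lsc}. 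The main obstacle is really packaged into the auxiliary Lemma \ref{lem:good-ext-approx}; once $S$ is in hand, the remaining subtlety is just the set-theoretic gymnastics needed to compare $A_k^+$, $(A_k\setminus S)^+$, $\overline S$, and $\inn(S)$, and to check that all perimeter-splittings are on the right side of each inequality so that one may simultaneously invoke the small-volume IC on $A_k\setminus S$ and the standard lower semicontinuity of the perimeter on $\inn(S)$.
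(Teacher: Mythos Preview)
Your proof is correct and follows essentially the same strategy as the paper's: invoke Lemma \ref{lem:good-ext-approx} for the cut-off set $S$, apply the small-volume IC to $A_k\setminus S$ via the inclusion $A_k^+\setminus\overline S\subset(A_k\setminus S)^+$ and the perimeter splitting of Lemma \ref{lem:P(AcapB),P(A-S)}, and use standard lower semicontinuity of the perimeter on the open set $\inn(S)$ together with $\partial^\ast\!A_\infty\subset A_\infty^+\subset\inn(S)$; the final estimate with the $5\eps$ loss is identical.

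One small point of order: as written you apply Lemma \ref{lem:good-ext-approx} to the full sequence and \emph{then} try to extract a single subsequence along which both $\P(A_k)-\mu(A_k^+)$ tends to the original liminf \emph{and} $\P(S,A_k^+)<\eps$. These two conditions need not be compatible, since the indices realizing $\liminf_k\P(S,A_k^+)<\eps$ may be disjoint from those realizing $\liminf_k[\P(A_k)-\mu(A_k^+)]$. The fix is immediate: first extract a subsequence along which $\P(A_k)-\mu(A_k^+)$ converges to the original liminf (this subsequence still converges globally in measure), apply Lemma \ref{lem:good-ext-approx} to \emph{that} subsequence, and then further extract for $\P(S,A_k^+)<\eps$. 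The paper sidesteps this by not extracting a second time and instead using the elementary rule $\lim(a_k+b_k)\ge\liminf a_k+\limsup b_k$ to handle the $\liminf_k\P(S,A_k^+)$ directly.
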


\begin{proof}
  Possibly passing to a subsequence, we can assume that
  $\lim_{k\to\infty}\big[\P(A_k){-}\mu(A_k^+)\big]$ exists. We now fix an
  arbitrary $\eps>0$. Drawing on Lemma \ref{lem:abs-con-H} and the assumed
  IC, we then apply Lemma \ref{lem:good-ext-approx} with
  the given $\eps$, the corresponding $\delta$, and $C=1$, and we work with the
  corresponding set $S\in\BVs(\R^n)$. We start by splitting terms in the
  sense of the inequality
  \[
    \P(A_k)-\mu(A_k^+)
    \ge\P(A_k,\inn(S))
    -\mu\big(\overline{S}\big)
    +\P(A_k,\inn(S)^\c)-\mu\big(A_k^+\setminus\overline{S}\big)\,.
  \]
  Then we use the elementary rule
  $\lim_{k\to\infty}[a_k{+}b_k]\ge\liminf_{k\to\infty}a_k+\limsup_{k\to\infty}b_k$
  for $a_k,b_k\in\R$, valid whenever the limit on the left-hand side exists
  and the right-hand side does not yield the undefined expression
  $\infty-\infty$. By the initial assumption and the observation
  that neither ${-}\mu(\overline S)$ nor $\limsup\big[\ldots\big]$ equal
  ${-}\infty$ (see the subsequent estimate \eqref{eq:lsc-third} for the
  latter), we may write
  \begin{equation}\label{eq:lsc-main}
    \lim_{k\to\infty}\big[\P(A_k)-\mu(A_k^+)\big]
    \ge\liminf_{k\to\infty}\P(A_k,\inn(S))
    -\mu\big(\overline{S}\big)
    +\limsup_{k\to\infty}\big[\P(A_k,\inn(S)^\c)-\mu\big(A_k^+\setminus\overline{S}\big)\big]\,.
  \end{equation}
  The terms on the right-hand side of \eqref{eq:lsc-main} are now estimated
  separately. For the first term, by an application of Lemma \ref{lem:lsc-per}
  on the open set $\inn(S)$ and the inclusion $A_\infty^+\subset\inn(S)$ from
  Lemma \ref{lem:good-ext-approx}, we have
  \begin{equation}\label{eq:lsc-first}
    \liminf_{k\to\infty}\P(A_k,\inn(S))\ge\P(A_\infty,\inn(S))\ge\P(A_\infty,A_\infty^+)=\P(A_\infty)\,.
  \end{equation}
  For the second term, the estimate
  \begin{equation}\label{eq:lsc-second}
    \mu\big(\overline{S}\big)<\mu(A_\infty^+)+3\eps\,,
  \end{equation}
  also provided by Lemma \ref{lem:good-ext-approx}, suffices. In order to
  control the last term in \eqref{eq:lsc-main}, we first record that in view of
  $A_\infty^+\subset S$ we get
  $|A_k{\setminus}S|\le|A_k{\setminus}A_\infty|\le|A_k\Delta A_\infty|$ and that
  consequently the assumed global convergence implies
  $\lim_{k\to\infty}|A_k{\setminus}S|=0$. This permits the crucial
  application of the small-volume IC with constant $1$ to $A_k{\setminus}S$
  for $k\gg1$, which is now combined with the inclusion
  $A_k^+\setminus\overline{S}\subset(A_k\setminus S)^+$, Lemma
  \ref{lem:P(AcapB),P(A-S)}, and the inclusion $S^0\subset\inn(S)^\c$. All in
  all, for $k\gg1$, we deduce
  \[
    \mu\big(A_k^+\setminus\overline{S}\big)
    \le\mu((A_k\setminus S)^+)
    \le\P(A_k\setminus S)+\eps
    \le\P(A_k,S^0)+\P(S,A_k^+)+\eps
    \le\P(A_k,\inn(S)^\c)+\P(S,A_k^+)+\eps\,.
  \]
  Now we rearrange terms in the resulting estimate and take limits. Then, also
  employing the last property from Lemma \ref{lem:good-ext-approx}, we conclude
  \begin{equation}\label{eq:lsc-third}
    \limsup_{k\to\infty}\big[\P(A_k,\inn(S)^\c)-\mu\big(A_k^+\setminus\overline{S}\big)\big]
    \ge-\liminf_{k\to\infty}\P(S,A_k^+)-\eps
    >{-}2\eps\,.
  \end{equation}
  Collecting the estimates \eqref{eq:lsc-main}, \eqref{eq:lsc-first},
  \eqref{eq:lsc-second}, and \eqref{eq:lsc-third} we finally arrive at
  \[
    \lim_{k\to\infty}\big[\P(A_k)-\mu(A_k^+)\big]
    \ge\P(A_\infty)-\mu(A_\infty^+)-5\eps\,.
  \]
  Since $\eps>0$ is arbitrary, with this we have proven the claim \eqref{eq:lsc}.
\end{proof}

Next, essentially by passing to complements, we establish a variant of
Proposition \ref{prop:lsc-global} with opposite sign convention for the measure
$\mu$. This dual statement is analogous except for the fact that in the dual
case we can allow for \emph{local} convergence of sets of potentially infinite
perimeter, while in the original case we cannot generally relax the
corresponding global assumptions. In terms of the general Theorem
\ref{thm:lsc-Rn} this means that we achieve a treatment of the setting
\eqref{item:lsc-Rn-a} with $\mu_-\equiv0$.

\begin{prop}[$\L^1_\loc$ lower semicontinuity in case $\mu_-\equiv0$]
    \label{prop:lsc-global'}
  Consider a non-negative Radon measure $\mu$ on $\R^n$ which satisfies the
  small-volume IC in $\R^n$ with constant\/ $1$. Moreover, assume that
  $A_k\in\M(\R^n)$ converge \emph{locally} in measure on $\R^n$ to
  $A_\infty\in\M(\R^n)$. Then we have
  \begin{equation}\label{eq:lsc'}
    \liminf_{k\to\infty}\big[\P(A_k)+\mu(A_k^1)\big]
    \ge\P(A_\infty)+\mu(A_\infty^1)\,.
  \end{equation}
\end{prop}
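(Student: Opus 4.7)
My plan is to reduce the claim to Proposition \ref{prop:lsc-global} by passing to complements inside a ball $\B_R$ and then sending $R\to\infty$. After discarding the trivial case where $\liminf_k[\P(A_k){+}\mu(A_k^1)]=\infty$, I may pass to a subsequence along which this quantity converges and $\sup_k\P(A_k)<\infty$; Lemma \ref{lem:lsc-per} then gives $\P(A_\infty)<\infty$. Since $\mu$ and every $\P(A_k,\,\cdot\,)$ are locally finite Borel measures, for all but countably many $R>0$ in any bounded interval we simultaneously have $\mu(\partial\B_R)=0$ and $\H^{n-1}(\partial^\ast\! A_k\cap\partial\B_R)=0$ for every $k\in\N\cup\{\infty\}$, and I fix such a good~$R$.

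Set $D_k\coleq A_k^\c\cap\B_R$ and $D_\infty\coleq A_\infty^\c\cap\B_R$. Local convergence of $A_k$ yields $|D_k\Delta D_\infty|=|(A_k\Delta A_\infty)\cap\B_R|\to0$, so $D_k\to D_\infty$ \emph{globally} in measure on $\R^n$; Lemma \ref{lem:P-cup-cap} applied to $A_k$ and $\B_R^\c$ further gives $\P(D_k)\le\P(A_k){+}\H^{n-1}(\partial\B_R)<\infty$, so $D_k,D_\infty\in\BVs(\R^n)$. A direct density computation, combined with $\mu(\partial\B_R)=0$, shows $\mu(D_k^+)=\mu(\B_R){-}\mu(A_k^1\cap\B_R)$ and likewise for $D_\infty$. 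Substituting these identifications into Proposition \ref{prop:lsc-global} applied to $D_k\to D_\infty$ produces
\begin{equation}\label{eq:plan-complement}
  \liminf_{k\to\infty}\big[\P(D_k)+\mu(A_k^1\cap\B_R)\big]
  \ge\P(D_\infty)+\mu(A_\infty^1\cap\B_R)\,.
\end{equation}

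To carry \eqref{eq:plan-complement} back to the $A_k$, I combine two further ingredients: the estimate $\P(A_k)\ge\P(D_k)+\P(A_k\setminus\B_R)-\H^{n-1}(\partial\B_R)$, which is Lemma \ref{lem:P-cup-cap} applied to $A_k$ and $\B_R^\c$, and the bound $\liminf_k\P(A_k\setminus\B_R)\ge\P(A_\infty\setminus\B_R)$ from Lemma \ref{lem:lsc-per}. The crux is the \emph{identity}
\[
  \P(D_\infty)+\P(A_\infty\setminus\B_R)-\H^{n-1}(\partial\B_R)=\P(A_\infty)\,,
\]
which follows from the equality cases of Lemma \ref{lem:P(AcapB),P(A-S)} under the selection of $R$: the $\H^{n-1}(\partial\B_R\cap A_\infty^1)$ contribution to $\P(A_\infty\setminus\B_R)$ and the complementary $\H^{n-1}(\partial\B_R){-}\H^{n-1}(\partial\B_R\cap A_\infty^1)$ contribution to $\P(D_\infty)$ add up precisely to $\H^{n-1}(\partial\B_R)$. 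Combining \eqref{eq:plan-complement} with these estimates and with the trivial bound $\mu(A_k^1)\ge\mu(A_k^1\cap\B_R)$ then yields
\[
  \liminf_{k\to\infty}\big[\P(A_k)+\mu(A_k^1)\big]\ge\P(A_\infty)+\mu(A_\infty^1\cap\B_R)\,,
\]
and letting $R\to\infty$ through good radii, monotone convergence $\mu(A_\infty^1\cap\B_R)\nearrow\mu(A_\infty^1)$ (valid whether this limit is finite or infinite) closes the argument.

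The main obstacle I expect is bookkeeping the non-vanishing correction $\H^{n-1}(\partial\B_R)$, which grows without bound as $R\to\infty$ and thus cannot be absorbed into an $\eps$-error. The rescue is the exact cancellation at good radii described above: the $-\H^{n-1}(\partial\B_R)$ in the lower bound for $\P(A_k)$ is matched precisely by the $-\H^{n-1}(\partial\B_R)$ appearing in the formula for $\P(A_\infty)$, so both boundary contributions vanish once \eqref{eq:plan-complement} and the identity are put together.
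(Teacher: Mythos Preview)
Your argument is correct. Both you and the paper reduce to Proposition~\ref{prop:lsc-global} by taking complements inside a ball and then sending the radius to infinity; the difference lies in how the sphere contributions $\H^{n-1}(\,\cdot\,\cap\partial\B_R)$ are dealt with. The paper runs a coarea argument to select radii $R_i$ along which, after a further diagonal subsequence, $\H^{n-1}(A_k^0\cap\partial\B_{R_i})\to\H^{n-1}(A_\infty^0\cap\partial\B_{R_i})$; this convergence lets the boundary terms cancel on \emph{both} sides and yields the genuinely localized inequality $\liminf_k[\P(A_k,\B_{R_i})+\mu(A_k^1\cap\B_{R_i})]\ge\P(A_\infty,\B_{R_i})+\mu(A_\infty^1\cap\B_{R_i})$, which the paper then reuses as Lemma~\ref{lem:loc-lsc}. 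You instead keep the full perimeter $\P(A_k)$ on the left, control the outer piece $\P(A_k\setminus\B_R)$ by ordinary perimeter lower semicontinuity, and cancel the constant $\H^{n-1}(\partial\B_R)$ only on the $A_\infty$-side via the exact identity $\P(D_\infty)+\P(A_\infty\setminus\B_R)=\P(A_\infty)+\H^{n-1}(\partial\B_R)$. This avoids the coarea/subsequence step (you in fact only need $\mu(\partial\B_R)=0$ and $\H^{n-1}(\partial^\ast\!A_\infty\cap\partial\B_R)=0$; the condition on the $A_k$ is unnecessary). Your route is a bit leaner for this proposition, while the paper's route pays for itself by producing the localized estimate needed later.
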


We remark that the deduction of Proposition \ref{prop:lsc-global'} from
Proposition \ref{prop:lsc-global} is quite straightforward \emph{if} $A_k$ are
uniformly bounded and thus we can simply take complements in a fixed, suitably
large ball $B\subset\R^n$ (for which we clearly have $B\in\BVs(\R^n)$ and
$\mu(B)<\infty$). However, in general we are not in this situation, and thus in
the following proof we need additional cut-off arguments.

\begin{proof}[Proof of Proposition \ref{prop:lsc-global'}]
  As usual we can assume that $\lim_{k\to\infty}\big[\P(A_k){+}\mu(A_k^1)\big]$
  exists and is finite. Taking into account the sign of the $\mu$-term we can
  further assume $\sup_{k\in\N}\P(A_k)<\infty$, which implies $\P(A_\infty)<\infty$
  by Lemma \ref{lem:lsc-per}. Next, by a classical version of the coarea formula
  (which can be seen as the case $u(x)=|x|$ in either Theorem
  \ref{thm:BV-coarea} or Theorem \ref{thm:Lip-coarea}), for
  every $R_0\in{(0,\infty)}$ we have
  \[\begin{aligned}
    \int_0^{R_0}\liminf_{k\to\infty}\H^{n-1}((A_k^0\Delta A_\infty^0)\cap\partial\B_R)\,\d R
    &\le\liminf_{k\to\infty}\int_0^{R_0}\H^{n-1}((A_k^0\Delta A_\infty^0)\cap\partial\B_R)\,\d R\\
    &=\liminf_{k\to\infty}|(A_k^0\Delta A_\infty^0)\cap\B_{R_0}|=0\,,
  \end{aligned}\]
  and thus $\liminf_{k\to\infty}\H^{n-1}((A_k^0\Delta A_\infty^0)\cap\partial\B_R)=0$
  holds for \ae{} $R\in{(0,\infty)}$. In addition, the Radon measures
  $\gamma_k\coleq\P(A_k,\,\cdot\,)+\P(A_\infty,\,\cdot\,)+\mu$ satisfy
  $\gamma_k(\partial\B_R)=0$ for all but at most countably many
  $R\in{(0,\infty)}$. Altogether, this allows to choose radii
  $R_i\in{(0,\infty)}$ with $\lim_{i\to\infty}R_i=\infty$ such that, for the
  corresponding open balls $B_i\coleq\B_{R_i}$ centered at $0$, we have
  \begin{align}
    \P(A_k,\partial B_i)=\P(A_\infty,\partial B_i)=0
      &\qq\text{for all }i,k\in\N\,,\label{eq:P-Ak-dB=0}\\
    \mu(\partial B_i)=0
      &\qq\text{for all }i\in\N\,,\label{eq:mu-dB=0}\\
    \liminf_{k\to\infty}\H^{n-1}((A_k^0\Delta A_\infty^0)\cap\partial\B_i)=0
      &\qq\text{for all }i\in\N\,.\nonumber
  \end{align}
  Here, by successively passing to subsequences of $A_k$ and using a diagonal
  sequence argument, the last property can be strengthened to hold with $\lim$
  in place of $\liminf$ and then also gives
  \begin{equation}\label{eq:conv-cut-off-mass}
    \lim_{k\to\infty}\H^{n-1}(A_k^0\cap\partial B_i)=\H^{n-1}(A_\infty^0\cap\partial B_i)
    \qq\text{for all }i\in\N\,.
  \end{equation}
  Now, for arbitrary $i\in\N$, we consider the complements $B_i\setminus A_k$,
  which converge for $k\to\infty$ in measure to $B_i\setminus A_\infty$. (Observe here
  that indeed \emph{local} convergence in measure of $A_k$ implies \emph{global}
  convergence in measure of the bounded sets $B_i\setminus A_k$.) Hence, by an
  application of Proposition \ref{prop:lsc-global}, we get
  \begin{equation}\label{eq:lsc-complements}
    \liminf_{k\to\infty}\big[\P(B_i\setminus A_k)-\mu((B_i\setminus A_k)^+)\big]
    \ge\P(B_i\setminus A_\infty)-\mu((B_i\setminus A_\infty)^+)\,.
  \end{equation}
  We now estimate and rewrite terms in \eqref{eq:lsc-complements}. On one hand
  we exploit \eqref{eq:P-Ak-dB=0} (which can also be written as
  $\H^{n-1}(\partial^\ast\!A_k\cap\partial
  B_i)=\H^{n-1}(\partial^\ast\!A_\infty\cap\partial B_i)=0$) in order to apply the
  equality case of \eqref{eq:P(A-S)} in Lemma \ref{lem:P(AcapB),P(A-S)}. In this
  way we derive
  \begin{gather*}
    \P(B_i\setminus A_k)=\P(A_k,B_i^+)+\P(B_i,A_k^0)
    =\P(A_k,B_i)+\H^{n-1}(A_k^0\cap\partial B_i)\,,\\
    \P(B_i\setminus A_\infty)=\P(A_\infty,B_i^+)+\P(B_i,A_\infty^0)
    =\P(A_\infty,B_i)+\H^{n-1}(A_\infty^0\cap\partial B_i)\,.
  \end{gather*}
  On the other hand, keeping \eqref{eq:mu-dB=0} in mind, we have
  \begin{gather*}
    \mu((B_i\setminus A_k)^+)=\mu(B_i\setminus A_k^1)=\mu(B_i)-\mu(A_k^1\cap B_i)\,,\\
    \mu((B_i\setminus A_\infty)^+)=\mu(B_i\setminus A_\infty^1)=\mu(B_i)-\mu(A_\infty^1\cap B_i)\,.
  \end{gather*}
  We plug these findings into \eqref{eq:lsc-complements} and are left with
  \begin{multline*}
    \liminf_{k\to\infty}\big[\P(A_k,B_i)+\mu(A_k^1\cap B_i)+\H^{n-1}(A_k^0\cap\partial B_i)\big]-\mu(B_i)\\
    \ge\P(A_\infty,B_i)+\mu(A_\infty^1\cap B_i)+\H^{n-1}(A_\infty^0\cap\partial B_i)-\mu(B_i)\,.
  \end{multline*}
  Adding the finite number $\mu(B_i)$ and subtracting the finite number in
  \eqref{eq:conv-cut-off-mass}, the inequality reduces to
  \[
    \liminf_{k\to\infty}\big[\P(A_k,B_i)+\mu(A_k^1\cap B_i)\big]
    \ge\P(A_\infty,B_i)+\mu(A_\infty^1\cap B_i)\,.
  \]
  At this stage, we further enlarge the terms on the left-hand side and use the
  initial assumption on the existence of the limit to get
  \[
    \lim_{k\to\infty}\big[\P(A_k)+\mu(A_k^1)\big]
    \ge\P(A_\infty,B_i)+\mu(B_i\cap A_\infty^1)\,.
  \]
  Finally, sending $i\to\infty$ and taking into account
  $\lim_{i\to\infty}R_i=\infty$, we arrive at the claim \eqref{eq:lsc'}.
\end{proof}

By combining Propositions \ref{prop:lsc-global} and \ref{prop:lsc-global'} we
are able to treat the global-convergence setting \eqref{item:lsc-Rn-c} in
Theorem \ref{thm:lsc-Rn} in its full generality.

\begin{proof}[Proof of Theorem \ref{thm:lsc-Rn} under assumptions
    \eqref{item:lsc-Rn-c}]
  For $A_k$ and $A_\infty$ as in the statement, we record that both
  $A_k\cup A_\infty\in\BVs(\R^n)$ and $A_k\cap A_\infty\in\BVs(\R^n)$ converge globally in
  measure to $A_\infty$. Then, since we assumed the small-volume IC for both $\mu_+$
  and $\mu_-$, we can apply Proposition \ref{prop:lsc-global} to $A_k\cup A_\infty$ and
  Proposition \ref{prop:lsc-global'} to $A_k\cap A_\infty$ to deduce
  \begin{align*}
    \liminf_{k\to\infty}\big[\P(A_k\cup A_\infty)-\mu_-((A_k\cup A_\infty)^+)\big]&\ge\P(A_\infty)-\mu_-(A_\infty^+)\,,\\
    \liminf_{k\to\infty}\big[\P(A_k\cap A_\infty)+\mu_+((A_k\cap A_\infty)^1)\big]&\ge\P(A_\infty)+\mu_+(A_\infty^1)\,.
  \end{align*}
  We now add these two inequalities and use \eqref{eq:P-cup-cap} in the
  form $\P(A_k\cup A_\infty)+\P(A_k\cap A_\infty)\le\P(A_k)+\P(A_\infty)$ together with
  $(A_k\cup A_\infty)^+=A_k^+\cup A_\infty^+\supset A_k^+$ and
  $(A_k\cap A_\infty)^1=A_k^1\cap A_\infty^1\subset A_k^1$. Then we end up with
  \[
    \P(A_\infty)+\liminf_{k\to\infty}\big[\P(A_k)+\mu_+(A_k^1)-\mu_-(A_k^+)\big]
    \ge2\P(A_\infty)+\mu_+(A_\infty^1)-\mu_-(A_\infty^+)\,,
  \]
  which by subtraction of $\P(A_\infty)$ yields the claim in \eqref{eq:lsc-Rn}.
\end{proof}

Before treating the remaining settings and finalizing the discussion of
semicontinuity on the full space, we record the following localized
semicontinuity property, which comes out from the cut-off argument in the proof
of Proposition \ref{prop:lsc-global'} and a \glqq dual\grqq{} variant of this
argument. This localized statement will in fact be very convenient in the
sequel.

\begin{lem}[localized semicontinuity]\label{lem:loc-lsc}
  Consider non-negative Radon measures $\mu_+$ and $\mu_-$ on $\R^n$ which both
  satisfy the small-volume IC in $\R^n$ with constant\/ $1$. If $A_k\in\M(\R^n)$
  converge to $A_\infty\in\M(\R^n)$ locally in measure in $\R^n$, then, for every
  $R\in{(0,\infty)}$, we have
  \[
    \liminf_{k\to\infty}\big[\P(A_k,\B_R)+\mu_+(A_k^1\cap\B_R)-\mu_-(A_k^+\cap\B_R)\big]
    \ge\P(A_\infty,\B_R)+\mu_+(A_\infty^1\cap\B_R)-\mu_-(A_\infty^+\cap\B_R)\,.
  \]
\end{lem}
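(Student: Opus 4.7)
The plan is to reduce to the already-proved global-convergence case \eqref{item:lsc-Rn-c} of Theorem \ref{thm:lsc-Rn} by truncating to slightly smaller balls $\B_{R_j}$ with $R_j\nearrow R$ and then passing to the limit $j\to\infty$.

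I would first reduce to the case in which $\liminf_{k\to\infty}[\P(A_k,\B_R)+\mu_+(A_k^1\cap\B_R)-\mu_-(A_k^+\cap\B_R)]$ is finite (the infinite case being immediate via Lemma \ref{lem:lsc-per} and $\mu_-(\B_R)<\infty$), then pass to a subsequence along which this quantity converges; in particular $\sup_k\P(A_k,\B_R)<\infty$. Next I would select radii $R_j\in(0,R)$ with $R_j\nearrow R$ such that $\mu_\pm(\partial\B_{R_j})=\P(A_\infty,\partial\B_{R_j})=0$ for every $j$ (possible since each of these conditions fails for at most countably many $t$). Fatou's lemma combined with the Lipschitz coarea formula (Theorem \ref{thm:Lip-coarea}) for $u(x)=|x|$ gives
\[
  \int_0^R\liminf_{k\to\infty}\H^{n-1}((A_k^+\Delta A_\infty^+)\cap\partial\B_t)\,\d t
  \le\lim_{k\to\infty}|(A_k\Delta A_\infty)\cap\B_R|=0,
\]
so the $R_j$ may additionally be chosen with $\liminf_k\H^{n-1}((A_k^+\Delta A_\infty^+)\cap\partial\B_{R_j})=0$. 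A diagonal extraction then yields a single subsequence (still denoted $A_k$) along which $\lim_k\H^{n-1}(A_k^+\cap\partial\B_{R_j})=\H^{n-1}(A_\infty^+\cap\partial\B_{R_j})$ holds simultaneously for every $j$, with these limits finite since $\H^{n-1}(\partial\B_{R_j})<\infty$.

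For each fixed $j$, write $B\coleq\B_{R_j}$. The sets $A_k\cap B$ and $A_\infty\cap B$ lie in $\BVs(\R^n)$ and $A_k\cap B\to A_\infty\cap B$ globally in measure on $\R^n$, so case \eqref{item:lsc-Rn-c} of Theorem \ref{thm:lsc-Rn} yields $\liminf_k[\P(A_k\cap B)+\mu_+((A_k\cap B)^1)-\mu_-((A_k\cap B)^+)]\ge\P(A_\infty\cap B)+\mu_+((A_\infty\cap B)^1)-\mu_-((A_\infty\cap B)^+)$. A short density computation (using the openness of $B$) gives $(A_k\cap B)^1=A_k^1\cap B$ exactly and shows that $(A_k\cap B)^+$ differs from $A_k^+\cap B$ only by a subset of $\partial B$, hence only by a $\mu_-$-null set. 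Lemma \ref{lem:P(AcapB),P(A-S)} gives $\P(A_k\cap B)\le\P(A_k,B)+\H^{n-1}(\partial B\cap A_k^+)$, with equality for $A_\infty$ thanks to $\P(A_\infty,\partial B)=0$. Substituting these identities and cancelling the common finite quantity $\lim_k\H^{n-1}(A_k^+\cap\partial B)=\H^{n-1}(A_\infty^+\cap\partial B)$ from both sides delivers the localised semicontinuity
\[
  \liminf_{k\to\infty}\big[\P(A_k,B)+\mu_+(A_k^1\cap B)-\mu_-(A_k^+\cap B)\big]
  \ge \P(A_\infty,B)+\mu_+(A_\infty^1\cap B)-\mu_-(A_\infty^+\cap B).
\]

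To conclude I let $j\to\infty$. The pointwise bound
\[
  \P(A_k,\B_R)+\mu_+(A_k^1\cap\B_R)-\mu_-(A_k^+\cap\B_R)
  \ge \P(A_k,\B_{R_j})+\mu_+(A_k^1\cap\B_{R_j})-\mu_-(A_k^+\cap\B_{R_j})-\mu_-(\B_R\setminus\B_{R_j})
\]
(obtained by dropping the non-negative $\P$- and $\mu_+$-contributions on the annulus $\B_R\setminus\B_{R_j}$ and bounding the $\mu_-$-contribution crudely) together with $\mu_-(\B_R\setminus\B_{R_j})\to0$ as $\B_{R_j}\nearrow\B_R$ (by inner regularity of $\mu_-$, since $\B_R$ is open) and the analogous continuity of each summand on the $A_\infty$-side yields the claim. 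The main technical obstacle, and the reason for the careful choice of the radii $R_j$, is precisely the cancellation of the $\H^{n-1}$-term on $\partial\B_{R_j}$ between the $A_k$- and $A_\infty$-sides; everything else reduces to straightforward measure-continuity bookkeeping.
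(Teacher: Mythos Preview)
Your proof is correct. The overall cut-off strategy---choose good radii $R_j\nearrow R$ avoiding mass of $\mu_\pm$ and $\P(A_\infty,\cdot)$ on spheres, arrange via Fatou/coarea that the sphere terms $\H^{n-1}(A_k^+\cap\partial\B_{R_j})$ converge, then pass $j\to\infty$ using $\mu_-(\B_R\setminus\B_{R_j})\to0$---is exactly what the paper does as well.

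The one genuine difference is in how you invoke the global result inside the ball. The paper first treats the cases $\mu_-\equiv0$ and $\mu_+\equiv0$ separately (going back to Propositions \ref{prop:lsc-global} and \ref{prop:lsc-global'} via complements $B_i\setminus A_k$ and intersections $A_k\cap B_i$, respectively) and only afterwards combines them through the $A_k\cup A_\infty$\,/\,$A_k\cap A_\infty$ trick used in the proof of Theorem \ref{thm:lsc-Rn}\eqref{item:lsc-Rn-c}. You instead apply the already-established combined statement Theorem \ref{thm:lsc-Rn}\eqref{item:lsc-Rn-c} directly to $A_k\cap\B_{R_j}$, which handles both $\mu_+$ and $\mu_-$ at once. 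Since Theorem \ref{thm:lsc-Rn}\eqref{item:lsc-Rn-c} is proved before Lemma \ref{lem:loc-lsc} in the paper, this is legitimate and makes your argument somewhat shorter; the paper's route, on the other hand, makes the parallel with the proof of Proposition \ref{prop:lsc-global'} more explicit.
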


\begin{proof}
  We first establish the claim simultaneously for the case $\mu_-\equiv0$, in
  which we set $\mu\coleq\mu_+$, and for the case $\mu_+\equiv0$, in which we
  set $\mu\coleq\mu_-$. For the case $\mu_-\equiv0$ we can follow quite closely
  the lines of the proof of Proposition \ref{prop:lsc-global'}, while for the
  case $\mu_+\equiv0$ we use an analogous but dual argument based on the
  convergence of $A_k\cap B_i$ to $A_\infty\cap B_i$. In the sequel we only point out
  the relevant modifications. First of all, we now work with a fixed
  $R\in{(0,\infty)}$ and may initially assume existence and finiteness of
  $\lim_{k\to\infty}\big[\P(A_k,\B_R){-}\mu(A_k^+\cap\B_R)\big]$ and
  $\lim_{k\to\infty}\big[\P(A_k,\B_R){+}\mu(A_k^1\cap\B_R)\big]$, respectively,
  which leads to $\sup_{k\in\N}\P(A_k,\B_R)<\infty$ and $\P(A_\infty,\B_R)<\infty$
  (where we have exploited $\mu(\B_R)<\infty$ in case $\mu_+\equiv0$). Then, the
  good radii $R_i$ are taken in ${(0,R)}$ with $\lim_{i\to\infty}R_i=R$, where
  in case $\mu_+\equiv0$ the coarea argument is implemented with $A_k^1$ and
  $A_\infty^1$ instead of $A_k^0$ and $A_\infty^0$ to subsequently achieve
  $\lim_{k\to\infty}\H^{n-1}(A_k^1\cap\partial B_i)
  =\H^{n-1}(A_\infty^1\cap\partial B_i)$ in place of \eqref{eq:conv-cut-off-mass}. The
  remainder of the reasoning stays unchanged in case $\mu_-\equiv0$ and in case
  $\mu_+\equiv0$ is done with $A_k\cap B_i$ and $A_\infty\cap B_i$ instead of
  $B_i\setminus A_k$ and $B_i\setminus A_\infty$ (which slightly simplifies the
  handling of the $\mu$-terms). When adapting the final step in the proof of
  Proposition \ref{prop:lsc-global'} to the case $\mu_+\equiv0$, we may no
  longer pass from ${-}\mu(A_k^+\cap B_i)$ to ${-}\mu(A_k^+\cap B_R)$ on the
  left-hand side by simply enlarging the term, but we can still conclude, as in
  view of $\mu(\B_R)<\infty$ we have
  $\lim_{i\to\infty}\mu(A_k^+\cap B_i)=\mu(A_k^+\cap B_R)$ uniformly in $k$.

  Finally, in order to reach the general case, in which both $\mu_+$ and $\mu_-$
  do not vanish, we return to the reasoning used above to prove Theorem
  \ref{thm:lsc-Rn} in the setting \eqref{item:lsc-Rn-c}. The adaptation of this
  reasoning to a ball $\B_R$ is straightforward and exploits
  \eqref{eq:P-cup-cap} in the form
  $\P(A_k\cup A_\infty,\B_R)+\P(A_k\cap A_\infty,\B_R)\le\P(A_k,\B_R)+\P(A_\infty,\B_R)$.
\end{proof}

We proceed by addressing the proof of semicontinuity in the settings
\eqref{item:lsc-Rn-a} and \eqref{item:lsc-Rn-b} of Theorem \ref{thm:lsc-Rn}.
We only sketch the relevant arguments, since we will later provide further
details in connection with even more general cases contained in Theorem
\ref{thm:lsc-Dir}.

In fact, in order to complete the treatment of the setting \eqref{item:lsc-Rn-a}
the observation needed is essentially the one that, for finite measures, the
cases $\mu_+\equiv0$ and $\mu_-\equiv0$ are fully dual to each other:

\begin{proof}[Sketch of proof for Theorem \ref{thm:lsc-Rn} under assumptions
    \eqref{item:lsc-Rn-a}]
  In case $\mu_-\equiv0$ the claim is covered by Proposition
  \ref{prop:lsc-global'}. Moreover, we can move back from this case to the case
  $\mu_+\equiv0$ once more by taking complements. Indeed, since we are assuming
  $\mu_-(\R^n)<\infty$, this works rather straightforwardly by exploiting
  $\P(A_k^\c)=\P(A_k)$ and $\mu_-((A_k^\c)^1)=\mu_-(\R^n)-\mu_-(A_k^+)$ together
  with the analogous formulas for $A_\infty^\c$. Alternatively, we can obtain the claim
  in the case $\mu_+\equiv0$ by passing $R\to\infty$ in the case $\mu_+\equiv0$
  of Lemma \ref{lem:loc-lsc}. Finally, the general case with non-zero $\mu_+$
  and $\mu_-$ can be reached by the same reasoning used under assumptions
  \eqref{item:lsc-Rn-c}.
\end{proof}

In connection with the setting \eqref{item:lsc-Rn-b} the final key observation
is that the strong IC for $\mu_-$ keeps cut-off terms
(almost) non-negative and prevents the failure of lower semicontinuity at
$\infty$:

\begin{proof}[Sketch of proof for Theorem \ref{thm:lsc-Rn} under assumptions
    \eqref{item:lsc-Rn-b}]
  Once more the case $\mu_-\equiv0$ is covered by Proposition
  \ref{prop:lsc-global'}, and once we manage to additionally treat the case
  $\mu_+\equiv0$, the general case follows as well. Thus, we now describe yet
  another cut-off argument used to deal with the case $\mu_+\equiv0$. As usual
  we assume that the $\liminf$ in \eqref{eq:lsc-Rn} is in fact a limit. By Lemma
  \ref{lem:loc-lsc} we have
  \begin{equation}\label{eq:proof-str-ic-1}
    \liminf_{k\to\infty}[\P(A_k,\B_R){-}\mu_-(A_k^+\cap\B_R)]
    \ge\P(A_\infty,\B_R){-}\mu_-(A_\infty^+\cap\B_R)
  \end{equation}
  for all $R\in{(0,\infty)}$. For arbitrary $\eps>0$, we claim that we can
  choose balls $B_i=\B_{R_i}$ with $R_i\in{(R_\eps,\infty)}$ and
  $\lim_{i\to\infty}R_i=\infty$ such that $\mu_-(\partial B_i)=0$ and
  \begin{equation}\label{eq:proof-str-ic-2}
    \lim_{k\to\infty}\H^{n-1}(A_k^1\cap\partial B_i)
    =\H^{n-1}(A_\infty^1\cap\partial B_i)<\eps
  \end{equation}
  hold for all $i\in\N$ and at least for a subsequence of $(A_k)_{k\in\N}$, to
  which we pass without reflecting this in notation. Indeed, the condition
  $\mu_-(\partial B_i)=0$ and the convergence of the $\H^{n-1}$-measures
  in \eqref{eq:proof-str-ic-2} have already been discussed (see the proofs of
  Proposition \ref{prop:lsc-global'} and Lemma \ref{lem:loc-lsc}), while the
  $\eps$-bound in \eqref{eq:proof-str-ic-2} can be achieved by writing out
  $|A_\infty^1|<\infty$ via the coarea formula in a similar way. From
  $\mu_-(\partial B_i)=0$, the almost-strong IC with constant $1$ near $\infty$
  (applicable for $A_k\cap B_i^\c$ in view of $R_i>R_\eps$), and Lemma
  \ref{lem:P(AcapB),P(A-S)} we get
  \[\begin{aligned}
    \mu_-\big(A_k^+\cap B_i^\c\big)
    &=\mu_-((A_k\cap B_i^\c)^+)\\
    &\le\P(A_k\cap B_i^\c)+\eps\\
    &\le\P(A_k,(B_i^\c)^+)+\P(B_i^\c,A_k^1)+\eps\\
    &=\P(A_k,B_i^\c)+\H^{n-1}(A_k^1\cap\partial B_i)+\eps\,.
  \end{aligned}\]
  Rearranging terms and bringing in \eqref{eq:proof-str-ic-2} then gives
  control on the terms cut off in the sense of
  \begin{equation}\label{eq:proof-str-ic-3}
    \liminf_{k\to\infty}[\P(A_k,B_i^\c){-}\mu_-(A_k^+\cap B_i^\c)]
    \ge{-}\eps-\lim_{k\to\infty}\H^{n-1}(A_k^1\cap\partial B_i)
    >{-}2\eps\,.
  \end{equation}
  To conclude, we add up \eqref{eq:proof-str-ic-1} (for $R=R_i$, $\B_R=B_i$) and
  \eqref{eq:proof-str-ic-3}, send $i\to\infty$, and finally exploit the
  arbitrariness of $\eps$. Then we arrive at \eqref{eq:lsc-Rn} in the
  case $\mu_+\equiv0$.
\end{proof}

\section{Existence with obstacles or volume-constraints}
  \label{sec:exist}

In this section we apply the preceding semicontinuity results on full $\R^n$ in
proving the existence of minimizers in obstacle problems or volume-constrained
problems for the functional $\Pmu$ introduced in \eqref{eq:P}.

In fact, for obstacle problems with \ae{} obstacle constraint, the existence
proof is mostly straightforward and leads to the following statement.

\begin{thm}[existence in obstacle problems]\label{thm:ex-obst}
  For sets $I,O\in\M(\R^n)$, $n\ge2$, consider the admissible class
  \[
    \mathscr{G}_{I,O}
    \coleq\{E\in\BVs(\R^n)\,:\,I\subset E\subset O\text{ up to negligible sets}\}\,.
  \]
  If there exists some $A_0\in\mathscr{G}_{I,O}$ at all and if, for non-negative
  Radon measures $\mu_+$ and $\mu_-$ on $\R^n$, which both satisfy the
  small-volume IC in $\R^n$ with constant\/ $1$, \ldots
  \begin{enumerate}[{\rm(a)}]
  \item either, $\mu_-(O^+)<\infty$ holds,\label{item:ex-obst-a}
  \item or, for some $R_0\in{(0,\infty)}$ and some $\gamma\in{(0,1]}$, the
    measure $\mu_-$ also satisfies the strong IC\footnote{To be fully consistent
    with Definition \ref{def:ICs}, which was given on open sets, we should speak
    of the IC in ${\big(\overline{\B_{R_0}}\big)}^\c$ here. However, since $R_0$ can be
    increased, it  does not make a difference if we work with
    $\overline A\subset{\big(\overline{\B_{R_0}}\big)}^\c$ or rather simply
    $A\subset{(\B_{R_0})}^\c$ instead. Thus, the slight inconsistency of writing
    ``in $(\B_{R_0})^\c$'' here and in the following seems justifiable.}
    in ${(\B_{R_0})}^\c$ with constant\/ $1{-}\gamma$,\label{item:ex-obst-b}
  \end{enumerate}
  then there exists the minimum of the obstacle problem
  \begin{equation}\label{eq:obst-min}
    \min\{\Pmu[E]\,:\,E\in\mathscr{G}_{I,O}\}\,,
  \end{equation}
  with a minimum value in ${({-}\mu_-(O^+),\infty)}$ in case
  \eqref{item:ex-obst-a} and in
  ${({-}(1{-}\gamma)\P(\B_{R_0}){-}\mu_-(\overline{\B_{R_0}}),\infty)}$ in case
  \eqref{item:ex-obst-b}.
\end{thm}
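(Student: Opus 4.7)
The plan is the direct method of the calculus of variations, with two technical points to handle: a quantitative lower bound so that the infimum lies in the interval claimed, and reconciling the only available compactness (local convergence in measure on $\R^n$) with membership of the limit in $\mathscr{G}_{I,O}$ and with the lower semicontinuity results of Section \ref{sec:Rn}. For the lower bound, every $E \in \mathscr{G}_{I,O}$ satisfies $E^+ \subset O^+$ directly from the definition of upper density, so in case \eqref{item:ex-obst-a} one has $\mu_-(E^+) \le \mu_-(O^+) < \infty$ and hence $\Pmu[E] \ge \P(E) - \mu_-(O^+) \ge -\mu_-(O^+)$. In case \eqref{item:ex-obst-b} I would split $E^+$ over $\overline{\B_{R_0}}$ and its complement: Lemma \ref{lem:P(AcapB),P(A-S)} applied to $E$ and $\overline{\B_{R_0}}$ yields $\P(E \setminus \overline{\B_{R_0}}) \le \P(E) + \P(\B_{R_0})$, the strong IC of constant $1{-}\gamma$ in $(\B_{R_0})^\c$ applied to $E \setminus \overline{\B_{R_0}}$ gives $\mu_-((E \setminus \overline{\B_{R_0}})^+) \le (1{-}\gamma)(\P(E) + \P(\B_{R_0}))$, and combining with the trivial $\mu_-(E^+ \cap \overline{\B_{R_0}}) \le \mu_-(\overline{\B_{R_0}})$ produces $\Pmu[E] \ge \gamma \P(E) - (1{-}\gamma)\P(\B_{R_0}) - \mu_-(\overline{\B_{R_0}})$. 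Strictness in the stated interval is then read off by inspecting the only potential equality case $E = \emptyset$, where $\Pmu[\emptyset] = 0$ exceeds the asserted bound.

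Next I would take a minimizing sequence $(A_k) \subset \mathscr{G}_{I,O}$ with $\Pmu[A_k] \to \inf \le \Pmu[A_0] < \infty$. Since $\mu_+(A_k^1) \ge 0$, the bounds above yield $\sup_k \P(A_k) < \infty$. The crucial observation is that each $A_k \in \BVs(\R^n)$ has $|A_k^\c| = \infty$ (as $|A_k| < \infty$ but $|\R^n| = \infty$), so the isoperimetric alternative in Theorem \ref{thm:isoperi} collapses to the one-sided bound $|A_k| \le \Gamma_n \P(A_k)^{n/(n-1)}$, giving also $\sup_k |A_k| < \infty$. Lemma \ref{lem:cpct-per} then extracts a subsequence converging locally in measure on $\R^n$ to some $A_\infty \in \M(\R^n)$; a further a.e.-convergent subsequence passes the uniform $\L^1$ bound and the inclusions $I \subset A_k \subset O$ to the limit, and Lemma \ref{lem:lsc-per} yields $\P(A_\infty) < \infty$. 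Hence $A_\infty \in \mathscr{G}_{I,O}$.

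To conclude I apply lower semicontinuity. In case \eqref{item:ex-obst-a} I would replace $\mu_-$ by its finite restriction $\tilde\mu_- \coleq \mu_- \ecke O^+$, which still satisfies the small-volume IC with constant $1$ (being dominated by $\mu_-$) and agrees with $\mu_-$ on all $A_k^+$ and $A_\infty^+$; Theorem \ref{thm:lsc-Rn}\eqref{item:lsc-Rn-a} applied to $\mathscr{P}_{\mu_+,\tilde\mu_-}$ then delivers $\liminf_k \Pmu[A_k] \ge \Pmu[A_\infty]$. In case \eqref{item:ex-obst-b} the strong IC with constant $1{-}\gamma < 1$ in $(\B_{R_0})^\c$ trivially implies the almost-strong IC \eqref{eq:strong-ic-at-infty} (take $R_\eps = R_0$ for every $\eps > 0$), so Theorem \ref{thm:lsc-Rn}\eqref{item:lsc-Rn-b} yields the same inequality. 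Combined with $A_\infty \in \mathscr{G}_{I,O}$ from the previous step, this identifies $A_\infty$ as a minimizer, and the first step supplies the stated bounds on the minimum value. The principal obstacle I anticipate is the compactness step on a possibly unbounded $O$: local $\L^1$ compactness on $\R^n$ does not by itself guarantee $|A_\infty| < \infty$, and the clean resolution is the structural observation that membership in $\BVs(\R^n)$ automatically upgrades the isoperimetric inequality to a global one-sided bound on $|A_k|$ — after which matching the local-convergence lsc variants of Theorem \ref{thm:lsc-Rn} to the two hypotheses on $\mu_-$ is a minor additional point handled by restriction in case (a) and by trivial implication in case (b).
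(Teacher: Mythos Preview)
Your proposal is correct and follows essentially the same route as the paper's proof: deriving the coercivity bound $\Pmu[E]\ge\gamma\P(E)-(1{-}\gamma)\P(\B_{R_0})-\mu_-(\overline{\B_{R_0}})$ in case \eqref{item:ex-obst-b} (and the simpler one in case \eqref{item:ex-obst-a}), extracting a locally-in-measure convergent subsequence, upgrading to $|A_\infty|<\infty$ via the isoperimetric inequality, and then invoking Theorem~\ref{thm:lsc-Rn}\eqref{item:lsc-Rn-a} with the finite restriction $\mu_-\ecke O^+$ in case \eqref{item:ex-obst-a} and Theorem~\ref{thm:lsc-Rn}\eqref{item:lsc-Rn-b} in case \eqref{item:ex-obst-b}. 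The only point you leave implicit is why $\Pmu[A_0]<\infty$, which the paper secures via Lemma~\ref{lem:finite} (the small-volume IC for $\mu_+$ gives $\mu_+(A_0^1)\le\mu_+(A_0^+)<\infty$).
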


\medskip

\newlength\parind\setlength\parind{\parindent}
\noindent\begin{minipage}[t]{8cm}
  \hspace{\parind}As a basic case, which illustrates the applicability of
  Theorem \ref{thm:ex-obst}, we consider measurable obstacles
  $I\Subset O\subset\R^n$ and $(n{-}1)$-dimensional measures
  $\mu_\pm=\theta_\pm\H^{n-1}\ecke(\R^{n-1}{\times}\{0\})$ with
  $\theta_+,\theta_-\in{[0,\infty)}$. Then indeed, the setting
  \eqref{item:ex-obst-a} applies for $\mu(O^+)<\infty$ (e.\@g.\@ if $O$ is
  bounded) and $\theta_+\le2$, $\theta_-\le2$, while the setting
  \eqref{item:ex-obst-b} covers even fully arbitrary $O$ up to $O=\R^n$ in case
  $\theta_+\le2$, $\theta_-<2$ (but now with $\theta_-=2$ excluded).
  Specifically for $n=2$, $O=\R^2$, $\theta_+=0$, one may also identify
  minimizers $A$ in the obstacle problem \eqref{eq:obst-min} in a geometrically
  intuitive way, illustrated in Figure \ref{fig:obst-min}, as a certain convex
  hull of $I$ with an additional $\theta_-$-dependent constraint on the angles
  at the intersection of $\partial A$ and $\spt\mu_-=\R^{n-1}{\times}\{0\}$.
  However, we leave more detailed considerations on such specific geometric
  cases for study elsewhere.
\end{minipage}
\hfill
\begin{minipage}[t]{7.6cm}\centering
\vspace{-.3cm}
\begin{figure}[H]\centering
  \includegraphics{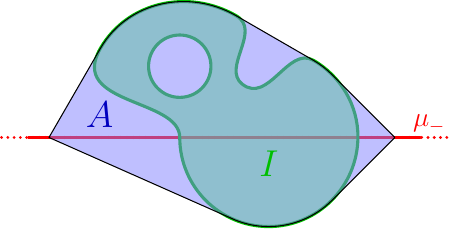}
  \vspace{-.3cm}
  \caption{A minimizer $A$ in the obstacle problem \eqref{eq:obst-min} for
    $n=2$, some smooth $I\Subset\R^2$, $O=\R^2$, $\mu_+\equiv0$, and
    $\mu_-=\sqrt2\H^1\ecke(\R{\times}\{0\})$.\label{fig:obst-min}}
\end{figure}
\end{minipage}

\vspace{.3ex}

Here, we additionally remark that if we have $I=\emptyset$ and $\mu_-$ satisfies
the strong IC even in full $\R^n$ with constant $1{-}\gamma$, then in view of
$\Pmu[E]\ge\gamma\P(E)$ for all $E\in\BVs(\R^n)$ the situation of the theorem
trivializes insofar that the unique minimizer up to negligible sets in
\eqref{eq:obst-min} is $\emptyset$. However, our settings \eqref{item:ex-obst-a}
and \eqref{item:ex-obst-b} allow for situations which do not trivialize to the
same extent \emph{even in the absence of the inner obstacle}. To demonstrate
this, we consider $I\coleq\emptyset$, an arbitrary $O\in\M(\R^n)$, any
non-empty, bounded, open, convex $K\in\mathscr{G}_{I,O}$, $\mu_+\colequiv0$, and
the finite measure $\mu_-\coleq\theta\H^{n-1}\ecke\partial K$ with
$\theta\in{[0,\infty)}$. Then it can be checked that the obstacle problem in
\eqref{eq:obst-min} has the unique minimizer $\emptyset$ in case $\theta<1$, has
both $\emptyset$ and $K$ as minimizers in case $\theta=1$, and has the unique
minimizer $K$ in case $\theta>1$. Here, the measure
$\mu_-=\theta\H^{n-1}\ecke\partial K$ trivially satisfies the strong IC in
$(\B_{R_0})^\c$ for $R_0$ large enough and by the later Theorem
\ref{thm:2P-admis} satisfies the small-volume IC in $\R^n$ with constant
$\theta/2$, while by the later Proposition \ref{prop:strong-ic-for-pseudoconvex}
it satisfies the strong IC in full $\R^n$ only with constant $\theta$. All in
all, this means that the non-trivial cases with $\theta\in{[1,2]}$ are indeed
included in the regimes of \eqref{item:ex-obst-a} and \eqref{item:ex-obst-b}
above, but would not be covered by a statement with the strong IC on full
$\R^n$.

\begin{proof}[Proof of Theorem \ref{thm:ex-obst}]
  We first record that $A_0\in\BVs(\R^n)$ implies
  $\mu_+(A_0^1)\le\mu_+(A_0^+)<\infty$ by Lemma \ref{lem:finite}, and thus the
  minimum value in \eqref{eq:obst-min} is bounded from above by
  $\P(A_0){+}\mu_+(A_0^1){-}\mu_-(A_0^+)<\infty$.

  Now we treat the situation \eqref{item:ex-obst-a}. In view of
  \[
    \Pmu[E]=\P(E){+}\mu_+(E^1){-}\mu_-(E^+)\ge\P(E)-\mu_-(O^+)
  \]
  for all $E\in\mathscr{G}_{I,O}$, every minimizing sequence
  $(A_k)_{k\in\N}$ for $\Pmu$ in $\mathscr{G}_{I,O}$ satisfies
  $\limsup_{k\to\infty}\P(A_k)<\infty$. By the standard compactness and
  semicontinuity results from Lemmas \ref{lem:cpct-per} and \ref{lem:lsc-per}, a
  subsequence of $(A_k)_{k\in\N}$ converges \emph{locally} in measure on $\R^n$
  to some $A_\infty\in\M(\R^n)$ with $\P(A_\infty)<\infty$ and $I\subset A_\infty\subset O$ up to
  negligible sets. Taking into account $|A_k|<\infty$, the isoperimetric
  estimate of Theorem \ref{thm:isoperi} ensures
  $\limsup_{k\to\infty}|A_k|<\infty$, and by a basic semicontinuity property we
  infer $|A_\infty|<\infty$ and thus $A_\infty\in\mathscr{G}_{I,O}$. Then, Theorem
  \ref{thm:lsc-Rn}\eqref{item:lsc-Rn-a}, applied with the \emph{finite} Radon
  measure $\mu_-\ecke O^+$ instead of $\mu_-$, ensures that the limit $A_\infty$
  is a minimizer.

  Next we turn to the situation \eqref{item:ex-obst-b}. Since the strong IC for
  $\mu_-$ in $(\B_{R_0})^\c$ yields
  \[\begin{aligned}
    \Pmu[E]&\ge\P(E)-\mu_-((E\setminus\B_{R_0})^+)-\mu_-(\overline{\B_{R_0}})\\
    &\ge\P(E)-(1{-}\gamma)\P(E\setminus\B_{R_0})-\mu_-(\overline{\B_{R_0}})\\
    &\ge\gamma\P(E)-(1{-}\gamma)\P(\B_{R_0})-\mu_-(\overline{\B_{R_0}})
  \end{aligned}\]
  for all $E\in\mathscr{G}_{I,O}$, again every minimizing sequence $(A_k)_{k\in\N}$
  for $\Pmu$ in $\mathscr{G}_{I,O}$ satisfies $\limsup_{k\to\infty}\P(A_k)<\infty$.
  At this stage the arguments given for the the situation \eqref{item:ex-obst-a}
  still yield that a subsequence of $(A_k)_{k\in\N}$ converges \emph{locally} in
  measure on $\R^n$ to some $A_\infty\in\mathscr{G}_{I,O}$. Finally, by Theorem
  \ref{thm:lsc-Rn}\eqref{item:lsc-Rn-b} we conclude that the limit $A_\infty$ is
  a minimizer.
\end{proof}

To conclude the discussion of obstacle problems we remark that a more general
point of view with thin obstacles and $\H^{n-1}$-\ae{} obstacle constraints
(compare \cite{DeGColPic72,Giusti72,DeAcutis79,CarDalLeaPas88,SchSch18}, for instance) might
be naturally connected to our setting, but we leave such issues for study at
another point.

\medskip

We now turn to volume-constrained minimization problems for $\Pmu$, where the
special case $\mu\equiv0$ corresponds to the classical isoperimetric problem.
We provide an existence statement for minimizers of $\Pmu$ at least in case that
$\mu_+$ vanishes and $\mu_-$ is finite.

\begin{thm}[existence in prescribed-volume problems]\label{thm:ex-prscr-vol}
  Consider a non-negative Radon measure $\mu$ on $\R^n$ with $\mu(\R^n)<\infty$
  and a constant $\r\in{(0,\infty)}$. If $\mu$ satisfies the small-volume
  IC in $\R^n$ with constant $1$, then there exists the
  minimum of the prescribed-volume problem
  \[
    \min\{\P(A){-}\mu(A^+)\,:\,
    A\in\BVs(\R^n)\,,\,|A|=\alpha_n\r^n\}
  \]
  with a minimum value in
  ${\big({-}\mu(\R^n),n\alpha_n\r^{n-1}\big]}$.
\end{thm}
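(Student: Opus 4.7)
The plan is the direct method of the calculus of variations, with a \emph{volume-shifting} construction based on the sharp isoperimetric inequality \eqref{eq:sharp-isoperi} serving as the key new ingredient to handle loss of volume at infinity. I would begin by checking the bounds on $I\coleq\inf\{\P(A){-}\mu(A^+):A\in\BVs(\R^n),\,|A|=\alpha_n\r^n\}$: for the upper bound $I\le n\alpha_n\r^{n-1}$ I take balls $\B_\r(x_m)$ with $|x_m|\to\infty$, using that finiteness of $\mu$ forces $\mu(\B_\r(x_m)^+)\to0$; for the strict lower bound $I>{-}\mu(\R^n)$ I invoke \eqref{eq:sharp-isoperi}, which gives $\P(A)\ge\P(\B_\r)=n\alpha_n\r^{n-1}>0$ for every admissible $A$.

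Given a minimizing sequence $(A_k)_{k\in\N}$ with $\mathscr{P}_{0,\mu}[A_k]\to I$, these bounds force $\sup_k\P(A_k)<\infty$, so by Lemma~\ref{lem:cpct-per} a subsequence converges locally in measure to some $A_\infty\in\M(\R^n)$ with $\P(A_\infty)<\infty$ and $|A_\infty|\le\alpha_n\r^n$. In the favorable case $|A_\infty|=\alpha_n\r^n$ the finiteness of $\mu$ makes Theorem~\ref{thm:lsc-Rn} applicable, and it immediately yields $\mathscr{P}_{0,\mu}[A_\infty]\le I$, so $A_\infty$ is a minimizer.

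The principal obstacle is the case $v\coleq\alpha_n\r^n-|A_\infty|>0$, where a fraction $v$ of the volume has escaped to infinity along $(A_k)_{k\in\N}$. Here I would construct an improved, near-minimizing sequence $(\tilde A_k)_{k\in\N}$ with $|\tilde A_k|=\alpha_n\r^n$ by excising the far tail of $A_k$ and replacing it by a ball of equal volume at a controlled location. For $\eps>0$ pick $R_0$ large with $\mu((\B_{R_0})^\c)<\eps$ and $|A_\infty\setminus\B_{R_0}|<\eps$, fix $L\ge1$, and use the coarea identity $\int_{R_0}^{R_0+L}\H^{n-1}(A_k^1\cap\partial\B_s)\,\d s=|A_k\cap(\B_{R_0+L}\setminus\B_{R_0})|$ together with local convergence to select $R_k\in[R_0,R_0+L]$ for which $\H^{n-1}(A_k^1\cap\partial\B_{R_k})$ is small and the standard good-level-set conditions $\H^{n-1}(\partial^\ast\!A_k\cap\partial\B_{R_k})=0=\mu(\partial\B_{R_k})$ hold. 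With $|x_M|$ chosen large enough for disjointness and $r_k$ adjusted so $|\tilde A_k|=\alpha_n\r^n$, set $\tilde A_k\coleq(A_k\cap\B_{R_k})\cup\B_{r_k}(x_M)$. The crucial estimate
\[
  n\alpha_n r_k^{n-1}=\P(\B_{r_k})\le\P(A_k\setminus\B_{R_k})
\]
from \eqref{eq:sharp-isoperi}, combined with Lemma~\ref{lem:P(AcapB),P(A-S)} applied to both $A_k\cap\B_{R_k}$ and $A_k\setminus\B_{R_k}$, gives $\P(\tilde A_k)\le\P(A_k)+2\H^{n-1}(A_k^1\cap\partial\B_{R_k})$, while $\mu(\tilde A_k^+)\ge\mu((A_k\cap\B_{R_k})^+)\ge\mu(A_k^+)-\eps$ is immediate from $\mu((\B_{R_0})^\c)<\eps$. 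Altogether $\mathscr{P}_{0,\mu}[\tilde A_k]\le\mathscr{P}_{0,\mu}[A_k]+C\eps$.

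The most delicate point I anticipate is the final double limit. For each $\eps$ the sequence $(\tilde A_k)_{k\in\N}$ lives in a bounded set, so compactness and Theorem~\ref{thm:lsc-Rn} produce an admissible limit $\tilde A^{(\eps)}\in\BVs(\R^n)$ with $|\tilde A^{(\eps)}|=\alpha_n\r^n$ and $\mathscr{P}_{0,\mu}[\tilde A^{(\eps)}]\le I+C\eps$. Since the localization parameters $R_0$ and $|x_M|$ both grow with $\eps^{-1}$, my plan for the passage $\eps\searrow0$ is to arrange $x_M^{(\eps)}$ along a fixed direction and exploit the concrete splitting $\tilde A^{(\eps)}=(A_\infty\cap\B_{R_\ast^{(\eps)}})\cup\B_{r_\ast^{(\eps)}}(x_M^{(\eps)})$ into an inner part converging globally in $\L^1$ to $A_\infty$ (since $|A_\infty|<\infty$) and a far ball of volume tending to $v$; a final application of Theorem~\ref{thm:lsc-Rn} to this twin-sequence limit should then produce a genuine minimizer.
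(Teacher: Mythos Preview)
Your overall strategy matches the paper's: bound $I$, take a minimizing sequence, extract a locally convergent subsequence with possible volume drop, and compensate by a volume-shifting construction driven by the sharp isoperimetric inequality~\eqref{eq:sharp-isoperi}. The perimeter bookkeeping via Lemma~\ref{lem:P(AcapB),P(A-S)} and the coarea selection of good radii are also in the same spirit. The divergence is in \emph{where} you put the compensating ball, and this creates a genuine gap in your final step.

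You place the ball $\B_{r_k}(x_M)$ at a far location chosen disjoint from $\B_{R_0+L}$, which forces $|x_M|\to\infty$ as $\eps\searrow0$ (since $R_0\to\infty$). Your sequence $\tilde A^{(\eps)}$ therefore again loses the volume $v$ at infinity as $\eps\searrow0$: the local-in-measure limit of $\tilde A^{(\eps)}$ is just $A_\infty$, with $|A_\infty|<\alpha_n\r^n$. The ``twin-sequence limit'' you allude to does not produce an admissible set, and trying to re-attach a ball of volume $v$ at some finite point $y$ only yields competitors $A_\infty\cup\B_{r(y)}(y)$ whose functional values approach $I$ as $|y|\to\infty$ without ever achieving it. In short, the double limit is circular: you have traded the original escaping mass for a new one.

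The paper avoids this by placing the compensating ball at the \emph{origin}, overlapping with $A_{k_i}$: one chooses $r_i\in(0,\r]$ with $|\B_{r_i}\setminus A_{k_i}|=|A_{k_i}\setminus\B_{R_i}|$ (possible by continuity since $|\B_\r\setminus A_{k_i}|=|A_{k_i}\setminus\B_\r|\ge|A_{k_i}\setminus\B_{R_i}|$) and sets $E_i\coleq(A_{k_i}\cap\B_{R_i})\cup\B_{r_i}$. The isoperimetric step becomes slightly subtler, namely $\P(\B_{r_i})\le\P(\B_{r_i}\cap A_{k_i})+\P(A_{k_i}\setminus\B_{R_i})$ (using $|\B_{r_i}|=|\B_{r_i}\cap A_{k_i}|+|A_{k_i}\setminus\B_{R_i}|$), but now $r_i$ stays in the fixed interval $(0,\r]$, the radii $R_i\to\infty$, and $E_i$ converges locally in measure to the \emph{single} set $A_\infty\cup\B_r$ with $r=\lim r_i$. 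One checks directly that $|A_\infty\cup\B_r|=\alpha_n\r^n$, and a single application of Theorem~\ref{thm:lsc-Rn}\eqref{item:lsc-Rn-a} finishes the proof with no second limit.
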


Here, the bounds for the minimum value leave room for improvement. For instance,
estimating via the isoperimetric inequality we find that the minimum value is in
fact in ${\big[n\alpha_n\r^{n-1}{-}\mu(\R^n),n\alpha_n\r^{n-1}\big]}$. In
addition, let us point out that if $\mu$ has bounded support and $\r$ is large
enough such that $\spt\mu\subset\overline{\B_\r(x)}$ for some $x\in\R^n$, then
$\B_\r(x)$ is a minimizer and the theorem holds trivially. In the general case,
however, the result is non-trivial and the proof is somewhat involved, since
(subsequences of) minimizing sequences may converge only locally, but not
globally in measure, and in view of a ``volume drop'' at infinity the limit then
violates the volume constraint and is not admissible as a minimizer. Our
strategy to circumvent this phenomenon is not really new and is vaguely inspired
by considerations of \cite{GonMasTam83,RitRos04}, for instance. The basic idea is
to suitably shift volume into a fixed ball, which in our case with
$\mu_+\equiv0$ and $\mu_-(\R^n)<\infty$ can be implemented with suitable control
on the values of $\Pmu$ along the sequence. Indeed, in this way we are able to
construct refined minimizing sequences with global convergence in measure and
an admissible limit, which turns out to be a minimizer.

\begin{proof}
  We start with the main case $n\ge2$ and record that $\B_\r$ is admissible with
  $\P(\B_\r){-}\mu(\B_\r^+)\le\P(\B_\r)=n\alpha_n\r^{n-1}<\infty$. Taking into
  account
  \[
    \P(A)-\mu(A^+)\ge\P(A){-}\mu(\R^n)
  \]
  for all admissible $A$, it is thus clear that every minimizing sequence
  $(A_k)_{k\in\N}$ satisfies $\limsup_{k\to\infty}\P(A_k)<\infty$. Using
  compactness and semicontinuity and possibly passing to a subsequence, we get
  that $(A_k)_{k\in\N}$ converges \emph{locally} in measure on $\R^n$ to some
  $A_\infty\in\BVs(\R^n)$ with $|A_\infty|\le\alpha_n\r^n$.

  We next choose good cut-off radii. By Fatou's lemma, the coarea formula,
  and the volume constraint we get
  \[
    \int_0^\infty\liminf_{k\to\infty}\H^{n-1}(A_k^+\cap\partial\B_R)\,\d R
    \le\lim_{k\to\infty}\int_0^\infty\H^{n-1}(A_k^+\cap\partial\B_R)\,\d R
    =\lim_{k\to\infty}|A_k|=\alpha_n\r^n<\infty\,.
  \]
  Thus, there is a sequence of radii $R_i\in{(2\r,\infty)}$ with
  $\lim_{i\to\infty}R_i=\infty$ and
  $\liminf_{k\to\infty}\H^{n-1}(A_k^+{\cap}\partial\B_{R_i})<i^{-1}$ for all
  $i\in\N$. In particular, for a suitable subsequence $(A_{k_i})_{i\in\N}$ of
  $(A_k)_{k\in\N}$, by the local convergence in measure and the preceding choice
  of radii we can achieve
  \begin{equation}\label{eq:good-Ri-1}
    |(A_{k_i}\Delta A_\infty)\cap\B_{R_i}|<i^{-1}
  \end{equation}
  and
  \begin{equation}\label{eq:good-Ri-2}
    \H^{n-1}(A_{k_i}^+\cap\partial\B_{R_i})<i^{-1}
    \qq\text{for all }i\in\N\,.
  \end{equation}
  Next, since $s\mapsto|\B_s{\setminus}A_{k_i}|$ is continuous with
  $|\B_0{\setminus}A_{k_i}|=0$ (where we understand $\B_0\coleq\emptyset$ from
  here on) and $|\B_\r{\setminus}A_{k_i}|=|A_{k_i}{\setminus}\B_\r|
  \ge|A_{k_i}{\setminus}\B_{R_i}|$ (a consequence of $|A_{k_i}|=|\B_\r|$), we
  can also choose radii $r_i\in{(0,\r]}$ such that
  \[
    |\B_{r_i}{\setminus}A_{k_i}|=|A_{k_i}{\setminus}\B_{R_i}|
    \qq\text{for all }i\in\N\,,
  \]
  and we will now attempt to produce a modified minimizing sequence without
  loss of volume at infinity by removing $A_{k_i}{\setminus}\B_{R_i}$ from
  $A_{k_i}$ and at the same time adding $\B_{r_i}{\setminus}A_{k_i}$ for volume
  compensation. Indeed, this reasoning works out directly in case of
  \begin{equation}\label{eq:good-ri}
    \P(A_{k_i},\partial\B_{r_i})=0
    \qq\text{for all }i\in\N\,,
  \end{equation}
  but unfortunately \eqref{eq:good-ri} cannot be ensured in general.
  Nonetheless, in the sequel we first complete the proof under the simplifying
  assumption \eqref{eq:good-ri}, and we postpone the discussion how to
  compensate for a failure of \eqref{eq:good-ri} to the end of our
  reasoning. For now, we use the announced competitors
  \[
    E_i\coleq(A_{k_i}\cap\B_{R_i})\cup\B_{r_i}
    =(A_{k_i}\cap\B_{R_i})\dcup(\B_{r_i}\setminus A_{k_i})\,,
  \]
  which in view of $|E_i|=|A_{k_i}{\cap}\B_{R_i}|{+}|\B_{r_i}{\setminus}A_{k_i}|
  =|A_{k_i}{\cap}\B_{R_i}|{+}|A_{k_i}{\setminus}\B_{R_i}|=|A_{k_i}|$ satisfy the
  volume constraint. In order to estimate the perimeter of $E_i$, we first
  observe
  \[
    \P(E_i)
    \le\H^{n-1}((\partial\B_{r_i})\setminus A_{k_i}^1)
    +\P(A_{k_i},\B_{R_i}\setminus\overline{\B_{r_i}})
    +\H^{n-1}(A_{k_i}^+\cap\partial\B_{R_i})
  \]
  and then continue by estimating the first term on the right-hand side. We rewrite
  \[
    \H^{n-1}((\partial\B_{r_i}){\setminus}A_{k_i}^1)
    =\P(\B_{r_i}){-}\H^{n-1}(A_{k_i}^1{\cap}\partial\B_{r_i})
  \]
  and then on the basis of $|\B_{r_i}|
  =|\B_{r_i}{\cap}A_{k_i}|{+}|\B_{r_i}{\setminus}A_{k_i}|
  =|\B_{r_i}{\cap}A_{k_i}|{+}|A_{k_i}{\setminus}\B_{R_i}|$ exploit the
  isoperimetric inequality \eqref{eq:sharp-isoperi} to deduce
  \[
    \P(\B_{r_i})\le\P(\B_{r_i}{\cap}A_{k_i})+\P(A_{k_i}{\setminus}\B_{R_i})\,.
  \]
  Further we can control
  \[
    \P(\B_{r_i}\cap A_{k_i})
    \le\P(A_{k_i},\B_{r_i})
    {+}\H^{n-1}(A_{k_i}^+\cap\partial\B_{r_i})\,,\qq
    \P(A_{k_i}\setminus\B_{R_i})
    \le\P(A_{k_i},\R^n{\setminus}\overline{\B_{R_i}})
    {+}\H^{n-1}(A_{k_i}^+\cap\partial\B_{R_i})\,.
  \]
  Putting together the estimates and collecting the three terms
  $\P(A_{k_i},\B_{r_i})$, 
  $\P(A_{k_i},\B_{R_i}{\setminus}\overline{\B_{r_i}})$,
  $\P(A_{k_i},\R^n{\setminus}\overline{\B_{R_i}})$ simply in $\P(A_{k_i})$, we
  arrive at
  \[
    \P(E_i)
    \le\P(A_{k_i})
    +\H^{n-1}((A_{k_i}^+\setminus A_{k_i}^1)\cap\partial\B_{r_i})
    +2\H^{n-1}(A_{k_i}^+\cap\partial\B_{R_i})\,.
  \]
  Here, the middle term on the right-hand side can be rewritten as
  $\P(A_{k_i},\partial\B_{r_i})$ and vanishes under the simplifying assumption
  \eqref{eq:good-ri}, while the last term on the right-hand side is controlled
  by $2i^{-1}$ through \eqref{eq:good-Ri-2}. Also bringing in that we have
  $\mu(E_i^+)\ge\mu(A_{k_i}^+{\cap}\B_{R_i})=\mu(A_{k_i}^+){-}\mu(\R^n{\setminus}\B_{R_i})$,
  we finally arrive at
  \[
    \P(E_i)-\mu(E_i^+)
    \le\P(A_{k_i})-\mu(A_{k_i}^+)+2i^{-1}+\mu((\B_{R_i})^\c)\,.
  \]
  Then, crucially exploiting $\lim_{i\to\infty}R_i=\infty$ and
  $\mu(\R^n)<\infty$, we have $\lim_{i\to\infty}\mu(\R^n{\setminus}\B_{R_i})=0$
  and can conclude that with $(A_k)_{k\in\N}$ also $(E_i)_{i\in\N}$ is a
  minimizing sequence in the volume-constrained problem.

  Now, in view of $r_i\le\r$ for all $i\in\N$, by passing to subsequences we can
  assume that $r\coleq\lim_{i\to\infty}r_i\in{[0,\r]}$ exists, and we finally
  proceed to establish that $A_\infty{\cup}\B_r$ is a minimizer in the
  volume-constrained problem. To this end we record that
  $E_i=(A_{k_i}{\cap}\B_{R_i}){\cup}\B_{r_i}$ converge \emph{locally} in
  measure on $\R^n$ to $A_\infty{\cup}\B_r$, since in this local sense we have the
  convergences $A_{k_i}\to A_\infty$, $\B_{R_i}\to\R^n$, $\B_{r_i}\to\B_r$. In order to
  show admissibility of $A_\infty{\cup}\B_r$, for arbitrary $i\in\N$, we split
  \[
    \alpha_n\r^n=|A_{k_i}|=|A_{k_i}\cap\B_{R_i}|+|A_{k_i}\setminus\B_{R_i}|\,,
  \]
  and via \eqref{eq:good-Ri-1}, the choice of $r_i$, and the \emph{local}
  convergence in measure $A_k\to A_\infty$ deduce for the right-hand volumes the
  convergences
  \[
    \lim_{i\to\infty}|A_{k_i}\cap\B_{R_i}|=\lim_{i\to\infty}|A_\infty\cap\B_{R_i}|=|A_\infty|
    \qq\text{and}\qq
    \lim_{i\to\infty}|A_{k_i}\setminus\B_{R_i}|
    =\lim_{i\to\infty}|\B_{r_i}\setminus A_{k_i}|
    =|\B_r\setminus A_\infty|\,.
  \]
  This implies that $A_\infty{\cup}\B_r$ fulfills the volume constraint
  $\alpha_n\r^n=|A_\infty|{+}|\B_r{\setminus}A_\infty|=|A_\infty{\cup}\B_r|$. Thus, we are in the
  position to finally use the semicontinuity in Theorem
  \ref{thm:lsc-Rn}\footnote{More precisely, one way of reasoning at this point
  is to use the semicontinuity assertion from Theorem
  \ref{thm:lsc-Rn}\eqref{item:lsc-Rn-a}, which draws on the finiteness of $\mu$
  and needs local convergence only. Another way is to rely only on the case
  covered in each of Theorem \ref{thm:lsc-global-intro}, Theorem
  \ref{thm:lsc-Rn}\eqref{item:lsc-Rn-c}, and Proposition \ref{prop:lsc-global}
  on the basis of the observation that the coincidence of volumes
  $|E_i|=\alpha_n\r^n=|A_\infty{\cup}\B_r|$ improves the local convergence to global
  convergence required in these statements.} along the
  minimizing sequence $E_i$ with limit $A_\infty{\cup}\B_r$ and deduce that
  $A_\infty{\cup}\B_r$ is a minimizer in the volume-constrained problem.

  It remains to provide an argument in case \eqref{eq:good-ri} fails. In this
  situation, since $\P(A_{k_i},\partial\B_q)=0$ holds for all but countably many
  $q\in{(0,\infty)}$ (and trivially for $q=0$), we can pass to
  ever-so-slightly-decreased good radii $q_i\in{[0,r_i]}$. However, in view of
  the volume constraint we cannot directly use
  $(A_{k_i}{\cap}\B_{R_i}){\cup}\B_{q_i}$ as competitors but rather need to
  compensate once more for the slight loss of volume. In fact, fixing
  arbitrary points $x_i\in(\B_{R_i}{\setminus}\overline{\B_{r_i}})$
  with $|\B_\delta(x_i){\setminus}A_{k_i}|>0$ for all $\delta>0$
  (such points exist, since $|A_{k_i}|=\alpha_n\r^n\le|\B_{2\r}{\setminus}\B_\r|
  <|\B_{R_i}{\setminus}\overline{\B_{r_i}}|$), for every $q_i\in{[0,r_i]}$, we
  find by continuity some $\delta_i\in{[0,\infty)}$ with
  $|\B_{q_i}{\setminus}A_{k_i}|{+}|\B_{\delta_i}(x_i){\setminus}A_{k_i}|
  =|\B_{r_i}{\setminus}A_{k_i}|$. Moreover, if we take $q_i$ arbitrarily close
  to $r_i$, then in view of $|\B_\delta(x_i){\setminus}A_{k_i}|>0$ for all
  $\delta>0$ this results in $\delta_i$
  coming arbitrarily close to $0$. We can thus choose $q_i\in{[0,r_i]}$ with
  $\P(A_k,\partial\B_{q_i})=0$ close enough to $r_i$ to ensure for a
  corresponding $\delta_i\in{[0,\infty)}$ that $\delta_i<i^{-1}$ and
  $\B_{\delta_i}(x_i)\Subset\B_{R_i}{\setminus}\overline{\B_{r_i}}$. Then it can
  be checked that
  \[
    \widetilde E_i\coleq(A_{k_i}\cap\B_{R_i})\cup\B_{q_i}\cup\B_{\delta_i}(x_i)
  \]
  satisfies the volume constraint. Moreover, we can estimate
  $\P(\widetilde E_i)$ essentially in the same way as $\P(E_i)$, just with an
  extra term controlled by
  $\P(\B_{\delta_i}(x_i))=n\alpha_n\delta_i^{n-1}<n\alpha_ni^{1-n}$. In this way
  we deduce
  \[
    \P(\widetilde E_i){-}\mu(\widetilde E_i^+)
    \le\P(A_{k_i})-\mu(A_{k_i}^+)+2i^{-1}+n\alpha_ni^{1-n}+\mu((\B_{R_i})^\c)\,,
  \]
  which is still sufficient to conclude that the modified sequence
  $(\widetilde E_i)_{i\in\N}$ is a minimizing sequence for the
  volume-constrained problem. From this point onwards, taking into account
  $\lim_{i\to\infty}|\B_{\delta_i}(x_i)|=0$ the verification of the volume
  constraint for $A_\infty{\cup}\B_r$ with $r=\lim_{i\to\infty}r_i=\lim_{i\to\infty}q_i$
  and the remainder of the reasoning work almost exactly as described before.

  Finally, in the case $n=1$ a similar reasoning with major simplifications
  applies, where now each $A_k$ with volume constraint $|A_k|=2\r$ can be
  represented as a union of finitely many bounded intervals and in particular
  satisfies $A_k^+=\overline{A_k}$ and $A_k^1=\inn(A_k)$. Indeed, the beginning
  of the reasoning up to the choice of the radii $R_i$ stays essentially
  unchanged with \eqref{eq:good-Ri-2} now simplifying to
  ${\pm}R_i\notin\overline{A_{k_i}}$. However, the construction of competitors
  with compensated volume vastly simplifies with the need for \eqref{eq:good-ri}
  completely dropping out. In fact, we claim that by choice of an interval
  $I_i\subset\B_\r\subset\B_{R_i}$ (where the balls are also intervals, but for
  brevity we keep the $\B$-notation) one can ensure that
  \[
    E_i\coleq(A_{k_i}\cap\B_{R_i})\cup I_i
  \]
  satisfies the constraint $|E_i|=2\r$ and the simple bound
  $\P(E_i)\le\P(A_{k_i})$. To prove this claim, first consider the case
  $|A_{k_i}\cap\B_\r|>0$. Then a continuity argument gives an interval
  $I_i\subset\B_\r$ with $|I_i\cap A_{k_i}|>0$ and
  $|I_i\setminus A_{k_i}|=|A_{k_i}\setminus\B_{R_i}|$, and this suffices to
  conclude $|E_i|=|A_{k_i}|=2\r$ and
  $\P(E_i)\le\P(A_{k_i}\cap\B_{R_i})\le\P(A_{k_i})$ (where the former estimate
  holds, since $I_i$ intersects at least one interval of
  $A_{k_i}\cap\B_{R_i}$). In case $|A_{k_i}\cap\B_\r|=0$ the simple choice
  $I_i\coleq\B_{r_i}$ with
  $r_i\coleq\frac12|A_{k_i}\setminus\B_{R_i}|\in{[0,\r]}$ gives
  $|E_i|=|A_{k_i}|=2\r$ and $\P(I_i)\le\P(A_{k_i}\setminus\B_{R_i})$ (as either
  $\P(I_i)=0=\P(A_{k_i}\setminus\B_{R_i})$ or
  $\P(I_i)=2\le\P(A_{k_i}\setminus\B_{R_i})$). Then in view of
  ${\pm}R_i\notin\overline{A_{k_i}}$ one still gets
  $\P(E_i)\le\P(A_{k_i}\cap\B_{R_i})+\P(A_{k_i}\setminus\B_{R_i})=\P(A_{k_i})$.
  With these properties of $E_i$ and the unchanged estimate for $\mu(E_i^+)$,
  one directly infers that $(E_i)_{i\in\N}$ is a minimizing sequence in the
  volume-constrained problem with (after passage to a subsequence) limit
  $A_\infty\cup I$ for some interval $I\subset\B_\r$. As in the case $n\ge2$ one
  then concludes that the convergence $E_i\to A_\infty\cup I$ looses no volume at
  infinity and that $A_\infty\cup I$ is a minimizer.
\end{proof}

\section{Lower semicontinuity and existence for Dirichlet problems}\label{sec:Dir}

In this section we adapt the semicontinuity results of Section
\ref{sec:Rn} to a setting with a (generalized) Dirichlet condition on
the boundary of an open set $\Omega\subset\R^n$. To this end we prescribe the
Dirichlet datum by means of a set $A_0\in\M(\R^n)$ and consider the class
\begin{equation}\label{eq:Dir-class}\begin{aligned}
  \mathscr{D}_{A_0}(\Omega)
  &\coleq\{E\in\M(\R^n)\,:\,\P(E,\overline\Omega)<\infty\,,\,E\setminus\Omega=A_0\setminus\Omega\}\\
  &\hspace{.45ex}=\hspace{.2ex}\{E\in\M(\R^n)\,:\,\P(E,\overline\Omega)<\infty\,,\,E\Delta A_0\subset\Omega\}\,,
\end{aligned}\end{equation}
in which sets of finite perimeter are extended from $\Omega$ to (a neighborhood
of) $\overline\Omega$ by coincidence with the given $A_0$ outside $\Omega$. In
addition, we prescribe once more measures $\mu_+$ and $\mu_-$, which in
principle live on $\overline\Omega$, but for which we can indeed express
finiteness on all bounded sets and suitable ICs in a
convenient way by considering them as a Radon measure on all of $\R^n$ such that
$\mu_\pm\ecke(\overline\Omega)^\c\equiv0$. Given the data $A_0$ and $\mu_\pm$ we
then aim at minimizing among all $E\in\mathscr{D}_{A_0}(\Omega)$ the adaptation
of the previously considered functional
\begin{equation}\label{eq:P-Dir}
  \Pmu[E;\overline\Omega]\coleq\P(E,\overline\Omega)+\mu_+(E^1)-\mu_-(E^+)\,,
\end{equation}
which is defined for $E\in\M(\R^n)$ if at least one of
$\P(E,\overline\Omega){+}\mu_+(E^1)$ and $\mu_-(E^+)$ is finite and specifically
for $E\in\mathscr{D}_{A_0}(\Omega)$ with $\min\{\mu_+(E^1),\mu_-(E^+)\}<\infty$.
Here --- as customary in the $\BVs$ setting and essentially required by the lack
of weak closedness of traces --- it is tolerated for $E\in\mathscr{D}_{A_0}(\Omega)$
that $\partial E$ deviates from $\partial A_0$ at $\partial\Omega$, but such
deviations are accounted for by taking the perimeter on $\overline\Omega$ and
thus including $\P(E,\partial\Omega)$ in the functional.

With view towards non-parametric Dirichlet problems we will include --- to the
extent possible in a general parametric theory --- unbounded domains $\Omega$
(e.\@g.\@ cylinders $\Omega=D\times\R$ over open $D\subset\R^{n-1}$) and
infinite measures $\mu_\pm$ (e.\@g.\@  product measures
$\mu_\pm=\lambda_\pm\otimes\mathcal{L}^1$ with finite Radon measures
$\lambda_\pm=\lambda_\pm\ecke\overline D$). Thus, the application of our results
in the case $\Omega=D\times\R$, $\mu_\pm=\lambda_\pm\otimes\mathcal{L}^1$ is
possible, but nonetheless does not directly yield a satisfactory non-parametric
theory, since in this case the $\mu$-terms in \eqref{eq:P-Dir} are usually
infinite on subgraphs of functions and thus do not detect the finer behavior
of such non-parametric competitors. In this article, we do \emph{not}
elaborate on this technical point, but indeed we presume that it can be overcome
by first looking at one-sided cases with $\Omega=D\times{(z,\infty)}$,
$\mu_\pm=\lambda_\pm\otimes(\mathcal{L}^1\ecke{(z,\infty)})$ with $z\in\R$
(which are fully accessible by our means), then normalizing the $\mu$-terms
relative to a zero level or another reference configuration, and finally sending
$z\to{-}\infty$. However, all further details of such a procedure are deferred
for treatment elsewhere.

We now come back to the parametric cases under consideration here and provide our
results in form of a semicontinuity theorem and an existence theorem, which both
apply for the functional in \eqref{eq:P-Dir} inside Dirichlet classes of type
\eqref{eq:Dir-class}.

\begin{thm}[lower semicontinuity in a Dirichlet class]\label{thm:lsc-Dir}
  Consider an open set\/ $\Omega$ in $\R^n$, a set $A_\infty\in\M(\R^n)$, a
  sequence $(A_k)_{k\in\N}$ in $\M(\R^n)$, and assume that non-negative Radon
  measures $\mu_+$ and $\mu_-$ on $\R^n$ with
  $\mu_\pm\ecke(\overline\Omega)^\c\equiv0$ both satisfies the small-volume IC
  in $\R^n$ with constant\/ $1$. Furthermore, assume that \emph{one of the
  following sets of additional assumptions} is valid\textup{:}
  \begin{enumerate}[{\rm(a)}]
  \item The measure $\mu_-$ is \emph{finite}, and $A_k$ converge to $A_\infty$
    \emph{locally} in measure on $\R^n$ with
    $A_k\setminus\Omega=A_\infty\setminus\Omega$ for all
    $k\in\N$.\label{item:lsc-Dir-a}
  \item The measure $\mu_-$ additionally satisfies the \emph{almost-strong IC
    with constant $1$ near $\infty$} in the sense that, for every $\eps>0$,
    there exists some $R_\eps\in{(0,\infty)}$ with
    \eqref{eq:strong-ic-at-infty}, and $A_k$ converge to $A_\infty$ \emph{locally} in
    measure on $\R^n$ with
    $|A_k\Delta A_\infty|{+}\P(A_k,\overline\Omega){+}\P(A_\infty,\overline\Omega)<\infty$,
    $A_k\setminus\Omega=A_\infty\setminus\Omega$, and\/ $\min\{\mu_+(A_k^1),\mu_-(A_k^+)\}<\infty$ for
    all $k\in\N$.\label{item:lsc-Dir-b}
  \item The sets $A_k$ converge to $A_\infty$ \emph{globally} in measure on $\R^n$ with
    $\P(A_k,\overline\Omega){+}\P(A_\infty,\overline\Omega)<\infty$,
    $A_k\setminus\Omega=A_\infty\setminus\Omega$, and\/
    $\min\{\mu_+(A_k^1),\mu_-(A_k^+)\}<\infty$ for all
    $k\in\N$.\label{item:lsc-Dir-c}
  \end{enumerate}
  Then we have $\min\{\mu_+(A_\infty^1),\mu_-(A_\infty^+)\}<\infty$ and
  \begin{equation}\label{eq:lsc-Dir}
    \liminf_{k\to\infty}\Pmu[A_k;\overline\Omega]\ge\Pmu[A_\infty;\overline\Omega]\,.
  \end{equation}
\end{thm}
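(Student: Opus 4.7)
The plan is to deduce the Dirichlet-class statement from the full-space semicontinuity of Theorem \ref{thm:lsc-Rn} via a localization/cut-off argument, exploiting two structural features of the setup: the measures $\mu_\pm$ are concentrated on $\overline\Omega$, and the Dirichlet constraint $A_k\setminus\Omega=A_\infty\setminus\Omega$ forces $\1_{A_k}=\1_{A_\infty}$ \ae{} on the open set $(\overline\Omega)^\c$. Consequently the distributional gradients $\D\1_{A_k}$ and $\D\1_{A_\infty}$ agree as measures on $(\overline\Omega)^\c$, and hence $\P(A_k,B)=\P(A_\infty,B)$ for every Borel $B\subset(\overline\Omega)^\c$. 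Thus the ``outside'' perimeter contribution $\P(\cdot,(\overline\Omega)^\c)$ is common to all $A_k$ and to $A_\infty$, and it will cancel in the semicontinuity comparison so that only the $\overline\Omega$-restriction of the perimeter actually enters the functional.

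First I would apply the localized semicontinuity of Lemma \ref{lem:loc-lsc} on each open ball $\B_R$ to obtain
\[
  \liminf_{k\to\infty}\big[\P(A_k,\B_R)+\mu_+(A_k^1\cap\B_R)-\mu_-(A_k^+\cap\B_R)\big]
  \ge\P(A_\infty,\B_R)+\mu_+(A_\infty^1\cap\B_R)-\mu_-(A_\infty^+\cap\B_R)\,.
\]
Splitting $\P(E,\B_R)=\P(E,\B_R\cap\overline\Omega)+\P(E,\B_R\setminus\overline\Omega)$ and using the $k$-independence of the second term (with a brief case split when this common term is infinite) reduces the perimeters to their $\overline\Omega$-restrictions. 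Sending $R\to\infty$, monotone convergence of Radon measures on ascending Borel sets then yields $\P(A_\infty,\B_R\cap\overline\Omega)\nearrow\P(A_\infty,\overline\Omega)$, $\mu_+(A_\infty^1\cap\B_R)\nearrow\mu_+(A_\infty^1)$, and $\mu_-(A_\infty^+\cap\B_R)\nearrow\mu_-(A_\infty^+)$ (using $\mu_\pm\ecke(\overline\Omega)^\c\equiv0$), producing the correct right-hand side of \eqref{eq:lsc-Dir}.

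The core difficulty is to show that after $R\to\infty$ the corresponding left-hand side is bounded above by $\liminf_k\Pmu[A_k;\overline\Omega]$; the negative $\mu_-$-term with its growing complement $A_k^+\setminus\B_R$ is the obstruction. In setting \eqref{item:lsc-Dir-a} this is automatic from $\mu_-(A_k^+\setminus\B_R)\le\mu_-(\R^n\setminus\B_R)\to0$ uniformly in $k$ by finiteness of $\mu_-$. In setting \eqref{item:lsc-Dir-c} the global convergence lets one write $\mu_-(A_k^+\setminus\B_R)\le\mu_-(A_\infty^+\setminus\B_R)+\mu_-((A_k\Delta A_\infty)^+)$: the first term vanishes as $R\to\infty$ once $\mu_-(A_\infty^+)<\infty$ is secured, while the second is controlled by the small-volume IC applied to the vanishing-volume set $A_k\Delta A_\infty$ via the perimeter estimate $\P(A_k\Delta A_\infty)\le\P(A_k,\overline\Omega)+\P(A_\infty,\overline\Omega)$ from Lemma \ref{lem:P(AcapB),P(A-S)}. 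Setting \eqref{item:lsc-Dir-b} is the main obstacle: I would mimic the cut-off argument from the proof of Theorem \ref{thm:lsc-Rn}\eqref{item:lsc-Rn-b}, choosing radii $R_i\to\infty$ with $R_i>R_\eps$, $\mu_-(\partial\B_{R_i})=0$, and $\H^{n-1}(A_k^1\cap\partial\B_{R_i})$ small along a subsequence --- the latter via a coarea computation which, depending on whether $|A_\infty|$ or $|A_\infty^\c|$ is finite in the relevant regime, may require a preliminary swap to complements as in Remark \ref{rem:lsc-Rn-infinite-vol}. Then, combining the almost-strong IC \eqref{eq:strong-ic-at-infty} applied to $A_k\setminus\B_{R_i}$ with $\P(A_k\setminus\B_{R_i})\le\P(A_k,(\B_{R_i})^\c)+\H^{n-1}(A_k^1\cap\partial\B_{R_i})$ from Lemma \ref{lem:P(AcapB),P(A-S)} yields $\mu_-(A_k^+\setminus\B_{R_i})\le\P(A_k,(\B_{R_i})^\c)+\H^{n-1}(A_k^1\cap\partial\B_{R_i})+\eps$, which precisely compensates the ``lost outside perimeter'' and closes the gap.

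Finally, the well-definedness $\min\{\mu_+(A_\infty^1),\mu_-(A_\infty^+)\}<\infty$ needed to make sense of $\Pmu[A_\infty;\overline\Omega]$ in settings \eqref{item:lsc-Dir-b}, \eqref{item:lsc-Dir-c} follows by passing to a subsequence realising the $\liminf$ along which either $\mu_+(A_k^1)$ or $\mu_-(A_k^+)$ stays uniformly bounded: in the first case Proposition \ref{prop:lsc-global'} applied inside a neighborhood of $\overline\Omega$ transfers this finiteness to $A_\infty$, and in the second case the cut-off estimate above combined with the setting-specific hypothesis does the same for $\mu_-$.
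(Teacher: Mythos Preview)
Your overall strategy --- localized semicontinuity via Lemma \ref{lem:loc-lsc}, cancellation of the outside perimeter $\P(\cdot,\B_R\setminus\overline\Omega)$ thanks to the Dirichlet constraint, then $R\to\infty$ --- matches the paper's, and your treatment of setting \eqref{item:lsc-Dir-a} is essentially correct.

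However, your handling of setting \eqref{item:lsc-Dir-b} has a real gap. You propose to apply the almost-strong IC \eqref{eq:strong-ic-at-infty} to $A_k\setminus\B_{R_i}$, but that condition requires $|A_k\setminus\B_{R_i}|<\infty$, i.e.\ $|A_k|<\infty$, which is \emph{not} among the hypotheses of \eqref{item:lsc-Dir-b}; only $|A_k\Delta A_\infty|<\infty$ is assumed. Your attempted fix via ``swap to complements as in Remark \ref{rem:lsc-Rn-infinite-vol}'' does not work here: in the Dirichlet setting one only has $\P(A_k,\overline\Omega)<\infty$, not $\P(A_k)<\infty$, so the isoperimetric dichotomy forcing either $|A_k|<\infty$ or $|A_k^\c|<\infty$ is unavailable (think of bounded $\Omega$ with $A_\infty\setminus\Omega$ a half-space), and even when $|A_k^\c|<\infty$ it is unclear how a complement swap would yield the desired tail estimate for $\mu_-$. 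The same issue obstructs your coarea argument for smallness of $\H^{n-1}(A_k^1\cap\partial\B_{R_i})$, which again needs $|A_k|<\infty$. The paper circumvents all of this by applying the IC not to $A_k\setminus\B_{R_i}$ but to $(A_k\Delta A_\infty)\setminus\B_{R_i}$, which has finite volume by assumption and is contained in $\Omega$: this yields
\[
  \mu_-((A_k^+\Delta A_\infty^+)\setminus\B_{R_i})
  \le\P(A_k,\overline\Omega\setminus\B_{R_i})+\P(A_\infty,\overline\Omega\setminus\B_{R_i})
     +\H^{n-1}((A_k\Delta A_\infty)^+\cap\partial\B_{R_i})+\eps\,,
\]
and the coarea step is likewise carried out on $A_k\Delta A_\infty$. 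Your setting \eqref{item:lsc-Dir-c} argument is closer to the mark, as there you do work with $A_k\Delta A_\infty$.

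A smaller issue: your final paragraph on $\min\{\mu_+(A_\infty^1),\mu_-(A_\infty^+)\}<\infty$ is shaky. The hypothesis only says the minimum is finite for each $k$ separately, and your subsequence-plus-semicontinuity idea does not transfer finiteness (Proposition \ref{prop:lsc-global'} gives a \emph{lower} bound on $\P+\mu_+$, not an upper bound on $\mu_+$). The paper instead invokes Lemma \ref{lem:finite-Dir}: since $A_k,A_\infty\in\mathscr{F}_{A_\infty}(\Omega)$, finiteness of $\mu_+(A_k^1)$ (resp.\ $\mu_-(A_k^+)$) for a single $k$ already forces the same for $A_\infty$.
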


Before approaching the proof of Theorem \ref{thm:lsc-Dir} we address some
interconnected technical points.

First we remark that the hypotheses
$\P(A_k,\overline\Omega){+}\P(A_\infty,\overline\Omega)<\infty$ and
$A_k\setminus\Omega=A_\infty\setminus\Omega$ of the situations \eqref{item:lsc-Dir-b}
and \eqref{item:lsc-Dir-c} can be expressed alternatively as
$A_k,A_\infty\in\mathscr{D}_{A_0}(\Omega)$ for some $A_0\in\M(\R^n)$ or --- by
considering the limit $A_\infty$ itself as the boundary datum --- also as
$A_k,A_\infty\in\mathscr{D}_{A_\infty}(\Omega)$. Moreover, introducing, for open
$\Omega\subset\R^n$ and $A_0\in\M(\R^n)$, the subclass
\[
  \mathscr{F}_{A_0}(\Omega)
  \coleq\{E\in\M(\R^n)\,:\,|E\Delta A_0|{+}\P(E,\overline\Omega)<\infty\,,\,E\setminus\Omega=A_0\setminus\Omega\}
\]
of $\mathscr{D}_{A_0}(\Omega)$, we may include the additional requirement
$|A_k\Delta A_\infty|<\infty$ by writing $A_k,A_\infty\in\mathscr{F}_{A_0}(\Omega)$ for
some $A_0\in\M(\R^n)$ or $A_k,A_\infty\in\mathscr{F}_{A_\infty}(\Omega)$. If there exists some
$E_0\in\mathscr{F}_{A_0}(\Omega)$ at all (e.\@g.\@ if
$\P(A_0,\overline\Omega)<\infty$), we can also rewrite\footnote{Indeed, the
alternative characterization of $\mathscr{F}_{A_0}(\Omega)$ results from the
following elementary observations (for $\Omega,A_0,E_0$ as above). For
$E\in\M(\R^n)$, we have
$E\setminus\Omega=A_0\setminus\Omega\iff E\Delta E_0\subset\Omega$ and also
$|E\Delta A_0|<\infty\iff|E\Delta E_0|<\infty$. Moreover, for $E\in\M(\R^n)$ with
$E\Delta E_0\subset\Omega$, in view of
$\P(E\Delta E_0)=\P(E\Delta E_0,\overline\Omega)$ we get
$\P(E\Delta E_0)<\infty\iff\P(E,\overline\Omega)<\infty$.}
\[
  \mathscr{F}_{A_0}(\Omega)=\{E\in\M(\R^n)\,:\,E\Delta E_0\in\BVs(\R^n)\,,\,E\Delta E_0\subset\Omega\}\,.
\]
Furthermore, we record the following generalization of Lemma \ref{lem:finite},
which is adapted for the class $\mathscr{F}_{A_0}(\Omega)$.

\begin{lem}\label{lem:finite-Dir}
  Consider an open set $\Omega\subset\R^n$ and a set $A_0\in\M(\R^n)$. If a
  non-negative Radon measure $\mu$ on $\R^n$ satisfies the small-volume
  IC in $\R^n$ with constant $C\in{[0,\infty)}$, then
  $\mu(E_0^1)<\infty$ for \emph{some} $E_0\in\mathscr{F}_{A_0}(\Omega)$
  implies in fact $\mu(E^1)<\infty$ for \emph{all} $E\in\mathscr{F}_{A_0}(\Omega)$,
  and similarly $\mu(E_0^+)<\infty$ for \emph{some} $E_0\in\mathscr{F}_{A_0}(\Omega)$
  implies $\mu(E^+)<\infty$ for \emph{all} $E\in\mathscr{F}_{A_0}(\Omega)$.
\end{lem}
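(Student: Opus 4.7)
The plan is to reduce the lemma to the full-space result of Lemma \ref{lem:finite} by considering the symmetric difference $F \coleq E\Delta E_0$, which lies in $\Omega$ and carries the "bulk" of the discrepancy between any two elements of $\mathscr{F}_{A_0}(\Omega)$. Indeed, for $E_0,E\in\mathscr{F}_{A_0}(\Omega)$ we have $E\setminus\Omega=A_0\setminus\Omega=E_0\setminus\Omega$, so $F\subset\Omega$, and $|F|\le|E\Delta A_0|+|A_0\Delta E_0|<\infty$. Moreover, by the alternative description of $\mathscr{F}_{A_0}(\Omega)$ recorded just before the statement of the lemma (with $E_0$ playing the role of the reference set), one has $F=E\Delta E_0\in\BVs(\R^n)$.

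Applying Lemma \ref{lem:finite} to $F$ with the base domain $\R^n$, using the small-volume IC for $\mu$ on $\R^n$, we obtain
\[
  \mu(F^+)<\infty\,.
\]
To pass from this to estimates on $\mu(E^1)$ and $\mu(E^+)$, observe that the identity $E=E_0\Delta F$ yields $E\subset E_0\cup F$ up to negligible sets, hence $E^1\subset(E_0\cup F)^1$ and $E^+\subset(E_0\cup F)^+$. An elementary density argument then gives the inclusions
\[
  (E_0\cup F)^1\subset E_0^1\cup F^+
  \qq\text{and}\qq
  (E_0\cup F)^+\subset E_0^+\cup F^+\,:
\]
indeed, if $x\in(E_0\cup F)^1$ has density $0$ in $F$, then necessarily $x$ has density $1$ in $E_0$, and similarly if $x\in(E_0\cup F)^+$ has density $0$ in $F$, then $x$ has positive upper density in $E_0$. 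Combining these inclusions with the bound on $\mu(F^+)$ yields
\[
  \mu(E^1)\le\mu(E_0^1)+\mu(F^+)
  \qq\text{and}\qq
  \mu(E^+)\le\mu(E_0^+)+\mu(F^+)\,,
\]
from which the two claimed implications follow at once.

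There is no substantial obstacle in this argument; everything rests on the characterization of $\mathscr{F}_{A_0}(\Omega)$ via the reference set $E_0$ and on the full-space finiteness statement of Lemma \ref{lem:finite}. The only point requiring mild care is the density inclusion $(E_0\cup F)^1\subset E_0^1\cup F^+$, which uses that the upper density of a union is controlled by the sum of upper densities of its members. The same reasoning can in fact be stated symmetrically by swapping the roles of $E$ and $E_0$, confirming the equivalence of the finiteness of $\mu(\,\cdot\,^1)$ (respectively $\mu(\,\cdot\,^+)$) across the entire class $\mathscr{F}_{A_0}(\Omega)$.
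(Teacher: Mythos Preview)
Your proof is correct and follows essentially the same approach as the paper: both arguments use that $F=E\Delta E_0\in\BVs(\R^n)$, apply Lemma~\ref{lem:finite} to obtain $\mu(F^+)<\infty$, and then conclude via an elementary density inclusion. The only cosmetic difference is that the paper records the inclusions $E^1\Delta E_0^1\subset F^+$ and $E^+\Delta E_0^+\subset F^+$ directly, whereas you route through $E\subset E_0\cup F$ and the inclusions $(E_0\cup F)^1\subset E_0^1\cup F^+$, $(E_0\cup F)^+\subset E_0^+\cup F^+$; both are equivalent elementary observations.
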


\begin{proof}
  For $E,E_0\in\mathscr{F}_{A_0}(\Omega)$, we have already recorded
  $E\Delta E_0\in\BVs(\R^n)$, and then by Lemma \ref{lem:finite} we infer
  $\mu(E^1\Delta E_0^1)\le\mu((E\Delta E_0)^+)<\infty$
  and $\mu(E^+\Delta E_0^+)\le\mu((E\Delta E_0)^+)<\infty$. Therefore,
  $\mu(E_0^1)<\infty$ implies $\mu(E^1)<\infty$, and $\mu(E_0^+)<\infty$ implies
  $\mu(E^+)<\infty$.
\end{proof}

Next some more remarks on the requirement $|A_k\Delta A_\infty|<\infty$ are in order.

\begin{rem}[on the role of $|A_k\Delta A_\infty|<\infty$ in Theorem
    \ref{thm:lsc-Dir}]\label{rem:lsc-Dir}
  While most requirements in Theorem \ref{thm:lsc-Dir} are natural
  and\textup{/}or resemble features from Theorem \ref{thm:lsc-Rn}, we find it
  worth pointing out that the finite-volume requirement for $A_k\Delta A_\infty$
  of the setting \eqref{item:lsc-Dir-b} is automatically satisfied in many
  cases, but cannot be omitted in full generality. This is clarified by the
  following points, which apply for any open $\Omega\subset\R^n$ and
  $A_0\in\M(\R^n)$\textup{:}
  \begin{enumerate}[{\rm(i)}]
  \item In analogy with Theorem \ref{thm:lsc-Rn}, in the setting
    \eqref{item:lsc-Dir-a} the requirement $|A_k\Delta A_\infty|<\infty$ is simply not
    necessary. Moreover, in the setting \eqref{item:lsc-Dir-c} we do not require
    $|A_k\Delta A_\infty|<\infty$ explicitly, but have it implicitly \ka at least for
    $k\gg1$\kz{} through the global convergence assumed there.
  \item If we have $n\ge2$ and\/ $\Omega$ is not too close to full space in the sense of\/
    $\Cp_1((\Omega^1)^\c)=\infty$ \ka as it follows from
    $|\Omega^\c|=\infty$, for instance\kz, then, for
    $A,E\in\mathscr{D}_{A_0}(\Omega)$ we always have $|E\Delta A|<\infty$. Thus, in
    this case we have $\mathscr{F}_{A_0}(\Omega)=\mathscr{D}_{A_0}(\Omega)$
    whenever $\mathscr{F}_{A_0}(\Omega)\neq\emptyset$, and also in the setting
    \eqref{item:lsc-Dir-b} the condition $|A_k\Delta A_\infty|<\infty$ is automatically
    satisfied and need not be required explicitly.\label{item:rem-lsc-Dir-ii}

    \begin{proof}
      From $E\Delta A\subset(A\Delta A_0)\cup(E\Delta A_0)\subset\Omega$
      we get $(E\Delta A)^1\subset\Omega^1$ and
      $\P(E\Delta A)\le\P(E,\overline\Omega){+}\P(A,\overline\Omega)<\infty$.
      Then the isoperimetric estimate of Theorem \ref{thm:isoperi} yields
      $\min\{|E\Delta A|,|(E\Delta A)^\c|\}<\infty$. In case
      $|(E\Delta A)^\c|<\infty$, however, observing
      $(\Omega^1)^\c\subset((E\Delta A)^1)^\c=((E\Delta A)^\c)^+$
      together with $(E\Delta A)^\c\in\BVs(\R^n)$ we get
      $\Cp_1((\Omega^1)^\c)<\infty$ from Proposition \ref{prop:inf-P}. This
      leaves $|E\Delta A|<\infty$ as the sole possibility.
    \end{proof}
  \item If we have $n\ge2$ and\/ $\Omega$ is close enough to full space in the
    sense of\/ $\Cp_1((\overline\Omega)^\c)<\infty$ \ka as it follows from
    $(\overline\Omega)^\c\in\BVs(\R^n)$, for instance\kz, then
    from Proposition \ref{prop:inf-P} we get $(\overline\Omega)^\c\subset H^+$
    for some $H\in\BVs(\R^n)$, and for every $E\in\M(\R^n)$ with
    $\P(E,\overline\Omega)<\infty$ either $E$ or $E^\c$ is in
    $\BVs(\overline\Omega)$. Specifically, for
    $A,E\in\mathscr{D}_{A_0}(\Omega)$, the requirement $|E\Delta A|<\infty$
    then means that either $A,E\in\BVs(\overline\Omega)$ or
    $A^\c,E^\c\in\BVs(\overline\Omega)$ holds, and the hypotheses of
    the setting \eqref{item:lsc-Dir-b} can be reformulated
    correspondingly.\label{item:rem-lsc-Dir-iii}
    
    \begin{proof}[Proof that either $E$ or $E^\c$ is in $\BVs(\overline\Omega)$]
      By assumption we have $\P(E,U)<\infty$ for an open
      $U\supset\overline\Omega$, from which we infer $\P(E\cup H)<\infty$, since $\R^n$
      is covered by the open sets $U$ and $(\overline\Omega)^\c$ and since
      $E\cup H$ has finite perimeter in $U$ and even zero perimeter in
      $(\overline\Omega)^\c$. This enforces
      $\min\{|E\cup H|,|(E\cup H)^\c|\}<\infty$ once more by Theorem
      \ref{thm:isoperi}. In view of $|H|<\infty$ we deduce
      $\min\{|E|,|E^\c|\}<\infty$ and consequently either
      $E\in\BVs(\overline\Omega)$ or $E^\c\in\BVs(\overline\Omega)$.
    \end{proof}
  \item In case $\Cp_1((\overline\Omega)^\c)<\infty$, $\mu_-(\R^n)=\infty$ the
    explicit requirement $|A_k\Delta A_\infty|<\infty$ can\emph{not} be dropped from
    the setting \eqref{item:lsc-Dir-b}, since lower semicontinuity fails with
    $\Pmu[A_k;\overline\Omega]={-}\infty$ for $k\in\N$, but
    $\Pmu[A_\infty;\overline\Omega]=0$, for instance, if we use $H$ from
    point \eqref{item:rem-lsc-Dir-iii} and take
    $A_k\coleq(\B_k\cup H)^\c$ with $A_k^\c\in\BVs(\R^n)$,
    $A_k\setminus\Omega=\emptyset$ and $A_\infty\coleq\emptyset\in\BVs(\R^n)$.
  \item For each open $\Omega\subset\R^n$, $n\ge2$, in view of
    $\Omega^1\subset\overline\Omega$ at least one of the points
    \eqref{item:rem-lsc-Dir-ii} and \eqref{item:rem-lsc-Dir-iii} applies, and
    sometimes even both apply. For instance, the latter happens for dense
    open $\Omega\subset\R^n$ with $|\Omega^\c|=\infty$.
  \end{enumerate}
\end{rem}

Finally, we turn to the proof of the theorem.

\begin{proof}[Proof of Theorem \ref{thm:lsc-Dir}]
  The subsidiary claim $\min\{\mu_+(A_\infty^1),\mu_-(A_\infty^+)\}<\infty$ is trivially
  satisfied in the situation \eqref{item:lsc-Dir-a} with finite
  $\mu_-$. It is also satisfied in the situations \eqref{item:lsc-Dir-b} and
  \eqref{item:lsc-Dir-c}, since in these we have $A_k,A_\infty\in\mathscr{F}_{A_\infty}(\Omega)$
  (at least for $k\gg1$) and since we know from Lemma \ref{lem:finite-Dir} that
  $\mu_+(A_k^1)<\infty$ even for a single $A_k\in\mathscr{F}_{A_\infty}(\Omega)$ implies
  $\mu_+(A_\infty^1)<\infty$ and likewise $\mu_-(A_k^+)<\infty$ implies
  $\mu_-(A_\infty^+)<\infty$.
  
  To shorten notation, in the remainder of this proof we abbreviate
  \[
    \langle\mu_\pm\,;A\rangle\coleq\mu_+(A^1)-\mu_-(A^+)\,,
  \]
  and we record that, in all three situations, Lemma \ref{lem:loc-lsc} yields
  \[
    \liminf_{k\to\infty}\big[\P(A_k,\B_R)+\langle\mu_\pm\ecke\B_R\,;A_k\rangle\big]
    \ge\P(A_\infty,\B_R)+\langle\mu_\pm\ecke\B_R\,;A_\infty\rangle
    \qq\text{for all }R\in{(0,\infty)}\,.
  \]
  Moreover, whenever we additionally ensure $A_k,A_\infty\in\BVs_\loc(\R^n)$ for
  $k\gg1$, then in view of $A_k\setminus\Omega=A_\infty\setminus\Omega$ we may subtract
  $\P(A_k,\B_R\setminus\overline\Omega)=\P(A_\infty,\B_R\setminus\overline\Omega)<\infty$
  on both sides to arrive at
  \begin{equation}\label{eq:lsc-Dir-pre}
    \liminf_{k\to\infty}\big[\P(A_k,\overline\Omega\cap\B_R)+\langle\mu_\pm\ecke\B_R\,;A_k\rangle\big]
    \ge\P(A_\infty,\overline\Omega\cap\B_R)+\langle\mu_\pm\ecke\B_R\,;A_\infty\rangle
  \end{equation}
  Taking these preliminary observations as a starting point, we now deal with
  the three situations separately, where throughout we can and do assume that
  $\lim_{k\to\infty}\big[\P(A_k,\overline\Omega){+}\langle\mu_\pm\,;A_k\rangle\big]$
  exists and is finite.

  We first treat the situation \eqref{item:lsc-Dir-a}. Since in this case $\mu_-$
  is finite, we directly get
  $\limsup_{k\to\infty}\P(A_k,\overline\Omega)<\infty$, and then, using the
  lower semicontinuity of the perimeter and
  $A_k\setminus\Omega=A_\infty\setminus\Omega$, we infer $\P(A_k,U)+\P(A_\infty,U)<\infty$ for
  $k\gg1$ on a fixed open $U\supset\overline\Omega$. This finding and the
  assumption $\mu_\pm\ecke(\overline\Omega)^\c=0$ open the way to modify $A_k$ and
  $A_\infty$ away from $\overline\Omega$ and ensure that there is no loss of
  generality in assuming $A_k,A_\infty\in\BVs_\loc(\R^n)$ for $k\gg1$ and the validity
  of \eqref{eq:lsc-Dir-pre}. Trivially estimating on the left-hand side of
  \eqref{eq:lsc-Dir-pre}, we deduce, for all $R\in{(0,\infty)}$,
  \[
    \liminf_{k\to\infty}\Pmu[A_k;\overline\Omega]+\mu_-((\B_R)^\c)
    \ge\P(A_\infty,\overline\Omega\cap\B_R)+\langle\mu_\pm\ecke\B_R\,;A_\infty\rangle\,,
  \]
  and then, sending $R\to\infty$ and crucially exploiting the finiteness
  of $\mu_-$, we arrive at the claim \eqref{eq:lsc-Dir}.

  Next we turn to the situation \eqref{item:lsc-Dir-b}. From
  the assumptions $A_k,A_\infty\in\mathscr{D}_{A_\infty}(\Omega)$ we get
  $\P(A_k,U){+}\P(A_\infty,U)<\infty$ for all $k\in\N$ on a fixed open
  $U\supset\overline\Omega$. Again this means that we may modify $A_k$ and $A_\infty$
  away from $\overline\Omega$ and may assume the validity of
  \eqref{eq:lsc-Dir-pre}. For arbitrary $\eps>0$, relying on cut-off arguments
  as in the proofs of Proposition \ref{prop:lsc-global'} and Lemma
  \ref{lem:loc-lsc} we obtain radii $R_i\in{(R_\eps,\infty)}$ with
  $\lim_{i\to\infty}R_i=\infty$ and replace $(A_k)_{k\in\N}$ with one of its
  subsequences such that there hold $\mu_-(\partial\B_{R_i})=0$ and
  $\lim_{k\to\infty}\H^{n-1}((A_k\Delta A_\infty)^+\cap\partial\B_{R_i})=0$ for all
  $i\in\N$. We exploit $\mu_-(\partial\B_{R_i})=0$ and bring in the
  assumptions $A_k\Delta A_\infty\subset\Omega$, $|A_k\Delta A_\infty|<\infty$ and the
  assumed almost-strong IC near $\infty$ (applicable in view of $R_i>R_\eps$) in
  the decisive estimate
  \begin{align}
    \mu_-((A_k^+\Delta A_\infty^+)\setminus\B_{R_i})
    &\le\mu_-(((A_k\Delta A_\infty)\setminus\B_{R_i})^+)
      \le\P((A_k\Delta A_\infty)\setminus\B_{R_i})+\eps
      =\P((A_k\Delta A_\infty)\setminus\B_{R_i},\overline\Omega)+\eps
      \nonumber\\
    &\le\P(A_k,\overline\Omega\setminus\B_{R_i})+\P(A_\infty,\overline\Omega\setminus\B_{R_i})+\H^{n-1}((A_k\Delta A_\infty)^+\cap\partial\B_{R_i})+\eps\,.
      \label{eq:mu-at-infty}
  \end{align}
  Taking into account $\mu_-(\B_{R_i})<\infty$, the estimate
  \eqref{eq:mu-at-infty} yields in particular $\mu_-(A_k^+\Delta A_\infty^+)<\infty$ and
  thus leaves us with the alternative that either $\mu_-(A_k^+)=\mu_-(A_\infty^+)=\infty$
  holds for all $k\in\N$ or $\mu_-(A_k^+){+}\mu_-(A_\infty^+)<\infty$ holds for all
  $k\in\N$. In the case $\mu_-(A_k^+)=\mu_-(A_\infty^+)=\infty$, taking into
  account $\min\{\mu_+(A_k^1),\mu_-(A_k^+)\}<\infty$ and
  $\min\{\mu_+(A_\infty^1),\mu_-(A_\infty^+)\}<\infty$, we necessarily have
  $\mu_+(A_k^1){+}\mu_+(A_\infty^1)<\infty$ for all $k\in\N$, and \eqref{eq:lsc-Dir} is
  trivially satisfied with value ${-}\infty$ on both sides. Thus, from here on
  we deal with the case $\mu_-(A_k^+){+}\mu_-(A_\infty^+)<\infty$ only. We  rearrange
  the terms in \eqref{eq:mu-at-infty}, pass $k\to\infty$, and involve
  $\lim_{k\to\infty}\H^{n-1}((A_k\Delta A_\infty)^+\cap\partial\B_{R_i})=0$ to conclude
  \begin{equation}\label{eq:Pmu-at-infty}
    \liminf_{k\to\infty}\big[\P(A_k,\overline\Omega\setminus\B_{R_i})-\mu_-(A_k^+\setminus\B_{R_i})\big]
    \ge-\P(A_\infty,\overline\Omega\setminus\B_{R_i})-\mu_-(A_\infty^+\setminus\B_{R_i})-\eps\,,
  \end{equation}
  where now all the single terms are finite. Clearly, on the left-hand side we
  may replace ${-}\mu_-(A_k^+\setminus\B_{R_i})$ with
  $\langle\mu_\pm\ecke(\B_{R_i})^\c\,;A_k\rangle$, which is only larger.
  Adding up \eqref{eq:lsc-Dir-pre} (with $R=R_i$) and this slightly modified
  version of \eqref{eq:Pmu-at-infty}, we get
  \[
    \liminf_{k\to\infty}\Pmu[A_k;\overline\Omega]
    \ge\P(A_\infty,\overline\Omega\cap\B_{R_i})+\langle\mu_\pm\ecke\B_{R_i}\,;A_\infty\rangle
    -\P(A_\infty,\overline\Omega\setminus\B_{R_i})-\mu_-(A_\infty^+\setminus\B_{R_i})-\eps
  \]
  for all $i\in\N$. We now rewrite
  $\langle\mu_\pm\ecke\B_{R_i}\,;A_\infty\rangle-\mu_-(A_\infty^+\setminus\B_{R_i})
  =\mu_+(A_\infty^1\cap\B_{R_i})-\mu_-(A_\infty^+)$ on the right-hand side, send $i\to\infty$,
  and exploit $\lim_{i\to\infty}R_i=\infty$. Keeping in mind that
  $\P(A_\infty,\overline\Omega)<\infty$ and $\mu_-(A_\infty^+)<\infty$ in the presently
  considered case and finally exploiting the arbitrariness of $\eps$, we then
  obtain the claim \eqref{eq:lsc-Dir} also in the situation
  \eqref{item:lsc-Dir-b}.

  Finally, in order to handle the situation \eqref{item:lsc-Dir-c} it suffices
  to slightly adapt the estimate \eqref{eq:mu-at-infty} in the reasoning used for
  \eqref{item:lsc-Dir-b}. Indeed, now we simply take $R_i\in{(0,\infty)}$ rather
  than $R_i\in{(R_\eps,\infty)}$, and only eventually, given an arbitrary
  $\eps>0$, we exploit the global convergence $\lim_{k\to\infty}|A_k\Delta A_\infty|=0$
  assumed in \eqref{item:lsc-Dir-c} to find
  \[\begin{aligned}
    \mu_-((A_k^+\Delta A_\infty^+)\setminus\B_{R_i})
    &\le\mu_-(((A_k\Delta A_\infty)\setminus\B_{R_i})^+)
    \le\P((A_k\Delta A_\infty)\setminus\B_{R_i})+\eps
    =\P((A_k\Delta A_\infty)\setminus\B_{R_i},\overline\Omega)+\eps\\
    &\le\P(A_k,\overline\Omega\setminus\B_{R_i})+\P(A_\infty,\overline\Omega\setminus\B_{R_i})+\H^{n-1}((A_k\Delta A_\infty)^+\cap\partial\B_{R_i})+\eps
  \end{aligned}\]
  for $k\gg1$. This is enough to establish in the limit $k\to\infty$ the
  estimate \eqref{eq:Pmu-at-infty}\footnote{In fact, since in the line of
  argument based on \eqref{item:lsc-Dir-c} the radii $R_i$ do not depend on
  $\eps$, one can exploit the arbitrariness of $\eps$ earlier in the argument to
  deduce the validity of \eqref{eq:Pmu-at-infty} in fact even without the
  $\eps$-term.} --- now under the assumptions of \eqref{item:lsc-Dir-c}, but
  still only in case $\mu_-(A_k^+){+}\mu_-(A_\infty^+)<\infty$. We can thus carry out
  the remainder of the reasoning and establish \eqref{eq:lsc-Dir} exactly as in
  the situation \eqref{item:lsc-Dir-b}.
\end{proof}

Exploiting the semicontinuity result in a more or less standard way we obtain
the following existence theorem for the functional in \eqref{eq:P-Dir}.

\begin{thm}[existence in Dirichlet problems]\label{thm:ex-Dir}
  For an open set\/ $\Omega$ in $\R^n$, assume that non-negative Radon measures
  $\mu_+$ and $\mu_-$ on $\R^n$ with $\mu_\pm\ecke(\overline\Omega)^\c\equiv0$
  both satisfy the small-volume IC in $\R^n$ with constant\/ $1$. Moreover,
  consider $A_0\in\M(\R^n)$ with
  $\mu_+(A_0^1){+}\P(A_0,\overline\Omega)<\infty$, and assume that \emph{one of the
  following sets of additional assumptions} is valid\textup{:}
  \begin{enumerate}[{\rm(a)}]
  \item The measure $\mu_-$ is \emph{finite}.\label{item:ex-Dir-a}
  \item For some $R_0\in{(0,\infty)}$ and $\gamma\in{(0,1]}$, the measure
    $\mu_-$ additionally satisfies the \emph{strong} IC in ${(\B_{R_0})}^\c$
    with constant\/ $1{-}\gamma$.\label{item:ex-Dir-b}
  \end{enumerate}
  Then, for $n\ge2$, there exists the minimum of the \ka generalized\kz{}
  Dirichlet problem
  \begin{equation}\label{eq:Dir-fvol}
    \min\{\Pmu[E;\overline\Omega]\,:\,E\in\mathscr{F}_{A_0}(\Omega)\}\,,
  \end{equation}
  and moreover, in situation \eqref{item:ex-Dir-a} with $n\ge1$, there also
  exists the minimum of the variant of the problem
  \begin{equation}\label{eq:Dir-nfvol}
    \min\{\Pmu[E;\overline\Omega]\,:\,E\in\mathscr{D}_{A_0}(\Omega)\}\,.
  \end{equation}
  The minimum values in the situation \eqref{item:ex-Dir-a} are in
  ${[{-}\mu_-(\R^n),\infty)}$, and the minimum value in the situation
  \eqref{item:ex-Dir-b} is in
  ${[{-}(1{-}\gamma)\P(A_0,\overline\Omega){-}(1{-}\gamma)\P(\B_{R_0}){-}\mu_-(A_0^+){-}\mu_-(\overline{\B_{R_0}}),\infty)}$.
\end{thm}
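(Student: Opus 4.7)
The strategy is the direct method, coupled with the lower semicontinuity result of Theorem \ref{thm:lsc-Dir}. Since $A_0\in\mathscr{F}_{A_0}(\Omega)\subset\mathscr{D}_{A_0}(\Omega)$ with $\Pmu[A_0;\overline\Omega]\le\P(A_0,\overline\Omega)+\mu_+(A_0^1)<\infty$, the infimum is strictly below $\infty$. For the lower bound, situation \eqref{item:ex-Dir-a} is immediate from the finiteness of $\mu_-$, giving $\Pmu[E;\overline\Omega]\ge{-}\mu_-(\R^n)$. For situation \eqref{item:ex-Dir-b}, if $\mu_-(A_0^+)=\infty$, then Lemma \ref{lem:finite-Dir} forces $\Pmu\equiv{-}\infty$ on $\mathscr{F}_{A_0}(\Omega)$, so any element trivially minimizes. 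Assuming instead $\mu_-(A_0^+)<\infty$, I decompose $\mu_-(E^+)\le\mu_-(A_0^+)+\mu_-((E\Delta A_0)^+)$, further split the last term along $\overline{\B_{R_0}}$, and apply the strong IC to $(E\Delta A_0)\setminus\B_{R_0}$ (a subset of $(\B_{R_0})^\c$ with finite volume, since $E\Delta A_0\in\BVs(\R^n)$). Combining this with Lemma \ref{lem:P(AcapB),P(A-S)} and the $\BV$-identity $\P(E\Delta A_0,G)\le\P(E,G)+\P(A_0,G)$ (via $\1_{E\Delta A_0}=|\1_E{-}\1_{A_0}|$) yields, after collecting terms, the coercivity estimate
\[
  \Pmu[E;\overline\Omega]
  \ge\gamma\,\P(E,\overline\Omega)
  -(1{-}\gamma)\P(A_0,\overline\Omega)-(1{-}\gamma)\P(\B_{R_0})-\mu_-(A_0^+)-\mu_-(\overline{\B_{R_0}})\,,
\]
which matches the stated bound on the minimum value.

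Next I pick a minimizing sequence $(A_k)_{k\in\N}$ in the chosen admissible class. Either case produces $\sup_k\P(A_k,\overline\Omega)<\infty$. Selecting an open neighborhood $U\supset\overline\Omega$ with $\P(A_0,U)<\infty$ and exploiting $A_k\setminus\Omega=A_0\setminus\Omega$ gives $\sup_k\P(A_k,U)<\infty$, so Lemma \ref{lem:cpct-per} supplies a subsequence converging locally in measure on $\R^n$ to some $A_\infty\in\M(\R^n)$. After redefining $A_\infty$ on a Lebesgue-null set we may arrange $A_\infty\setminus\Omega=A_0\setminus\Omega$, and Lemma \ref{lem:lsc-per} gives $\P(A_\infty,\overline\Omega)<\infty$, so $A_\infty\in\mathscr{D}_{A_0}(\Omega)$.

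Finally, Theorem \ref{thm:lsc-Dir} identifies $A_\infty$ as a minimizer. The existence statement in $\mathscr{D}_{A_0}(\Omega)$ for case \eqref{item:ex-Dir-a} follows at once from Theorem \ref{thm:lsc-Dir}\eqref{item:lsc-Dir-a}, since $\mu_-$ is finite and no finite-volume assumption on $A_k\Delta A_\infty$ is needed. The main technical obstacle lies in the $\mathscr{F}_{A_0}(\Omega)$-statement for $n\ge2$, where I must additionally verify $A_\infty\in\mathscr{F}_{A_0}(\Omega)$, that is $|A_\infty\Delta A_0|<\infty$, and in case \eqref{item:ex-Dir-b} secure the finite-volume hypothesis $|A_k\Delta A_\infty|<\infty$ demanded by Theorem \ref{thm:lsc-Dir}\eqref{item:lsc-Dir-b}. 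Both reduce to a uniform bound on $|A_k\Delta A_0|$, which I would obtain by applying the isoperimetric estimate of Theorem \ref{thm:isoperi} to $A_k\Delta A_0\in\BVs(\R^n)$ together with the uniform bound $\sup_k\P(A_k\Delta A_0)\le\sup_k\P(A_k,\overline\Omega){+}\P(A_0,\overline\Omega)<\infty$. The case distinction recorded in Remark \ref{rem:lsc-Dir} --- according as $\Cp_1((\Omega^1)^\c)=\infty$ or $\Cp_1((\overline\Omega)^\c)<\infty$ --- then either automatically excludes the alternative in which $|(A_k\Delta A_0)^\c|$ is the smaller quantity, or permits passage to a subsequence on which either all $A_k\in\BVs(\overline\Omega)$ or all $A_k^\c\in\BVs(\overline\Omega)$, reducing via a complement argument to the first regime.
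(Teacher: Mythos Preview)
Your proof is correct and follows essentially the same route as the paper: establish the lower bound via the strong IC applied to $(E\Delta A_0)\setminus\B_{R_0}$, extract a locally convergent minimizing subsequence via the perimeter bound, use the isoperimetric estimate on $A_k\Delta A_0$ to secure $|A_\infty\Delta A_0|<\infty$ and $|A_k\Delta A_\infty|<\infty$, and conclude by Theorem~\ref{thm:lsc-Dir}. One remark: your final paragraph invoking the case distinction from Remark~\ref{rem:lsc-Dir} is unnecessary --- since your minimizing sequence $A_k$ already lies in $\mathscr{F}_{A_0}(\Omega)$, you have $A_k\Delta A_0\in\BVs(\R^n)$ by definition, so the isoperimetric inequality directly gives $|A_k\Delta A_0|\le\Gamma_n\P(A_k\Delta A_0)^{n/(n-1)}$ without any worry about the complement alternative; the paper proceeds exactly this way.
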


In connection with this theorem let us first set clear that the functional
$\Pmu[\,\cdot\,;\overline\Omega]$ is well-defined on the admissible $E$.
Indeed, in the situation \eqref{item:ex-Dir-a} thanks to the
finiteness of $\mu_-$ we evidently have
$\Pmu[E;\overline\Omega]\in{({-}\infty,\infty]}$ for all
$E\in\mathscr{D}_{A_0}(\Omega)$ and a fortiori for
$E\in\mathscr{F}_{A_0}(\Omega)$. Moreover, in the situation
\eqref{item:ex-Dir-b} we get from the assumption
$\mu_+(A_0^1){+}\P(A_0,\overline\Omega)<\infty$ and
Lemma \ref{lem:finite-Dir} that $\mu_+(E^1){+}\P(E,\overline\Omega)<\infty$
and consequently $\Pmu[E;\overline\Omega]\in{[{-}\infty,\infty)}$
hold at least for all $E\in\mathscr{F}_{A_0}(\Omega)$.

We further remark that if only \eqref{item:ex-Dir-b} but not
\eqref{item:ex-Dir-a} is satisfied (in particular $\mu_-(\R^n)=\infty$), we
may still consider \eqref{eq:Dir-nfvol} in the form
\begin{equation}\label{eq:Dir-nfvol'}
  \min\{\Pmu[E;\overline\Omega]\,:\,
  E\in\mathscr{D}_{A_0}(\Omega)\,,\,\Pmu[E;\overline\Omega]\text{ defined}\}\,,
\end{equation}
where we recall that $\Pmu[E;\overline\Omega]$ is defined for
$E\in\mathscr{D}_{A_0}(\Omega)$ precisely if $\min\{\mu_+(E^1),\mu_-(E^+)\}<\infty$.
However, in fact this does not win much when compared to \eqref{eq:Dir-fvol},
and thus we have excluded this situation above and only comment on it
briefly. Indeed, in case $n\ge2$, $\Cp_1((\Omega^1)^\c)=\infty$, Remark
\ref{rem:lsc-Dir}\eqref{item:rem-lsc-Dir-ii} gives
$\mathscr{D}_{A_0}(\Omega)=\mathscr{F}_{A_0}(\Omega)$, and
\eqref{eq:Dir-nfvol'} reduces to precisely \eqref{eq:Dir-fvol} (also keeping in
mind that we have already argued for the finiteness of the $\mu_+$-term on
$\mathscr{F}_{A_0}(\Omega)$). Moreover, in case $n\ge2$,
$\Cp_1((\Omega^1)^\c)<\infty$ we can modify\footnote{In fact,
in view of $\Cp_1((\Omega^1)^\c)<\infty$ there exists $H\in\BVs(\R^n)$ with
$\Omega^\c\subset H$ up to negligible sets, and the problem under consideration
stays unchanged when replacing $A_0$ with $A_0\cap H$, which clearly satisfies
$|A_0\cap H|\le|H|<\infty$.} $A_0$ inside $\Omega$ to ensure $|A_0|<\infty$ and
then obtain from Remark \ref{rem:lsc-Dir}\eqref{item:rem-lsc-Dir-iii} that the
sets $E\in\mathscr{D}_{A_0}(\Omega)$ split into some with
$E\in\BVs(\overline\Omega)$ and thus $E\in\mathscr{F}_{A_0}(\Omega)$ on one hand
and some with $E^\c\in\BVs(\overline\Omega)$ on the other hand. However, in the
case considered it turns out\footnote{The precise reasoning proceeds as follows
and exploits that $H\in\BVs(\R^n)$ from the previous footnote also satisfies
$(\overline\Omega)^\c\subset H^1$. In case $\mu_+(\R^n)<\infty=\mu_-(\R^n)$,
from $E^\c\in\BVs(\overline\Omega)$ we get first $E^\c\cup H\in\BVs(\R^n)$, then
$\mu_-((E^+)^\c)\le\mu_-((E^\c\cup H)^+)<\infty$ via Lemma \ref{lem:finite},
then $\mu_-(E^+)=\infty$, and finally
$\Pmu[E;\overline\Omega]={-}\infty$. In case
$\mu_+(\R^n)=\infty=\mu_-(\R^n)$, essentially the same reasoning leads from
$E^\c\in\BVs(\overline\Omega)$ to $\mu_-(E^+)=\mu_+(E^1)=\infty$, and thus
$\Pmu[E;\overline\Omega]$ is undefined.} that either 
$\Pmu[E;\overline\Omega]$ equals ${-}\infty$ whenever
$E^\c\in\BVs(\overline\Omega)$ or $\Pmu[E;\overline\Omega]$ is
undefined whenever $E^\c\in\BVs(\overline\Omega)$. Thus, either
\eqref{eq:Dir-nfvol'} is a rather trivial extension of \eqref{eq:Dir-fvol}, or
\eqref{eq:Dir-nfvol'} reduces to precisely \eqref{eq:Dir-fvol} once more.

\begin{proof}
  The admissible classes in both \eqref{eq:Dir-fvol} and \eqref{eq:Dir-nfvol}
  contain $A_0$. Thus, these classes are non-empty, and in view of
  $\mu_+(A_0^1){+}\P(A_0,\overline\Omega)<\infty$ the corresponding infima are
  in ${[-\infty,\infty)}$. Moreover, in view of
  $\mu_\pm\ecke(\overline\Omega)^\c\equiv0$ the problems in \eqref{eq:Dir-fvol} and
  \eqref{eq:Dir-nfvol} remain unchanged if we modify $A_0$ away from
  $\overline\Omega$. Hence, we can and do assume $A_0\in\BVs_\loc(\R^n)$, which
  implies that the admissible classes are contained in $\BVs_\loc(\R^n)$.
  
  We now focus, for a moment, on the situation \eqref{item:ex-Dir-a}. In view of
  $\mu_-(\R^n)<\infty$ and
  \[
    \Pmu[E;\overline\Omega]\ge\P(E,\overline\Omega)-\mu_-(\R^n)
    \qq\text{for all }E\in\M(\R^n)
  \]
  we find that every minimizing sequence $(A_k)_{k\in\N}$ in either
  \eqref{eq:Dir-fvol} or \eqref{eq:Dir-nfvol} satisfies
  $\limsup_{k\to\infty}\P(A_k,\overline\Omega)<\infty$.

  Next we turn to the situation \eqref{item:ex-Dir-b}. We can assume
  $\mu_-(A_0^+)<\infty$, as otherwise $A_0$ with
  $\Pmu[A_0;\overline\Omega]={-}\infty$ clearly minimizes.
  For $E\in\mathscr{F}_{A_0}(\Omega)$, since we have $|E\Delta A_0|<\infty$ and
  $E\Delta A_0\subset\Omega$, the strong IC yields
  \[\begin{aligned}
    \mu_-((E^+\Delta A_0^+)\setminus\overline{\B_{R_0}})
    &\le\mu_-(((E\Delta A_0)\setminus\B_{R_0})^+)\\
    &\le(1{-}\gamma)\P((E\Delta A_0)\setminus\B_{R_0})\\
    &=(1{-}\gamma)\P((E\Delta A_0)\setminus\B_{R_0},\overline\Omega)\\
    &\le(1{-}\gamma)\P(E,\overline\Omega)+(1{-}\gamma)\P(A_0,\overline\Omega)+(1{-}\gamma)\P(\B_{R_0})\,,
  \end{aligned}\]
  and from this estimate we infer $\mu_-(E^+)<\infty$ and
  \[
    \Pmu[E;\overline\Omega]
    \ge\gamma\P(E,\overline\Omega)-(1{-}\gamma)\P(A_0,\overline\Omega)-(1{-}\gamma)\P(\B_{R_0})-\mu_-(A_0^+)-\mu_-(\overline{\B_{R_0}})
    \qq\text{for all }E\in\mathscr{F}_{A_0}(\Omega)\,.
  \]
  Thus, for every minimizing sequence $(A_k)_{k\in\N}$ in \eqref{eq:Dir-fvol},
  we obtain once more $\limsup_{k\to\infty}\P(A_k,\overline\Omega)<\infty$.

  In any of the cases considered in the statement we further proceed as
  follows. Fixing a minimizing sequence $(A_k)_{k\in\N}$, from
  $\limsup_{k\to\infty}\P(A_k,\overline\Omega)<\infty$ together with
  $A_k\setminus\Omega=A_0\setminus\Omega$ we get
  $\limsup_{k\to\infty}\P(A_k,U)<\infty$ for some open neighborhood $U$ of
  $\overline\Omega$ and in view of $A_0\in\BVs_\loc(\R^n)$ also
  $\limsup_{k\to\infty}\P(A_k,\B_R)<\infty$ for every $R\in{(0,\infty)}$. By
  compactness, a diagonal argument, and lower semicontinuity of the perimeter,
  we deduce that a subsequence of $(A_k)_{k\in\N}$ converges \emph{locally}
  in measure on $\R^n$ to $A_\infty\in\mathscr{D}_{A_0}(\Omega)$ (with even
  $\P(A_\infty,U)<\infty$). In case of problem \eqref{eq:Dir-fvol} we additionally
  involve the isoperimetric estimate of Theorem \ref{thm:isoperi} to derive
  the subsidiary estimate $|A_k\Delta A_0|\le\Gamma_n\P(A_k\Delta A_0)^\frac n{n-1}
  \le\Gamma_n[\P(A_k,\overline\Omega){+}\P(A_0,\overline\Omega)]^\frac n{n-1}$,
  which implies $|A_\infty\Delta A_0|<\infty$ also for the limit $A_\infty$ and thus ensures
  the admissibility of $A_\infty$ and $|A_k\Delta A_\infty|<\infty$ for all $k\in\N$. Finally,
  we apply Theorem \ref{thm:lsc-Dir}\eqref{item:lsc-Dir-a} in situation
  \eqref{item:ex-Dir-a} and Theorem \ref{thm:lsc-Dir}\eqref{item:lsc-Dir-b} in
  situation \eqref{item:ex-Dir-b} to conclude that the limit $A_\infty$ is a minimizer
  in \eqref{eq:Dir-fvol} and \eqref{eq:Dir-nfvol}, respectively (where, as we
  recall, in situation \eqref{item:ex-Dir-b} we consider \eqref{eq:Dir-fvol}
  only).
\end{proof}

\section{Properties and reformulations of isoperimetric conditions}\label{sec:admis-meas}

In this section we take a closer look at ICs, specifically small-volume ICs,
and equivalent ways to express these conditions. Most (though not really all) of
the results obtained in this regard will find uses in the subsequent sections.

\begin{rem}
  Even though we will not work with the observations of this remark any further,
  we briefly record that the $\eps$-$\delta$-feature of the small-volume IC
  can be reformulated in the following standard way. Given a Radon measure $\mu$
  on an open set\/ $\Omega\subset\R^n$, the small-volume IC for $\mu$ in
  $\Omega$ with constant\/ $C\in{[0,\infty)}$ means nothing but the existence of
  a modulus $\omega\colon{[0,\infty]}\to{[0,\infty]}$ with
  $\lim_{t\searrow0}\omega(t)=\omega(0)=0$ such that we have
  \begin{equation}\label{eq:IC-omega}
    \mu(A^+)\le C\P(A)+\omega(|A|)
    \qq\text{for all }A\in\M(\R^n)\text{ with\/ }\overline A\subset\Omega\,.
  \end{equation}
  Introducing a modified $1$-capacity ${}^C\mathrm{K}_1^\omega$ by
  ${}^C\mathrm{K}_1^\omega(S)\coleq\inf\{C\P(A)+\omega(|A|)\,:\,
  A\in\M(\R^n)\,,\,S\subset A^+\,,\,\overline A\subset\Omega\}$ \ka with
  understanding $\inf\emptyset=\infty$\kz, one may further recast
  \eqref{eq:IC-omega} in the \ka still\/\kz{} equivalent form
  \[
    \mu(S)\le{}^C\mathrm{K}_1^\omega(S)
    \qq\text{for all }S\in\Bo(\R^n)\,.
  \]
\end{rem}

As shown by the next lemma, there is also some flexibility concerning the
precise class of test sets for ICs.

\begin{lem}\label{lem:closed-vs-compact}
  Consider a Radon measure $\mu$ on an open set\/ $\Omega\subset\R^n$ and
  $C\in{[0,\infty)}$. Then the following assertions \ka where
  \eqref{item:ic-closed} is exactly the definition of the small-volume
  IC in $\Omega$ with constant\/ $C$\kz{} are
  \textbf{equivalent}\textup{:}
  \begin{enumerate}[{\rm(a)}]
  \item For every $\eps>0$, there exists $\delta>0$ such that
    $\mu(A^+)\le C\P(A){+}\eps$ for all $A\in\M(\R^n)$ with
    $\overline A\subset\Omega$, $|A|<\delta$.\label{item:ic-closed}
  \item For every $\eps>0$, there exists $\delta>0$ such that
    $\mu(A^+)\le C\P(A){+}\eps$ for all $A\in\M(\R^n)$ with $A\Subset\Omega$,
    $|A|<\delta$.\label{item:ic-compact}
  \item For every $\eps>0$, there exists $\delta>0$ such that
    $\mu(A^+)\le C\P(A){+}\eps$ for all $A\in\M(\R^n)$ with $A^+\!\subset\!\Omega$,
    $|A|\!<\!\delta$.\label{item:ic-m-closed}
  \end{enumerate}
  The equivalence carries over to corresponding versions of the strong \ka
  instead of small-volume\kz{} IC.
\end{lem}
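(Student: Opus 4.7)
The first observation is that the test classes are nested as $\{A \in \M(\R^n) : A \Subset \Omega\} \subset \{A : \overline A \subset \Omega\} \subset \{A : A^+ \subset \Omega\}$: the first inclusion is by definition, and the second uses $A^+ \subset \overline A$ (any point of positive upper density of $A$ is a topological limit point, since every ball about it has positive Lebesgue intersection with $A$ and is therefore non-empty). Consequently $\eqref{item:ic-m-closed} \Rightarrow \eqref{item:ic-closed} \Rightarrow \eqref{item:ic-compact}$ holds with the same constants and $\delta$'s, and the work lies in establishing the two reverse implications by approximation.

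For $\eqref{item:ic-compact} \Rightarrow \eqref{item:ic-closed}$ I would fix $\eps > 0$, take $\delta$ from \eqref{item:ic-compact} corresponding to $\eps/2$, and for any admissible $A$ (with $\overline A \subset \Omega$, $|A| < \delta$) consider the outer truncations $A_R \coleq A \cap \B_R$. Since $\overline{A_R} \subset \overline A \cap \overline{\B_R}$ is compact and contained in $\Omega$ we have $A_R \Subset \Omega$, so \eqref{item:ic-compact} applies. The limit $R \to \infty$ uses $\mu(A_R^+) \to \mu(A^+)$, which follows by monotone convergence from the density inclusion $A^+ \cap \B_R \subset A_R^+ \subset A^+$, together with the perimeter estimate $\P(A_R) \le \P(A) + \H^{n-1}(\partial \B_R \cap A^+)$ from Lemma~\ref{lem:P(AcapB),P(A-S)}. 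The coarea formula applied to $x \mapsto |x|$ yields $\int_0^\infty \H^{n-1}(\partial \B_R \cap A^+)\,\d R \le |A^+| < \infty$, so the liminf of the integrand is $0$, and extracting $R_k \to \infty$ realizing it gives $\mu(A^+) \le C\P(A) + \eps$.

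For $\eqref{item:ic-closed} \Rightarrow \eqref{item:ic-m-closed}$ I would proceed dually, via the inner truncations $A_t \coleq A \cap \{\dist(\,\cdot\,, \Omega^\c) > t\}$, whose closure sits in $\{\dist \ge t\} \subset \Omega$, so that \eqref{item:ic-closed} applies. Again $\mu(A_t^+) \to \mu(A^+)$ by monotone convergence (using $A^+ \subset \Omega = \bigcup_{t>0}\{\dist > t\}$), and Lemma~\ref{lem:P(AcapB),P(A-S)} supplies $\P(A_t) \le \P(A, \{\dist > t\}^1) + \H^{n-1}(\{\dist = t\} \cap A^+)$. The first summand tends to $\P(A)$ because $\partial^\ast\!A \subset A^+ \subset \Omega$ forces $\P(A, \partial \Omega) = 0$. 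For the second, the coarea formula for $\dist(\,\cdot\,, \Omega^\c)$ gives $\int_0^\tau \H^{n-1}(\{\dist = t\} \cap A^+)\,\d t = |A^+ \cap \{0 < \dist < \tau\}|$, and the right-hand side tends to $0$ as $\tau \to 0$ since $A^+ \cap \partial \Omega = \emptyset$ and $|A^+| < \infty$. A Chebyshev-type averaging then selects $t_k \to 0$ with $\H^{n-1}(\{\dist = t_k\} \cap A^+) \to 0$, and passing to the limit yields $\mu(A^+) \le C\P(A) + \eps$. The strong-IC variants follow from exactly the same proofs with the $\eps$-terms set to zero throughout.

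The main obstacle will be this last Chebyshev extraction: mere integrability of $f(t) \coleq \H^{n-1}(\{\dist = t\} \cap A^+)$ on $(0,\tau)$ together with $\int_0^\tau f \to 0$ does not automatically produce a sequence on which $f$ vanishes. The averaging argument succeeds only once one establishes the sharper decay $|A^+ \cap \{\dist < \tau\}| = o(\tau)$, which in turn has to be traced back to the density-zero behaviour of $A^+$ at every point of $\partial \Omega$ (a consequence of $A^+ \cap \partial \Omega = \emptyset$) combined with a covering argument over the relevant part of $\partial \Omega$.
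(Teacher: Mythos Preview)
Your handling of the trivial chain $\eqref{item:ic-m-closed}\Rightarrow\eqref{item:ic-closed}\Rightarrow\eqref{item:ic-compact}$ and of $\eqref{item:ic-compact}\Rightarrow\eqref{item:ic-closed}$ is correct and essentially matches the paper. One cosmetic difference: the paper gets $\P(A\cap\B_R)\le\P(A)$ for \emph{every} $R$ directly from pseudoconvexity of balls (Lemmas~\ref{lem:intersect-with-pseudoconvex} and~\ref{lem:convex->pseudoconvex}), so no extraction of good radii is needed there.

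For $\eqref{item:ic-closed}\Rightarrow\eqref{item:ic-m-closed}$ you take a genuinely different route from the paper, and the gap you yourself flag is real and, as far as I can see, not closable by the covering argument you sketch. The density-zero condition $A^+\cap\partial\Omega=\emptyset$ gives $|A\cap\B_r(x)|=o(r^n)$ only \emph{pointwise} in $x\in\partial\Omega$, with no uniformity, while $\partial\Omega$ carries no regularity whatsoever (it need not be $(n{-}1)$-rectifiable, need not have locally finite $\H^{n-1}$-measure, and can have Hausdorff dimension arbitrarily close to $n$). A Vitali/Besicovitch covering of $\partial\Omega$ at scale $\tau$ therefore gives no control on the number of balls, and summing pointwise $o(\tau^n)$ contributions does not produce the global $o(\tau)$ bound you need. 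In short, nothing in the hypotheses ties the rate at which $A$ thins out near $\partial\Omega$ to the geometry of $\{\dist=t\}$.

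The paper circumvents this entirely by invoking the interior approximation of Tamanini--Giacomelli \cite[Teorema~2]{TamGia90}: it produces $A_k\subset A$ with $\overline{A_k}=A_k^+\subset A^+\subset\Omega$, the direct perimeter bound $\P(A_k)\le\P(A)$, and $\P(A\setminus A_k)\to0$. The first two properties let one apply \eqref{item:ic-closed} to $A_k$ with no boundary cut-off term at all; the third, via $\Cp_1$ and Proposition~\ref{prop:Cp1-null}, guarantees $\mu\big(A^+\setminus\bigcup_k A_k^+\big)=0$ and hence $\mu(A_k^+)\to\mu(A^+)$. So the hard work (building inner approximants that control perimeter without a level-set error) is outsourced to \cite{TamGia90}, and the paper explicitly calls this ``crucial''. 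Your distance-truncation approach would recover the result for sufficiently regular $\Omega$ (for instance, when $\Omega$ is a half-space one can show $\H^{n-1}(A^+\cap\{\dist=t\})\to0$ via BV slicing), but not in the stated generality.
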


In the sequel, from this lemma we will only need the equivalence of
\eqref{item:ic-closed} and \eqref{item:ic-compact}, which is trivial for bounded
$\Omega$ and results from a simple cut-off argument in general. In order to
prove the equivalence with \eqref{item:ic-m-closed} in the full generality
stated here, we will make crucial use of the fine approximation result
\cite[Teorema 2]{TamGia90} (which in turn draws on \cite{TamGia89,TamCon91}).

\begin{proof}[Proof of Lemma \ref{lem:closed-vs-compact}]
  Clearly, \eqref{item:ic-m-closed} implies \eqref{item:ic-closed}, and
  \eqref{item:ic-closed} implies \eqref{item:ic-compact}.

  In addition, we now show that \eqref{item:ic-compact} implies
  \eqref{item:ic-closed}. To this end, we fix $\eps>0$ and consider
  $A\in\M(\R^n)$ with $\overline A\subset\Omega$, $|A|<\delta$ for the
  corresponding $\delta$. In view of $A\cap\B_R\Subset\Omega$, from
  \eqref{item:ic-compact} we then get
  \[
    \mu(A^+\cap\B_R)
    =\mu((A\cap\B_R)^+)
    \le C\P(A\cap\B_R)+\eps
    \le C\P(A)+\eps
    \qq\qq\text{for each }R\in{(0,\infty)}\,,
  \]
  where the last estimate can be obtained from Lemmas
  \ref{lem:intersect-with-pseudoconvex} and \ref{lem:convex->pseudoconvex}, for
  instance. In the limit $R\to\infty$ we read off $\mu(A^+)\le C\P(A)+\eps$.

  Next we prove that \eqref{item:ic-closed} implies
  \eqref{item:ic-m-closed}. For this, we fix again $\eps>0$ and consider
  some $A\in\M(\R^n)$ with $A^+\subset\Omega$, $|A|<\delta$ for the
  corresponding $\delta$. Clearly, we can additionally assume
  $\P(A)<\infty$. From the interior approximation result
  \cite[Teorema 2]{TamGia90} we then obtain a sequence of sets $A_k\in\M(\R^n)$
  such that
  \[
    A_k\subset A_{k+1}\subset A\,,\qq\qq
    \overline{A_k}=A_k^+\,,\qq\qq
    \P(A_k)\le\P(A)\qq\qq
    \text{for all }k\in\N
  \]
  (where the crucial condition $\overline{A_k}=A_k^+$ is stated in
  \cite[Teorema 2]{TamGia90} in the equivalent form
  $A_k^0\cap\partial A_k=\emptyset$) and
  \[
    \lim_{k\to\infty}\P(A\setminus A_k)=0\,.
  \]
 In view of $\overline{A_k}=A_k^+\subset A^+\subset\Omega$, from
 \eqref{item:ic-closed} and the preceding properties of $A_k$ we conclude
  \begin{equation}\label{eq:mu<P+eps-TC}
    \mu(A_k^+)\le C\P(A_k)+\eps\le C\P(A)+\eps
    \qq\qq\text{for each }k\in\N\,.
  \end{equation}
  Evidently the above conditions imply $\bigcup_{k=1}^\infty A_k^+\subset A^+$,
  and we now show that, decisively, they also ensure
  \begin{equation}\label{eq:conv-A+-H-ae}
    \mu\bigg(A^+\setminus\bigcup_{k=1}^\infty A_k^+\bigg)=0\,.
  \end{equation}
  Indeed, observing $A^+\setminus\bigcup_{k=1}^\infty A_k^+
  \subset A^+\setminus A_\ell^+\subset (A\setminus A_\ell)^+$ for each
  $\ell\in\N$, from Proposition \ref{prop:inf-P} we first infer
  $\Cp_1\big(A^+\setminus\bigcup_{k=1}^\infty A_k^+\big)
  \le\lim_{\ell\to\infty}\P(A\setminus A_\ell)=0$, then by Proposition
  \ref{prop:Cp1-null} we deduce
  $\H^{n-1}\big(A^+\setminus\bigcup_{k=1}^\infty A_k^+\big)=0$, and finally via
  Lemma \ref{lem:abs-con-H} we arrive at \eqref{eq:conv-A+-H-ae}. With
  \eqref{eq:conv-A+-H-ae} at hand we can then go to the limit $k\to\infty$ in
  \eqref{eq:mu<P+eps-TC} to establish $\mu(A^+)\le C\P(A)+\eps$ in the
  generality of \eqref{item:ic-m-closed}.

  For the strong conditions instead of the small-volume ones, the reasoning
  works in the same way.
\end{proof}

In the specific cases that the measure $\mu$ is finite or supported at positive
distance from $\partial\Omega$, we have further characterizations of the
small-volume IC for $\mu$ in $\Omega$. Indeed, we can allow test sets $A$
reaching up to $\partial\Omega$, can pass to the \emph{relative} perimeter
$\P(A,\Omega)$, or can even state the condition in a fully localized way. This
is detailed in the next statement, where for notational
convenience\footnote{Indeed, if one considers a Radon measure $\mu$ on $\Omega$
and assumes in analogy to Lemma \ref{lem:bdry-local-ic} either finiteness of
$\mu$ or $\dist(\spt\mu,\Omega^\c)>0$, the extension of $\mu$ from $\Omega$ to
$\R^n$ by zero is still a Radon measure. This goes without saying for finite
$\mu$, but is true also when requiring $\dist(\spt\mu,\Omega^\c)>0$, since this
condition improves local finiteness on $\Omega$ to finiteness on all
\emph{bounded} subsets of $\Omega$ and thus ensures local finiteness of the
extension.} we work with a Radon measure $\mu$ defined on full $\R^n$.

\begin{lem}\label{lem:bdry-local-ic}
  Consider an open set\/ $\Omega\subset\R^n$, a Radon measure $\mu$ on $\R^n$,
  and $C\in{[0,\infty)}$. If either $\mu$ is finite with
  $\mu\ecke\Omega^\c\equiv0$ or $\mu$ satisfies $\dist(\spt\mu,\Omega^\c)>0$,
  then the following assertions are \textbf{equivalent}\textup{:}
  \begin{enumerate}[{\rm(a)}]
  \item The measure $\mu$ restricted to $\Omega$ satisfies the small-volume IC
    in $\Omega$ with constant\/ $C$.\label{item:ic-std}
  \item For every $\eps>0$, there exists $\delta>0$ \st{}
    $\mu(A^+)\le C\P(A){+}\eps$ for all $A\in\M(\R^n)$ with
    $|A\setminus\Omega|=0$, $|A|<\delta$.\label{item:ic-to-bdry}
  \item For every $\eps>0$, there exists $\delta>0$ \st{}
    $\mu(A^+)\le C\P(A,\Omega){+}\eps$ for all $A\in\M(\R^n)$ with
    $|A|<\delta$.\label{item:mod-ic-to-bdry}
  \end{enumerate}
  In the case of finite $\mu$ with $\mu\ecke\Omega^\c\equiv0$ one more
  equivalent assertion is\textup{:}
  \begin{enumerate}[{\rm(a)}]
  \refstepcounter{enumi}\refstepcounter{enumi}\refstepcounter{enumi}
\item For every $x\in\Omega$, there exists $r_x>0$ with
  $\B_{r_x}(x)\subset\Omega$ such that $\mu$ restricted to $\B_{r_x}(x)$
  satisfies the small-volume IC in $\B_{r_x}(x)$ with constant\/
  $C$.\label{item:ic-local}
  \end{enumerate}
\end{lem}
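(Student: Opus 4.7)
The plan is to prove the chain $(c)\Rightarrow(b)\Rightarrow(a)$, both of which are immediate from $\overline A\subset\Omega\Rightarrow|A\setminus\Omega|=0$ and $\P(A,\Omega)\le\P(A)$, so that the entire equivalence among the first three assertions reduces to the single nontrivial implication $(a)\Rightarrow(c)$. Then, in the finite case, I would separately establish $(a)\iff(d)$, where the easy direction $(a)\Rightarrow(d)$ is just restriction, while $(d)\Rightarrow(a)$ is the substantive content.

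For $(a)\Rightarrow(c)$, the idea is to apply $(a)$ not to $A$ itself but to the truncation $B_t\coleq A\cap\{u>t\}$, where $u(x)\coleq\dist(x,\Omega^c)$ is $1$-Lipschitz and where $\overline{B_t}\subset\{u\ge t\}\subset\Omega$ for every $t>0$ by continuity of $u$. The coarea formula (Theorem~\ref{thm:Lip-coarea}) yields
\[
  \int_0^\infty\H^{n-1}(A^+\cap\{u=t\})\,\d t
  \le\int_{A^+}|\nabla u|\dx\le|A|\,,
\]
so for $|A|$ small enough I can select a level $t>0$ with $\H^{n-1}(A^+\cap\{u=t\})$ arbitrarily small. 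Combining this with Lemma~\ref{lem:P(AcapB),P(A-S)} gives $\P(B_t)\le\P(A,\{u>t\}^1)+\P(\{u>t\},A^+)\le\P(A,\Omega)+\H^{n-1}(A^+\cap\{u=t\})$, so applying $(a)$ to $B_t$ controls $\mu(B_t^+)$ by $C\P(A,\Omega)$ plus a small error. To pass from $\mu(B_t^+)$ back to $\mu(A^+)$, the two sub-cases of the hypothesis are treated separately: in the case $\dist(\spt\mu,\Omega^c)=\rho>0$, I pick any $t<\rho$; then $\spt\mu\subset\{u>t\}$ forces $\mu(A^+)=\mu(A^+\cap\{u>t\})\le\mu(B_t^+)$ (at points of the open set $\{u>t\}$ the densities of $A$ and $B_t$ agree). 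In the finite case, I first exploit that $\mu$ is concentrated on $\Omega=\{u>0\}$ to pick $t_0>0$ with $\mu(\{u\le t_0\})<\eps/3$, and only then pick $t<t_0$ with the coarea smallness. The corresponding threshold $\delta'$ is chosen small enough ($\delta'<\rho\eps/(2C)$ resp.\@ $\delta'<t_0\eps/(3C)$) that such a level $t$ exists.

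For $(d)\Rightarrow(a)$ in the finite case, given $\eps>0$ I would first fix, by inner regularity and finiteness of $\mu$, a compact $K_0\subset\Omega$ with $\mu(\Omega\setminus K_0)<\eps/3$, and then extract from the cover $\{\B_{r_x/2}(x):x\in K_0\}$ a finite subcover $K_0\subset\bigcup_{j=1}^M\B_{r_{x_j}/2}(x_j)$; critically, $M$ depends only on $\mu$ and $\eps$, not on $A$. For a test set $A$ with $\overline A\subset\Omega$ and $|A|$ small, the coarea formula applied to $|{\,\cdot\,}{-}x_j|$ furnishes radii $\rho_j\in(r_{x_j}/2,r_{x_j})$ with $\H^{n-1}(A^+\cap\partial\B_{\rho_j}(x_j))<\eps/M^2$. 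Setting $W_j\coleq\B_{\rho_j}(x_j)\setminus\bigcup_{i<j}\B_{\rho_i}(x_i)$ and $A_j\coleq A\cap W_j$, the $A_j$ are disjoint, satisfy $\overline{A_j}\subset\B_{r_{x_j}}(x_j)$, and cover $A^+\cap K_0$ by $\bigcup_jA_j^+$; the inclusion $\partial^*W_j\subset\partial\B_{\rho_j}(x_j)\cup\bigcup_{i<j}\partial\B_{\rho_i}(x_i)$ combined with Lemma~\ref{lem:P(AcapB),P(A-S)} yields $\sum_j\P(A_j)\le\P(A)+\sum_k(M-k+1)\eps/M^2\le\P(A)+\eps$. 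Applying $(d)$ to each ball with per-ball tolerance $\eps/M$ and summing produces $\mu(A^+\cap K_0)\le C\P(A)+C\eps+\eps$, and adding $\mu(\Omega\setminus K_0)<\eps/3$ gives $(a)$ after a final rescaling of $\eps$.

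The main obstacle in both reductions is the simultaneous control of the additional perimeter introduced by the cutoffs (to be absorbed via coarea) and of the $\mu$-mass left over near $\partial\Omega$ or outside $K_0$. The subtle point in the finite case of $(d)\Rightarrow(a)$ is that the number of balls $M$ must be fixed before the test set $A$, which forces both the per-ball tolerance ($\eps/M$) and the per-sphere coarea tolerance ($\eps/M^2$) to be tightened proportionally, since the $M$ overlapping boundary terms accumulate with a combinatorial factor $M$ in the iterated intersection estimate for $\P(W_j,A^+)$.
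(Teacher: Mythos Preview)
Your proof of $(a)\Rightarrow(c)$ matches the paper's argument almost verbatim (distance function, coarea to locate a good level, cut off and apply the IC, handle the two sub-cases separately). No issues there.

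For $(d)\Rightarrow(a)$, your approach is correct but considerably more laborious than the paper's. The paper proves $(d)\Rightarrow(c)$ directly, and the two key simplifications are:
\begin{itemize}
\item Instead of a finite subcover, the paper invokes \emph{Vitali's covering theorem} on the family $\{\B_r(x):x\in\Omega,\,r\le r_x\}$ to obtain finitely many \emph{disjoint} balls $\B_{\rho_i}(x_i)$ with $\mu\big(\Omega\setminus\bigcup_i\B_{\rho_i}(x_i)\big)<\eps/2$.
\item On each of these balls, the paper does not apply $(d)$ raw but instead \emph{bootstraps} the already-proven implication $(a)\Rightarrow(c)$: since $(d)$ gives $(a)$ on $\B_{\rho_i}(x_i)$, one immediately gets the \emph{relative} estimate $\mu(A^+\cap\B_{\rho_i}(x_i))\le C\,\P(A,\B_{\rho_i}(x_i))+\eps/(2k)$ for the original, uncut set $A$.
\end{itemize}
Because the balls are disjoint, the relative perimeters sum to at most $\P(A,\Omega)$, and the argument is finished in one line. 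This completely sidesteps the decomposition of $A$ into pieces $A_j$, the coarea selection of radii $\rho_j$, and the combinatorial $M^2$-bookkeeping that you rightly identify as the delicate point in your version. (Your approach also needs the small additional observation that the $\rho_j$ can be chosen with $\mu(\partial\B_{\rho_j}(x_j))=0$, so that $A^+\cap K_0$ is really covered $\mu$-a.e.\ by $\bigcup_jA_j^+$.) What your approach buys is self-containment: it avoids Vitali and does not reuse $(a)\Rightarrow(c)$; but the price is exactly the overlap control you describe.
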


Here the implications \eqref{item:mod-ic-to-bdry}\!$\implies$\!%
\eqref{item:ic-to-bdry}\!$\implies$\!\eqref{item:ic-std}\!$\implies$\!%
\eqref{item:ic-local} are simple generalities, while the reverse implications
are non-trivial and draw crucially on the assumption that $\mu$ is
finite or satisfies $\dist(\spt\mu,\Omega^\c)>0$. Indeed, setting
$h_k\coleq\sum_{i=1}^k\frac1i\in\R$, we record that
\eqref{item:ic-to-bdry}\!$\implies$\!\eqref{item:mod-ic-to-bdry} fails for the
\emph{infinite} Radon measure $\mu=2C\sum_{k=1}^\infty\delta_{h_{3k}}$
on $\R$ with $C>0$ and $\Omega=\bigcup_{k=1}^\infty{(h_{3k-1},h_{3k+1})}$, while
\eqref{item:ic-std}\!$\implies$\!\eqref{item:ic-to-bdry} and
\eqref{item:ic-local}\!$\implies$\!\eqref{item:ic-std} fail for the same measure
together with $\Omega=\bigcup_{k=1}^\infty{(h_{3k-2},h_{3k+1})}$ and $\Omega=\R$,
respectively.

In addition, also the $\eps$-$\delta$-nature of the small-volume IC is crucial
for Lemma \ref{lem:bdry-local-ic} insofar that the simple implications
\eqref{item:mod-ic-to-bdry}\!$\implies$\!\eqref{item:ic-to-bdry}\!$\implies$\!%
\eqref{item:ic-std}\!$\implies$\!\eqref{item:ic-local} carry over by analogy
to a strong-IC case with $\eps$ and $\delta$ removed, while the reverse
implications do not have analogs there. Indeed, the strong-IC analog of
\eqref{item:ic-to-bdry}\!$\implies$\!\eqref{item:mod-ic-to-bdry} fails for the
\emph{finite} Radon measure $\mu=2C(\delta_{-2}{+}\delta_2)$ on $\R$ together
with $\Omega={({-}3,{-}1)}\,\dcup{(1,3)}$, while the analoga of
\eqref{item:ic-std}\!$\implies$\!\eqref{item:ic-to-bdry} and
\eqref{item:ic-local}\!$\implies$\!\eqref{item:ic-std} fail for the same measure
together with $\Omega={({-}3,3)}\setminus\{0\}$ and $\Omega={(-3,3)}$,
respectively.

Furthermore, all counterexamples mentioned here can be easily adapted to work in
$\R^n$ instead of $\R$.

\begin{proof}[Proof of Lemma \ref{lem:bdry-local-ic}]
  As already observed, the implications \eqref{item:mod-ic-to-bdry}\!$\implies$\!%
  \eqref{item:ic-to-bdry}\!$\implies$\!\eqref{item:ic-std}\!$\implies$\!%
  \eqref{item:ic-local} are straightforward.

  Next we prove that \eqref{item:ic-std} implies \eqref{item:mod-ic-to-bdry}. We
  record that $d\colon\R^n\to{(0,\infty)}$, given by
  $d(x)\coleq\dist(x,\Omega^c)$, is Lipschitz with constant $1$ and then
  by Rademacher's theorem satisfies $|\nabla d|\le1$ \ae{} on
  $\Omega$. Moreover, since $\Omega$ is open, we have
  $\Omega=\bigcup_{t>0}\{d>t\}$. Now we consider an arbitrary $\eps>0$. Then, in
  case of finite $\mu$ with $\mu\ecke\Omega^\c\equiv0$ we can fix a corresponding
  $t_0>0$ such that $\mu(\{d<t_0\})<\frac\eps3$ holds, while in
  case $\dist(\spt\mu,\Omega^\c)>0$ we are even in position to ensure
  $\mu(\{d<t_0\})=0$. In addition, we fix $\delta>0$ such that the standard form
  of the small-volume IC in $\Omega$ from \eqref{item:ic-std}
  applies with this $\delta$ and $\frac\eps3$ in place of $\eps$, and we
  consider $A\in\M(\R^n)$ with $|A|<\min\{\delta,\frac{t_0\eps}{3C}\}$. Via the
  coarea formula of Theorem \ref{thm:Lip-coarea} we get
  \[
    \int_0^{t_0}\H^{n-1}(A^+\cap\{d=t\})\,\d t
    =\int_{A^+\cap\{d<t_0\}}|\nabla d|\dx
    \le|A^+|<\frac{t_0\eps}{3C}
  \]
  and can thus choose $t\in{(0,t_0)}$ with
  \begin{equation}\label{eq:cut-off-bdry}
    \H^{n-1}(A^+\cap\{d=t\})<\frac\eps{3C}
  \end{equation}
  (where for $C=0$ an arbitrary $t\in{(0,t_0)}$ suffices). We now cut off portions
  of $A$ close to $\partial\Omega$ by introducing $E\coleq A\cap\{d>t\}$,
  for which clearly $\overline E\subset\{d\ge t\}\subset\Omega$ and $|E|\le|A|<\delta$
  hold. Estimating via the choice of $t_0$, the small-volume IC from
  \eqref{item:ic-std} (with $\frac\eps3$ in place of $\eps$),
  Lemma \ref{lem:P(AcapB),P(A-S)}, and \eqref{eq:cut-off-bdry}, we then arrive at
  \[\begin{aligned}
    \mu(A^+)
    &\le\mu(A^+\cap\{d>t\})+\mu(\{d<t_0\})
    \le\mu(E^+)+\frac\eps3
    \le C\P(E)+\frac{2\eps}3\\
    &\le C\P(A,\Omega)+C\H^{n-1}(A^+\cap\{d=t\})+\frac{2\eps}3
    \le C\P(A,\Omega)+\eps\,.
  \end{aligned}\]
  Thus, we obtain $\mu(A^+\cap\Omega)\le C\P(A,\Omega)+\eps$ in the setting of
  \eqref{item:mod-ic-to-bdry}.

  Finally, in case of finite $\mu$ with $\mu\ecke\Omega^\c\equiv0$ we show that
  \eqref{item:ic-local} implies \eqref{item:mod-ic-to-bdry}. To this end we fix
  once more some $\eps>0$. We then apply Vitali's covering theorem (see
  \cite[Theorem 2.8]{Mattila95}, for instance) to the family of all balls
  $\B_r(x)$ with $x\in\Omega$ and $r\le r_x$ and exploit $\mu(\Omega)<\infty$ to
  obtain finite number $k\in\N$ of disjoint balls $\B_{\r_i}(x_i)$ with
  $x_i\in\Omega$ and $\r_i\le r_{x_i}$ for $i\in\{1,2,\ldots,k\}$ such that it
  holds
  \[
    \mu\bigg(\Omega\setminus\bigcup_{i=1}^k\B_{\r_i}(x_i)\bigg)\le\frac\eps2\,.
  \]
  Now the assumption \eqref{item:ic-local} guarantees the validity of
  \eqref{item:ic-std} on each of the balls
  $\B_{\r_i}(x_i)\subset\B_{r_{x_i}}(x_i)$ with $i\in\{1,2,\ldots,k\}$ in place
  of $\Omega$. Since we have already shown that \eqref{item:ic-std} implies
  \eqref{item:mod-ic-to-bdry}, we also have \eqref{item:mod-ic-to-bdry} on each
  of these balls. Since the number of balls is finite, this in turn yields a
  common $\delta>0$ such that we have
  \[
    \mu(A^+\cap\B_{\r_i}(x_i))\le C\P(A,\B_{\r_i}(x_i))+\frac\eps{2k}
  \]
  for all $A\in\M(\R^n)$ with $|A|<\delta$ and all $i\in\{1,2,\ldots,k\}$. In
  conclusion, for all $A\in\M(\R^n)$ with $|A|<\delta$, we achieve
  \[
    \mu(A^+)
    \le\sum_{i=1}^k\mu(A^+\cap\B_{\r_i}(x_i))+\mu\bigg(\Omega\setminus\bigcup_{i=1}^k\B_{\r_i}(x_i)\bigg)
    \le\sum_{i=1}^k\Big[C\P(A,\B_{\r_i}(x_i))+\frac\eps{2k}\Big]+\frac\eps2
    \le C\P(A,\Omega)+\eps\,,
  \]
  where the disjointness of $\B_{\r_i}(x_i)$ is used in the last step. In this
  way we arrive at \eqref{item:mod-ic-to-bdry}.
\end{proof}

As a rather unexpected consequence of Lemma \ref{lem:bdry-local-ic}, we next
derive that the small-volume IC with a fixed constant actually carries over to
the sum of two (or finitely many) mutually singular measures with still the same
constant. Clearly, for the strong IC, one cannot draw an analogous conclusion in
comparable generality.

\begin{prop}[small-volume IC for a sum of singular measures]\label{prop:ic-sing}
  Consider non-negative Radon measures $\mu_1$, $\mu_2$ on $\R^n$ which are
  singular to each other in the sense that there exists a decomposition
  $\R^n=S_1\dcup S_2$ into $S_1,S_2\in\Bo(\R^n)$ with
  $\mu_1(S_1^\c)=\mu_2(S_2^\c)=0$. Further suppose that either $\mu_1$ is finite
  or $\dist(\spt\mu_1,\spt\mu_2)>0$ holds. Then, if $\mu_1$ and $\mu_2$ both
  satisfy the small-volume IC on $\R^n$ with constant $C\in{[0,\infty)}$, also
  $\mu_1{+}\mu_2$ satisfies the small-volume IC on $\R^n$ with the same constant
  $C$.
\end{prop}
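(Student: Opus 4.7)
The plan is to address the two regimes of the hypothesis separately and reduce the finite-measure case to the positive-distance case. The key tool throughout will be Lemma \ref{lem:bdry-local-ic}, which allows passing from an IC stated with full-space perimeter to one stated with relative perimeter on a domain $\Omega$ where the measure lives at positive distance from $\partial\Omega$.

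\emph{Main case: $\dist(\spt\mu_1,\spt\mu_2)>0$.} I would set $d\coleq\dist(\spt\mu_1,\spt\mu_2)$ and choose the disjoint open neighborhoods $\Omega_i\coleq\mathcal{N}_{d/3}(\spt\mu_i)$, which satisfy $\dist(\spt\mu_i,\Omega_i^\c)\ge d/3>0$. The small-volume IC of $\mu_i$ on $\R^n$ with constant $C$ trivially restricts to the same IC on $\Omega_i$, so Lemma \ref{lem:bdry-local-ic}\eqref{item:mod-ic-to-bdry} applies and yields, for every $\eps>0$, some $\delta_i>0$ with $\mu_i(A^+)\le C\P(A,\Omega_i)+\eps/2$ whenever $|A|<\delta_i$. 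Setting $\delta\coleq\min\{\delta_1,\delta_2\}$ and exploiting the disjointness of $\Omega_1$ and $\Omega_2$ in the form $\P(A,\Omega_1)+\P(A,\Omega_2)=\P(A,\Omega_1\cup\Omega_2)\le\P(A)$, I would obtain
\[
  (\mu_1{+}\mu_2)(A^+)\le C\P(A,\Omega_1)+C\P(A,\Omega_2)+\eps\le C\P(A)+\eps
  \qq\text{for all }A\in\M(\R^n)\text{ with }|A|<\delta\,,
\]
which is precisely the claim.

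\emph{Reduction of the finite case.} If only $\mu_1(\R^n)<\infty$ is known, I would, given $\eps>0$, use inner regularity of $\mu_1$ to pick a compact $K\subset S_1$ with $\mu_1(S_1\setminus K)<\eps/3$; since $K\subset S_1$ gives $\mu_2(K)=0$, outer regularity of $\mu_2$ furnishes an open set $U\supset K$ with $\mu_2(U)<\eps/3$. Compactness of $K$ then secures $\dist(K,U^\c)>0$. The truncated measures $\nu_1\coleq\mu_1\ecke K$ and $\nu_2\coleq\mu_2\ecke U^\c$ satisfy $\nu_i\le\mu_i$ (so they inherit the small-volume IC with constant $C$) and $\dist(\spt\nu_1,\spt\nu_2)>0$, so the main case applies to $\nu_1{+}\nu_2$ and produces some $\delta>0$ with $(\nu_1{+}\nu_2)(A^+)\le C\P(A)+\eps/3$ for $|A|<\delta$. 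The pointwise bounds $(\mu_1{-}\nu_1)(A^+)\le\mu_1(S_1\setminus K)<\eps/3$ and $(\mu_2{-}\nu_2)(A^+)\le\mu_2(U)<\eps/3$ then yield $(\mu_1{+}\mu_2)(A^+)\le C\P(A)+\eps$, completing the proof.

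The main obstacle — and the point at which the $\eps$-freedom of the \emph{small-volume} IC (as opposed to the strong IC) is essential — is the main-case step: a naive attempt to bound $\mu_i(A^+)\le C\P(A\cap\Omega_i)+\eps$ would incur the unwanted perimeter contribution $\H^{n-1}(A^+\cap\partial\Omega_i)$ from cutting $A$ along $\partial\Omega_i$, which need not be small. Lemma \ref{lem:bdry-local-ic}\eqref{item:mod-ic-to-bdry} sidesteps exactly this difficulty by giving directly the relative-perimeter version of the IC, and the disjointness of $\Omega_1$ and $\Omega_2$ then lets the two relative perimeters add without overcounting. The finite-case reduction is essentially a soft regularity argument and should present no real difficulty.
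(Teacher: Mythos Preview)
Your proof is correct and follows essentially the same approach as the paper: both rely on Lemma \ref{lem:bdry-local-ic}\eqref{item:mod-ic-to-bdry} to pass to relative perimeters on disjoint open neighborhoods and then add without overcounting. The only organizational difference is that the paper treats the finite-$\mu_1$ case directly (approximating $\mu_2$ from \emph{inside} $S_2$ by a closed set $C_2$ and then applying Lemma \ref{lem:bdry-local-ic} in parallel to $\mu_1\ecke K_1$ via finiteness and to $\mu_2\ecke C_2$ via positive distance), whereas you reduce it to the positive-distance case by approximating $\mu_2$ from \emph{outside} $K$ via outer regularity; both routes amount to the same separation argument.
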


From the example in the later Remark
\ref{rem:infinite-P}\eqref{item:infinite-P-without IC} it becomes clear that the
extra assumptions in the proposition (either one measure finite or supports
at positive distance) cannot be dropped.

\begin{proof}
  We start with the case that $\mu_1$ is finite. Given an arbitrary
  $\eps>0$, the finiteness of $\mu_1$ together with
  $\mu_1(S_1^\c)=\mu_2(S_2^\c)=0$ yields the existence of a compact set
  $K_1\subset S_1$ and a closed set $C_2\subset S_2$ such that
  $\mu_1(K_1^\c){+}\mu_2(C_2^\c)\le\eps$. In view of $\dist(K_1,C_2)>0$
  we can choose \emph{disjoint} open sets $O_1\supset K_1$ and $O_2\supset C_2$
  and can also ensure $\dist(C_2,O_2^\c)>0$. Since the closedness of $C_2$
  yields $\spt(\mu_2\ecke C_2)\subset C_2$, we can then apply
  \eqref{item:ic-std}\!$\implies$\!\eqref{item:mod-ic-to-bdry}
  from Lemma \ref{lem:bdry-local-ic} on one hand for the finite measure
  $\mu_1\ecke K_1$, on the other hand for the possibly infinite measure
  $\mu_2\ecke C_2$ with $\dist(\spt(\mu_2\ecke C_2),O_2^\c)>0$ to obtain some
  $\delta>0$ such that we have $\mu_1(A^+\cap K_1)\le C\P(A,O_1){+}\eps$ and
  $\mu_2(A^+\cap C_2)\le C\P(A,O_2){+}\eps$ for all $A\in\M(\R^n)$ with
  $|A|<\delta$. Consequently, for such sets we also get
  \[
    (\mu_1{+}\mu_2)(A^+)\le\mu_1(A^+\cap K_1)+\mu_2(A^+\cap C_2)+\eps
    \le C\P(A,O_1)+C\P(A,O_2)+3\eps\le\C\P(A)+3\eps\,,
  \]
  which yields the claim.
  
  The case of $\dist(\spt\mu_1,\spt\mu_2)>0$ is a bit simpler, since we can
  directly choose disjoint open sets $O_1\supset\spt\mu_1$ and
  $O_2\supset\spt\mu_2$ with $\dist(\spt\mu_1,O_1^\c)>0$ and
  $\dist(\spt\mu_2,O_2^\c)>0$. Then, we can apply
  \eqref{item:ic-std}\!$\implies$\!\eqref{item:mod-ic-to-bdry} from Lemma
  \ref{lem:bdry-local-ic} to both $\mu_1=\mu_1\ecke O_1$ and $\mu_2=\mu_2\ecke
  O_2$ and conclude the reasoning as before.
\end{proof}

In the sequel we record that ICs can be expressed not only with test sets, but
also with test functions and partially in a distributional way. This is detailed
in the following (almost) twin theorems, where the one for the strong-IC case is
a minor variant of known results from \cite[Theorem 4.7]{MeyZie77},
\cite[Theorem 5.12.4]{Ziemer89}, \cite[Section 2]{DuzSte96},
\cite[Theorem 3.3, Theorem 3.5]{PhuTor08}, \cite[Theorem 4.4]{PhuTor17}, while
the adaptation to the small-volume case does not seem to have direct
predecessors in the literature. As a side benefit it turns out in this context
that the measure-theoretic closure $A^+$ can be replaced with the
measure-theoretic interior $A^1$ in the formulation of both types of ICs.

\begin{thm}[characterizations of the strong IC]\label{thm:char-strong-ic}
  For a Radon measure $\mu$ on an open set\/ $\Omega\subset\R^n$ and a constant
  $C\in{[0,\infty)}$, the following assertions are \textbf{equivalent} with each
  other\textup{:}
  \begin{enumerate}[{\rm(a)}]
  \item The strong IC holds for $\mu$ in $\Omega$ with
    constant\/ $C$.\label{item:mu(A^+)<P}
  \item We have $\mu(A^1)\le C\P(A)$ for all $A\in\M(\R^n)$ with $\overline
    A\subset\Omega$ and\/ $|A|<\infty$.\label{item:mu(A^1)<P}
  \item We have $\int_\Omega\eta\,\d\mu\le C\int_\Omega|\nabla\eta|\dx$
    for all non-negative functions
    $\eta\in\C^\infty_\cpt(\Omega)$.\label{item:est-smooth-grad}
  \item We have $\mu(N)=0$ for all\/ $\H^{n-1}$-negligible $N\in\Bo(\Omega)$ and\/
    $\int_\Omega|v^\ast|\,\d\mu\le C\int_\Omega|\nabla v|\dx$ for all\/
    $v\in\W_0^{1,1}(\Omega)$.\label{item:est-W11-grad}
  \item We have $\mu=\Div\sigma$ in the sense of distributions on $\Omega$ for
    some vector field $\sigma\in\mathrm{L}^\infty(\Omega,\R^n)$ with
    $\|\sigma\|_{\L^\infty(\Omega,\R^n)}\le C$.\label{item:sub-C-div}
  \end{enumerate}
\end{thm}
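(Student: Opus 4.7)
My proof plan establishes the cycle $(a)\Rightarrow(b)\Rightarrow(c)\Rightarrow(d)\Rightarrow(e)\Rightarrow(a)$. The first implication is immediate from $A^1\subset A^+$. For $(b)\Rightarrow(c)$, given non-negative $\eta\in\C^\infty_\cpt(\Omega)$, each superlevel set $\{\eta>t\}$ with $t>0$ is open and compactly contained in $\Omega$, and since every point of an open set has Lebesgue density $1$ we have $\{\eta>t\}\subset\{\eta>t\}^1$, so (b) gives $\mu(\{\eta>t\})\le C\P(\{\eta>t\})$. The layer-cake formula combined with the Fleming-Rishel coarea formula (Theorem~\ref{thm:BV-coarea}) then yields
\[
  \int_\Omega\eta\,\d\mu
  =\int_0^\infty\mu(\{\eta>t\})\,\d t
  \le C\int_0^\infty\P(\{\eta>t\})\,\d t
  =C\int_\Omega|\nabla\eta|\dx.
\]

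For $(c)\Rightarrow(d)$ I would proceed in two steps. First, I show $\mu(N)=0$ for every $\H^{n-1}$-negligible Borel $N\subset\Omega$: by inner regularity it suffices to treat compact $N$, and Proposition~\ref{prop:Cp1-null} together with a mollification-and-cutoff of the defining $\W^{1,1}$-competitors for $\Cp_1(N)$ produces non-negative $\eta_k\in\C^\infty_\cpt(\Omega)$ with $\eta_k\ge1$ on an open neighborhood of $N$ and $\int|\nabla\eta_k|\dx\to0$; applying (c) gives $\mu(N)\le\int\eta_k\,\d\mu\le C\int|\nabla\eta_k|\dx\to0$. Second, for non-negative $v\in\W^{1,1}_0(\Omega)$ I approximate by non-negative $\eta_k\in\C^\infty_\cpt(\Omega)$ strongly in $\W^{1,1}$; Lemma~\ref{lem:str-implies-Hae} gives $\eta_k^\ast\to v^\ast$ $\H^{n-1}$-\ae{}, the first step promotes this to $\mu$-\ae{}, and Fatou's lemma delivers $\int v^\ast\,\d\mu\le C\int|\nabla v|\dx$. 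The signed case in (d) follows by applying this inequality to $|v|\in\W^{1,1}_0(\Omega)$, using $|\nabla|v||=|\nabla v|$ \ae{} and $|v|^\ast=|v^\ast|$ $\H^{n-1}$-\ae{}.

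For $(d)\Rightarrow(e)$ I plan a Hahn-Banach duality argument. On the subspace $V\coleq\{\nabla\eta:\eta\in\C^\infty_\cpt(\Omega)\}\subset\L^1(\Omega,\R^n)$ define $L(\nabla\eta)\coleq-\int_\Omega\eta\,\d\mu$; the identification $\nabla\eta\mapsto\eta$ is well-defined since any $\eta\in\C^\infty_\cpt(\Omega)$ with $\nabla\eta\equiv0$ is locally constant, compactly supported, and vanishes near $\partial\Omega$ on each connected component. Using (d) together with $\eta^\ast=\eta$ for continuous $\eta$, we get
\[
  |L(\nabla\eta)|\le\int_\Omega|\eta|\,\d\mu
  =\int_\Omega|\eta^\ast|\,\d\mu
  \le C\|\nabla\eta\|_{\L^1(\Omega,\R^n)}.
\]
Hahn-Banach extends $L$ to a continuous linear functional on $\L^1(\Omega,\R^n)$ of norm at most $C$, and $\L^1$-$\L^\infty$ duality represents it as integration against some $\sigma\in\L^\infty(\Omega,\R^n)$ with $\|\sigma\|_{\L^\infty(\Omega,\R^n)}\le C$. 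Testing against $\eta\in\C^\infty_\cpt(\Omega)$ gives $\int\sigma\cd\nabla\eta\dx=-\int\eta\,\d\mu$, i.e.\ $\Div\sigma=\mu$ distributionally on $\Omega$.

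Finally, $(e)\Rightarrow(a)$ goes via $(e)\Rightarrow(c)$ (which is trivial: $\int\eta\,\d\mu=-\int\sigma\cd\nabla\eta\dx\le C\int|\nabla\eta|\dx$ for non-negative $\eta\in\C^\infty_\cpt(\Omega)$) and the derivation in $(c)\Rightarrow(d)$, yielding that $\mu$ charges no $\H^{n-1}$-null set and that $\int v^\ast\,\d\mu\le C\int|\nabla v|\dx$ for all non-negative $v\in\W^{1,1}_0(\Omega)$. Given $A\in\M(\R^n)$ with $\overline A\subset\Omega$ and $|A|+\P(A)<\infty$, I apply Lemma~\ref{lem:mon-approx-u+} to $\1_A$ to obtain monotonically decreasing $v_\ell\in\W^{1,1}(\R^n)$ with $0\le v_\ell\le1$ and $v_\ell^\ast\to\1_{A^+}$ $\H^{n-1}$-\ae{}, arranged --- by inspecting the underlying Carriero-Dal Maso-Leaci-Pascali / Dal Maso construction, or alternatively by combining with the strict approximation of Lemma~\ref{lem:strict-approx-above} --- to additionally satisfy $\int|\nabla v_\ell|\dx\to\P(A)$. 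Multiplying by $\psi\in\C^\infty_\cpt(\Omega)$ with $0\le\psi\le1$ and $\psi\equiv1$ on a neighborhood of $\overline A$ yields $w_\ell\coleq\psi v_\ell\in\W^{1,1}_0(\Omega)$, still non-negative and decreasing, with $w_\ell^\ast\to\1_{A^+}$ $\mu$-\ae{} and
\[
  \int|\nabla w_\ell|\dx
  \le\int|\nabla\psi|\,v_\ell\dx+\int\psi|\nabla v_\ell|\dx
  \to 0+\P(A),
\]
where the first term vanishes since $\nabla\psi\equiv0$ on $A$. Monotone convergence then gives $\int w_\ell^\ast\,\d\mu\to\mu(A^+)$, and the extended (c) applied to $w_\ell$ closes the loop with $\mu(A^+)\le C\P(A)$. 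The main technical obstacle I expect is the combined monotonicity-plus-strict-convergence requirement in this last step, which forces one to carefully marry Lemmas~\ref{lem:mon-approx-u+} and~\ref{lem:strict-approx-above}; by comparison, the Hahn-Banach construction in $(d)\Rightarrow(e)$, while decisive, is conceptually routine.
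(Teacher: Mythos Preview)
Your cycle $(a)\Rightarrow(b)\Rightarrow(c)\Rightarrow(d)\Rightarrow(e)\Rightarrow(a)$ is correct, and the arguments for the first four implications match the paper's essentially verbatim (the paper closes the equivalence via $(d)\Rightarrow(a)$ and treats $(e)$ separately via $(e)\Rightarrow(c)$ and $(d)\Rightarrow(e)$, but this is only a reordering).

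The one place where you make life harder than necessary is the return to $(a)$. You propose to combine the monotone $\H^{n-1}$-\ae{} approximation of Lemma~\ref{lem:mon-approx-u+} with a strict-convergence property, and you correctly flag the marriage of monotonicity with strict convergence as the ``main technical obstacle''. But this obstacle is self-inflicted. The paper uses Lemma~\ref{lem:strict-approx-above} \emph{alone}: for $A\Subset\Omega$ with $|A|+\P(A)<\infty$, that lemma supplies $v_k\in\W^{1,1}_0(\Omega)$ with $\1_A\le v_k\le1$ \ae{} and $\int|\nabla v_k|\dx\to\P(A)$. The one-sidedness $v_k\ge\1_A$ \ae{} already forces $v_k^\ast\ge(\1_A)^+=\1_{A^+}$ $\H^{n-1}$-\ae{} (the approximate upper limit is monotone in the function), hence $\mu(A^+)\le\int v_k^\ast\,\d\mu$ for \emph{each} $k$, with no need for monotone convergence of the sequence. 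Applying $(d)$ and passing to the limit via strict convergence yields $\mu(A^+)\le C\P(A)$ directly. One further minor point: Lemma~\ref{lem:strict-approx-above} requires $\spt\1_A\Subset\Omega$, so you should first reduce from $\overline A\subset\Omega$ to $A\Subset\Omega$ via Lemma~\ref{lem:closed-vs-compact}; your cutoff $\psi$ implicitly assumes this but you do not state the reduction.
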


\begin{thm}[characterizations of the small-volume IC]\label{thm:char-small-vol-ic}
  For a Radon measure on an open set\/ $\Omega\subset\R^n$ and a constant
  $C\in{[0,\infty)}$, the following assertions are \textbf{equivalent} with each other\textup{:}
  \begin{enumerate}[{\rm(a)}]
  \item The small-volume IC holds for $\mu$ in $\Omega$
    with constant $C$.\label{item:mu(A^+)<P+eps}
  \item For every $\eps>0$, there exists some $\delta>0$ such that we have
    $\mu(A^1)\le C\P(A)+\eps$ for all $A\in\M(\R^n)$ with $\overline
    A\subset\Omega$ and\/ $|A|<\delta$.\label{item:mu(A^1)<P+eps}
  \item There exists a modulus $\omega\colon{[0,\infty)}\to{[0,\infty]}$ with
    $\lim_{t\searrow0}\omega(t)=\omega(0)=0$ such that we have
    $\int_\Omega\eta\,\d\mu\le C\int_\Omega|\nabla\eta|\dx+\omega\big(|\spt\eta|\big)$
    for all\/ $\eta\in\C^\infty_\cpt(\Omega)$ with $0\le\eta\le1$ on
    $\Omega$.\label{item:omega-est-smooth-grad}
  \item We have $\mu(N)=0$ for all\/ $\H^{n-1}$-negligible $N\in\Bo(\Omega)$,
    and, for every $\eps>0$, there exists some $\delta>0$ such that we have
    $\int_\Omega|v^\ast|\,\d\mu\le C\int_\Omega|\nabla v|\dx+\eps\sup_\Omega|v|$
    for all $v\in\W_0^{1,1}(\Omega)\cap\L^\infty(\Omega)$ with
    $|\{v\neq0\}|<\delta$.\label{item:eps-delta-est-W11-grad}
  \end{enumerate}
  In addition, the \textbf{subsequent property at least implies} each of the
  preceding ones\textup{:}
  \begin{enumerate}[{\rm(a)}]
  \refstepcounter{enumi}\refstepcounter{enumi}\refstepcounter{enumi}\refstepcounter{enumi}
  \item We have $\mu=H\Ln{+}\Div\sigma$ in the sense of distributions on
    $\Omega$ for some vector field $\sigma\in\mathrm{L}^\infty(\Omega,\R^n)$ with
    $\|\sigma\|_{\L^\infty(\Omega,\R^n)}\le C$ and some function
    $H\in\L^1(\Omega)$.\label{item:sub-C-div-plus-L1}
  \end{enumerate}
\end{thm}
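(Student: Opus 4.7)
The plan is to establish the cycle $\eqref{item:mu(A^+)<P+eps} \Rightarrow \eqref{item:mu(A^1)<P+eps} \Rightarrow \eqref{item:omega-est-smooth-grad} \Rightarrow \eqref{item:eps-delta-est-W11-grad} \Rightarrow \eqref{item:mu(A^+)<P+eps}$, together with a separate direct proof of $\eqref{item:sub-C-div-plus-L1} \Rightarrow \eqref{item:omega-est-smooth-grad}$. The trivial step $\eqref{item:mu(A^+)<P+eps} \Rightarrow \eqref{item:mu(A^1)<P+eps}$ follows from $A^1\subset A^+$, and $\eqref{item:sub-C-div-plus-L1} \Rightarrow \eqref{item:omega-est-smooth-grad}$ comes from the distributional identity
\[
  \int_\Omega\eta\,\d\mu = \int_\Omega\eta H\dx - \int_\Omega\sigma\cd\nabla\eta\dx
  \le \int_{\spt\eta}|H|\dx + C\int_\Omega|\nabla\eta|\dx
\]
(for $\eta\in\C^\infty_\cpt(\Omega)$ with $0\le\eta\le 1$), with modulus $\omega(s)\coleq\sup_{|E|\le s}\int_E|H|\dx$ vanishing at $0$ by absolute continuity of the integral of $H\in\L^1(\Omega)$.

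For $\eqref{item:mu(A^1)<P+eps} \Rightarrow \eqref{item:omega-est-smooth-grad}$, I would first adapt Lemma \ref{lem:abs-con-H} --- using that every open covering set $A\supset N$ from Lemma \ref{lem:negligible} satisfies $N\subset A\subset A^1$ --- to conclude that $\eqref{item:mu(A^1)<P+eps}$ forces $\mu(N)=0$ for each $\H^{n-1}$-negligible Borel $N$. Then, for $\eta\in\C^\infty_\cpt(\Omega)$ with $0\le\eta\le 1$ and $|\spt\eta|<\delta$ (for the $\delta$ attached to some $\eps'>0$ in $\eqref{item:mu(A^1)<P+eps}$), Fubini and the Lipschitz coarea formula (Theorem \ref{thm:Lip-coarea}) give $\int_\Omega\eta\,\d\mu=\int_0^1\mu(\{\eta>t\})\,\d t$ and $\int_\Omega|\nabla\eta|\dx=\int_0^1\P(\{\eta>t\})\,\d t$; each open superlevel $\{\eta>t\}$ satisfies $\overline{\{\eta>t\}}\subset\spt\eta\Subset\Omega$ and $|\{\eta>t\}|\le|\spt\eta|<\delta$, so $\mu(\{\eta>t\})\le\mu(\{\eta>t\}^1)\le C\P(\{\eta>t\})+\eps'$ by $\eqref{item:mu(A^1)<P+eps}$, and integrating yields $\int_\Omega\eta\,\d\mu\le C\int_\Omega|\nabla\eta|\dx+\eps'$. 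A monotone modulus $\omega$ with $\lim_{s\searrow 0}\omega(s)=0$ is assembled from the $\eps'$--$\delta$ correspondence.

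For $\eqref{item:omega-est-smooth-grad} \Rightarrow \eqref{item:eps-delta-est-W11-grad}$, applying $\eqref{item:omega-est-smooth-grad}$ to cut-off functions supported in small open neighborhoods of $N$ obtained from Lemma \ref{lem:negligible} yields $\mu(N)=0$ for $\H^{n-1}$-null $N$. For the main bound I reduce to $v\ge 0$ by applying the inequality to $|v|$ (noting $|\nabla|v||=|\nabla v|$, $|v|^*=|v^*|$), normalize $u\coleq v/\sup v$ to have $\sup u\le 1$ and $|\{u>0\}|<\delta$, and approximate $u$ by mollification plus cut-off to produce $\eta_k\in\C^\infty_\cpt(\Omega)$ with $0\le\eta_k\le 1$, $\eta_k\to u$ in $W^{1,1}(\Omega)$, and --- via Lemma \ref{lem:control-spt} --- $|\spt\eta_k|<\delta$. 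Lemma \ref{lem:str-implies-Hae} gives $\eta_k^*\to u^*$ $\H^{n-1}$-a.e., hence $\mu$-a.e., and dominated convergence (with $\eta_k\le 1$) on the measure side together with $W^{1,1}$-convergence on the gradient side lets me pass to the limit in $\eqref{item:omega-est-smooth-grad}$ applied to $\eta_k$. Multiplying back by $\sup v$ and choosing $\delta$ so that $\omega(\delta)<\eps$ yields $\eqref{item:eps-delta-est-W11-grad}$.

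Finally, for $\eqref{item:eps-delta-est-W11-grad} \Rightarrow \eqref{item:mu(A^+)<P+eps}$, given $A$ with $\overline A\subset\Omega$ and $|A|<\delta$, Lemma \ref{lem:strict-approx-above} applied to $\1_A\in\BV(\Omega)$ (with $\spt\1_A=\overline A\Subset\Omega$) yields $v_k\in W^{1,1}_0(\Omega)$ with $\spt v_k\Subset\Omega$, $0\le v_k\le 1$, and $v_k\to\1_A$ strictly in $\BV(\Omega)$; the underlying construction in \cite[Theorem 3.3]{CarDalLeaPas88} simultaneously delivers $v_k^*\to\1_{A^+}$ $\H^{n-1}$-a.e., and Lemma \ref{lem:control-spt} with $M=\delta$ additionally secures $|\{v_k>0\}|<\delta$. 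Applying $\eqref{item:eps-delta-est-W11-grad}$ to each $v_k$ (with $\sup v_k\le 1$) and passing to the limit --- dominated convergence gives $\int_\Omega v_k^*\,\d\mu\to\mu(A^+)$ (as $v_k^*\le 1$ and $\mu$ vanishes on $\H^{n-1}$-null sets), while strict $\BV$-convergence gives $\int_\Omega|\nabla v_k|\dx\to\P(A)$ --- produces $\mu(A^+)\le C\P(A)+\eps$. The main obstacle throughout is the coordination of three simultaneous demands on the approximating sequences --- strict $\BV$-convergence for the gradient integrals, $\H^{n-1}$-a.e. convergence of precise representatives for the $\mu$-integrals, and small-support or small-measure bounds to match the $\delta$-condition --- which is handled by the interplay of Lemmas \ref{lem:strict-approx-above}, \ref{lem:str-implies-Hae}, and \ref{lem:control-spt}, combined at every stage with the vanishing of $\mu$ on $\H^{n-1}$-negligible sets.
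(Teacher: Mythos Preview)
Your overall scheme coincides with the paper's, and most steps are fine, but two passages do not go through as written.

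In \eqref{item:omega-est-smooth-grad}$\Rightarrow$\eqref{item:eps-delta-est-W11-grad} you invoke dominated convergence on the $\mu$-side using $\eta_k\le1$. Since $\mu$ may well be infinite (this is the whole point of the small-volume IC beyond the finite-measure case), the constant $1$ is not a dominating function in $\L^1(\mu)$, and the approximating supports need not stay inside any fixed compact set. The inequality you actually need is one-sided: from $\eta_k^\ast\to|v|^\ast$ $\mu$-a.e.\ Fatou's lemma gives $\int_\Omega|v^\ast|\,\d\mu\le\liminf_k\int_\Omega\eta_k\,\d\mu$, which is exactly what the paper uses.

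In \eqref{item:eps-delta-est-W11-grad}$\Rightarrow$\eqref{item:mu(A^+)<P+eps} you assert that the construction behind Lemma~\ref{lem:strict-approx-above} ``simultaneously delivers $v_k^\ast\to\1_{A^+}$ $\H^{n-1}$-a.e.'' This is not part of that lemma (nor of \cite[Theorem~3.3]{CarDalLeaPas88}), and strict $\BV$-convergence does not in general imply $\H^{n-1}$-a.e.\ convergence of precise representatives. But you do not need it: Lemma~\ref{lem:strict-approx-above} already gives $v_k\ge\1_A$ a.e., hence $v_k^\ast\ge(\1_A)^+=\1_{A^+}$ holds $\H^{n-1}$-a.e.\ and thus $\mu$-a.e.\ by the first part of \eqref{item:eps-delta-est-W11-grad}. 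This yields $\mu(A^+)\le\int_\Omega v_k^\ast\,\d\mu$ for every $k$, and applying \eqref{item:eps-delta-est-W11-grad} to $v_k$ and then letting $k\to\infty$ (strict convergence handles the gradient term) finishes the step, as in the paper. Note also that Lemma~\ref{lem:strict-approx-above} needs $\overline A\Subset\Omega$, not just $\overline A\subset\Omega$; the reduction to compactly contained $A$ is supplied by Lemma~\ref{lem:closed-vs-compact}.
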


Here, the extra terms distinguishing Theorem \ref{thm:char-small-vol-ic} from Theorem
\ref{thm:char-strong-ic} have been incorporated in slightly different forms, but
indeed the formulations are to some extent interchangable. However, a subtlety
related to Lemma \ref{lem:control-spt} is that in condition
\eqref{item:eps-delta-est-W11-grad} it seems decisive to require smallness for
$|\{v\neq0\}|$ (or alternatively for any $\L^p$ norm of $v$), but \emph{not} in
fact for $|{\spt v}|$.

In the sequel we first detail the proof of Theorem \ref{thm:char-small-vol-ic} and then
comment on the necessary adaptations needed to cover the case of Theorem
\ref{thm:char-strong-ic} as well.

\begin{proof}[Proof of Theorem \ref{thm:char-small-vol-ic}]
  Since we have $A^1\subset A^+$ by definition, it is clear that
  \eqref{item:mu(A^+)<P+eps} implies \eqref{item:mu(A^1)<P+eps}.

  \smallskip

  We start by proving that \eqref{item:mu(A^1)<P+eps} implies
  \eqref{item:omega-est-smooth-grad}. We denote by $\delta_i>0$ the value of
  $\delta$ which corresponds to $\eps=\frac1i$ in \eqref{item:mu(A^1)<P+eps}, we
  assume $\delta_{i+1}<\delta_i$ for $i\in\N$, and we choose the modulus
  $\omega\coleq\sum_{i=1}^\infty\frac1i\1_{[\delta_{i+1},\delta_i)}+\infty\1_{[\delta_1,\infty)}$.
  We now consider $\eta\in\C^\infty_\cpt(\Omega)$ with $0\le\eta\le1$ on $\Omega$.
  If $\eta$ vanishes identically or we have
  $|{\spt\eta}|\ge\delta_1$, the claim is trivially valid. Otherwise
  we henceforth fix $i\in\N$ with $|{\spt\eta}|\in{[\delta_{i+1},\delta_i)}$ and
  thus $\omega(|{\spt\eta}|)=\frac1i$. We observe that $\{\eta>t\}$ is open and
  thus $\{\eta>t\}\subset\{\eta>t\}^1$ holds for all $t\in\R$. Then, via a
  layer-cake type rewriting, the estimate from \eqref{item:mu(A^1)<P+eps} for
  $\{\eta>t\}\Subset\Omega$ with $|\{\eta>t\}|<\delta_i$, and the coarea formula
  of Theorem \ref{thm:BV-coarea} we get
  \[
    \int_\Omega\eta\,\d\mu
    =\int_0^1\mu(\{\eta>t\})\,\d t
    \le\int_0^1\mu(\{\eta>t\}^1)\,\d t
    \le\int_0^1\Big[\P(\{\eta>t\})+{\ts\frac1i}\Big]\,\d t
    =\int_\Omega|\nabla\eta|\dx+\omega(|{\spt\eta}|)\,.
  \]
  This gives the property \eqref{item:omega-est-smooth-grad}.

  \smallskip
  
  Next we verify that \eqref{item:omega-est-smooth-grad} implies
  \eqref{item:eps-delta-est-W11-grad}. In order to show $\mu(N)=0$ for an
  $\H^{n-1}$-negligible $N\in\Bo(\Omega)$, we slightly adapt the proof of Lemma
  \ref{lem:abs-con-H}. Indeed, we can assume $N\Subset\Omega$. Given $\eps>0$,
  Lemma \ref{lem:negligible} yields an open $A$ with $N\subset A\Subset\Omega$,
  $|A|<\eps$, $\P(A)<\eps$, and by mollifying the $\1_A$ we obtain
  $\eta\in\C^\infty_\cpt(\Omega)$ with $\1_N\le\eta\le1$ on $\Omega$,
  $|{\spt\eta}|<\eps$, and $\int_\Omega|\nabla\eta|<\eps$. Exploiting the
  estimate from \eqref{item:omega-est-smooth-grad} for this $\eta$, we find
  $\mu(N)<C\eps+\sup_{[0,\eps)}\omega$. As $\eps>0$ is arbitrary, we end up with
  $\mu(N)=0$. We now derive the main inequality in
  \eqref{item:eps-delta-est-W11-grad}. Given $\eps>0$ we fix $\delta>0$ such
  that $\sup_{[0,\delta)}\omega\le\eps$. We consider
  $v\in\W_0^{1,1}(\Omega)\cap\L^\infty(\Omega)$ with
  $|\{v\neq0\}|<\delta$ and may additionally assume $\sup_\Omega|v|=1$. We
  record $|v|\in\W_0^{1,1}(\Omega)\cap\L^\infty(\Omega)$ with
  $|\nabla|v||=|\nabla v|$ \ae{} and choose
  $\eta_k\in\C^\infty_\cpt(\Omega)$ with $0\le\eta_k\le1$ on $\Omega$ such that
  $\eta_k$ converge to $|v|$ in $\W^{1,1}(\Omega)$. Involving
  $|\{|v|>0\}|=|\{v\neq0\}|<\delta$ and drawing on Lemma \ref{lem:control-spt}
  we can modify the sequence $(\eta_k)_{k\in\N}$ such that we additionally have
  $|{\spt\eta_k}|<\delta$ for all $k\in\N$. Moreover, possibly replacing
  $(\eta_k)_{k\in\N}$ by a subsequence, we infer from Lemma
  \ref{lem:str-implies-Hae} that $\eta_k$ converge to $|v|^\ast=|v^\ast|$
  also $\H^{n-1}$-\ae{} on $\Omega$, and by the preceding this convergence holds
  $\mu$-\ae{} on $\Omega$ as well. Hence, via Fatou's lemma and the estimate in
  \eqref{item:est-smooth-grad} we find
  \[
    \int_\Omega|v^\ast|\,\d\mu
    \le\liminf_{k\to\infty}\int_\Omega\eta_k\,\d\mu
    \le\liminf_{k\to\infty}\bigg[C\int_\Omega|\nabla\eta_k|\dx+\omega(|{\spt\eta_k}|)\bigg]
    \le C\int_\Omega|\nabla v|\dx+\eps\,.
  \]
  This completes the derivation of \eqref{item:eps-delta-est-W11-grad}.

  \smallskip

  We turn to the implication from \eqref{item:eps-delta-est-W11-grad} back to
  \eqref{item:mu(A^+)<P+eps}. We consider $\eps>0$, the corresponding $\delta$
  from \eqref{item:eps-delta-est-W11-grad}, and a set $A\in\BVs(\R^n)$ with
  $A\Subset\Omega$ and $|A|<\delta$. Then, by Lemma
  \ref{lem:strict-approx-above} applied with $u=\1_A$, we can find
  $v_k\in\W_0^{1,1}(\Omega)$ with $\1_A\le v_k\le1$ \ae{} on $\Omega$ for all
  $k\in\N$ such that $v_k$ converge strictly in $\BV(\Omega)$ to $\1_A$. Observing
  $|\{\1_A>0\}|=|A|<\delta$, we next apply Lemma \ref{lem:control-spt} with
  $u=\1_A$ to modify the sequence and achieve additionally $|\{v_k>0\}|<\delta$
  for all $k\in\N$. Taking into account that $\eta_k^\ast\ge(\1_A)^+=\1_{A^+}$
  holds $\H^{n-1}$-\ae{}, we deduce
  \[
    \mu(A^+)
    \le\liminf_{k\to\infty}\int_\Omega\eta_k^\ast\,\d\mu
    \le\lim_{k\to\infty}\bigg[C\int_\Omega|\nabla\eta_k|\dx+\eps\sup_\Omega|\eta_k|\bigg]
    \le C\P(A)+\eps\,.
  \]
  By Lemma \ref{lem:closed-vs-compact} this suffices to ensure the small-volume
  IC in $\Omega$ with constant $C$

  \smallskip
  
  Finally, we prove that \eqref{item:sub-C-div-plus-L1} implies
  \eqref{item:omega-est-smooth-grad}. Given $\sigma$ and $H$ as in
  \eqref{item:sub-C-div-plus-L1}, by absolute continuity of the integral,
  there exists $\omega\colon{[0,\infty]}\to{[0,\infty]}$ with
  $\lim_{t\searrow0}\omega(t)=\omega(0)=0$ such that
  $\int_A|H|\dx\le\omega(|A|)$ holds for all $A\in\Bo(\Omega)$. Using
  this together with the definition of the distributional divergence, we
  estimate
  \[
    \int_\Omega\eta\,\d\mu={-}\int_\Omega\sigma\cdot\nabla\eta\dx+\int_\Omega\eta H\dx
    \le\int_\Omega|\sigma|\,|\nabla\eta|\dx+\int_{\spt\eta}|H|\dx
    \le C\int_\Omega|\nabla\eta|\dx+\omega(|{\spt\eta}|)
  \]
  for every $\eta\in\C^\infty_\cpt(\Omega)$ with $0\le\eta\le1$ on $\Omega$.
\end{proof}

Theorem \ref{thm:char-strong-ic} is in most regards a special case of Theorem
\ref{thm:char-small-vol-ic}, the only true addition being the fact that we can also get
back from \eqref{item:mu(A^+)<P}, \eqref{item:mu(A^1)<P},
\eqref{item:est-smooth-grad}, \eqref{item:est-W11-grad} to
\eqref{item:sub-C-div}. Consequently, we can keep the proof comparably brief:

\begin{proof}[Proof of Theorem \ref{thm:char-strong-ic}]
  The implications \eqref{item:mu(A^+)<P}$\implies$\eqref{item:mu(A^1)<P},
  \eqref{item:mu(A^1)<P}$\implies$\eqref{item:est-smooth-grad},
  \eqref{item:est-smooth-grad}$\implies$\eqref{item:est-W11-grad},
  \eqref{item:est-W11-grad}$\implies$\eqref{item:mu(A^+)<P},
  \eqref{item:sub-C-div}$\implies$\eqref{item:est-smooth-grad}    
  in Theorem \ref{thm:char-strong-ic} can be proved along the lines of
  the corresponding implications in Theorem \ref{thm:char-small-vol-ic}.
  In fact, one can drop from the reasoning all arguments and terms with $\eps$,
  $\omega$, $H$ as well as the requirements $\eta\le1$, $v\in\L^\infty(\Omega)$,
  while at the same time weakening all $\delta$-smallness conditions to merely
  finiteness conditions. This leads to some simplifications, for instance, Lemma
  \ref{lem:control-spt} is no longer needed. However, we refrain from discussing
  any further details in this regard.

  \smallskip

  Rather to conclude the proof we address the implication
  \eqref{item:est-W11-grad}$\implies$\eqref{item:sub-C-div}, which
  follows from (a homogeneous version of) the duality
  $(\W_0^{1,1})^\ast=\W^{-1,\infty}$ and, in more concrete terms, from the
  following reasoning. Consider the closed subspace
  $X\coleq\{\nabla\eta\,:\,\eta\in\W_0^{1,1}(\Omega)\}$ of
  $\L^1(\Omega,\R^n)$ with the $\L^1$-norm. Then the assumption
  \eqref{item:est-W11-grad} gives that the linear functional
  $\nabla\eta\mapsto\int_\Omega\eta^\ast\,\d\mu$ is an element of norm $\le C$
  in the dual $X^\ast$. By the Hahn-Banach theorem, this functional extends
  to an element of norm $\le C$ in $\L^1(\Omega,\R^n)^\ast$, and by the Riesz
  duality $(\L^1)^\ast=\L^\infty$ there exists some
  $\sigma\in\L^\infty(\Omega,\R^n)$ with
  $\|\sigma\|_{\L^\infty(\Omega,\R^n)}\le C$ such that
  \[
    \int_\Omega\eta^\ast\,\d\mu
    =-\int_\Omega\sigma\cdot\nabla\eta\dx
    \qq\qq\text{holds for all }\eta\in\W_0^{1,1}(\Omega)\,.
  \]
  Specifying this conclusion to $\eta\in\C^\infty_\cpt(\Omega)$, we obtain
  $\mu=\Div\sigma$ in the sense of distributions on $\Omega$.
\end{proof}

\section{\boldmath Isoperimetric conditions for perimeter measures and
  rectifiable measures}\label{sec:IC-for-P}

We begin this section by checking the validity of the strong IC in an
already-mentioned basic case, namely for the perimeter measure of a pseudoconvex
set. In view of the preceding results this can be implemented
conveniently by checking the variant of the IC with the representative $A^1$
instead of $A^+$.

\begin{prop}[strong IC for perimeters measures of pseudoconvex sets]
    \label{prop:strong-ic-for-pseudoconvex}
  For every pseudoconvex set $K\in\BVs(\R^n)$, the perimeter measure
  $\H^{n-1}\ecke\partial^\ast\!K$ satisfies the strong IC in $\R^n$ with
  constant $1$ and in case $|K|>0$ does not satisfy the strong IC in $\R^n$ with
  any smaller constant.
\end{prop}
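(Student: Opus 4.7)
The plan is to establish the strong IC with constant $1$ via the equivalent form in Theorem \ref{thm:char-strong-ic}, and then to prove sharpness by testing with $A=K$ itself.

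First I would reduce the problem. By the equivalence of items \eqref{item:mu(A^+)<P} and \eqref{item:mu(A^1)<P} in Theorem \ref{thm:char-strong-ic}, it suffices to show, for $\mu\coleq\H^{n-1}\ecke\partial^\ast\!K$, the bound $\mu(A^1)\le\P(A)$ for every $A\in\M(\R^n)$ with $|A|<\infty$ (and, without loss of generality, $\P(A)<\infty$, so that $A\in\BVs(\R^n)$). The strategy is to squeeze the quantity $\H^{n-1}(\partial^\ast\!K\cap A^1)$ between two expressions built from $\P(A\cap K)$.

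The upper squeeze is immediate from pseudoconvexity: Lemma \ref{lem:intersect-with-pseudoconvex} yields $\P(A\cap K)\le\P(A)$. The lower squeeze is a density argument, which I view as the one mildly substantive step. At every $x\in\partial^\ast\!K\cap A^1$, the set $K$ has density $\frac12$ (since $\partial^\ast\!K\subset K^{\frac12}$ by Theorem \ref{thm:Federer}) while $A$ has density $1$, so $A\cap K$ has density $\frac12$ at $x$. Consequently $x\in(A\cap K)^+\setminus(A\cap K)^1=\partial^{\mathrm e}(A\cap K)$. Applying Federer's theorem (Theorem \ref{thm:Federer}) once more, the inclusion $\partial^\ast\!K\cap A^1\subset\partial^\ast(A\cap K)$ holds up to $\H^{n-1}$-negligible sets, and Theorem \ref{thm:DeGiorgi} translates this into $\H^{n-1}(\partial^\ast\!K\cap A^1)\le\P(A\cap K)$. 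Chaining with the pseudoconvexity bound gives the required inequality with constant $1$.

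For sharpness, assume $|K|>0$. The isoperimetric estimate of Theorem \ref{thm:isoperi}, in the form $|K|\le\Gamma_n\P(K)^{\frac n{n-1}}$, forces $\P(K)>0$. Testing the IC with $A=K$ and using $\partial^\ast\!K\subset K^{\frac12}\subset K^+$ we compute
\[
  \mu(K^+)=\H^{n-1}(\partial^\ast\!K\cap K^+)=\H^{n-1}(\partial^\ast\!K)=\P(K)\,,
\]
while the right-hand side of the IC equals $C\P(K)$. Thus $\mu(K^+)\le C\P(K)$ with $\P(K)>0$ forces $C\ge1$, completing the proof. The only nontrivial ingredient beyond the tools already assembled in Section \ref{sec:prelim} is the density computation identifying $\partial^\ast\!K\cap A^1$ with (part of) $\partial^\ast(A\cap K)$; the remainder is a direct application of the reformulation theorem and pseudoconvexity.
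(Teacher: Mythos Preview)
Your proof is correct and follows essentially the same approach as the paper: both reduce via Theorem \ref{thm:char-strong-ic} to the $A^1$-version, then chain the density inclusion $\partial^\ast\!K\cap A^1\subset(A\cap K)^{1/2}$ (up to $\H^{n-1}$-null sets) with De Giorgi's and Federer's theorems and Lemma \ref{lem:intersect-with-pseudoconvex}, and finally test with $A=K$ for sharpness. One cosmetic point: your appeal to Theorem \ref{thm:isoperi} for $\P(K)>0$ is stated there only for $n\ge2$; the case $n=1$ is trivial (or follows since $\P(K)=0$ forces $\1_K$ constant a.e.), so no harm done.
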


\begin{proof}
  By Theorems \ref{thm:DeGiorgi} and \ref{thm:Federer} together with Lemma
  \ref{lem:intersect-with-pseudoconvex}, we infer
  \[
    (\H^{n-1}\ecke\partial^\ast\!K)(A^1)
    =\H^{n-1}(A^1\cap K^\frac12)
    \le\H^{n-1}((A\cap K)^\frac12)
    =\P(A\cap K)
    \le\P(A)
  \]
  By Theorem \ref{thm:char-strong-ic} this means that $\H^{n-1}\ecke\partial K$
  satisfies the strong IC in $\R^n$ with constant $1$. As moreover the equality
  $(\H^{n-1}\ecke\partial K)(K^+)=\P(K)$ occurs for the test set $K$ itself,
  the constant $1$ is optimal in case $|K|>0$ (in which we have $\P(K)>0$ as
  well).
\end{proof}

We stress that the pseudoconvexity assumption in Proposition
\ref{prop:strong-ic-for-pseudoconvex} cannot be dropped, as already for $n=2$
and a bounded, smooth, open, but non-convex $K\subset\R^2$ one finds with
$(\H^1\ecke\partial K)(\mathrm{C}(K))=\P(K)>\P(\mathrm{C}(K))$ for the
closed convex hull $\mathrm{C}(K)$ of $K$ that the strong IC fails for
$\H^1\ecke\partial K$. In contrast to this, however, we show with the next (and
much more interesting) results that the small-volume IC is independent of
geometric properties such as convexity of an underlying set and indeed admits a
much wider class of admissible measures.

\begin{thm}[small-volume IC for general perimeter measures]\label{thm:2P-admis}
  For every $E\in\M(\R^n)$ with $\P(E)<\infty$, the double perimeter measure
  \[
    \mu\coleq2\P(E,\,\cdot\,)=2|\D\1_E|=2\H^{n-1}\ecke\partial^\ast\!E
  \]
  can be expressed in the form $\mu=H\Ln+\Div\sigma$ in $\mathscr{D}'(\R^n)$
  with a sub-unit $\L^\infty$ vector field $\sigma$ on $\R^n$ and a function
  $H\in\L^1(\R^n)$. Consequently, $\mu$ satisfies all properties in
  Theorem \ref{thm:char-small-vol-ic} on $\Omega=\R^n$ and in particular satisfies
  the small-volume IC in $\R^n$ with constant $1$, that is, for every $\eps>0$,
  there is some $\delta>0$ such that
  \begin{equation}\label{eq:2P-admis}
    2\H^{n-1}(A^+\cap\partial^\ast\!E)\le\P(A)+\eps
    \qq\text{for all }A\in\M(\R^n)\text{ with }|A|<\delta\,.
  \end{equation}
\end{thm}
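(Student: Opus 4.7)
By the implication (e)$\Rightarrow$(a) in Theorem \ref{thm:char-small-vol-ic}, it suffices to exhibit a decomposition $\mu=H\Ln+\Div\sigma$ in $\mathscr{D}'(\R^n)$ with $H\in\L^1(\R^n)$ and $\sigma\in\L^\infty(\R^n,\R^n)$ sub-unit, since then \eqref{eq:2P-admis} and the other claims follow at once. My plan is to first construct a sub-unit \emph{calibration} $\sigma_0\in\L^\infty(\R^n,\R^n)$ with $\Div\sigma_0\in\L^1(\R^n)$ and interior normal trace $\sigma_0\cd\nu_E=1$ $\H^{n-1}$-a.e.\ on $\partial^\ast\!E$, and then to flip the sign of $\sigma_0$ across $\partial^\ast\!E$ to obtain $\sigma$.

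\textbf{Construction of $\sigma_0$ via variational mean curvature.} I would appeal to the optimal variational mean curvature theorem of Barozzi--Gonzalez--Tamanini \cite{BarGonTam87} and Barozzi \cite{Barozzi94} to obtain $H_E\in\L^1(\R^n)$ such that both $E$ minimizes Massari's functional $\mathscr{P}_{H_E}[\cd;\R^n]$ and, dually, $E^\c$ minimizes $\mathscr{P}_{-H_E}[\cd;\R^n]$, with the two saturations $\int_E H_E\dx=\P(E)$ and $\int_{E^\c} H_E\dx=-\P(E)$ enforced by $\L^1$-optimality. Writing out both minimality inequalities with these saturations yields the auxiliary signed IC
\[
\Bigl|\int_F H_E\dx\Bigr|\le\P(F)\qquad\text{for every }F\in\BVs(\R^n).
\]
The Hahn--Banach/Riesz duality argument of (a)$\Rightarrow$(e) in Theorem \ref{thm:char-strong-ic} is sign-agnostic, so applied to the signed measure $H_E\Ln$ it produces $\sigma_0\in\L^\infty(\R^n,\R^n)$ with $\|\sigma_0\|_{\L^\infty(\R^n,\R^n)}\le1$ and $\Div\sigma_0=H_E$ in $\mathscr{D}'(\R^n)$. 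For the trace condition I would assume WLOG $|E|<\infty$ (Theorem \ref{thm:isoperi}, swapping $E$ and $E^\c$ if necessary); then \eqref{eq:int-Gauss-Rn} combined with the saturation yields
\[
\P(E)=\int_E H_E\dx=\int_E\Div\sigma_0\dx=\int_{\partial^\ast\!E}(\sigma_0\cd\nu_E)\,\d\H^{n-1}\le\P(E),
\]
and, since $|\sigma_0\cd\nu_E|\le1$ $\H^{n-1}$-a.e.\ on $\partial^\ast\!E$, equality throughout forces $\sigma_0\cd\nu_E=1$ $\H^{n-1}$-a.e.\ on $\partial^\ast\!E$.

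\textbf{Flipping and main obstacle.} I would finally set $\sigma\coleq(1-2\1_E)\sigma_0\in\L^\infty(\R^n,\R^n)$, still sub-unit, and $H\coleq(2\1_E-1)H_E\in\L^1(\R^n)$. For $\varphi\in\C^\infty_\cpt(\R^n)$, using the interior Gauss--Green formula \eqref{eq:int-Gauss} for $\sigma_0$ on $E$ (with trace $1$) to rewrite $\int_E\sigma_0\cd\nabla\varphi$, a short distributional computation gives
\[
\langle\Div\sigma,\varphi\rangle=\int_{\R^n}\varphi H_E\dx+2\int_{\partial^\ast\!E}\varphi\,\d\H^{n-1}-2\int_E\varphi H_E\dx=\langle-H\Ln+\mu,\varphi\rangle,
\]
so $\mu=H\Ln+\Div\sigma$ in $\mathscr{D}'(\R^n)$, and the theorem follows. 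The main obstacle is the second step: bare existence of a variational mean curvature would deliver only one-sided Massari-type inequalities, and it is really the $\L^1$-optimality refinement in \cite{Barozzi94} (dual minimality for $E^\c$ together with the two saturations) that is needed so that the signed auxiliary IC for $H_E$ can be read off and Theorem \ref{thm:char-strong-ic} applied; the remaining steps are comparatively routine distributional bookkeeping.
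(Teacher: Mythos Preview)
Your proposal is correct and follows essentially the same route as the paper: obtain the optimal variational mean curvature $H_E$ from \cite{BarGonTam87,Barozzi94}, derive the signed auxiliary IC $\bigl|\int_F H_E\dx\bigr|\le\P(F)$, run Hahn--Banach/Riesz to get a sub-unit $\sigma_0$ with $\Div\sigma_0=H_E$, use the Gauss--Green saturation to force $\sigma_0\cd\nu_E=1$ on $\partial^\ast\!E$, and then flip the sign across $\partial^\ast\!E$. The only cosmetic differences are that the paper treats $n=1$ by hand (your appeal to Theorem \ref{thm:isoperi} for the WLOG reduction needs $n\ge2$) and uses both \eqref{eq:int-Gauss} and \eqref{eq:ext-Gauss} in the final computation rather than just the interior formula, but neither point affects correctness.
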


We would like to highlight that the small-volume IC reached in the theorem
trivially carries over to $\mu=2\H^{n-1}\ecke S$ with any subset
$S\in\Bo(\partial^\ast\!E)$ and even more generally to
$\mu=\alpha\H^{n-1}\ecke\partial^\ast\!E$ with any ${[0,2]}$-valued Borel
density $\alpha\colon\partial^\ast\!E\to[0,2]$ on $\partial^\ast\!E$. Thus, we
have identified a reasonably broad class of ($n{-}1$)-dimensional measures for
which the central assumption of our semicontinuity and existence results
holds. Beyond that a further broadening of the class will be achieved in
Corollary \ref{cor:rect-admis}, and the optimality of the upper bound
$2$ for the density $\alpha$ will be established in Proposition
\ref{prop:bound-2-optimal}.

\begin{proof}
  In the case $n=1$, the boundary $\partial^\ast\!E$ consists of
  finitely many points. Then, for $\mu=2\H^0\ecke\partial^\ast\!E$, the claim
  $\mu=H\mathcal{L}^1+\sigma'$ follows trivially by taking any
  sub-unit $\sigma\in\BV(\R)$ which is smooth on $(\partial^\ast\!E)^\c$
  and jumps from ${-}1$ to $1$ at each point of $\partial^\ast\!E$ so that
  $\sigma'={-}H\mathcal{L}^1+2\H^0\ecke\partial^\ast\!E$ with
  $H\in\L^1(\R)$. (In fact, if $\partial^\ast\!E\subset{(a,b)}$ for a bounded
  interval ${(a,b)}$, one may take $\sigma$ linear on each component of
  ${(a,b)}\setminus\partial^\ast\!E$ and $\sigma\equiv0$ on
  ${(a,b)}^\c$.)

  In the case $n\ge2$, from Theorem \ref{thm:isoperi} we get
  $E\in\BVs(\R^n)$ or $E^\c\in\BVs(\R^n)$, where in view of
  $\P(E^c,\,\cdot\,)=\P(E,\,\cdot\,)$ and
  $\partial^\ast\!E^\c=\partial^\ast\!E$ it suffices to treat the case
  $E\in\BVs(\R^n)$. By results of Barozzi \& Gonzalez \& Tamanini
  \cite{BarGonTam87} and Barozzi \cite{Barozzi94} (see specifically
  \cite[Remark 2.1, Theorem 2.1]{Barozzi94} or alternatively
  \cite[Section 2]{GonMas94}), there exists an optimal $\L^1$ variational
  mean curvature $H_E$ of $E$, that is, a function $H_E\in\L^1(\R^n)$ with
  $\int_EH_E\dx=\P(E)={-}\int_{E^\c}H_E\dx$ and thus $\int_{\R^n}H_E\dx=0$ such
  that
  \[
    \P(E)-\int_EH_E\dx\le\P(F)-\int_FH_E\dx
    \qq\text{for all }F\in\M(\R^n)\text{ with }\P(F)<\infty\,.
  \]
  We apply this to $F$ and $F^\c$ and exploit $\P(F^\c)=\P(F)$ and
  $\int_{F^\c}H_E\dx={-}\int_FH_E\dx$ to deduce
  \[
    \bigg|\int_FH_E\dx\bigg|\le\P(F)
    \qq\text{for all }F\in\M(\R^n)\text{ with }\P(F)<\infty\,.
  \]
  This estimate can be read as a strong IC for $H_E\Ln$, but at this point is not
  perfectly in line with the previous considerations in this paper, which
  would rather require separate conditions on $(H_E)_+\Ln$ and $(H_E)_-\Ln$.
  Nonetheless, most of the arguments used for Theorems \ref{thm:char-strong-ic}
  and \ref{thm:char-small-vol-ic} still apply, and we now give a brief rereading
  in the present situation in order to eventually reach a divergence structure
  $H_E=\Div\sigma_E$. Indeed, for $\eta\in\C^\infty_\cpt(\R^n)$, with the help
  of a layer-cake formula and the coarea formula of Theorem \ref{thm:BV-coarea}
  we find $\P(\{\eta>t\})<\infty$ for \ae{} $t\in\R$ and
  \[
    \bigg|\int_{\R^n}\eta H_E\dx\bigg|
    =\bigg|\int_\R\int_{\{\eta>t\}}H_E\dx\,\d t\bigg|
    \le\int_\R\bigg|\int_{\{\eta>t\}}H_E\dx\bigg|\,\d t
    \le\int_\R\P(\{\eta>t\})\,\d t
    =\int_{\R^n}|\nabla\eta|\dx\,.
  \]
  Consequently, if we consider the subspace
  $X\coleq\{\nabla\eta\,:\,\eta\in\C^\infty_\cpt(\R^n)\}$ of $\L^1(\R^n,\R^n)$
  with the $\L^1$-norm, the functional $\nabla\eta\mapsto\int_{\R^n}\eta H_E\dx$
  is a sub-unit element in $X^\ast$ and extends to a sub-unit element in
  $\L^1(\R^n,\R^n)^\ast$ by virtue of the Hahn-Banach theorem. The duality
  $(\L^1)^\ast=\L^\infty$ then yields some $\sigma_E\in\L^\infty(\R^n,\R^n)$ with
  $\|\sigma_E\|_{\L^\infty(\R^n,\R^n)}\le1$ such that
  $\int_{\R^n}\eta H_E\dx=-\int_{\R^n}\sigma_E\cdot\nabla\eta\dx$ holds for all
  $\eta\in\C^\infty_\cpt(\R^n)$, in other words, it gives a sub-unit $\L^\infty$
  vector field $\sigma_E$ on $\R^n$ with
  \[
    \Div\sigma_E=H_E
    \qq\qq\text{in the sense of distributions on }\R^n\,.
  \]
  Exploiting $E\in\BVs(\R^n)$ and the Gauss-Green formula
  \eqref{eq:int-Gauss-Rn} we then infer
  \[
    \H^{n-1}(\partial^\ast\!E)
    =\P(E)
    =\int_EH_E\dx
    =\int_E\Div\sigma_E\dx
    =\int_{\partial^\ast\!E}\sigma_E\cd\nu_E\,\d\H^{n-1}
  \]
  for the generalized normal trace $\sigma_E\cd\nu_E$ introduced in Definition
  \ref{def:gen-traces}. This improves the $\H^{n-1}$-\ae{} \emph{in}equality
  $|\sigma_E\cd\nu_E|\le1$ on $\partial^\ast\!E$ to the $\H^{n-1}$-\ae{} equality
  \[
    \sigma_E\cd\nu_E=1
    \qq\qq\text{on }\partial^\ast\!E\,.
  \]
  \begin{minipage}{8.5cm}
    We next introduce the modifications
    \[
      \sigma\coleq\begin{cases}{-}\sigma_E&\text{on }E\\\sigma_E&\text{on }E^\c\end{cases}
    \]
      and
    \[
      H\coleq\begin{cases}H_E&\text{on }E\\{-}H_E&\text{on }E^\c\end{cases}
    \]
    of $\sigma_E$ and $H_E$ and record that $\sigma$ and $H$ are still a
    sub-unit $\L^\infty$ vector field and an $\L^1$ function on $\R^n$. Then,
    for\hfill arbitrary\hfill $\p\in\C^\infty_\cpt(\R^n)$,\hfill the\hfill
    Gauss-Green\hfill formulas
  \end{minipage}
  \hfill
  \begin{minipage}{6.8cm}\vspace{-5ex}
  \begin{figure}[H]\centering
    \includegraphics{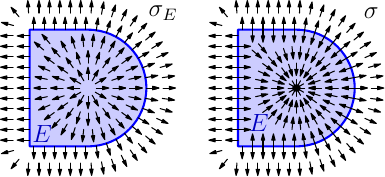}
    \caption{An illustration of $\sigma_E$ and $\sigma$, which differ by
      reversing the arrows inside $E$.}
  \end{figure}
  \end{minipage}\\
  \eqref{eq:int-Gauss}, \eqref{eq:ext-Gauss} (here used for $\sigma_E$ with
  $\Div\sigma_E=H_E\in\L^1(\R^n)$ on $\Omega=\R^n$) yield
  \[\begin{aligned}
    \int_{\R^n}\sigma\cd\nabla\p\dx
    &={-}\int_E\sigma_E\cd\nabla\p\dx+\int_{E^\mathrm{c}}\sigma_E\cd\nabla\p\dx\\
    &=\int_E\p(\Div\sigma_E)\dx-\int_{E^\mathrm{c}}\p(\Div\sigma_E)\dx-2\int_{\partial^\ast\!E}\p\,\nortrace\,\d\H^{n-1}\\
    &=\int_E\p H_E\dx-\int_{E^\mathrm{c}}\p H_E\dx-2\int_{\partial^\ast\!E}\p\,\d\H^{n-1}\\
    &=\int_{\R^n}\p\,\d(H\Ln-2\H^{n-1}\ecke\partial^\ast\!E)\,.
  \end{aligned}\]
  In conclusion we have  
  \[
    {-}\Div\sigma=H\Ln-2\H^{n-1}\ecke\partial^\ast\!E
    \qq\qq\text{in the sense of distributions on }\R^n
  \]
  or in other words $\mu=H\Ln+\Div\sigma$ in the sense of distributions on
  $\R^n$. Thus, all the claims follow directly from Theorem
  \ref{thm:char-small-vol-ic}.
\end{proof}

\begin{rem}[on infinite perimeter measures]\label{rem:infinite-P}
  If $E\in\BVs_\loc(\R^n)\setminus\BVs(\R^n)$ has only locally finite, but not
  finite perimeter, the following examples show that $2\P(E,\,\cdot\,)$
  \emph{may or may not} satisfy the small-volume IC with constant $1$.
  \begin{enumerate}[{\rm(i)}]
  \item On one hand, if $E$ is a half-space or the infinite strip between
    two parallel hyperplanes, for instance, then $2\P(E,\,\cdot\,)$ satisfies
    the small-volume IC with constant $1$\textup{;} see Proposition
    \ref{prop:inf-model-meas}.
  \item On the other hand, if we consider $n=1$ and the union of intervals
    $E_\ell\coleq\bigcup_{k=2\ell}^\infty\bigcup_{i=1}^\ell{(k{+}\frac{2i-1}k,k{+}\frac{2i}k)}$,
    with arbitrary fixed $\ell\in\N$, then $\P(E_\ell,\,\cdot\,)$
    consists of groups of $2\ell$ Dirac measures concentrated on shorter and
    shorter intervals, and thus $2\P(E_\ell,\,\cdot\,)$ satisfies the
    small-volume IC with constant $\frac2\ell$ at most \ka
    but no larger constant\kz. This example can be adapted to higher
    dimensions either simply by taking
    $E_\ell{\times}{(0,1)^{n-1}}\subset\R^n$ or by considering
    $\bigcup_{i=1}^\ell\{(x',x_n)\in\R^{n-1}{\times}\R\,:\,f_{2i-1}(x')<x_n<f_{2i}(x')\}$,
    where $f_1<f_2<\ldots<f_{2\ell}$ are smooth functions
    $\R^{n-1}\to\R$ with $\lim_{|x'|\to\infty}f_j(x')=0$.\label{item:infinite-P-without IC}
  \end{enumerate}
\end{rem}

Next, as announced, we address a further extension of Theorem
\ref{thm:2P-admis}:

\begin{cor}[small-volume IC for rectifiable $\H^{n-1}$-measures]\label{cor:rect-admis}
  If $S\in\Bo(\R^n)$ is $\H^{n-1}$-finite and countably $\H^{n-1}$-rectifiable
  \ka in the sense that\/ $\H^{n-1}(S)<\infty$ and\/
  $\H^{n-1}(S\setminus\bigcup_{j=1}^\infty f_j(\R^{n-1}))=0$ for Lipschitz
  mappings $f_j\colon\R^{n-1}\to\R^n$\kz, then the measure $2\H^{n-1}\ecke S$
  satisfies the small-volume IC in $\R^n$ with constant\/ $1$.
\end{cor}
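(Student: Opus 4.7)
}

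The plan is to reduce to Theorem \ref{thm:2P-admis} by decomposing $S$ into countably many pieces, each sitting inside the reduced boundary of some set of finite perimeter, and then combining the resulting measures via Proposition \ref{prop:ic-sing} together with a tail estimate based on the finiteness of $\H^{n-1}(S)$.

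First, I would invoke the standard structure theory of countably $\H^{n-1}$-rectifiable sets: up to an $\H^{n-1}$-negligible set $N_0$, the set $S$ can be written as a Borel-measurable disjoint union $S\setminus N_0=\dcup_{j=1}^\infty S_j$ where each $S_j$ is contained in a compact piece of a $\C^1$-graph over an $(n{-}1)$-dimensional hyperplane. Such a graph-piece is in turn contained in the boundary $\partial E_j=\partial^\ast\! E_j$ of a bounded smooth open set $E_j\in\BVs(\R^n)$, so that $S_j\subset\partial^\ast\! E_j$ holds and $\H^{n-1}(\partial^\ast\! E_j)<\infty$. Because $\H^{n-1}(N_0)=0$, the measure $2\H^{n-1}\ecke S$ is unaffected by removing $N_0$, so we may work with the disjoint decomposition into the $S_j$.

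Next, I would apply Theorem \ref{thm:2P-admis} to each $E_j$ to obtain that $2\H^{n-1}\ecke\partial^\ast\! E_j$ satisfies the small-volume IC in $\R^n$ with constant $1$. Since $S_j\subset\partial^\ast\! E_j$, the restricted measure $\mu_j\coleq2\H^{n-1}\ecke S_j\le2\H^{n-1}\ecke\partial^\ast\! E_j$ trivially inherits the same IC with the same constant. Moreover each $\mu_j$ is finite (as $\H^{n-1}(S)<\infty$), and the $\mu_j$ are pairwise mutually singular by construction of the $S_j$ as pairwise disjoint Borel sets. Hence, for each fixed $N\in\N$, repeated application of Proposition \ref{prop:ic-sing} shows that the finite sum $\nu_N\coleq\sum_{j=1}^N\mu_j$ still satisfies the small-volume IC in $\R^n$ with constant $1$.

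Finally, given $\eps>0$, I would choose $N$ large enough so that $\sum_{j>N}2\H^{n-1}(S_j)<\eps/2$, which is possible thanks to $\H^{n-1}(S)<\infty$. By the IC for $\nu_N$ there exists $\delta>0$ such that $\nu_N(A^+)\le\P(A)+\eps/2$ for all $A\in\M(\R^n)$ with $|A|<\delta$. For such $A$ we then simply estimate
\[
 2\H^{n-1}(A^+\cap S)
 =\nu_N(A^+)+\sum_{j>N}\mu_j(A^+)
 \le\P(A)+\frac\eps2+\sum_{j>N}2\H^{n-1}(S_j)
 <\P(A)+\eps\,,
\]
which establishes the small-volume IC in $\R^n$ with constant $1$ for $2\H^{n-1}\ecke S$. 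The main technical point is the disjoint decomposition into pieces of reduced boundaries of $\BVs$ sets --- once available, the rest is a clean combination of Theorem \ref{thm:2P-admis}, Proposition \ref{prop:ic-sing}, and the trivial tail control afforded by $\H^{n-1}(S)<\infty$.
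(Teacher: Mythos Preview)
Your proposal is correct and follows essentially the same approach as the paper: decompose $S$ up to an $\H^{n-1}$-null set into disjoint pieces lying in reduced boundaries of sets of finite perimeter, apply Theorem~\ref{thm:2P-admis} to each piece, combine finitely many via Proposition~\ref{prop:ic-sing}, and control the tail using $\H^{n-1}(S)<\infty$. The only cosmetic difference is that the paper uses compact subsets of Lipschitz graphs (via \cite[Proposition~2.76]{AmbFusPal00}) and disjointifies them a posteriori by setting $K_j'=K_j\setminus\bigcup_{i<j}K_i$, whereas you invoke a $\C^1$-graph decomposition with disjoint $S_j$ from the outset.
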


\begin{proof}
  It follows from \cite[Proposition 2.76]{AmbFusPal00} that we have
  $\H^{n-1}(S\setminus\bigcup_{j=1}^\infty K_j)=0$ for countably many
  compact subsets $K_j\subset\Gamma_j$ of Lipschitz-($n{-}1$)-graphs $\Gamma_j$
  in the sense of \cite[Example 2.58]{AmbFusPal00}. Clearly, we have
  $K_j\subset\partial^\ast\!E_j$ for some $E_j\in\BVs(\R^n)$ (which can
  be obtained by suitably cutting off the subgraphs of the Lipschitz functions,
  for instance). From Theorem \ref{thm:2P-admis} we have that
  $2\H^{n-1}\ecke K_j'$ with $K_j'\coleq K_j\setminus\bigcup_{i=1}^{j-1}K_i$ for
  $j\in\N$ satisfies the small-volume IC in $\R^n$ with constant $1$. In a next
  step we use Proposition \ref{prop:ic-sing} and the finiteness of these
  measures to conclude that
  $2\H^{n-1}\ecke\bigcup_{j=1}^kK_j=\sum_{j=1}^k2\H^{n-1}\ecke K_j'$ with
  $k\in\N$ satisfies this condition as well. Given an
  arbitrary $\eps>0$, in view of $\H^{n-1}(S)<\infty$ we can fix first
  $k\in\N$ with $\H^{n-1}(S\setminus\bigcup_{j=1}^kK_j)\le\frac\eps2$
  and then $\delta>0$ such that 
  $2\H^{n-1}(A^+\cap\bigcup_{j=1}^kK_j)\le\P(A){+}\frac\eps2$ holds for all
  $A\in\M(\R^n)$ with $|A|<\delta$. By combination of these properties we obtain
  in fact $2\H^{n-1}(A^+\cap S)\le\P(A){+}\eps$, that is, the small-volume IC
  holds for $2\H^{n-1}\ecke S$ in $\R^n$ with constant $1$.
\end{proof}

Finally, we establish a converse to Theorem \ref{thm:2P-admis} and
Corollary \ref{cor:rect-admis}.

\begin{prop}[necessity of the upper density bound $2$ for the small-volume IC]
    \label{prop:bound-2-optimal}
  If $S\in\Bo(\R^n)$ is countably $\H^{n-1}$-rectifiable and
  $\alpha\H^{n-1}\ecke S$ with $\alpha\in\L^1_\loc(\R^n;\H^{n-1}\ecke S)$
  satisfies the small-volume IC with constant\/ $1$, then necessarily
  $\alpha\le2$ holds $\H^{n-1}$-\ae{} on $S$.
\end{prop}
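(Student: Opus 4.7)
The plan is a blow-up construction combined with a Vitali-type covering. Suppose, for contradiction, that there exists $\eta_0 > 0$ with $\H^{n-1}(S^\ast) > 0$ for $S^\ast \coleq \{x \in S : \alpha(x) > 2{+}\eta_0\}$. Since $\alpha \ge 2{+}\eta_0$ on $S^\ast$ and $\alpha\in\L^1_\loc(\R^n;\H^{n-1}\ecke S)$, after intersecting with a large ball we may assume $\H^{n-1}(S^\ast) < \infty$. We aim to construct, for every sufficiently small scale $r_1 > 0$, a test set $A_{r_1}\subset\R^n$ with $|A_{r_1}|\to0$ as $r_1\searrow0$ but with $(\alpha\H^{n-1}\ecke S)(A_{r_1}^+) - \P(A_{r_1})$ bounded below by a fixed positive constant independent of $r_1$, which contradicts the small-volume IC with constant\/ $1$.

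The basic building block is a single thin tangential slab. By countable $\H^{n-1}$-rectifiability of $S$, at $\H^{n-1}$-\ae{} $x_0\in S^\ast$ the set $S$ has an approximate tangent hyperplane $T_{x_0}$ with unit density; by the Besicovitch differentiation theorem applied to $\alpha\in\L^1_\loc(\H^{n-1}\ecke S)$, at $\H^{n-1}$-\ae{} $x_0$ the point $x_0$ is also a Lebesgue point of $\alpha$. A standard combination of these two facts yields the weak-$\ast$ convergence of Radon measures on $\R^n$
\[
  r^{1-n}(\tau_{x_0,r})_\#(\alpha\H^{n-1}\ecke S) \;\xrightarrow[r\searrow0]{\ast}\; \alpha(x_0)\H^{n-1}\ecke T_{x_0},
  \qq\text{with }\tau_{x_0,r}(y)\coleq(y{-}x_0)/r.
\]
Choosing orthonormal coordinates with $T_{x_0}=\R^{n-1}{\times}\{0\}$, writing $D_r\coleq\{y\in\R^{n-1}:|y|<r\}$, and fixing $\eps\coleq\eta_0/(8(n{-}1))$, consider the open cylindrical slab
\[
  A_r(x_0)\coleq x_0 + \bigl(D_r{\times}{(-\eps r,\eps r)}\bigr).
\]
Elementary computation gives $|A_r(x_0)| = 2\eps\alpha_{n-1}r^n$ and $\P(A_r(x_0)) = 2\alpha_{n-1}(1{+}(n{-}1)\eps)r^{n-1} = \alpha_{n-1}(2{+}\tfrac{\eta_0}{4})r^{n-1}$, where $\alpha_{n-1}$ denotes the volume of the unit ball in $\R^{n-1}$. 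Applying the weak-$\ast$ convergence to the open cylinder $D_1{\times}{(-\eps,\eps)}$, whose boundary meets $T_{x_0}$ only in the $\H^{n-1}$-negligible set $\partial D_1{\times}\{0\}$, we get $r^{1-n}(\alpha\H^{n-1}\ecke S)(A_r(x_0)) \to \alpha(x_0)\alpha_{n-1} > (2{+}\eta_0)\alpha_{n-1}$. Hence, for all sufficiently small $r$ (depending on $x_0$), using $A_r(x_0)\subset A_r(x_0)^+$,
\[
  (\alpha\H^{n-1}\ecke S)(A_r(x_0)^+) - \P(A_r(x_0)) \ge \tfrac{\eta_0}{4}\alpha_{n-1}r^{n-1}.
\]

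To assemble many disjoint such slabs, we apply Egorov's theorem to the two underlying pointwise convergences to obtain a compact $K\subset S^\ast$ with $\H^{n-1}(K)>0$ and a uniform threshold $r_0>0$ such that the displayed slab estimate holds for every $x_0\in K$ and every $r\in{(0,r_0]}$; note also that $A_r(x_0)\subset\overline{\B_{2r}(x_0)}$ always holds. For each scale $r_1\in{(0,r_0]}$, a Vitali-type covering theorem for the rectifiable measure $\H^{n-1}\ecke K$, together with the unit density of $\H^{n-1}\ecke K$ at $\H^{n-1}$-\ae{} point of $K$, produces a finite family of pairwise disjoint closed balls $\bigl\{\overline{\B_{2r_j}(x_j)}\bigr\}_j$ with $x_j\in K$, $r_j\le r_1$, and $\sum_j\alpha_{n-1}r_j^{n-1}\ge c_0\H^{n-1}(K)$ for some dimensional $c_0>0$. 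The enclosed slabs $A_{r_j}(x_j)$ are then pairwise disjoint, so $A_{r_1}\coleq\bigcup_jA_{r_j}(x_j)$ satisfies
\[
  |A_{r_1}|\le 2\eps\alpha_{n-1}r_1\sum_jr_j^{n-1}\xrightarrow{r_1\searrow0}0
  \qq\text{and}\qq
  (\alpha\H^{n-1}\ecke S)(A_{r_1}^+) - \P(A_{r_1}) \ge \tfrac{\eta_0}{4}c_0\H^{n-1}(K),
\]
yielding the desired contradiction. The main obstacle is the simultaneous uniformization, via Egorov, of the approximate-tangent and Lebesgue-point asymptotics on a positive-$\H^{n-1}$-measure subset of $S^\ast$, together with the choice of a sufficiently generous enlargement factor in the Vitali covering to guarantee that the slabs at different $x_j$, with differently oriented tangent planes, remain pairwise disjoint.
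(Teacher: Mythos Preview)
Your proof is correct, but the paper takes a considerably shorter route. Rather than blowing up at each point, uniformizing via Egorov, and assembling many disjoint slabs through a Vitali covering, the paper localizes once and for all to a \emph{single} Lipschitz-$(n{-}1)$-graph $\Gamma_{j_0}$ (using the graph characterization of rectifiability, \cite[Proposition 2.76]{AmbFusPal00}) on which a compact $G\subset S\cap\Gamma_{j_0}$ with $\H^{n-1}(G)>0$ and $\alpha\ge2{+}4\eps/\H^{n-1}(G)$ can be found. One then chooses an open neighborhood $U$ of $G$ inside $\Gamma_{j_0}$ with $\H^{n-1}(U)<\H^{n-1}(G)+\eps$ and thickens $U$ by a uniform width $2\ell$ in the \emph{fixed} normal direction $\nu_{j_0}$ of the base hyperplane; for $\ell$ small this gives a single test set $A$ with $|A|<\delta$ and $\P(A)<2\H^{n-1}(G)+3\eps$, immediately contradicting the IC. The point is that working over one Lipschitz graph lets you thicken the whole patch coherently, eliminating the need to reconcile differently oriented tangent planes, to invoke Egorov, or to control a Vitali sum $\sum_jr_j^{n-1}$ from both sides via uniform density bounds. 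Your blow-up template is a perfectly valid and more robust alternative (it would transfer to settings where only a.e.\ approximate tangent planes, rather than a graph decomposition, are available), but here it is heavier machinery than necessary. One small point worth tightening in your argument: the uniform two-sided density bounds on $K$ needed to relate $\sum_jr_j^{n-1}$ to $\H^{n-1}(K)$ should be incorporated into the Egorov step alongside the slab asymptotics, so that $c_0$ is genuinely independent of the scale $r_1$.
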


\begin{proof}
  We assume, for a proof by contradiction, that $\alpha>2$ holds on a
  non-$\H^{n-1}$-negligible subset of $S$, and similar to the preceding proof
  we infer from \cite[Proposition 2.76]{AmbFusPal00} that
  $\H^{n-1}(S\setminus\bigcup_{j=1}^\infty\Gamma_j)=0$ holds for countably many
  Lipschitz-($n{-}1$)-graphs $\Gamma_j$ over hyperplanes $\pi_j$ in $\R^n$.
  Then, we can also find a compact subset $G$ of $S\cap\Gamma_{j_0}$, for some
  fixed $j_0\in\N$, with $\H^{n-1}(G)>0$ such that $\alpha\ge2{+}4\eps/\H^{n-1}(G)$
  holds $\H^{n-1}$-\ae{} on $G$ for some $\eps>0$. Since $G$ is compact, there
  exists an open neighborhood $U$ of $G$ in $\Gamma_{j_0}$ such that $U$ is a
  Lipschitz-($n{-}1$)-graph over an open $\BVs$ set in the hyperplane
  $\pi_{j_0}$ with $\H^{n-1}(U)<\H^{n-1}(G){+}\eps$. Next, for the $\eps>0$
  already fixed, we consider the corresponding $\delta>0$ from the
  IC, and we choose $\ell>0$ small enough that the ``width-$2\ell$ thickening''
  $A\coleq\bigcup_{t\in{({-}\ell,\ell)}}(U{+}t\nu_{j_0})\in\BVs(\R^n)$ of $U$ in
  the normal direction $\nu_{j_0}$ of $\pi_{j_0}$ satisfies $|A|<\delta$ and
  $\P(A)<2\H^{n-1}(U){+}\eps$. Then the previous estimates combine to
  $\P(A)<2\H^{n-1}(G){+}3\eps$, and in view of $G\subset S$ and
  $G\subset U\subset A^+$ we arrive at
  \[
    (\alpha\H^{n-1}\ecke S)(A^+)
    \ge(\alpha\H^{n-1})(G)
    \ge2\H^{n-1}(G){+}4\eps>\P(A){+}\eps\,.
  \]
  This, however, contradicts the assumed small-volume IC for
  $\alpha\H^{n-1}\ecke S$.
\end{proof}

\section{Lower semicontinuity on general domains}\label{sec:dom}

Once more we consider non-negative Radon measures $\mu_+$ and $\mu_-$ on $\R^n$
and define a functional of the previously considered type over arbitrary
$D\in\Bo(\R^n)$ by setting
\begin{equation}\label{eq:P-dom}
  \Pmu[A;D]\coleq\P(A,D)+\mu_+(A^1)-\mu_-(A^+)
\end{equation}
whenever for $A\in\M(\R^n)$ at least one of $\P(A,D){+}\mu_+(A^1)$ and
$\mu_-(A^+)$ is finite. Our aim in this section is to complement the
semicontinuity results of Section \ref{sec:Rn} for the full-space functional
$\Pmu=\Pmu[\,\cdot\,;\R^n]$ and the ones of Section \ref{sec:Dir} for
(generalized) Dirichlet classes with local semicontinuity results, which do not
involve boundary conditions and apply for $\Pmu[\,\cdot\,;D]$ with
$\mu_\pm\ecke D^\c\equiv0$ over arbitrary (measure-theoretically) open sets $D$.

In order to single out basic lines of our approach we point out directly that in
spite of requiring $\mu_\pm\ecke D^\c\equiv0$ we keep working with Radon measures
$\mu_\pm$ on all of $\R^n$ and impose ICs on these measures in all of $\R^n$
rather than using ICs in the sense of Definition \ref{def:ICs} on open domains
$D=\Omega$. In particular, our measures $\mu_\pm$ are
necessarily finite in cases with bounded $D$ (by definition of a Radon measure
on $\R^n$) and more generally whenever $\Cp_1(D)<\infty$ (by Proposition
\ref{prop:inf-P} and Lemma \ref{lem:finite}). One reason for proceeding in this
way is that the full-space viewpoint is convenient in order to apply the
previously achieved results and at least in case of \emph{finite}
measures $\mu_\pm$ on open $\Omega=D$ is not truly restrictive, as in fact the
small-volume ICs in $\Omega$ and in $\R^n$ are even equivalent by Lemma
\ref{lem:bdry-local-ic}. Moreover, for cases with \emph{infinite} measures
$\mu_-$ concentrated on unbounded domains $D$ with $\Cp_1(D)=\infty$ the
following example suggests that working with ICs in all of $\R^n$ is even more
appropriate for semicontinuity. Indeed, we consider for $n=1$ the
infinite union $\Omega\coleq\bigcup_{m=1}^\infty I_m$ of the intervals
$I_m\coleq{(m{-}2^{-m},m{+}2^{-m})}$ and, for arbitrarily small
$\theta\in{(0,\infty)}$, the infinite Radon measure
$\mu_-=\theta\sum_{m=1}^\infty\delta_m=\theta\H^0\ecke\N$ supported in
$\Omega$. Then $\mu_-$ satisfies the strong IC even with (small) constant
$\theta/2$ in $\Omega$, but lower semicontinuity of
$\mathscr{P}_{0,\mu_-}[\,\cdot\,;\Omega]$ fails, since $\bigcup_{m=k}^\infty
I_m$ converge globally in measure to $\emptyset$ with
$\mathscr{P}_{0,\mu_-}\big[\bigcup_{m=k}^\infty I_m;\Omega\big]={-}\infty$ for
all $k\in\N$ and $\mathscr{P}_{0,\mu_-}[\emptyset;\Omega]=0$. In fact, in the
light of Theorem \ref{thm:lsc-BV-dom}\eqref{item:lsc-BV-dom-c} below this
failure of semicontinuity is possible only since
$\mu_-{+}\H^0\ecke\partial\Omega$ satisfies the small-volume IC \emph{in all
of\/ $\R$} at best with constant $1{+}\theta/2$, but not with the required
constant $1$. We remark that similar configurations can be arranged with
absolutely continuous measures (by ``spreading out'' the Dirac measures a bit)
and in arbitrary dimension $n\in\N$ (e.\@g.\@ by placing measures in thin annuli
instead of short intervals). Thus, as foreshadowed above, semicontinuity does
not follow from an IC in open $D=\Omega$ in the sense of Definition
\ref{def:ICs}, but rather from certain $D$-dependent ICs in full $\R^n$.
In fact, these ICs can be read, if not as ICs in $D$, then still as ICs
\emph{relative to $D$} with the relative perimeter occurring in essentially the
same way as in the condition of Lemma
\ref{lem:bdry-local-ic}\eqref{item:mod-ic-to-bdry}.

\smallskip

Before reaching semicontinuity on arbitrary open sets $D=\Omega$ in the later
Theorem \ref{thm:lsc-dom}, we first provide a semicontinuity statement, which
applies on the measure-theoretic interior $D=\Omega^1$ of a set $\Omega$ of
locally finite perimeter and in fact seems illustrative and interesting in its
own right. We remark that at this point we apply the notions of local and global
convergence in measure from \eqref{eq:conv-in-meas} and
\eqref{eq:local-conv-in-meas} on the possibly non-open set $\Omega^1$.

\begin{thm}[lower semicontinuity on a domain of locally finite perimeter]
    \label{thm:lsc-BV-dom}
  Consider a set\/ $\Omega\in\M(\R^n)$, a set $A_\infty\in\M(\R^n)$, a sequence
  $(A_k)_{k\in\N}$ in $\M(\R^n)$, and non-negative Radon measures $\mu_+$ and\/
  $\mu_-$ on $\R^n$ with $\mu_\pm\ecke(\Omega^1)^\c\equiv0$ such that one of the
  following sets of assumptions is valid\textup{:}  
  \begin{enumerate}[{\rm(a)}]
  \item We have $\Omega\in\BVs_\loc(\R^n)$, the measure $\mu_-$ is
    \emph{finite}, the measures $\mu_+$ and $\mu_-$ both satisfy the
    small-volume IC in $\R^n$ with constant\/ $1$, and $A_k$ converge to $A_\infty$
    \emph{locally} in measure on $\Omega$.\label{item:lsc-BV-dom-a}
  \item We have $\Omega\in\BVs_\loc(\R^n)$, the measures $\mu_+$ and
    $\mu_-{+}\P(\Omega,\,\cdot\,)$ both satisfy the small-volume IC in $\R^n$
    with constant\/ $1$, the measure $\mu_-{+}\P(\Omega,\,\cdot\,)$ additionally
    satisfies the \emph{almost-strong} IC from \eqref{eq:strong-ic-at-infty}
    with constant $1$ near $\infty$, and $A_k$ converge to $A_\infty$ \emph{locally} in
    measure on $\Omega$ with
    $|(A_k\Delta A_\infty)\cap\Omega|{+}\P(A_k\cap\Omega){+}\P(A_\infty\cap\Omega)<\infty$ for
    all $k\in\N$.\label{item:lsc-BV-dom-b}
  \item We have $\Omega\in\BVs_\loc(\R^n)$, the measures $\mu_+$ and
    $\mu_-{+}\P(\Omega,\,\cdot\,)$ both satisfy the small-volume IC in $\R^n$ with
    constant\/ $1$, and $A_k$ converge to $A_\infty$ \emph{globally} in measure on
    $\Omega$ with $\P(A_k\cap\Omega){+}\P(A_\infty\cap\Omega)<\infty$ for all
    $k\in\N$.\label{item:lsc-BV-dom-c}
  \end{enumerate}
  If furthermore $\min\{\mu_+(A_k^1),\mu_-(A_k^+)\}<\infty$ holds for all
  $k\in\N$, then we have $\min\{\mu_+(A_\infty^1),\mu_-(A_\infty^+)\}<\infty$ and
  \begin{equation}\label{eq:lsc-BV-dom}
    \liminf_{k\to\infty}\Pmu[A_k;\Omega^1]\ge\Pmu[A_\infty;\Omega^1]\,.
  \end{equation}
\end{thm}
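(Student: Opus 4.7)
The plan is to reduce everything to the full-space semicontinuity result Theorem~\ref{thm:lsc-Rn} by absorbing the perimeter measure $\P(\Omega,\cdot)$ into $\mu_-$, exactly along the lines suggested in the introduction. As a first step, I would replace $A_k$ and $A_\infty$ by $E_k:=A_k\cap\Omega$ and $E_\infty:=A_\infty\cap\Omega$. The functional is preserved, $\Pmu[A_k;\Omega^1]=\Pmu[E_k;\Omega^1]$, because $\P(\,\cdot\,,\Omega^1)$ is entirely determined by what happens at density-$1$ points of $\Omega$ (where $A_k$ and $A_k\cap\Omega$ have identical density), and because $\mu_\pm$ sit on $\Omega^1$ and vanish on $\H^{n-1}$-null sets by Lemma~\ref{lem:abs-con-H}, so $\mu_\pm(A_k^?)=\mu_\pm(E_k^?)$. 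All three convergence hypotheses also descend from $A_k$ to $E_k$ since $E_k\Delta E_\infty\subset(A_k\Delta A_\infty)\cap\Omega$.

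The key identity is that whenever $E\subset\Omega$ with $E,\Omega$ of finite perimeter, one has
\[
  \P(E)=\P(E,\Omega^1)+\P(\Omega,E^+).
\]
Indeed, $\partial^\ast E$ lies $\H^{n-1}$-a.e.\ in $\Omega^1\cup\partial^\ast\Omega$ (by Federer's Theorem~\ref{thm:Federer} for $\Omega$, since $E\subset\Omega$ forces $\Omega$ to have density $\ge\frac12$ at every point of $\partial^\ast E$); conversely, any $x\in E^+\cap\partial^\ast\Omega$ has upper $E$-density in $(0,\frac12]$, hence by Federer applied to $E$ has $E$-density $\frac12$, so $x\in\partial^\ast E$ up to $\H^{n-1}$-null. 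This turns $\Pmu[E;\Omega^1]$ into the full-space functional $\mathscr{P}_{\mu_+,\tilde\mu_-}[E]$ with $\tilde\mu_-:=\mu_-+\P(\Omega,\cdot)$; the same reasoning localizes to any open ball $B_R$ in the form $\P(E,B_R)-\P(\Omega,E^+\cap B_R)=\P(E,B_R\cap\Omega^1)$. Note also that $\mu_-$ and $\P(\Omega,\cdot)$ are mutually singular, since they live on the disjoint sets $\Omega^1$ and $\partial^\ast\Omega\subset\Omega^{1/2}$.

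In cases \eqref{item:lsc-BV-dom-b} and \eqref{item:lsc-BV-dom-c}, the hypotheses directly give $\P(E_k),\P(E_\infty)<\infty$ and $|E_k\Delta E_\infty|<\infty$, so the identity applies globally and one just invokes Theorem~\ref{thm:lsc-Rn}\eqref{item:lsc-Rn-b} or \eqref{item:lsc-Rn-c} for $E_k\to E_\infty$ with measures $\mu_+,\tilde\mu_-$, whose ICs are precisely what is assumed; the finiteness $\min\{\mu_+(A_\infty^1),\mu_-(A_\infty^+)\}<\infty$ then follows from Lemma~\ref{lem:finite-Dir}. The main obstacle is case~\eqref{item:lsc-BV-dom-a}, where $\tilde\mu_-$ may be non-Radon and need not satisfy any IC. I would resolve this by a truncation: set $\mu_-^R:=\mu_-+\P(\Omega,\cdot)\ecke B_R$, which is \emph{finite} (since $\Omega\in\BVs_\loc$) and whose two summands are mutually singular and each satisfy the small-volume IC with constant~$1$ --- the second because $\P(\Omega,\cdot)\ecke B_R\le\P(\Omega\cap B_R,\cdot)$ and $\Omega\cap B_R\in\BVs$, so Theorem~\ref{thm:2P-admis} applies. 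Proposition~\ref{prop:ic-sing} then gives the IC for $\mu_-^R$ \emph{with constant $1$} (the delicate point: the naive bound $\mu_-+\P(\Omega\cap B_R,\cdot)$ would only give constant~$2$). Applying the localized Lemma~\ref{lem:loc-lsc} with $\mu_+,\mu_-^R$ and radius $\varrho=R$, substituting the local identity, and bounding the left-hand side above by $\Pmu[E_k;\Omega^1]+\mu_-(B_R^{\mathrm c})$, yields
\[
  \liminf_{k\to\infty}\Pmu[E_k;\Omega^1]+\mu_-(B_R^{\mathrm c})\ge\P(E_\infty,B_R\cap\Omega^1)+\mu_+(E_\infty^1\cap B_R)-\mu_-(E_\infty^+\cap B_R).
\]
Sending $R\to\infty$ and using that $\mu_-$ is finite so $\mu_-(B_R^{\mathrm c})\to0$, together with monotone convergence on the right, gives the claim.
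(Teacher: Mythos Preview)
Your proof is correct and follows essentially the same route as the paper: intersect with $\Omega$, use the splitting identity $\P(E)=\P(E,\Omega^1)+\P(\Omega,E^+)$ for $E\subset\Omega$ (which the paper obtains from the equality case in Lemma~\ref{lem:P(AcapB),P(A-S)}), and thereby convert $\Pmu[\,\cdot\,;\Omega^1]$ into the full-space functional $\mathscr{P}_{\mu_+,\mu_-{+}\P(\Omega,\cdot)}$.

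Two small points. First, in cases \eqref{item:lsc-BV-dom-b} and \eqref{item:lsc-BV-dom-c} your sets $E_k=A_k\cap\Omega$ need not have finite volume, so Theorem~\ref{thm:lsc-Rn}\eqref{item:lsc-Rn-b},\eqref{item:lsc-Rn-c} as literally stated (which require $E_k,E_\infty\in\BVs(\R^n)$) do not apply directly; the paper handles this by invoking the infinite-volume extension in Remark~\ref{rem:lsc-Rn-infinite-vol} (or equivalently Theorem~\ref{thm:lsc-Dir} with trivial boundary datum), which only needs $|E_k\Delta E_\infty|+\P(E_k)+\P(E_\infty)<\infty$ --- exactly what you have. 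Second, in case \eqref{item:lsc-BV-dom-a} your one-step argument via $\mu_-^R=\mu_-+\P(\Omega,\cdot)\ecke B_R$ and Lemma~\ref{lem:loc-lsc} is a mild reorganization of the paper's two-step proof (first establish the case $\P(\Omega)<\infty$, then exhaust by $\Omega_R=\Omega\cap B_R$); both routes use Proposition~\ref{prop:ic-sing} and Theorem~\ref{thm:2P-admis} in the same essential way, and your shortcut of bounding $\P(\Omega,\cdot)\ecke B_R\le\P(\Omega\cap B_R,\cdot)$ to feed Theorem~\ref{thm:2P-admis} is a clean touch.
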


Since (all representatives of) a set $\Omega\in\BVs_\loc(\R^n)$ with $|\Omega|>0$
may have empty interior, the previous statement differs from the more usual
semicontinuity on open sets, and indeed semicontinuity on $D=\Omega^1$ does not
to seem to be well known even in case $\mu_\pm\equiv0$, that is, for the
perimeter itself. Therefore, we explicitly record as a subcase of Theorem
\ref{thm:lsc-BV-dom}:

\begin{cor}[lower semicontinuity of the perimeter on a measure-theoretic
  interior]\label{cor:lsc-per-BV-dom}
  Consider a set $\Omega\in\BVs_\loc(\R^n)$. If a sequence $(A_k)_{k\in\N}$ in
  $\M(\R^n)$ converges to $A_\infty\in\M(\R^n)$ locally in measure on $\Omega$,
  then we have
  \[
    \liminf_{k\to\infty}\P(A_k,\Omega^1)\ge\P(A_\infty,\Omega^1)\,.
  \]
\end{cor}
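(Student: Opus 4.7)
The corollary is an immediate specialization of Theorem \ref{thm:lsc-BV-dom}\eqref{item:lsc-BV-dom-a} to the case $\mu_+ \equiv \mu_- \equiv 0$. The zero measures trivially satisfy every requirement of hypothesis \eqref{item:lsc-BV-dom-a}: they vanish on $(\Omega^1)^\c$, the measure $\mu_-$ is (trivially) finite, and both satisfy the small-volume IC in $\R^n$ with constant $1$. The auxiliary assumption $\min\{\mu_+(A_k^1),\mu_-(A_k^+)\}<\infty$ is also automatic. With $\mu_\pm\equiv0$, the functional $\Pmu[\,\cdot\,;\Omega^1]$ reduces to the relative perimeter $\P(\,\cdot\,,\Omega^1)$, so \eqref{eq:lsc-BV-dom} is precisely the claimed inequality $\liminf_{k\to\infty}\P(A_k,\Omega^1)\ge\P(A_\infty,\Omega^1)$.

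If one wanted to prove the corollary independently of Theorem \ref{thm:lsc-BV-dom}, the natural route would be to reduce to the full-space semicontinuity result by intersecting with $\Omega$. Writing $E_k\coleq A_k\cap\Omega$ and $E_\infty\coleq A_\infty\cap\Omega$, the full version of De Giorgi's structure theorem together with the decomposition used in the proof of Lemma \ref{lem:P(AcapB),P(A-S)} gives, up to $\H^{n-1}$-negligible sets,
\[
  \P(A_k\cap\Omega)=\P(A_k,\Omega^1)+\P(\Omega,A_k^1)+\H^{n-1}(\partial^\ast\!A_k\cap\partial^\ast\Omega\cap\{\nu_{A_k}=\nu_\Omega\})\,.
\]
Dropping the last term and bounding the middle one by $\P(\Omega,A_k^+)$ suggests comparing $\P(A_k,\Omega^1)$ with $\P(E_k)-\P(\Omega,A_k^+)$. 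Since $\P(\Omega,\,\cdot\,)=\H^{n-1}\ecke\partial^\ast\Omega$ satisfies the small-volume IC in $\R^n$ with constant $\tfrac12$ by Theorem \ref{thm:2P-admis} (and hence also with constant $1$), one could then attempt to apply Theorem \ref{thm:lsc-Rn} to the sets $E_k$, which converge locally in measure on $\R^n$ to $E_\infty$, with $\mu_-=\P(\Omega,\,\cdot\,)$ and $\mu_+=0$.

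The main obstacle in this direct approach is that when $\Omega$ is unbounded the measure $\P(\Omega,\,\cdot\,)$ is typically infinite, so Theorem \ref{thm:lsc-Rn}\eqref{item:lsc-Rn-a} does not apply, and one would have to either verify an almost-strong IC for $\P(\Omega,\,\cdot\,)$ near infinity or upgrade to global convergence of $E_k$, neither of which is available from the hypotheses as stated. This is presumably the reason why Theorem \ref{thm:lsc-BV-dom} is formulated and proved with its three carefully distinguished cases rather than deduced directly. For the corollary, invoking case \eqref{item:lsc-BV-dom-a} of that theorem with $\mu_\pm\equiv0$ bypasses these complications entirely, so that is the proof I would write.
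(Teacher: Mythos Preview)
Your proof is correct and matches the paper's approach exactly: the corollary is stated in the paper as a subcase of Theorem \ref{thm:lsc-BV-dom}\eqref{item:lsc-BV-dom-a} with $\mu_\pm\equiv0$, and the paper likewise remarks that even in this degenerate case the underlying argument still routes through the perimeter measure $\P(\Omega,\,\cdot\,)$ as a nontrivial $\mu_-$. Your additional commentary on the direct approach and its obstruction for unbounded $\Omega$ is accurate but not needed for the proof itself; the paper handles exactly this issue inside the proof of Theorem \ref{thm:lsc-BV-dom}\eqref{item:lsc-BV-dom-a} by first treating the case $\P(\Omega)<\infty$ and then exhausting via $\Omega\cap\B_R$.
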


Interestingly, when specializing the subsequent proof of Theorem
\ref{thm:lsc-BV-dom}\eqref{item:lsc-BV-dom-a} to the case $\mu_\pm\equiv0$ of
the corollary, it turns out that even in this case the approach does rely on the
theory of the previous sections with $\mu_\pm\not\equiv0$ and indeed plugs in
the perimeter measure $\P(\Omega,\,\cdot\,)$ in place of either $\mu_+$ or
$\mu_-$. Alternatively, however, Corollary \ref{cor:lsc-per-BV-dom} can be
derived as a special case of a recent result of Lahti \cite{Lahti20}. Indeed,
\cite[Theorem 4.5]{Lahti20} guarantees lower semicontinuity of the
perimeter even on every $\Cp_1$-quasi-open set in a general metric-space
setting, while it follows from \cite[Theorem 2.5]{CarDalLeaPas88} that
$\Omega^1$ is $\Cp_1$-quasi-open for every $\Omega\in\BVs_\loc(\R^n)$.

\medskip

Next, we provide a refined discussion of the different settings in Theorem
\ref{thm:lsc-BV-dom}, where once more the differences concern the handling of
the $\mu_-$-term only.

First of all we emphasize that the statement under assumptions
\eqref{item:lsc-BV-dom-a} with \emph{finite} $\mu_-$ should be considered as
the most basic, but also central point of the theorem and will be sufficient in
order to eventually move on to semicontinuity on arbitrary open sets. Exemplary
cases covered by \eqref{item:lsc-BV-dom-a} are finite perimeter
measures $\mu_-=2\H^{n-1}\ecke\partial^\ast\!E$ of $E\in\BVs(\R^n)$
considered on any open $\Omega\in\BVs_\loc(\R^n)$ with
$\partial^\ast\!E\subset\Omega$, since for these Theorem \ref{thm:2P-admis}
gives the small-volume IC with constant $1$.

The settings \eqref{item:lsc-BV-dom-b} and \eqref{item:lsc-BV-dom-c} of Theorem
\ref{thm:lsc-BV-dom} improve on \eqref{item:lsc-BV-dom-a} in case of infinite
measures $\mu_-$, as seen similarly in Theorems \ref{thm:lsc-Rn} and
\ref{thm:lsc-Dir}. An exemplary case covered by \eqref{item:lsc-BV-dom-b}, but
not by \eqref{item:lsc-BV-dom-a} is
$\mu_-=2\H^{n-1}\ecke({(0,\infty)}{\times}\R^{n-2}{\times}\{0\})$ on
$\Omega={(0,\infty)}{\times}\R^{n-1}$ with $n\ge2$, for which
$\P(\Omega)=\infty$ holds, but still $\mu_-{+}\P(\Omega,\,\cdot\,)$ satisfies
even the strong IC on full $\R^n$ with constant $1$. While the exemplary cases
mentioned so far are covered also by the setting \eqref{item:lsc-BV-dom-c}, from
\eqref{item:lsc-BV-dom-c} we get the semicontinuity conclusion only along
sequences with \emph{global} convergence. Additional exemplary cases which are
covered by \eqref{item:lsc-BV-dom-c} only and come merely with
global-convergence semicontinuity are given by the infinite measures
$\mu_-=2\H^{n-1}\ecke(\R^{n-1}{\times}\{0,1\})$ on
$\Omega=\R^n$ and $\mu_-=2\H^{n-1}\ecke(\R^{n-1}{\times}\{1\})$ on
$\Omega=\R^{n-1}{\times}{(0,\infty)}$. In both these cases, Proposition
\ref{prop:inf-model-meas} implies the small-volume IC with constant $1$ for
$\mu_-{+}\P(\Omega,\,\cdot\,)$, but this measure does not satisfy the
almost-strong IC required in \eqref{item:lsc-BV-dom-b}.

\smallskip

We add one specific remark on the assumptions of the theorem:

\begin{rem}[on the finite-perimeter assumptions in Theorem \ref{thm:lsc-BV-dom}]
  The assumption $\P(A_k\cap\Omega)<\infty$, which occurs in parts
  \eqref{item:lsc-BV-dom-b} and \eqref{item:lsc-BV-dom-c} of Theorem
  \ref{thm:lsc-BV-dom}, follows from the more local and thus slightly more
  natural assumption $\P(A_k,\Omega^1)<\infty$ together with\/
  $\P(\Omega)<\infty$. Clearly, $\P(A_\infty\cap\Omega)<\infty$ follows from
  $\P(A_\infty,\Omega^1)<\infty$ together with $\P(\Omega)<\infty$ in the same way.
    
  \begin{proof}
    By distinguishing between points inside $\Omega^1$ and outside $\Omega^1$ it
    is not difficult to verify the inclusion
    $\partial^\mathrm{e}(A_k\cap\Omega)
    \subset(\partial^\mathrm{e}\!A_k\cap\Omega^1)\cup\partial^\mathrm{e}\Omega$.
    By Theorems \ref{thm:DeGiorgi} and \ref{thm:Federer} we infer
    $\H^{n-1}(\partial^\mathrm{e}\;\!\!(A_k\cap\Omega))\le\P(A_k,\Omega^1)+\P(\Omega)<\infty$,
    and then Federer's criterion (see \cite[Theorem 5.23]{EvaGar15}, for
    instance) yields $\P(A_k\cap\Omega)<\infty$.
  \end{proof}
\end{rem}

Now we turn to the proof of the theorem, where the essential strategy is to
apply the full-space or Dirichlet results and to include in $\mu_-$ a boundary
term $\P(\Omega,\,\cdot\,)$, which eventually cancels out with the boundary
contribution $\P(\,\cdot\,,\partial^\ast\Omega)$ of the perimeter.

\begin{proof}[Proof of Theorem \ref{thm:lsc-BV-dom}]
  In a first step we establish the result for the setting
  \eqref{item:lsc-BV-dom-a} with additional requirement $\P(\Omega)<\infty$
  and for the settings \eqref{item:lsc-BV-dom-b},
  \eqref{item:lsc-BV-dom-c}. We introduce
  \[
    S_k\coleq A_k\cap\Omega\,,\qq\qq
    S_\infty\coleq A_\infty\cap\Omega\,,\qq\qq
    \mu_-^\Omega\coleq\mu_-+\P(\Omega,\,\cdot\,)\,,
  \]
  and observe that the present assumptions imply the ones of the corresponding
  setting in Theorem \ref{thm:lsc-Rn} or its extension due to Remark
  \ref{rem:lsc-Rn-infinite-vol} with $S_k$, $S_\infty$, $\mu_+$, $\mu_-^\Omega$ in
  place of $A_k$, $A_\infty$, $\mu_+$, $\mu_-$. (As an alternative, we could also take
  into account $S_k\setminus\Omega=\emptyset=S_\infty\setminus\Omega$ and use Theorem
  \ref{thm:lsc-Dir} as our reference here.) However, while in assumptions
  \eqref{item:lsc-BV-dom-b} and \eqref{item:lsc-BV-dom-c} the relevant IC on
  $\mu_-^\Omega$ is explicitly included, under \eqref{item:lsc-BV-dom-a}
  with additionally $\P(\Omega)<\infty$ it remains to justify that
  $\mu_-^\Omega$ satisfies the small-volume IC on $\R^n$ with constant $1$. To
  this end we first argue that in view of the requirement $\P(\Omega)<\infty$ in
  \eqref{item:lsc-BV-dom-a} the small-volume IC with constant $1$ holds for
  $\P(\Omega,\,\cdot\,)$ by Theorem \ref{thm:2P-admis} (where we have even discarded a
  factor $2$). Moreover, in view of $\mu_-\ecke(\Omega^1)^\c\equiv0$ and
  specifically $\mu_-\ecke\partial^\ast\Omega\equiv0$ the measures $\mu_-$ and
  $\P(\Omega,\,\cdot\,)=\H^{n-1}\ecke\partial^\ast\Omega$ are singular to each other
  and under the present assumptions are both finite. Thus, by Proposition
  \ref{prop:ic-sing} the small-volume IC with constant $1$ carries over from
  these two measures to their sum $\mu_-^\Omega$. After this justification we
  are in position to apply Theorem \ref{thm:lsc-Rn}, which yields
  \begin{equation}\label{eq:lsc-Rn-adapt}
    \liminf_{k\to\infty}\mathscr{P}_{\mu_+,\mu_-^\Omega}[S_k]
    \ge\mathscr{P}_{\mu_+,\mu_-^\Omega}[S_\infty]
  \end{equation}
  for the full-space functional defined in \eqref{eq:P}, but now with
  $\mu_-^\Omega$ in place of $\mu_-$. In order to rewrite the perimeter term in
  this functional we next deduce from the equality case of \eqref{eq:P(AcapB)}
  in Lemma \ref{lem:P(AcapB),P(A-S)} that we have
  \[
    \P(S_k)=\P(S_k,\Omega^1)+\P(\Omega,S_k^+)\,.
  \]
  We use this equality in conjunction with the definition of
  $\mu_-^\Omega$ and the observations
  $\P(A\cap\Omega,\Omega^1)=\P(A,\Omega^1)$ and
  $\mu_\pm\ecke(\Omega^1)^\c\equiv0$. Arguing in this way we end
  up with
  \[
    \mathscr{P}_{\mu_+,\mu_-^\Omega}[S_k]
    =\P(S_k)+\mu_+(S_k^1)-\mu_-^\Omega(S_k^+)
    =\P(S_k,\Omega^1)+\mu_+(S_k^1)-\mu_-(S_k^+)
    =\Pmu[S_k;\Omega^1]
    =\Pmu[A_k;\Omega^1]\,.
  \]
  Since we can analogously rewrite
  $\mathscr{P}_{\mu_+,\mu_-^\Omega}[S_\infty]=\Pmu[A_\infty;\Omega^1]$, the
  semicontinuity property obtained in \eqref{eq:lsc-Rn-adapt} directly
  transforms into the one claimed in \eqref{eq:lsc-BV-dom}.

  In a second step, it remains to remove in case of the setting
  \eqref{item:lsc-BV-dom-a} the additional assumption $\P(\Omega)<\infty$ which we
  have imposed so far. To this end we consider the general case of
  \eqref{item:lsc-BV-dom-a} with merely $\Omega\in\BVs_\loc(\R^n)$ and apply the
  result achieved on the cut-offs
  $\Omega_R\coleq\Omega\cap\B_R\in\BVs(\R^n)$ with $\mu_\pm\ecke\Omega_R^1$
  in place of $\mu_\pm$ to establish
  \[
    \liminf_{k\to\infty}\big[\P(A_k,\Omega_R^1)+\mu_+(A_k^1\cap\Omega_R^1)-\mu_-(A_k^+\cap\Omega_R^1)\big]
    \ge\P(A_\infty,\Omega_R^1)+\mu_+(A_\infty^1\cap\Omega_R^1)-\mu_-(A_\infty^+\cap\Omega_R^1)
  \]
  for every $R\in{(0,\infty)}$. Using $\Omega_R^1\subset\Omega^1$ and elementary
  estimations we deduce
  \[
    \liminf_{k\to\infty}\Pmu[A_k;\Omega^1]+\mu_-((\Omega_R^1)^\c)
    \ge\P(A_\infty,\Omega_R^1)+\mu_+(A_\infty^1\cap\Omega_R^1)-\mu_-(A_\infty^+)\,,
  \]
  from which we obtain the claim \eqref{eq:lsc-BV-dom} also in the general case
  of \eqref{item:lsc-BV-dom-a} by sending $R\to\infty$, by taking into account
  pointwise monotone convergence of $\Omega_R^1$ to $\Omega^1$ and the assumption
  $\mu_\pm\ecke(\Omega^1)^c\equiv0$, and finally by crucially exploiting the
  finiteness of $\mu_-$.
\end{proof}

Next, even though these are side issues, we add remarks on a
modified strategy for proving Theorem \ref{thm:lsc-BV-dom} and on a refined
version of the theorem, which gives the semicontinuity conclusion
\eqref{eq:lsc-BV-dom} for $\Pmu[\,\cdot\,;\Omega^1]$ even for some
measures $\mu_\pm$ which merely satisfy $\mu\ecke(\Omega^+)^\c\equiv0$ and thus
include boundary terms on $\partial^\ast\!\Omega$.

\begin{rem}[on a modified proof of Theorem \ref{thm:lsc-BV-dom} and a variant
    with boundary measures]
  \text{}
  \begin{enumerate}[{\rm(i)}]
  \item Imposing $\P(\Omega)<\infty$ as a decisive additional assumption, the
    conclusion of Theorem \ref{thm:lsc-BV-dom} can also be established by
    modified strategy. In case of the setting \eqref{item:lsc-BV-dom-a} this
    strategy bypasses Proposition \ref{prop:ic-sing}, and in case of the
    settings \eqref{item:lsc-BV-dom-b} and \eqref{item:lsc-BV-dom-c} it requires
    the ICs imposed on $\mu_-{+}\P(\Omega,\,\cdot\,)$ now merely for $\mu_-$
    itself. One may wonder whether the latter point partially
    improves on the statement of the theorem, but actually it does not, since in
    case $\P(\Omega)<\infty$ the relevant ICs for $\mu_-$ imply the ones for
    $\mu_-{+}\P(\Omega,\,\cdot\,)$ \ka possibly with increased $R_\eps$ and
    decreased $\delta$\kz\textup{;} compare with points
    \eqref{item:lsc-BV-dom-i} and \eqref{item:lsc-BV-dom-ii} of Remark
    \ref{rem:lsc-BV-dom} below. Nonetheless, we believe that the modified
    strategy is of some intrinsic interest, and thus we explicate it
    here.\label{item:lsc-BV-dom-alt-strat}

    \begin{proof}[Modified proof of Theorem \ref{thm:lsc-BV-dom} in case
        $\P(\Omega)<\infty$]
      We first record that $\P(\Omega)<\infty$ implies, by Theorem
      \ref{thm:2P-admis}, the small-volume IC with constant $1$ for the finite
      measure $\pi^\Omega\coleq\P(\Omega,\,\cdot\,)$. Arguing as in the
      preceding proof, but with application of Theorem \ref{thm:lsc-Rn} to
      $\mathscr{P}_{\mu_+,\pi^\Omega}$ (and thus no need for having or checking
      ICs for $\mu_-{+}\pi^\Omega$), we end up with
      \begin{equation}\label{eq:lsc-dom-plus}
        \liminf_{k\to\infty}\mathscr{P}_{\mu_+,0}[A_k;\Omega^1]
        \ge\mathscr{P}_{\mu_+,0}[A_\infty;\Omega^1]\,.
      \end{equation}
      We can now complement this with a similar, but \glqq dual\grqq{}
      reasoning. To this end we work with
      \[
        U_k\coleq A_k\cup\Omega^\c
        \qq\qq\text{and}\qq\qq
        U_\infty\coleq A_\infty\cup\Omega^\c
      \]
      (which under \eqref{item:lsc-BV-dom-b} or \eqref{item:lsc-BV-dom-c} with
      $\P(\Omega)<\infty$ are finite-perimeter sets) and deduce by an application
      of Theorem \ref{thm:lsc-Rn} to $\mathscr{P}_{\pi^\Omega,\mu_-}$ (still
      with $\pi^\Omega=\P(\Omega,\,\cdot\,)$) the semicontinuity property
      \begin{equation}\label{eq:lsc-Rn-adapt'}
        \liminf_{k\to\infty}\mathscr{P}_{\pi^\Omega,\mu_-}[U_k]
        \ge\mathscr{P}_{\pi^\Omega,\mu_-}[U_\infty]\,.
      \end{equation}
      Crucially exploiting $\P(\Omega)<\infty$ once more, we can rewrite
      $\P(U_k)=\P(U_k,\Omega^1)+\P(\Omega,(U_k^1)^\c)=\P(A_k,\Omega^1)+\P(\Omega)-\P(\Omega,U_k^1)$
      and consequently $\mathscr{P}_{\pi^\Omega,\mu_-}[U_k]=\mathscr{P}_{0,\mu_-}[A_k;\Omega^1]+\P(\Omega)$.
      With this and the analogous formula for $U_\infty$ and $A_\infty$ we go into
      \eqref{eq:lsc-Rn-adapt'} and, after canceling the $\P(\Omega)$-terms,
      then find
      \begin{equation}\label{eq:lsc-dom-minus}
        \liminf_{k\to\infty}\mathscr{P}_{0,\mu_-}[A_k;\Omega^1]\ge\mathscr{P}_{0,\mu_-}[A_\infty;\Omega^1]\,.    
      \end{equation}
      Since \eqref{eq:lsc-dom-plus} and \eqref{eq:lsc-dom-minus} apply also with
      $A_k\cap A_\infty$ and $A_k\cup A_\infty$, respectively, in place of $A_k$, we can
      combine these two semicontinuity properties by the strategy from the proof
      of Theorem \ref{thm:lsc-Rn}\eqref{item:lsc-Rn-c}. Thus, we indeed arrive
      at the full claim \eqref{eq:lsc-BV-dom} which includes both the $\mu_+$-
      and $\mu_-$-terms.
    \end{proof}
  \item\label{rem:lsc-BV-dom-bdry}
        If we add again $\P(\Omega)<\infty$ to the assumptions of Theorem
    \ref{thm:lsc-BV-dom} and require also those ICs imposed in the original
    statement on $\mu_\pm$ now even for
    $\mu_\pm{+}\P(\Omega,\,\cdot\,)$, then we can weaken the
    requirement $\mu_\pm\ecke(\Omega^1)^\c\equiv0$ from the original statement to merely
    $\mu_\pm\ecke(\Omega^+)^\c\equiv0$ and still obtain the semicontinuity
    conclusion for the up-to-the-boundary functional
    \[
      A\mapsto\P(A,\Omega^1)+\mu_+((A\cup\Omega^\c)^1)-\mu_-((A\cap\Omega)^+)\,.
    \]
    Here, in order to better classify the terms we record that
    \begin{align*}
      \mu_+((A\cup\Omega^\c)^1)&=\mu_+(A^1\cap\Omega^1)+\mu_+((A\cup\Omega^\c)^1\cap\partial^\ast\Omega)\,,\\
      \mu_-((A\cap\Omega)^+)&=\mu_-(A^+\cap\Omega^1)+\mu_-((A\cap\Omega)^+\cap\partial^\ast\Omega)
    \end{align*}
    split into an interior portion on $\Omega^1$ and a boundary portion on
    $\partial^\ast\Omega$, where the latter is evaluated via the interior traces
    $(A\cap\Omega)^+\cap\partial^\ast\Omega$ and
    $(A\cup\Omega^\c)^1\cap\partial^\ast\Omega$ of\/ $A$ on $\partial^\ast\Omega$
    and where these two traces actually coincide up to $\H^{n-1}$-negligible sets at
    least in case $\P(A,\partial^\ast\Omega)<\infty$ of finite perimeter up to
    $\partial^\ast\Omega$.

    The proof of the semicontinuity just claimed is still a variant of the
    preceding ones. Indeed, setting again $\pi^\Omega\coleq\P(\Omega,\,\cdot\,)$,
    we recall that in the original proof we applied Theorem \ref{thm:lsc-Rn}
    directly for $\mathscr{P}_{\mu_+,\mu_-+\pi^\Omega}[S_k]$, while in the
    variant of the preceding point \eqref{item:lsc-BV-dom-alt-strat} we applied
    it for $\mathscr{P}_{\mu_+,\pi^\Omega}[S_k]$ and\/ $\mathscr{P}_{\pi^\Omega,\mu_-}[U_k]$.
    We now follow closely the latter strategy, where the only essential
    modification is that in order to come out with non-trivial boundary terms
    we cannot anymore \glqq decouple\grqq{} $\mu_\pm$ and\/
    $\pi^\Omega=\P(\Omega,\,\cdot\,)$, but rather now apply Theorem \ref{thm:lsc-Rn}
    for $\mathscr{P}_{0,\mu_-{+}\pi^\Omega}[S_k]$ and
    $\mathscr{P}_{\mu_+{+}\pi^\Omega,0}[U_k]$.
  
    Among the assumptions mentioned above, we single out and discuss the
    case of the basic setting \eqref{item:lsc-BV-dom-a} with
    $\mu_\pm\ecke(\Omega^+)^\c\equiv0$ and the small-volume IC with constant\/ $1$
    for the finite measures
    $\mu_\pm{+}\P(\Omega,\,\cdot\,)=\mu_\pm{+}\H^{n-1}{\ecke}\partial^\ast\Omega$. In
    this case, once more by Proposition \ref{prop:ic-sing}, the IC splits into
    separate ICs for $\mu_\pm\ecke\Omega^1$ and
    $(\mu_\pm{+}\H^{n-1})\ecke\partial^\ast\Omega$, and then Theorem
    \ref{thm:2P-admis} identifies a wide class of admissible measures. Indeed,
    $\mu_\pm$ will be admissible if the interior portion $\mu_\pm\ecke\Omega^1$
    has the form $\alpha\H^{n-1}\ecke(\Omega^1\cap\partial^\ast\!E)$ with
    $E\in\M(\R^n)$, $\P(E)<\infty$ and weight function $\alpha$ bounded by $2$
    and if the boundary portion $\mu_\pm\ecke\partial^\ast\Omega$ has the form
    $\beta\H^{n-1}\ecke\partial^\ast\Omega$ with boundary weight function
    $\beta$ bounded by $1$ \ka so that the resulting weight for
    $(\mu_\pm{+}\H^{n-1})\ecke\partial^\ast\Omega$ is again bounded by $2$\kz.
    We actually consider this part of the outcome with the bound $2$ on $\Omega^1$
    and the bound\/ $1$ on $\partial^\ast\Omega$ as a natural and very plausible
    manifestation of the \glqq one-sided accessibility\grqq{} of
    $\partial^\ast\Omega$ only from inside $\Omega$.
  \end{enumerate}
\end{rem}

The next remark uncovers that the ICs for $\mu_-{+}\P(\Omega,\,\cdot\,)$ in
Theorem \ref{thm:lsc-BV-dom} may in fact be understood as a kind of
domain-adapted ICs. This also motivates the usage of very similar ICs in the
subsequent semicontinuity statement of Theorem \ref{thm:lsc-dom} on general
open sets.

\begin{rem}[on the interpretation of the ICs for $\mu_-{+}\P(\Omega,\,\cdot\,)$
    in Theorem \ref{thm:lsc-BV-dom}]\label{rem:lsc-BV-dom}
  Consider $\Omega\in\Bo(\R^n)$ and a Radon measure $\mu_-$ on $\R^n$.
  \begin{enumerate}[{\rm(i)}]
  \item If we assume $\Omega\in\BVs_\loc(\R^n)$ and
    $\mu_-\ecke(\Omega^1)^\c\equiv0$, then the almost-strong IC near $\infty$
    with constant\/ $1$ for $\mu_-{+}\P(\Omega,\,\cdot\,)$, as it occurs in
    \eqref{item:lsc-BV-dom-b}, implies that, for every $\eps>0$ with its
    corresponding $R_\eps$, we have
    \begin{equation}\label{eq:strong-ic-at-infty-rel-pre}
      \mu_-(E^+)\le\P(E,\Omega^1)+\eps
      \qq\text{for all }E\in\M(\R^n)\text{ with }|E\cap\B_{R_\eps}|=0\text{ and }|E|<\infty\,.
    \end{equation}
    This can be understood as version of the same type of\/ IC only for $\mu_-$
    but \emph{relative to the domain $\Omega^1$}.
    
    \begin{proof}
      It suffices to verify \eqref{eq:strong-ic-at-infty-rel-pre} for
      $E\in\M(\R^n)$ with $|E\cap\B_{R_\eps}|=0$ and
      $|E|{+}\P(E,\Omega^1)<\infty$. To this end, we consider
      $R\in{(R_\eps,\infty)}$, abbreviate $\Omega_R\coleq\Omega\cap\B_R$, use
      $\mu_-\ecke(\Omega^1)^\c\equiv0$, and test the IC with $E\cap\Omega_R$. In this way we find
      $\mu_-(E^+\cap\B_R)+\P(\Omega,(E\cap\Omega_R)^+)
      \le\mu_-((E\cap\Omega_R)^+)+\P(\Omega,(E\cap\Omega_R)^+)\le\P(E\cap\Omega_R)+\eps$.
      Next we derive a slightly sharpened variant of the equality case in
      \eqref{eq:P(AcapB)}. By distinguishing between points in $\Omega_R^1$ and
      $\partial^\mathrm{e}\Omega_R$ we verify      
      $\partial^\mathrm{e}(E\cap\Omega_R)=(\Omega_R^1\cap\partial^\mathrm{e}\!E)\dcup((E\cap\Omega_R)^+\cap\partial^\mathrm{e}\Omega_R)$,
      and then via Theorems \ref{thm:DeGiorgi}, \ref{thm:Federer}, and
      \eqref{eq:P(AcapB)} we arrive at
      $\P(E\cap\Omega_R)=\P(E,\Omega_R^1)+\P(\Omega_R,(E\cap\Omega_R)^+)
      \le\P(E,\Omega^1)+\P(\Omega,(E\cap\Omega_R)^+)+\H^{n-1}(E^+\cap\partial\B_R)$
      for $R\in{(0,\infty)}$. When combining this with the previous estimate,
      the terms $\P(\Omega,(E\cap\Omega_R)^+)$ cancel out, and we obtain
      $\mu_-(E^+\cap\B_R)\le\P(E,\Omega^1)+\H^{n-1}(E^+\cap\partial\B_R)+\eps$.
      Exploiting
      once more $|E^+|=|E|<\infty$ in a coarea argument, we have
      $\liminf_{R\to\infty}\H^{n-1}(E^+\cap\partial\B_R)=0$, and in the limit
      $R\to\infty$ we arrive at \eqref{eq:strong-ic-at-infty-rel-pre}. (In case of
      $\P(\Omega,(E\cap\Omega)^+)<\infty$ this argument also works
      more directly with $E\cap\Omega$ in place of $E\cap\Omega_R$. However,
      we cannot exclude $\P(\Omega,(E\cap\Omega)^+)=\infty$ in general.)
    \end{proof}

    Moreover, in case of $\P(\Omega)<\infty$ and with a possible increase of the
    radii $R_\eps$, we can also get back from
    \eqref{eq:strong-ic-at-infty-rel-pre} to the original almost-strong IC near
    $\infty$ for $\mu_-{+}\P(\Omega,\,\cdot\,)$. This simply works by trivially
    enlarging the right-hand side in \eqref{eq:strong-ic-at-infty-rel-pre} to
    $\P(E)+\eps$ and using $\lim_{R\to\infty}\P(\Omega,(\B_R)^\c)=0$ to estimate
    $\P(\Omega,\,\cdot\,)$ outside large balls by $\eps$. In case
    $\P(\Omega)=\infty$, however, this backwards implication is false even if,
    in addition to \eqref{eq:strong-ic-at-infty-rel-pre} for $\mu_-$, both
    $\mu_-$ and $\P(\Omega,\,\cdot\,)$ separately satisfy the strong IC with
    constant\/ $1$. This is demonstrated, for $n\ge2$, by
    $\mu_-=\H^{n-1}\ecke(\R^{n-1}{\times}\{-2,2\})$ on
    $\Omega=\R^{n-1}{\times}{[-1,1]}^\c$, which has the announced
    properties.\label{item:lsc-BV-dom-i}
  \item If we assume once more $\Omega\in\BVs_\loc(\R^n)$ and
    $\mu_-\ecke(\Omega^1)^\c\equiv0$, then the small-volume IC with constant\/
    $1$ for $\mu_-{+}\P(\Omega,\,\cdot\,)$, as it occurs in
    \textup{\eqref{item:lsc-BV-dom-c}}, implies, for every $\eps>0$, the
    existence of $\delta>0$ such that
    \begin{equation}\label{eq:small-vol-ic-rel-pre}
      \mu_-(E^+)\le\P(E,\Omega^1)+\eps
      \qq\text{for all }E\in\M(\R^n)\text{ with }|E|<\delta\,.
    \end{equation}
    This can be seen as a small-volume IC for $\mu_-$ \emph{relative to the
    domain $\Omega^1$}, and the implication can be proved by straightforward
    adaptation of the reasoning in the preceding point
    \eqref{item:lsc-BV-dom-i}. Moreover, if we assume
    $\Omega\in\BVs_\loc(\R^n)$, $\mu_-\ecke(\partial^\ast\Omega)^\c\equiv0$, the
    small-volume IC with constant\/ $1$ on $\R^n$ for $\P(\Omega,\,\cdot\,)$
    \ka as it is generally satisfied in case $\P(\Omega)<\infty$ by Theorem
    \ref{thm:2P-admis}\kz, and that either $\mu_-$ is finite or the supports of\/ $\mu_-$ and
    $\P(\Omega,\,\cdot\,)$ have positive distance, then we can also get back
    from \eqref{eq:small-vol-ic-rel-pre} to the small-volume IC for
    $\mu{+}\P(\Omega,\,\cdot\,)$ with constant\/ $1$ by using Proposition
    \ref{prop:ic-sing}. In connection with this last claim, it is easy to see
    that the assumptions
    $\Omega\in\BVs_\loc(\R^n)$, $\mu_-\ecke(\partial^\ast\Omega)^\c\equiv0$, and
    the small-volume IC for $\P(\Omega,\,\cdot\,)$ cannot be dropped. Moreover,
    the example given, for $n=2$, by $\mu_-=\H^1\ecke(\R{\times}\{0\})$ on
    $\Omega=\R^2\setminus\bigcup_{i=1}^\infty\big({[2i{-}1,2i]}{\times}{\big[\frac1{2i},\frac1i\big]}\big)$
    demonstrates that also the requirement of finiteness of $\mu_-$ or supports
    at positive distance is indeed necessary for the backwards implication \ka
    even if, as it happens here, both $\mu_-$ and $\P(\Omega,\,\cdot\,)$
    separately satisfy the strong IC with constant\/ $1$\kz.\label{item:lsc-BV-dom-ii}
  \end{enumerate}
\end{rem}

At this point we finally turn to the second main statement of this section,
which complements the previous result on the measure-theoretic interior of
$\BVs_{(\loc)}$ sets with a version on arbitrary open sets $D=\Omega$ in
$\R^n$. So, in comparison with Theorem \ref{thm:lsc-BV-dom} we drop any
regularity of the domain, but require openness in the stronger topological
sense.

\begin{thm}[lower semicontinuity on a general open set]
    \label{thm:lsc-dom}
  Consider an open set\/ $\Omega$ in $\R^n$, a set $A_\infty\in\M(\R^n)$, a sequence
  $(A_k)_{k\in\N}$ in $\M(\R^n)$. For non-negative Radon measures $\mu_+$ and\/
  $\mu_-$ on $\R^n$ with $\mu_\pm\ecke\Omega^\c\equiv0$, assume that both
  $\mu_+$ and $\mu_-$ satisfy the small-volume IC in $\R^n$ with constant\/ $1$
  and that \emph{one of the following sets of additional assumptions} is
  valid\textup{:}
  \begin{enumerate}[{\rm(a)}]
  \item The measure $\mu_-$ is \emph{finite}, and $A_k$ converge to $A_\infty$
    \emph{locally} in measure on $\Omega$.\label{item:lsc-dom-a}
  \item The measure $\mu_-$ satisfies an \emph{almost-strong} IC near $\infty$
    \emph{relative} to $\Omega$ with constant\/ $1$ in the sense that, for
    every $\eps>0$, there exists some $R_\eps\in{(0,\infty)}$ such that
    \begin{equation}\label{eq:strong-ic-at-infty-rel}
      \mu_-(A^+)\le\P(A,\Omega)+\eps
      \qq\text{for all }A\in\M(\R^n)\text{ with }|A\cap\B_{R_\eps}|=0\text{ and }|A|<\infty\,,
    \end{equation}
    and $A_k$ converge to $A_\infty$ \emph{locally} in measure on $\Omega$ with
    $|(A_k\Delta A_\infty)\cap\Omega|{+}\P(A_k,\Omega){+}\P(A_\infty,\Omega)<\infty$ for all
    $k\in\N$.\label{item:lsc-dom-b}
  \item The measure $\mu_-$ satisfies the small-volume IC \emph{relative} to
    $\Omega$ with constant\/ $1$ in the sense that, for every $\eps>0$, there is
    some $\delta>0$ such that
    \begin{equation}\label{eq:small-vol-ic-rel}
      \mu_-(A^+)\le\P(A,\Omega)+\eps
      \qq\text{for all }A\in\M(\R^n)\text{ with }|A|<\delta\,,
    \end{equation}
    and $A_k$ converge to $A_\infty$ \emph{globally} in measure on $\Omega$ with
    $\P(A_k,\Omega){+}\P(A_\infty,\Omega)<\infty$ for all $k\in\N$.\label{item:lsc-dom-c}
  \end{enumerate}
  If furthermore $\min\{\mu_+(A_k^1),\mu_-(A_k^+)\}<\infty$ holds for all
  $k\in\N$, then we have $\min\{\mu_+(A_\infty^1),\mu_-(A_\infty^+)\}<\infty$ and
  \begin{equation}\label{eq:lsc-dom}
    \liminf_{k\to\infty}\Pmu[A_k;\Omega]\ge\Pmu[A_\infty;\Omega]\,.
  \end{equation}
\end{thm}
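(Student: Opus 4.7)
The plan is to deduce Theorem \ref{thm:lsc-dom} from the perimeter-domain version Theorem \ref{thm:lsc-BV-dom} by exhausting the general open set $\Omega$ from inside with sub-domains of finite perimeter. Using Theorem \ref{thm:Lip-coarea} applied to the Lipschitz function $x\mapsto\min\{\dist(x,\Omega^\c),1\}\,(j{-}|x|)_+$ one selects good levels to obtain open sets $\Omega_j$ with $\Omega_j\Subset\Omega_{j+1}\Subset\Omega$, $\Omega_j\in\BVs(\R^n)$, and $\bigcup_j\Omega_j=\Omega$. Since $\overline{\Omega_j}\subset\Omega$, we have the crucial chain $\Omega_j\subset\Omega_j^1\subset\overline{\Omega_j}\subset\Omega$, so in particular $\bigcup_j\Omega_j^1=\Omega$, and the restricted measures $\mu_\pm^j\coleq\mu_\pm\ecke\Omega_j^1$ satisfy $\mu_\pm^j\ecke(\Omega_j^1)^\c\equiv0$.

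For each fixed $j$, I would then invoke Theorem \ref{thm:lsc-BV-dom} with $\Omega_j$ in place of $\Omega$ and $\mu_\pm^j$ in place of $\mu_\pm$. The small-volume IC for $\mu_\pm^j$ in $\R^n$ is inherited from $\mu_\pm$ by trivial monotonicity in the test sets. For cases \eqref{item:lsc-dom-b} and \eqref{item:lsc-dom-c} one additionally needs that $\mu_-^j+\P(\Omega_j,\cdot)$ satisfies the small-volume IC with constant $1$: since $\mu_-^j$ is concentrated on $\Omega_j^1$ while $\P(\Omega_j,\cdot)=\H^{n-1}\ecke\partial^\ast\Omega_j$ lives on the disjoint set $\partial^\ast\Omega_j$, the two measures are mutually singular, the latter is finite (as $\Omega_j\in\BVs(\R^n)$), and each individually satisfies the small-volume IC with constant $1$ (the perimeter piece via Theorem \ref{thm:2P-admis}, with a factor $2$ to spare). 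Proposition \ref{prop:ic-sing} then upgrades this to the small-volume IC for the sum. The almost-strong IC near infinity required in case \eqref{item:lsc-dom-b} is automatic since $\overline{\Omega_j}$ is compact: for $R$ larger than any bound on $\Omega_j$, every $A$ with $|A\cap\B_R|=0$ has $A^+\cap\overline{\Omega_j}=\emptyset$, whence the left-hand side vanishes. The remaining convergence and finite-perimeter hypotheses transfer straightforwardly from $\Omega$ to $\Omega_j$.

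Theorem \ref{thm:lsc-BV-dom} then yields, for each $j$,
\[
  \liminf_{k\to\infty}\bigl[\P(A_k,\Omega_j^1)+\mu_+(A_k^1\cap\Omega_j^1)-\mu_-(A_k^+\cap\Omega_j^1)\bigr]
  \ge\P(A_\infty,\Omega_j^1)+\mu_+(A_\infty^1\cap\Omega_j^1)-\mu_-(A_\infty^+\cap\Omega_j^1)\,.
\]
Enlarging the perimeter and $\mu_+$-terms on the left to $\P(A_k,\Omega)$ and $\mu_+(A_k^1)$ is harmless (using $\mu_+\ecke\Omega^\c\equiv0$), while the three right-hand quantities converge as $j\to\infty$ to $\P(A_\infty,\Omega)$, $\mu_+(A_\infty^1)$, $\mu_-(A_\infty^+)$ by monotone convergence. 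The sole remaining issue, and the main technical obstacle, is to compensate the residual error $\mu_-(A_k^+\setminus\Omega_j^1)$ uniformly in $k$ as $j\to\infty$. In case \eqref{item:lsc-dom-a} this is trivial from $\mu_-(A_k^+\setminus\Omega_j^1)\le\mu_-(\Omega\setminus\Omega_j^1)\to0$ by finiteness. In case \eqref{item:lsc-dom-c}, global convergence $|(A_k\Delta A_\infty)\cap\Omega|\to0$ combined with the relative small-volume IC \eqref{eq:small-vol-ic-rel} applied to $(A_k\Delta A_\infty)\setminus\Omega_j$ reduces the control of $\mu_-(A_k^+\setminus\Omega_j^1)$ essentially to that of $\mu_-(A_\infty^+\setminus\Omega_j^1)\to0$, modulo small perimeter error terms in $\P(A_k,\Omega\setminus\Omega_j^1)$. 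The hardest case is \eqref{item:lsc-dom-b}, where $\mu_-$ may be infinite and only local convergence is available; the strategy would be a combined cut-off, choosing the exhaustion level $j$ large enough that $\Omega\cap\overline{\B_{R_\eps}}\subset\Omega_j$ (with $R_\eps$ from \eqref{eq:strong-ic-at-infty-rel}), and selecting the level by the coarea method of Proposition \ref{prop:lsc-global'} so that $\H^{n-1}(A_k^+\cap\partial^\ast\Omega_j)$ can be made arbitrarily small in the $\liminf$. The relative almost-strong IC applied to $A_k\setminus\Omega_j$ (which has $|(A_k\setminus\Omega_j)\cap\B_{R_\eps}|=0$ and finite volume, the latter secured by a symmetric-difference argument as in the treatment of Theorem \ref{thm:lsc-Rn}\eqref{item:lsc-Rn-b}) then yields $\mu_-(A_k^+\setminus\Omega_j^1)\le\P(A_k,\Omega\setminus\Omega_j^1)+\H^{n-1}(A_k^+\cap\partial^\ast\Omega_j)+\eps$; rearranging and sending first $j\to\infty$ and then $\eps\to0$ completes the argument.
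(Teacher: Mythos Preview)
Your treatment of setting \eqref{item:lsc-dom-a} is correct and coincides with the paper's: exhaust $\Omega$ by $\Omega_j\Subset\Omega$ with $\Omega_j\in\BVs(\R^n)$, apply Theorem~\ref{thm:lsc-BV-dom}\eqref{item:lsc-BV-dom-a} on each $\Omega_j$, and use finiteness of $\mu_-$ to absorb the tail $\mu_-(\Omega\setminus\Omega_j^1)$. One minor simplification you overlooked: since each $\Omega_j$ is bounded and $\mu_-$ is Radon on $\R^n$, the restrictions $\mu_-^j$ are automatically \emph{finite}, so there is no need to verify ICs for $\mu_-^j+\P(\Omega_j,\cdot)$ or to invoke parts \eqref{item:lsc-BV-dom-b}, \eqref{item:lsc-BV-dom-c} of Theorem~\ref{thm:lsc-BV-dom} at all; part \eqref{item:lsc-BV-dom-a} suffices uniformly.

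For settings \eqref{item:lsc-dom-b} and \eqref{item:lsc-dom-c} the paper does \emph{not} continue with the inner exhaustion $\Omega_j$. Instead, once \eqref{item:lsc-dom-a} is established, it applies that very result on the open sets $\Omega\cap\B_{R_i}$ with finite measures $\mu_\pm\ecke(\Omega\cap\B_{R_i})$, and controls the remainder outside $\B_{R_i}$ by the relative IC combined with a coarea choice of good radii $R_i$, exactly as in the derivation of \eqref{eq:Pmu-at-infty} for Theorem~\ref{thm:lsc-Dir}. The crucial advantage of ball cut-offs over your inner exhaustion is that $(A_k\Delta A_\infty)\setminus\B_{R_i}$ trivially satisfies $|(\,\cdot\,)\cap\B_{R_\eps}|=0$ as soon as $R_i>R_\eps$, so the relative almost-strong IC \eqref{eq:strong-ic-at-infty-rel} applies directly.

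This is precisely where your argument for \eqref{item:lsc-dom-b} breaks: the inclusion $\Omega\cap\overline{\B_{R_\eps}}\subset\Omega_j$ is generally \emph{impossible}, since $\Omega_j\Subset\Omega$ forces $\overline{\Omega_j}\subset\Omega$, whereas $\Omega\cap\overline{\B_{R_\eps}}$ reaches up to $\partial\Omega$ whenever $\partial\Omega\cap\overline{\B_{R_\eps}}\neq\emptyset$. Consequently $A_k\setminus\Omega_j$ may carry volume inside $\B_{R_\eps}$ (in the collar $\Omega\setminus\Omega_j$ near $\partial\Omega$), and \eqref{eq:strong-ic-at-infty-rel} is inapplicable to it. Your approach in \eqref{item:lsc-dom-c} is closer to viable, but would still need an additional argument to make $\P(\Omega_j,(A_k\Delta A_\infty)^+)$ small and to handle $\mu_-$ on $\partial^\ast\Omega_j$; the paper's ball cut-offs dispense with these complications.
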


Since the different cases in Theorem \ref{thm:lsc-dom} are still illustrated
well by the examples given in connection with Theorem \ref{thm:lsc-BV-dom}, we
now keep the discussion brief. Once more, the setting \eqref{item:lsc-dom-a}
concerns finite measures $\mu_-$, and this part of
Theorem \ref{thm:lsc-dom} will be deduced from the corresponding assertion for
finite-perimeter domains by a simple exhaustion argument, which closely
resembles the last step in the proof of Theorem \ref{thm:lsc-BV-dom} and
crucially draws on the finiteness of $\mu_-$. The improvements for infinite
measures provided by \eqref{item:lsc-dom-b} and \eqref{item:lsc-dom-c} involve
essentially the same relative ICs found in \eqref{eq:strong-ic-at-infty-rel-pre}
and \eqref{eq:small-vol-ic-rel-pre}. Despite this similarity, under
\eqref{item:lsc-dom-b} or \eqref{item:lsc-dom-c} with possibly infinite $\mu_-$
we cannot derive the result directly from the counterparts in Theorem
\ref{thm:lsc-BV-dom} by exhaustion, but rather will implement a deduction from
the result in the setting \eqref{item:lsc-dom-a} by cut-off arguments widely
analogous to the proof of Theorem \ref{thm:lsc-Dir}.

The difference between \eqref{item:lsc-dom-b} and \eqref{item:lsc-dom-c} can
again be underpinned with concrete examples: On one hand, the cases $n\ge2$,
$\mu=2\H^{n-1}\ecke(\R^{n-1}{\times}\{0\})$, $\Omega=\R^{n-1}{\times}{({-}1,1)}$
and $n=1$, $\mu=2\H^0\ecke\mathds{Z}$, $\Omega=\R$ are included in
\eqref{item:lsc-dom-c}, but not in \eqref{item:lsc-dom-b}. On the other hand,
both \eqref{item:lsc-dom-b} and \eqref{item:lsc-dom-c} apply in the cases
$n=2$, $\mu=2\H^1\ecke(\R{\times}\{0\})$, $\Omega=\{(x,y)\in\R^2\,:\,|y|<|x|\}$
and $n=1$, $\mu=2\H^0\ecke(2\mathds{Z}{-}1)$, $\Omega=\R{\setminus}2\mathds{Z}$,
where, however, only \eqref{item:lsc-dom-b} gives semicontinuity even with
respect to \emph{local} convergence in measure.

We also record in connection with both Theorem \ref{thm:lsc-BV-dom} and
Theorem \ref{thm:lsc-dom} and the corresponding examples:

\begin{rem}[on the settings of Theorems \ref{thm:lsc-BV-dom} and
    \ref{thm:lsc-dom}]\label{rem:rel-ic-enforces-finite}
  In Theorem \ref{thm:lsc-dom}, the settings \eqref{item:lsc-dom-b} and
  \eqref{item:lsc-dom-c} improve on \eqref{item:lsc-dom-a} only in the
  infinite-volume case $|\Omega|=\infty$, since indeed the IC from
  \eqref{eq:strong-ic-at-infty-rel} or \eqref{eq:small-vol-ic-rel} for a Radon
  measure $\mu_-$ on $\R^n$ and open $\Omega\subset\R^n$ together with
  $|\Omega|<\infty$ and $\mu_-\ecke(\Omega^+)^\c\equiv0$ already enforces
  finiteness of $\mu_-$.

  \begin{proof}
    In case $|\Omega|<\infty$ we may test \eqref{eq:strong-ic-at-infty-rel}
    with $\Omega\setminus\B_{R_1}$ to infer
    $\mu_-(\Omega^+\setminus\overline{\B_{R_1}})\le\mu_-((\Omega\setminus\B_{R_1})^+)
    \le\P(\Omega\setminus\B_{R_1},\Omega)+1\le\P(\B_{R_1})+1<\infty$. Similarly,
    if we fix $\delta>0$ such that \eqref{eq:small-vol-ic-rel} applies with
    $\eps=1$, then in view of $|\Omega|<\infty$ we have
    $|\Omega\setminus\B_{R_1}|<\delta$ for some suitably large
    $R_1\in{(0,\infty)}$, and by testing \eqref{eq:small-vol-ic-rel} with
    $\Omega\setminus\B_{R_1}$ we deduce exactly the same estimate. Clearly,
    taking into account local finiteness of $\mu_-$ and
    $\mu_-\ecke(\Omega^+)^\c\equiv0$, the estimate yields finiteness of $\mu_-$
    in both cases.
  \end{proof}

  Also in the earlier Theorem \ref{thm:lsc-BV-dom}, the settings
  \eqref{item:lsc-BV-dom-b} and \eqref{item:lsc-BV-dom-c} improve on
  \eqref{item:lsc-BV-dom-a} only in case $|\Omega|=\infty$. This follows by the
  same reasoning, which also works with \eqref{eq:strong-ic-at-infty-rel-pre}
  and \eqref{eq:small-vol-ic-rel-pre} in place of
  \eqref{eq:strong-ic-at-infty-rel} and \eqref{eq:small-vol-ic-rel}.
\end{rem}

Finally, let us point out that the additional \emph{relative} IC
\eqref{eq:small-vol-ic-rel} of Theorem \ref{thm:lsc-dom}\eqref{item:lsc-dom-c}
could in fact be required only near $\infty$ by adding a condition
$|A\cap\B_{R_\eps}|=0$, as it was included in all our settings of type
\eqref{item:lsc-dom-b}. However, while for the strong-type setting
\eqref{item:lsc-dom-b} the near-$\infty$ feature does win some generality, in
the small-volume setting \eqref{item:lsc-dom-c} an adaptation of Proposition
\ref{prop:ic-sing} shows that it does not, and therefore we have in fact decided
to stick to the formulation of Theorem \ref{thm:lsc-dom}\eqref{item:lsc-dom-c}
given above.

\smallskip

Now we proceed to the final semicontinuity proof of this paper.

\begin{proof}[Proof of Theorem \ref{thm:lsc-dom}]
  Throughout the proof we assume that $\lim_{k\to\infty}\Pmu[A_k;\Omega]$ exists
  and is finite. In addition, in view of $\mu_\pm\ecke\Omega^\c\equiv0$
  the values $\Pmu[A_k;\Omega]$, $\Pmu[A_\infty;\Omega]$ and all assumptions depend
  only on the portions $A_k\cap\Omega$ and $A_\infty\cap\Omega$ of $A_k$ and $A_\infty$. Hence
  we may and do assume
  \[
    A_k\subset\Omega\qq\text{and}\qq A_\infty\subset\Omega\,,
  \]
  which allows to rewrite the assumption $|(A_k\Delta A_\infty)\cap\Omega|<\infty$ of
  \eqref{item:lsc-dom-b} as $|A_k\Delta A_\infty|<\infty$ and to consider the global
  convergence on $\Omega$ in \eqref{item:lsc-dom-c} as global convergence on
  $\R^n$.

  In order to treat the situation \eqref{item:lsc-dom-a} we observe that the
  open set $\Omega$ can be exhausted by smooth open sets
  $\Omega_\ell\Subset\Omega$ with $\ell\in\N$ in the sense that
  $\Omega_\ell\subset\Omega_{\ell+1}$ for all $\ell\in\N$ and
  $\bigcup_{\ell=1}^\infty\Omega_\ell=\Omega$. Applying Theorem
  \ref{thm:lsc-BV-dom}\eqref{item:lsc-BV-dom-a} on $\Omega_\ell$ (which in
  particular satisfies $\Omega_\ell\in\BVs(\R^n)$ and $\Omega_\ell^1=\Omega_\ell$)
  with the measures $\mu_\pm\ecke\Omega_\ell$ we find
  \[
    \liminf_{k\to\infty}\big[\P(A_k,\Omega_\ell)+\mu_+(A_k^1\cap\Omega_\ell)-\mu_-(A_k^+\cap\Omega_\ell)\big]
    \ge\P(A_\infty,\Omega_\ell)+\mu_+(A_\infty^1\cap\Omega_\ell)-\mu_-(A_\infty^+\cap\Omega_\ell)\,.
  \]
  Using $\Omega_\ell\subset\Omega$ and elementary estimations we deduce
  \[
    \liminf_{k\to\infty}\Pmu[A_k;\Omega]+\mu_-(\Omega_\ell^\c)
    \ge\P(A_\infty,\Omega_\ell)+\mu_+(A_\infty^1\cap\Omega_\ell)-\mu_-(A_\infty^+)\,,
  \]
  from which we obtain the claim \eqref{eq:lsc-dom} in the generality of the
  situation \eqref{item:lsc-dom-a} by sending $\ell\to\infty$, by taking into
  account the pointwise monotone convergence of $\Omega_\ell$ to $\Omega$ and
  the assumption $\mu_\pm\ecke\Omega^\c\equiv0$, and finally by crucially
  exploiting the finiteness of $\mu_-$.

  In view of the analogy to the proof of Theorem
  \ref{thm:lsc-Dir}\eqref{item:lsc-Dir-b} we only sketch the arguments relevant
  for the present setting \eqref{item:lsc-dom-b}. As in the earlier proof, given
  an arbitrary $\eps>0$, we first choose a sequence of radii
  $R_i\in{(R_\eps,\infty)}$ with $\lim_{i\to\infty}R_i=\infty$ and pass to a
  subsequence of $(A_k)_{k\in\N}$ in order to ensure $\mu_-(\partial\B_{R_i})=0$ and
  $\lim_{k\to\infty}\H^{n-1}((A_k\Delta A_\infty)^+\cap\partial\B_{R_i})=0$. We then
  apply the already established part \eqref{item:lsc-dom-a} of the present
  theorem on $\Omega\cap\B_{R_i}$ with the \emph{finite} measures
  $\mu_\pm\ecke(\Omega\cap\B_{R_i})$, which inherit the small-volume IC from
  $\mu_\pm$, to infer
  \[
    \liminf_{k\to\infty}\big[\P(A_k,\Omega\cap\B_{R_i})+\mu_+(A_k^1\cap\B_{R_i})-\mu_-(A_k^+\cap\B_{R_i})\big]
    \ge\P(A_\infty,\Omega\cap\B_{R_i})+\mu_+(A_\infty^1\cap\B_{R_i})-\mu_-(A_\infty^+\cap\B_{R_i})\,.
  \]
  In order to estimate the terms cut off we follow closely the derivation
  around \eqref{eq:mu-at-infty} and \eqref{eq:Pmu-at-infty}, where now we take
  perimeters in the open domain $\Omega$ and rely on the relative version
  \eqref{eq:strong-ic-at-infty-rel} of the almost-strong IC in the form
  $\mu_-(((A_k\Delta A_\infty)\setminus\B_{R_i})^+)\le\P((A_k\Delta A_\infty)\setminus\B_{R_i},\Omega)+\eps$
  (which does apply, since $R_i\ge R_\eps$). Arguing as described we find that
  either the claim \eqref{eq:lsc-dom} holds trivially or we have
  $\mu_-(A_k^+){+}\mu_-(A_\infty^+)<\infty$ for all $k\in\N$ together with
  \begin{equation}\label{eq:P-dom-at-infty}
    \liminf_{k\to\infty}\big[\P(A_k,\Omega\setminus\B_{R_i})-\mu_-(A_k^+\setminus\B_{R_i})\big]
    \ge{-}\P(A_\infty,\Omega\setminus\B_{R_i})-\mu_-(A_\infty^+\setminus\B_{R_i})-\eps\,.
  \end{equation}
  By addition of the last two displayed equations and elementary estimation we
  arrive at
  \[
    \liminf_{k\to\infty}\Pmu[A_k;\Omega]
    \ge\P(A_\infty,\Omega\cap\B_{R_i})-\P(A_\infty,\Omega\setminus\B_{R_i})+\mu_+(A_\infty^1\cap\B_{R_i})-\mu_-(A_\infty^+)-\eps\,.
  \]
  Going to the limit $i\to\infty$ and using the arbitrariness of $\eps$, we
  obtain the claim \eqref{eq:lsc-dom} in the generality of the situation
  \eqref{item:lsc-dom-b}.

  The proof in the setting \eqref{item:lsc-dom-c} is an adaptation of the one in
  the setting \eqref{item:lsc-dom-b}, precisely as Theorem
  \ref{thm:lsc-Dir}\eqref{item:lsc-Dir-c} was obtained by adapting the argument
  given for Theorem \ref{thm:lsc-Dir}\eqref{item:lsc-Dir-b}. Indeed, for an
  arbitrary $\eps>0$, we can exploit $\lim_{k\to\infty}|A_k\Delta A_\infty|=0$ in order
  to apply the relative version \eqref{eq:small-vol-ic-rel} of the small-volume
  IC in the form
  $\mu_-(((A_k\Delta A_\infty)\setminus\B_{R_i})^+)\le\P((A_k\Delta A_\infty)\setminus\B_{R_i},\Omega)+\eps$
  at least for $k\gg1$. In the limit $k\to\infty$ we still arrive at the
  estimate \eqref{eq:P-dom-at-infty} and in conclusion can deduce the claim
  \eqref{eq:lsc-dom} also in the generality of the situation \eqref{item:lsc-dom-c}.
\end{proof}

We conclude this section by pointing out that, as it was on $\Omega=\R^n$, also
on arbitrary $\Omega$ the relative small-volume IC \eqref{eq:small-vol-ic-rel}
on $\mu_-$ is in fact optimal. This will go hand in hand with recording
further connections between the standard small-volume IC, its variant in
\eqref{eq:small-vol-ic-rel}, and semicontinuity properties of the functional,
and will now be explicated for the case $\mu_+\equiv0$, $\mu_-=\mu$:

\begin{rem}[on optimality of the relative IC \eqref{eq:small-vol-ic-rel} and 
    more connections between ICs and semicontinuity]
  {We here consider an open set\/ $\Omega\subset\R^n$ and a non-negative Radon
  measure $\mu$ on $\R^n$ with $\mu\ecke\Omega^\c\equiv0$.}
  \begin{enumerate}[{\rm(i)}]
  \item If\/ $\mathscr{P}_{0,\mu}[\,\cdot\,;\Omega]$ is lower semicontinuous on
    $\BVs(\Omega)$ with respect to global convergence in measure on $\Omega$, then
    for every $\eps>0$, there is some $\delta>0$ such that
    \eqref{eq:small-vol-ic-rel} holds for $\mu$, that is,
    $\mu(A^+)\le\P(A,\Omega)+\eps$ for all $A\in\M(\R^n)$ with $|A|<\delta$.

    \begin{proof}
      If \eqref{eq:small-vol-ic-rel} fails for some $\eps>0$ and all $\delta>0$, in
      particular, for each $k\in\N$, there exists $A_k\in\M(\R^n)$ with
      $|A_k|<\frac1k$ and $\mu(A_k^+)>\P(A_k,\Omega)+\eps$. However, then
      $A_k\in\BVs(\Omega)$ converge to $\emptyset$ in measure on $\Omega$ with
      $\mathscr{P}_{0,\mu}[A_k;\Omega]<{-}\eps$, and
      $\mathscr{P}_{0,\mu}[\,\cdot\,;\Omega]$ is not lower semicontinuous.
    \end{proof}

    Thus, at least in case $\mu_+\equiv0$ the assumption
    \eqref{eq:small-vol-ic-rel} on $\mu_-$ in Theorem
    \ref{thm:lsc-dom}\eqref{item:lsc-dom-c} is also necessary for lower
    semicontinuity of\/ $\mathscr{P}_{0,\mu_-}[\,\cdot\,;\Omega]$ and thus
    optimal.\label{item:optimality-doms}
  \item Consider the following assertions\footnote{Here, for the local-convergence
    semicontinuity \eqref{item:equiv-local-lsc}, we need to restrict to subclasses of
    $\BVs(\Omega)$ which exclude convergence of $A_k$ to $A$ with
    $|(A_k\Delta A)\cap\Omega|=\infty$ for all $k\in\N$. In contrast, the
    global-convergence statement \eqref{item:equiv-global-lsc} could
    equivalently be stated on all of $\{A\in\M(\R^n):\P(A,\Omega)<\infty\}$,
    since global convergence of $A_k$ to $A$ anyway yields
    $|(A_k\Delta A)\cap\Omega|<\infty$ for $k\gg1$.}\textup{:}
    \begin{enumerate}[{\rm(1)}]\renewcommand\theenumi{}
    \item  The measure $\mu$ is finite and satisfies the small-volume IC in $\R^n$
      with constant\/ $1$.\label{item:equiv-finite+ic}
    \item For every $A_0\in\M(\R^n)$ with $\P(A_0,\Omega)<\infty$, the functional
      $\mathscr{P}_{0,\mu}[\,\cdot\,;\Omega]$ is lower semicontinuous on
      $\{A\in\M(\R^n)\,:\,A\Delta A_0\in\BVs(\Omega)\}$ with respect
      to \emph{local} convergence in measure on $\Omega$.\label{item:equiv-local-lsc}
    \item For every $A_0\in\M(\R^n)$ with $\P(A_0,\Omega)<\infty$, the functional
      $\mathscr{P}_{0,\mu}[\,\cdot\,;\Omega]$ is lower semicontinuous on
      $\{A\in\M(\R^n)\,:\,A\Delta A_0\in\BVs(\Omega)\}$ with respect
      to \emph{global} convergence in measure on $\Omega$.\label{item:equiv-global-lsc}
    \item The functional $\mathscr{P}_{0,\mu}[\,\cdot\,;\Omega]$ is lower
      semicontinuous on $\BVs(\Omega)$ with respect
      to \emph{global} convergence in measure on $\Omega$.\label{item:equiv-global-lsc-0}
    \item For every $\eps>0$, there is some $\delta>0$ such that $\mu$ satisfies
      small-volume IC \eqref{eq:small-vol-ic-rel} \emph{relative} to
      $\Omega$.\label{item:equiv-rel-ic}
    \end{enumerate}

    Then, we claim that the implications \eqref{item:equiv-finite+ic}$\implies$%
    \eqref{item:equiv-local-lsc}$\implies$\eqref{item:equiv-global-lsc}$\iff$%
    \eqref{item:equiv-global-lsc-0}$\iff$\eqref{item:equiv-rel-ic} are generally
    valid. Indeed, \eqref{item:equiv-finite+ic}$\implies$\eqref{item:equiv-local-lsc}
    holds by Theorem \ref{thm:lsc-dom}\eqref{item:lsc-dom-a}, the implications
    \eqref{item:equiv-local-lsc}$\implies$\eqref{item:equiv-global-lsc}$\implies$%
    \eqref{item:equiv-global-lsc-0} are trivial,
    \eqref{item:equiv-global-lsc-0}$\implies$\eqref{item:equiv-rel-ic} has been
    established in the preceding point \eqref{item:optimality-doms}, and
    \eqref{item:equiv-rel-ic}$\implies$\eqref{item:equiv-global-lsc} holds by 
    Theorem \ref{thm:lsc-dom}\eqref{item:lsc-dom-c}.\label{item:five-prop-doms}

    We could in fact formulate even more equivalent statements, for instance, one
    such statement is given by the localized IC variant of Lemma
    \ref{lem:bdry-local-ic}\eqref{item:ic-local} together with finiteness of $\mu$.
  \item In general, the implication
    \eqref{item:equiv-finite+ic}$\implies$\eqref{item:equiv-local-lsc} from point
    \eqref{item:five-prop-doms} cannot be reversed. To see this, for $n\ge2$,
    we consider $\mu=2\H^{n-1}\ecke(\R^{n-1}{\times}\{0\})$ on $\Omega=\R^n$ or
    alternatively $\mu=\H^{n-1}\ecke(\R^{n-1}{\times}\{0\})$ on any open
    $\Omega\subset\R^n$ with $\R^{n-1}{\times}{[0,\infty)}\subset\Omega$. Then,
    it can be checked that $\mu$ satisfies
    \eqref{eq:strong-ic-at-infty-rel}. Thus, Theorem
    \ref{thm:lsc-dom}\eqref{item:lsc-dom-b} gives the validity of
    \eqref{item:equiv-local-lsc}, while \eqref{item:equiv-finite+ic} fails in view
    of the infiniteness of $\mu$. \ka The specific case $n=1$ is different, and
    for this case one can in fact show that the validity of
    \eqref{item:equiv-local-lsc} requires finiteness of\/ $\mu$ and that
    \eqref{item:equiv-finite+ic}$\iff$\eqref{item:equiv-local-lsc} holds.\kz

    Also the implication
    \eqref{item:equiv-local-lsc}$\implies$\eqref{item:equiv-global-lsc} cannot be
    reversed in general. Here, for $n\ge2$ we consider the infinite measure
    $\mu=2\H^{n-1}\ecke(\R^{n-1}{\times}\{0,1\})$ on any open $\Omega\subset\R^n$
    with $\dist(\R^{n-1}{\times}\{0,1\},\Omega^\c)>0$. Then, by adapting
    the proof of Proposition \ref{prop:inf-model-meas} one checks that $\mu$
    satisfies \eqref{eq:small-vol-ic-rel} for all these $\Omega$. Hence,
    Theorem \ref{thm:lsc-dom}\eqref{item:lsc-dom-c} gives the validity of
    \eqref{item:equiv-global-lsc}, while
    $A_k\coleq{[k,k{+}n]^{n-1}}{\times}{[0,1]}\in\BVs(\R^n)$ converge locally in
    measure on $\Omega$ to $\emptyset$ with $\mathscr{P}_{0,\mu}[A_k;\Omega]
    \le\mathscr{P}_{0,\mu}[A_k;\R^n]={-}2n^{n-2}<0$ and thus demonstrate that
    \eqref{item:equiv-local-lsc} fails in this case. For $n=1$, the same
    phenomenon occurs for $\mu=2\H^0\ecke\mathds{Z}$ on any open
    $\Omega\subset\R$ with
    $\dist(\mathds{Z},\Omega^\c)>0$.\label{item:equiv-expls}
  \item However, if we impose as an additional assumption
    \[
      \text{either }|\Omega|<\infty\text{ or }\mu(\Omega)<\infty\,,
    \]
    it turns out that the \textbf{five assertions} of point
    \eqref{item:five-prop-doms} are in fact \textbf{all equivalent}. In order to
    justify this claim we recall from Remark \ref{rem:rel-ic-enforces-finite}
    that \eqref{eq:small-vol-ic-rel} and $|\Omega|<\infty$ together enforce
    finiteness of $\mu$. Since moreover
    \eqref{eq:small-vol-ic-rel} is stronger than the usual small-volume IC, this
    means that under the additional assumption we also have the backwards
    implication \eqref{item:equiv-rel-ic}$\implies$\eqref{item:equiv-finite+ic}.

    In particular, we record that for the \ka counter\kz""examples of point
    \eqref{item:equiv-expls} it was inevitable to have both $|\Omega|=\infty$
    and $\mu(\Omega)=\infty$.
  \end{enumerate}  
\end{rem}

\begin{appendix}
  
\section{Isoperimetric conditions for infinite model measures}

In this appendix we justify the validity of ICs for basic infinite model
measures concentrated on hyperplanes by suitable capacity computations. We start
with an auxiliary lemma, which determines the $1$-capacity of sets in a
hyperplane and is not at all surprising. Still, since we are not aware of a
custom-fit reference for this statement, we also include a proof.

\begin{lem}[$1$-capacity on hyperplanes]\label{lem:Cp1-hyperplane}
  For $n\ge2$, every $S\in\Bo(\R^{n-1})$, and $t\in\R$, we have
  \[
    \Cp_1(S{\times}\{t\})=2|S|\,.
  \]
  In different words, this means $\Cp_1(A)=2\H^{n-1}(A)$ for every
  $A\in\Bo(\R^{n-1}{\times}\{t\})$ with\/ $t\in\R$.
\end{lem}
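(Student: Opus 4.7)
The plan is to prove the two inequalities $\Cp_1(S{\times}\{t\})\leq 2|S|$ and $\Cp_1(S{\times}\{t\})\geq 2|S|$ separately; the case $|S|=\infty$ will then follow from the finite case by monotonicity applied to bounded subsets $S'\subset S$.

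For the upper bound, I will invoke the perimeter characterization of Proposition \ref{prop:inf-P} and construct explicit competitors. Given $|S|<\infty$ and $\eps>0$, outer regularity of Lebesgue measure yields an open set $V\supset S$ in $\R^{n-1}$ with $|V|<|S|+\eps$. Decomposing $V$ as a countable union of essentially disjoint dyadic cubes $V=\bigcup_{i=1}^\infty Q_i$, I then take
\[
  H \coleq \bigcup_{i=1}^\infty Q_i\times(t{-}\delta_i, t{+}\delta_i),
\]
choosing $\delta_i>0$ so small that $\delta_i(\P(Q_i)+|Q_i|)<2^{-i}\eps$. Then $S\times\{t\}\subset V\times\{t\}\subset H^+$ and $|H|<\infty$, while summing the box perimeters $\P(Q_i\times(t{-}\delta_i,t{+}\delta_i)) = 2|Q_i|+2\delta_i\P(Q_i)$ over the essentially disjoint boxes yields $\P(H)\leq 2|V|+2\eps<2|S|+4\eps$. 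Sending $\eps\to 0$ gives the desired inequality.

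For the lower bound, I use a slicing argument on competitors from the definition of $\Cp_1$. Let $u\in\W^{1,1}(\R^n)$ with $u\geq 1$ a.e.\ on an open neighborhood $V$ of $S\times\{t\}$. By Fubini, for a.e.\ $x'\in\R^{n-1}$ the slice $u(x',\cdot)$ lies in $\W^{1,1}(\R)\cap\L^1(\R)$ and hence admits an absolutely continuous representative $\tilde u(x',\cdot)$ with vanishing limits at $\pm\infty$. The key intermediate claim is that $\tilde u(\cdot,t)\geq 1$ holds a.e.\ on $S$: for each $x'_0\in S$, openness of $V$ gives $\rho>0$ with $\B_\rho((x'_0,t))\subset V$, and Fubini together with continuity of $\tilde u(y',\cdot)$ in $s$ then yields $\tilde u(y',t)\geq 1$ for a.e.\ $y'$ in a ball of $\R^{n-1}$ around $x'_0$; the Lebesgue density theorem subsequently upgrades this to $\tilde u(\cdot,t)\geq 1$ a.e.\ on $S$. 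Once this is in hand, the one-dimensional estimate
\[
  \int_\R|\partial_n u(x',s)|\,\d s\geq 2\tilde u(x',t)\geq 2
\]
(valid because $\tilde u(x',\cdot)$ is absolutely continuous with vanishing limits at $\pm\infty$) combined with another application of Fubini gives $\int_{\R^n}|\nabla u|\dx\geq 2|S|$, and taking the infimum over admissible $u$ concludes.

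The main delicate point I expect is the trace claim $\tilde u(\cdot,t)\geq 1$ a.e.\ on $S$, since the hypothesis $u\geq 1$ a.e.\ on an $n$-dimensional open neighborhood of $S\times\{t\}$ does not \emph{per se} furnish pointwise information along the hyperplane trace. Resolving it requires the outlined chain of Fubini (to pass to slices), one-dimensional absolute continuity (to make sense of the trace), and the Lebesgue density theorem (to move from density points to all of $S$). The upper-bound construction is, by comparison, an essentially explicit box-competitor optimization in the spirit of Lemma \ref{lem:negligible}.
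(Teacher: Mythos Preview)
Your proof is correct and follows the same overall strategy as the paper---prove $\le$ via cylinder competitors and the perimeter characterization, prove $\ge$ via one-dimensional slicing---but the technical executions differ in instructive ways.

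For the upper bound, the paper first treats smooth bounded open $U\subset\R^{n-1}$ with a single cylinder $U\times(t{-}\delta,t{+}\delta)$, then passes to general open sets by exhaustion (invoking continuity-from-below of $\Cp_1$), and finally to Borel sets by outer approximation. Your dyadic-box construction with variable heights $\delta_i$ reaches the conclusion in one step from outer regularity, trading the capacity-continuity lemma for subadditivity and lower semicontinuity of perimeter. Both are equally valid; yours is slightly more self-contained.

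For the lower bound, the paper first reduces to compact $H\subset\R^{n-1}$ and the characterization $\Cp_1(K)=\inf\{\int|\nabla\eta|:\eta\in\C^\infty_\cpt,\ \eta\ge1\text{ on }K\}$, so the slicing estimate is trivial for smooth $\eta$; it then recovers arbitrary $S$ by inner approximation with compacts. You work directly with $u\in\W^{1,1}(\R^n)$ from the definition and must therefore justify the trace inequality $\tilde u(\cdot,t)\ge1$ a.e.\ on $S$. Your argument for this is correct, though the Lebesgue density theorem is not actually needed: one global application of Fubini to $\{u<1\}\cap V$ already shows that for a.e.\ $y'$ in the open slice $W=\{y':(y',t)\in V\}\supset S$ the one-dimensional representative satisfies $\tilde u(y',t)\ge1$. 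The paper's reduction to smooth compactly supported competitors is cleaner precisely because it sidesteps this trace discussion entirely.
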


\begin{proof}
  We prove the inequalities \glqq$\le$\grqq{} and \glqq$\ge$\grqq{} separately.
  
  We consider an open $U\in\BVs(\R^{n-1})$ and the open cylinder
  $U^\delta\coleq U{\times}{(t{-}\delta,t{+}\delta)}$ with $\delta>0$. One
  verifies $|U^\delta|=2\delta|U|<\infty$,
  $U{\times}\{t\}\subset U^\delta\subset(U^\delta)^+$,
  and $\P(U^\delta)=2|U|{+}2\delta\P(U)$. Therefore, Proposition
  \ref{prop:inf-P} gives
  $\Cp_1(U{\times}\{t\})\le\Cp_1(U^\delta)\le2|U|{+}2\delta\P(U)$ for
  arbitrary $\delta>0$, and we get $\Cp_1(U{\times}\{t\})\le2|U|$. Now, an
  arbitrary open set in $\R^{n-1}$ is the union of an increasing sequence of
  bounded open sets with smooth boundaries, thus in particular of open sets from
  $\BVs(\R^{n-1})$. (This claim can be proved essentially by mollifying $\1_K$
  with compact $K\subset U$ and then choosing good superlevel sets of the
  mollifications via Sard's theorem.) By \cite[Theorem 4.15(viii)]{EvaGar15}
  one can pass to the limit along such a sequence to deduce that
  $\Cp_1(U{\times}\{t\})\le2|U|$ stays valid for
  arbitrary open $U\subset\R^{n-1}$. For arbitrary $S\in\Bo(\R^{n-1})$, one
  then concludes
  \[
    \Cp_1(S{\times}\{t\})
    \le\inf\{\Cp_1(U{\times}\{t\}):U\text{ open in }\R^{n-1}\,,\,S\subset U\}
    \le\inf\{2|U|:U\text{open in }\R^{n-1}\,,\,S\subset U\}
    =2|S|\,.
  \]
  
  From Definition \ref{def:Cp1} one obtains in a standard way (essentially by
  mollification and multiplication with cut-off functions) the equality
  \begin{equation}\label{eq:Cp1-on-cpct}
    \Cp_1(K)=\inf\bigg\{\int_{\R^n}|\nabla\eta|\dx\,:\,
    \eta\in\C^\infty_\cpt(\R^n)\,,\,\eta\ge1\text{ on }K\bigg\}
    \qq\text{for compact }K\subset\R^n\,.
  \end{equation}
  Now, if $H$ is compact in $\R^{n-1}$, for every $\eta\in\C^\infty_\cpt(\R^n)$
  with $\eta\ge1$ on $H{\times}\{t\}$, one has
  \[
    \int_{\R^n}|\nabla\eta|\dx
    \ge\int_H\bigg[\,\bigg|\int_{-\infty}^t\partial_n\eta(x',x_n)\dx_n\bigg|
      +\bigg|\int_t^\infty\partial_n\eta(x',x_n)\dx_n\bigg|\,\bigg]\dx'
    =\int_H2|\eta(x',t)|\dx'\ge2|H|\,,
  \]
  and by \eqref{eq:Cp1-on-cpct} this implies $\Cp_1(H{\times}\{t\})\ge2|H|$. For
  arbitrary $S\in\Bo(\R^{n-1})$, one then concludes
  \[
    \Cp_1(S{\times}\{t\})
    \ge\sup\{\Cp_1(H{\times}\{t\}):H\text{ compact}\,,\,H\subset S\}
    \ge\sup\{2|H|:H\text{ compact}\,,\,H\subset S\}=2|S|\,,
  \]
  which completes the proof.
\end{proof}

The following results now identify two infinite measures, which satisfy the
strong IC with constant $1$ and the small-volume IC with constant $1$,
respectively.

\begin{prop}[strong IC for $\H^{n-1}$ on a single hyperplane]\label{prop:inf-model-meas-pre}
  For $n\ge2$, the non-negative Radon measure
  \[
    \mu\coleq2\H^{n-1}\ecke(\R^{n-1}{\times}\{0\})
    \qq\text{on }\R^n
  \]
  satisfies the strong IC in $\R^n$ with constant\/ $1$.
\end{prop}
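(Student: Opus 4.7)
The plan is to read off the result directly from Lemma \ref{lem:Cp1-hyperplane} together with the perimeter characterization of $1$-capacity in Proposition \ref{prop:inf-P}. Since the strong IC is trivially satisfied when $\P(A)=\infty$, I would fix $A\in\M(\R^n)$ with $|A|+\P(A)<\infty$, so that $A\in\BVs(\R^n)$, and set
\[
  S\coleq\{x'\in\R^{n-1}\,:\,(x',0)\in A^+\}\,,
\]
which is Borel in $\R^{n-1}$ by Borelness of $A^+$. Identifying $\H^{n-1}$ on the hyperplane $\R^{n-1}{\times}\{0\}$ with $\mathcal{L}^{n-1}$ on $\R^{n-1}$, I would observe that
\[
  \mu(A^+)=2\H^{n-1}\!\big(A^+\cap(\R^{n-1}{\times}\{0\})\big)=2|S|\,.
\]

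Next, Lemma \ref{lem:Cp1-hyperplane} gives exactly $\Cp_1(S{\times}\{0\})=2|S|$, so
$\mu(A^+)=\Cp_1(S{\times}\{0\})$. Since $S{\times}\{0\}\subset A^+$ and $\Cp_1$ is monotone, this yields
\[
  \mu(A^+)=\Cp_1(S{\times}\{0\})\le\Cp_1(A^+)\,.
\]
Finally, using $A\in\BVs(\R^n)$ itself as a competitor in Proposition \ref{prop:inf-P} (with $E=A^+$ and $H=A$, where trivially $A^+\subset A^+$) delivers $\Cp_1(A^+)\le\P(A)$. Chaining the inequalities gives $\mu(A^+)\le\P(A)$, which is the strong IC with constant $1$.

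There is essentially no serious obstacle here once Lemma \ref{lem:Cp1-hyperplane} is in hand: the only minor point to verify is Borel measurability of the slice $S$, which follows from Borelness of $A^+$, and the identification of $\H^{n-1}\ecke(\R^{n-1}{\times}\{0\})$ with the Lebesgue measure on $\R^{n-1}$ under the canonical projection.
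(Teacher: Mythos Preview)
Your proof is correct and follows essentially the same route as the paper: both restrict to $A\in\BVs(\R^n)$, use Lemma \ref{lem:Cp1-hyperplane} to rewrite $\mu(A^+)$ as the $1$-capacity of $A^+\cap(\R^{n-1}{\times}\{0\})$, and then bound this capacity by $\P(A)$ via Proposition \ref{prop:inf-P} with $A$ itself as competitor. Your intermediate passage through $\Cp_1(A^+)$ via monotonicity is a slight expansion of what the paper does in one line, but the argument is the same.
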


\begin{proof}
  For $A\in\BVs(\R^n)$, from Lemma \ref{lem:Cp1-hyperplane} and Proposition
  \ref{prop:inf-P} we obtain
  \[
    \mu(A^+)=2\H^{n-1}(A^+\cap(\R^{n-1}{\times}\{0\}))
    =\Cp_1(A^+\cap(\R^{n-1}{\times}\{0\}))\le\P(A)\,.
  \]
  Since the resulting estimate trivially holds in case $\P(A)=\infty$ as well,
  we have verified the claimed IC.
\end{proof}

\begin{prop}[small-volume IC for $\H^{n-1}$ on two parallel hyperplanes]
    \label{prop:inf-model-meas}
  For $n\ge2$, the non-negative Radon measure
  \[
    \mu\coleq2\H^{n-1}\ecke(\R^{n-1}{\times}\{0,1\})
    \qq\text{on }\R^n
  \]
  satisfies the small-volume IC in $\R^n$ with constant\/ $1$, and more precisely
  we have in fact
  \[
    \mu(A^+)\le\P(A)+2|A|
    \qq\text{for all }A\in\M(\R^n)\,.
  \]
\end{prop}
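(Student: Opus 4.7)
My approach is to adapt the divergence-criterion strategy behind the implication (e)$\Rightarrow$(a) of Theorem~\ref{thm:char-small-vol-ic}, with an absolutely continuous remainder that is merely bounded rather than $\L^1$ so as to yield exactly the additive term $2|A|$ in place of the small $\eps$. Concretely, define $f\colon\R\to[-1,1]$ by $f(t)\coleq-1$ for $t\le0$, $f(t)\coleq1-2t$ for $0<t<1$, and $f(t)\coleq1$ for $t\ge1$; then $f\in\BV(\R)$ has distributional derivative $\d f=2\delta_0+2\delta_1-2\1_{(0,1)}\mathcal{L}^1$. Setting $\sigma(x',x_n)\coleq f(x_n)e_n$ gives a sub-unit vector field $\sigma\in\L^\infty(\R^n,\R^n)$, and integration by parts in the $x_n$-variable against any test function $\eta\in\C^\infty_\cpt(\R^n)$ yields the distributional identity
\[
  \mu=H\Ln+\Div\sigma\qq\text{on }\R^n\,,\qq\text{with}\qq H\coleq 2\1_{\R^{n-1}\times(0,1)}\,.
\]

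For every non-negative $\eta\in\C^\infty_\cpt(\R^n)$ with $\eta\le1$, the identity and $\|\sigma\|_{\L^\infty}\le1$ combine into the test inequality
\[
  \int_{\R^n}\eta\,\d\mu
  =\int_{\R^n}\eta H\dx-\int_{\R^n}\sigma\cd\nabla\eta\dx
  \le2|{\spt\eta}|+\int_{\R^n}|\nabla\eta|\dx\,.
\]
The novelty compared to Theorem~\ref{thm:admis-crit-intro} is that $H\notin\L^1(\R^n)$, but the $\L^\infty$-bound $H\le2$ is precisely what generates the $2|A|$ remainder we aim for.

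It remains to pass from smooth $\eta$ to the characteristic function $\1_A$; the claim is trivial if $|A|+\P(A)=\infty$, so I take $A\in\BVs(\R^n)$. Following the proof of (d)$\Rightarrow$(a) in Theorem~\ref{thm:char-small-vol-ic}, I first assume $A$ is essentially contained in a bounded open set $\Omega$: Lemma~\ref{lem:strict-approx-above} and the smooth variant of Lemma~\ref{lem:control-spt} produce $\eta_k\in\C^\infty_\cpt(\Omega)$ with $0\le\eta_k\le1$, $|{\spt\eta_k}|<|A|+1/k$, strict convergence $\eta_k\to\1_A$ in $\BV(\R^n)$, and $\eta_k^\ast\ge\1_{A^+}-c_k$ $\H^{n-1}$-\ae{} (hence $\mu$-\ae{} by Lemma~\ref{lem:abs-con-H}) with $c_k\to0$. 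Applying the test inequality to $\eta_k$ and combining Fatou's lemma on the left with the strict-convergence limit $\int|\nabla\eta_k|\dx\to\P(A)$ on the right yields $\mu(A^+)\le\P(A)+2|A|$. The general case $A\in\BVs(\R^n)$ follows by truncation $A\cap\B_R$ for a sequence of good radii $R_k\to\infty$ chosen via the coarea formula so that $\P(A,\partial\B_{R_k})=0$ and $\H^{n-1}(A^+\cap\partial\B_{R_k})\to0$; then Lemma~\ref{lem:P(AcapB),P(A-S)} controls $\P(A\cap\B_{R_k})\le\P(A)+\H^{n-1}(A^+\cap\partial\B_{R_k})$, and the monotone inclusion $(A\cap\B_{R_k})^+\nearrow A^+$ up to $\mu$-null sets permits passage to the limit. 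The principal technical subtlety is the $\H^{n-1}$-\ae{} lower bound $\eta_k^\ast\ge\1_{A^+}-c_k$ for the approximants, which rests on the specific one-sided mollification underlying Lemma~\ref{lem:strict-approx-above} together with the additive truncation built into Lemma~\ref{lem:control-spt}.
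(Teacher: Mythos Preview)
Your divergence-field approach is correct in outline and genuinely different from the paper's proof. The paper argues geometrically: via the coarea formula it picks $t\in(0,1)$ with $\H^{n-1}(A^+\cap(\R^{n-1}{\times}\{t\}))\le|A|$, splits $A$ at height $t$ into $A_0$ and $A_1$, and then combines the capacity formula of Lemma~\ref{lem:Cp1-hyperplane} with Proposition~\ref{prop:inf-P} to bound $2\H^{n-1}(A^+\cap(\R^{n-1}{\times}\{0\}))\le\P(A_0)\le\P(A,\R^{n-1}{\times}({-}\infty,t))+|A|$ (and likewise at height~$1$), giving the claim by addition. Your construction instead makes explicit that $\mu=H\Ln+\Div\sigma$ with bounded (rather than integrable) $H$, which is a nice observation: it shows the criterion of Theorem~\ref{thm:char-small-vol-ic}\eqref{item:sub-C-div-plus-L1} has a quantitative cousin with an explicit $2|A|$ remainder in place of $\eps$.

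Two points in your execution need repair. First, invoking Lemma~\ref{lem:abs-con-H} is circular (it assumes the small-volume IC you are proving); just observe directly that $\mu\le2\H^{n-1}$, so $\H^{n-1}$-\ae{} statements are automatically $\mu$-\ae{} statements. Second, and more seriously, your chain of approximations does not deliver the $\H^{n-1}$-\ae{} bound $\eta_k^\ast\ge\1_{A^+}-c_k$ you claim: Lemma~\ref{lem:strict-approx-above} outputs only $\W^{1,1}$ functions, the smooth variant of Lemma~\ref{lem:control-spt} needs smooth inputs, and mollifying the one-sided $\W^{1,1}$ approximants destroys the one-sided bound precisely at points of $\partial^\ast\!A\subset A^+$. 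The clean fix is to bypass support control entirely: estimate $\int\eta H\dx\le2\int\eta\dx$ rather than $2|{\spt\eta}|$, so that the test inequality becomes $\int\eta\,\d\mu\le\int|\nabla\eta|\dx+2\int\eta\dx$. This passes to $v\in\W^{1,1}_0(\Omega)$, $0\le v\le1$, by mollification and dominated convergence (using $\mu\le2\H^{n-1}$ and Lemma~\ref{lem:str-implies-Hae}), after which the $\W^{1,1}$ approximants $v_k\ge\1_A$ from Lemma~\ref{lem:strict-approx-above} apply directly: $\mu(A^+)\le\int v_k^\ast\,\d\mu\le\int|\nabla v_k|\dx+2\int v_k\dx\to\P(A)+2|A|$. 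No smoothness, no Lemma~\ref{lem:control-spt}, no $c_k$.
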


\begin{proof}
  The validity of the IC follows by combining Proposition
  \ref{prop:inf-model-meas-pre} and Proposition \ref{prop:ic-sing}. However, we
  now carry out an alternative and self-contained proof, which also yields the
  explicit estimate claimed. Clearly we can assume $A\in\BVs(\R^n)$. In view of
  $\int_0^1\H^{n-1}(A^+\cap(\R^{n-1}{\times}\{t\}))\,\d t\le|A^+|=|A|$ we can
  find and fix some $t\in{(0,1)}$ with
  \[
    \H^{n-1}(A^+\cap(\R^{n-1}{\times}\{t\}))\le|A|\,.
  \]
  Introducing $A_0\coleq A\cap(\R^{n-1}{\times}{({-}\infty,t)})$ with 
  $|A_0|\le|A|<\infty$, by an application\footnote{If we stick to the precise
  statement of Lemma \ref{lem:P(AcapB),P(A-S)}, then in view of
  $\P(\R^{n-1}{\times}{({-}\infty,t)})=\infty$ we cannot use \eqref{eq:P(AcapB)}
  directly for $\R^{n-1}{\times}{({-}\infty,t)}$ and $G=\R^n$, but clearly we can
  circumvent this by applying \eqref{eq:P(AcapB)} with $G=\B_R$ first and then
  passing $R\to\infty$.} of \eqref{eq:P(AcapB)} we get
  \[
    \P(A_0)
    \le\P(A,\R^{n-1}{\times}{({-}\infty,t)})+\H^{n-1}(A^+\cap(\R^{n-1}{\times}\{t\}))
    \le\P(A,\R^{n-1}{\times}{({-}\infty,t)})+|A|\,.
  \]
  Via Lemma \ref{lem:Cp1-hyperplane} and Proposition \ref{prop:inf-P} (the
  latter applied in view of $A^+\cap(\R^{n-1}{\times}\{0\})\subset A_0^+$) we
  infer
  \[
    2\H^{n-1}(A^+\cap(\R^{n-1}{\times}\{0\}))
    =\Cp_1(A^+\cap(\R^{n-1}{\times}\{0\}))
    \le\P(A_0)
    \le\P(A,\R^{n-1}{\times}{({-}\infty,t)})+|A|\,.
  \]
  With the help of $A_1\coleq A\cap(\R^{n-1}{\times}{(t,\infty)})$, we
  analogously obtain the estimate
  \[
    2\H^{n-1}(A^+\cap(\R^{n-1}{\times}\{1\}))
    =\Cp_1(A^+\cap(\R^{n-1}{\times}\{1\}))
    \le\P(A_1)
    \le\P(A,\R^{n-1}{\times}{(t,\infty)})+|A|\,.
  \]
  Adding up the two estimates gives the claim $\mu(A^+)\le\P(A)+2|A|$, from which
  the IC is immediate.
\end{proof}

We remark that the preceding propositions formally extend to the case $n=1$,
where they correspond to the much simpler estimates $2\delta_0(A^+)\le\P(A)$ for
$A\in\Bo(\R)$ with $|A|<\infty$ and $2(\delta_0{+}\delta_1)(A^+)\le\P(A){+}2|A|$
for arbitrary $A\in\Bo(\R)$, with the Dirac measures $\delta_0$ and $\delta_1$
at $0$ and $1$. However, the measures $\delta_0$ and $\delta_0{+}\delta_1$ are
clearly finite, and indeed, for $n=1$, measures with strong IC are necessarily
finite, while the small-volume IC with constant $1$ still admits infinite
examples such as the measure
$2\H^0\ecke\mathds{Z}=2\sum_{z\in\mathds{Z}}\delta_z$, for instance.

\end{appendix}

\bibliographystyle{amsplain}
\bibliography{ic_lsc}

\end{document}